\documentclass[draft]{article}
\usepackage{amsmath,amsfonts,latexsym, amssymb}
\usepackage{dsfont}
\usepackage{stmaryrd}
\usepackage{enumerate}

\overfullrule 0 pt
\usepackage{color}

\oddsidemargin=0in
\evensidemargin=0in
\textwidth=6.5in

\usepackage[notref,notcite]{showkeys}
\usepackage{showkeys}

\newcommand{\nc}{\normalcolor}

\newcommand{\xip}{ {\xi'}}

\newcommand{\fa}{{\frak a}} 
\newcommand{\fb}{{\frak b}} 
\newcommand{\fw}{{\frak w}} 
\newcommand{\fq}{{\frak q}}

\newcommand{\non}{\nonumber}

\newcommand{\wt}{\widetilde}
\newcommand{\wh}{\widehat}

\renewcommand{\epsilon}{\varepsilon}
\newcommand{\e}{\varepsilon}
\newcommand{\eps}{\varepsilon}
\newcommand{\pt}{\partial}
\newcommand{\rd}{{\rm d}}

\newcommand{\bR}{{\mathbb R}}
\newcommand{\bZ}{{\mathbb Z}}
\newcommand{\bN}{{\mathbb N}}

\newcommand{\bx}{{\bf{x}}}
\newcommand{\by}{{\bf {y} }}
\newcommand{\bu}{{\bf{u}}}
\newcommand{\bv}{{\bf{v}}}
\newcommand{\bw}{{\bf{w}}}
\newcommand{\bz}{{\bf {z}}}

\newcommand{\cB}{{\mathcal  B }}
\newcommand{\cT}{{\mathcal  T }}

\newcommand{\tby}{\widetilde\by}

\newcommand{\bla}{\mbox{\boldmath $\lambda$}}

\newcommand{\al}{\alpha}

\newcommand{\be}{\begin{equation}}
\newcommand{\ee}{\end{equation}}

\newcommand{\ga}{{\gamma}}

\newcommand{\Om}{{\Omega}}
\newcommand{\om}{{\omega}}
\newcommand{\si}{\sigma}

\newcommand{\cL}{{\mathcal L}}

\newcommand{\cG}{{\mathcal G}}

\newcommand{\cN}{{\mathcal N}}
\newcommand{\cI}{{\mathcal I}}
\newcommand{\cF}{{\mathcal F}}
\newcommand{\cK}{{\mathcal K}}
\newcommand{\cM}{{\mathcal M}}
\newcommand{\cQ}{{\mathcal Q}}
\newcommand{\cR}{{\mathcal R}}
\newcommand{\cU}{{\mathcal U}}

\newcommand{\cS}{{\mathcal S}}

\newcommand{\E}{{\mathbb E }}
\newcommand{\R}{{\mathbb R }}

\renewcommand{\P}{{\mathbb P}}
\newcommand{\bC}{{\mathbb C}}

\newtheorem{theorem}{Theorem}
\newtheorem{corollary}[theorem]{Corollary}
\newtheorem{lemma}[theorem]{Lemma}
\newtheorem{proposition}[theorem]{Proposition}

\newtheorem{remark}[theorem]{Remark}
\newtheorem{definition}[theorem]{Definition}
\newcommand{\qed}{\hfill\fbox{}\par\vspace{0.3mm}}
\newenvironment{proof}{{\bf Proof.}} {\hfill\qed}

\numberwithin{theorem}{section}


\usepackage{geometry}     
\geometry{letterpaper}

\usepackage{graphicx}

\DeclareGraphicsRule{.tif}{png}{.png}{`convert #1 `dirname #1`/`basename
#1 .tif`.png}

\numberwithin{equation}{section}
\textheight=20cm
\textwidth=15cm
\oddsidemargin=0.5cm
\evensidemargin=0.5cm
\def\cal{}
\def\RR{{\mathbb R}}

\def\ZZ{{\mathbb Z}}

\def\cH{{\mathcal H}}

\def\W2{W^{1,2}({\cal O}(M))}

\def\O{\mathcal{ O}}

\def\1half{\frac{1}{2}}

\newcommand{\Lnorm}[2] {\left \| #2 \right  \|_{#1}}

\def\cA{{\mathcal A}}
\def\cO{{\mathcal O}}
\def\cW{{\mathcal W}}

\setcounter{secnumdepth}{5}
\setcounter{tocdepth}{5}

\title{Gap Universality of Generalized Wigner and $\beta$-Ensembles}

\date{May 18, 2014}

\author{L\'aszl\'o Erd\H os${}^1$\thanks{Partially supported
by SFB-TR 12 Grant of the German Research Council and by ERC Advanced Grant, RANMAT 338804} \quad
Horng-Tzer Yau${}^2$\thanks{Partially supported
by NSF grants DMS-1307444 and Simons Investigator Award}
 \\\\  
Institute of Science and Technology Austria \\
Am Campus 1, A-3400 Klosterneuburg, Austria \\ lerdos@ist.ac.at${}^1$
 \\ \\ 
Department of Mathematics, Harvard University\\
Cambridge MA 02138, USA \\ 
 htyau@math.harvard.edu${}^2$ 
\\}

\begin{document}

\maketitle

\vspace{1cm}

\begin{abstract}
We consider  generalized Wigner ensembles and general $\beta$-ensembles with 
analytic potentials  for any  $\beta \ge 1 $. 
 The recent universality results in particular  assert 
that the local averages of  consecutive eigenvalue gaps in the bulk of the spectrum
  are universal in the sense that they coincide with those  of
the corresponding Gaussian $\beta$-ensembles. In this article, we 
show that local averaging is not necessary for this result, i.e. we
prove that the  single gap distributions in the  bulk  are
 universal.  In fact, with an additional step,
our result can be extended to any  potential $C^4(\bR)$. 
\end{abstract}

\vskip 1.5 cm 

{\bf AMS Subject Classification (2010):} 15B52, 82B44

\medskip

\medskip

{\it Keywords:} $\beta$-ensembles, Wigner-Dyson-Gaudin-Mehta universality,  gap distribution, log-gas.

\vspace{1cm}

\newpage 

\tableofcontents

\section{Introduction}

The fundamental vision  that random matrices can be used  as  basic  models 
for large quantum systems was due to  E.  Wigner   \cite{W}. 
 He  conjectured that the eigenvalue
gap distributions of large  random matrices were  universal (``Wigner surmise") in the sense 
that large quantum systems and random matrices share  the same gap distribution functions. 
 The subsequent works 
of  Dyson, Gaudin and Mehta clarified  many related issues regarding this assertion 
and a thorough  understanding of  the  Gaussian ensembles has thus emerged
 (see the classical book of Mehta \cite{M} for a summary).
There are two main categories of random matrices: the 
invariant and the non-invariant ensembles. The universality conjecture, 
 which is also known as the Wigner-Dyson-Gaudin-Mehta (WDGM) conjecture, 
asserts that for both ensembles the eigenvalue gap distributions are universal up to symmetry classes. 
For invariant ensembles, the joint distribution function of the eigenvalues
 can be expressed explicitly  in terms of one dimensional 
particle systems with logarithmic interactions (i.e., log-gases) 
 at an inverse temperature $\beta$. The values $\beta = 1, 2, 4$,
 correspond   to the classical orthogonal, unitary and symplectic ensembles, respectively.
 Under various conditions on the external potential, the universality for the classical values 
 $\beta = 1, 2, 4$ was proved, 
via analysis on the corresponding orthogonal polynomials,     
by  Fokas-Its-Kitaev \cite{FIK},   Deift {\it et. al.}
\cite{De1,  DKMVZ1, DKMVZ2},  Bleher-Its \cite{BI},  Pastur-Shcherbina \cite{PS:97, PS} 
and in  many consecutive works, see e.g.  \cite{DG, DG1,  KS, Sch, Wid}. 
For nonclassical values of $\beta$  there is no  matrix ensemble 
behind the model,  except for the Gaussian cases \cite{DE} via tridiagonal matrices.
One may still be interested in the local correlation functions of the 
log-gas as an interacting particle system. The orthogonal 
polynomial method  is not applicable for nonclassical values of $\beta$ even for the Gaussian case.
For certain special potentials and  even integer $\beta$, however,  there are still explicit formulas for correlation functions 
\cite{For}.
Furthermore, for general $\beta$ in the Gaussian case  the  local  statistics  were 
described very precisely  with a different method by Valk\'o-Vir\'ag \cite{VV, VV2}. 
The universality for general $\beta$-ensembles was 
established only very recently  \cite{BEY, BEY2} by a new  method based on  dynamical 
  methods
using  Dirichlet form  estimates 
from  \cite{ESY4, ESYY}. This method is important for this article and
 we will discuss it in more details later on. 
 All previous results achieved by this method, however, required  in their
statement to consider a local average of  consecutive
gaps. In the current paper we will
  prove universality of {\it each single} gap in the bulk.

Turning to the non-invariant ensembles, the most important class is the $N\times N$  Wigner matrices 
characterized by the independence of their entries.  In general,  there is no longer an 
explicit expression  for the joint distribution function for the eigenvalues. 
However, there is a special class of  ensembles, the  Gaussian divisible ensembles,
that    interpolate between the general  Wigner ensembles and 
the Gaussian  ones. For these ensembles,  at least in  
the special Hermitian case,  there is  still  an explicit formula for
the joint distribution of the eigenvalues based upon the Harish-Chandra-Itzykson-Zuber integral.
This formula was first put into a mathematically 
useful form by Johansson \cite{J} (see also the later work of  Ben Arous-P\'eche \cite{BP}) to prove 
the universality of Gaussian divisible ensembles with a Gaussian component  of size order one. 
In \cite{EPRSY}, the size of the Gaussian component  needed for proving the universality 
was greatly reduced to $N^{-1/2 + \e}$. 
More importantly,  the idea of approximating 
Wigner ensembles by Gaussian divisible  ones  was first introduced and,  after
a perturbation argument,  this 
resulted in the first proof of universality 
for Hermitian ensembles with general  smooth distributions for matrix elements.  
The smoothness condition was later on removed in   \cite{TV, ERSTVY}.

 In his seminal paper \cite{Dy}, Dyson observed that the  eigenvalue distribution of 
Gaussian divisible ensembles is the same as the solution  of  a special system of stochastic
differential equations, commonly known now as the Dyson Brownian motion, at  a fixed time $t$. 
For short times, $t$ is
comparable with the variance of the Gaussian component.  He also 
 conjectured that the time to ``local equilibrium" of the Dyson Brownian motion is of order $1/N$,
which is  then  equivalent to  the universality of Gaussian divisible ensembles with
a Gaussian component of order  slightly larger than $N^{-1/2}$.
Thus the work \cite{EPRSY} can be viewed 
as proving  Dyson's  conjecture for the Hermitian case. 
 This method, however, completely tied  with  an explicit formula
that is so far  restricted to the Hermitian case.

A completely analytic   approach to estimate the time to local equilibrium of
the Dyson's Brownian motion  was initiated in \cite{ESY4} and 
further developed in \cite{ESYY, EYYrigi, EYYBern}, see \cite{EY} for
a detailed account. 
In  these papers, Dyson's conjecture in full generality   was proved 
\cite{EYYrigi} and universality was established  for  generalized Wigner ensembles for all symmetric classes. 
The idea of a dynamical approach in proving   universality turns out to be a very powerful one. 
Dyson Brownian motion can be viewed as the natural gradient flow for Gaussian $\beta$ log-gases 
 (we will often use the terminology $\beta$ log-gases for the $\beta$-ensembles 
 to emphasize  the logarithmic interaction). 
The gradient flow can be defined with respect to all $\beta$ log-gases, not just the Gaussian ones. 
Furthermore, one can consider  gradient flows of local
 log-gases with fixed  ``good boundary conditions".  Here ``local'' refers to 
 Gibbs measures on  $N^a,  0 < a < 1,$  consecutive
points of a log-gas  with  the locations of all other points fixed. 
 By   ``good boundary conditions'' we  mean that  these external points are {\it rigid}, i.e.
their locations 
 are close to their classical locations given by the limiting density of the original log-gas. 
Using this idea, we have proved the universality of general $\beta$-ensembles in \cite{BEY, BEY2}
 for analytic potential.

The main conclusion of these works   is that 
the local  gap distributions of either  the
generalized  Wigner ensembles (in all symmetry classes) 
or the general $\beta$-ensembles 
are  universal in the bulk of the spectrum (see \cite{EYBull}
for a  recent review). The dynamical approach based on Dyson's Brownian
 motion and related flows also 
provides a conceptual understanding for the origin of the universality.
 For technical reasons, however,  these proofs 
 apply to   averages of consecutive gaps, i.e. cumulative
 statistics of $N^\eps$ consecutive
 gaps were proven to be universal.  Averaging the statistics of the consecutive gaps is 
equivalent to averaging the energy parameter
 in the correlation functions. Thus, mathematically, the
results were  also  formulated in terms of universality of the  correlation
functions with  averaging in  an  energy window of size $N^{-1+\e}$.

The main goal of this paper is to remove the local averaging   in  the statistics of consecutive
 gaps   in our general approach 
using Dyson Brownian motion for both invariant and non-invariant ensembles.
We will show that  the distribution of {\it each single} gap in the bulk is universal,
 which we will refer to as the {\it single gap universality} or 
simply the {\it gap universality} whenever there is no confusion. 
The  single 
gap universality was proved for  a special class of  Hermitian Wigner matrices with the property that  
the first four moments of the matrix elements match those of the standard Gaussian 
random  variable 
 \cite{Taogap}  and no other results  have been known before.  
In particular, {\it  the  single gap universality 
 has not been proved    even for the Gaussian orthogonal ensemble (GOE). }

The  gap distributions are  closely related  to  the correlation functions which were 
often used to state the universality of random matrices. These two 
concepts   are   equivalent in a  certain  average sense. However, 
   there is no rigorous relation between 
correlation functions at a fixed energy and single gap distributions. 
Thus our results  on single gap statistics  do not
automatically imply the universality of the correlation functions
at a fixed energy  which was rigorously proved
 only for Hermitian Wigner matrices
\cite{EPRSY, TV, ERSTVY, EY}.

The removal of a local  average in the universality results proved
 in \cite{ESY4, EYYBand, EYY2}  is  a technical improvement  in itself 
and its physical meaning is not especially profound. 
 Our motivation for taking seriously this endeavor  
is due to that  the single gap distribution may be closely related to  the distribution of a single eigenvalue
in the bulk of the spectrum \cite{Gus}  or  at the edge \cite{TW, TW2}. 
 Since  our approach does  not rely on 
any  explicit formula involving  Gaussian matrices,  some  extension of this method 
may provide a way to understand  the distribution  of an individual 
eigenvalue of Wigner matrices.   In fact, partly based on the method in this paper, the edge universality
for the $\beta$-ensembles and generalized Wigner ensembles was established in \cite{BEYedge}.

The main new idea in this paper is an analysis of the Dyson Brownian motion via parabolic
 regularity  using the  De Giorgi-Nash-Moser idea. 
   Since the Hamiltonians of the local log-gases are 
convex, the 
correlation functions  can  be  re-expressed 
in terms of  a time average of certain   random walks in random environments. The connection  between 
correlation functions of  general log-concave measures and  random walks in random environments
was already pointed out in the work of Helffer and Sj\"ostrand \cite{HS} and Naddaf 
and Spencer \cite{NS}. 
This connection  was used as an effective way to estimate correlation functions 
for several models in statistical physics, 
see, e.g.  \cite{BM, DGI, GOS, He, D}, 
  as well as to remove convexity assumptions
in gradient interface models \cite{CD, CDM}.

 In this paper we observe that  the single   gap universality  is a consequence of 
the H\"older regularity of the solutions to these 
random walk problems. 
Due to the logarithmic interaction, the random walks are long ranged
 and their rates may be singular.  Furthermore, the 
random environments themselves depend 
on the gap distributions, which were exactly the problems we want to analyze!
  If we view these random walks as (discrete) 
parabolic equations with random 
coefficients, we find that they are of divergence form and are in the form of 
the equations studied in the fundamental paper 
by    Caffarelli, Chan and Vasseur  \cite{C}.  The main 
difficulty to apply \cite{C} to gain  regularity is that the jump rates  in our settings 
 are  random and  they do not satisfy the uniform upper and lower bounds required
 in \cite{C}.  In fact,  in
some space-time regime the jump rates  can be  much more singular than were allowed in \cite{C}. 
For controlling the  singularities of these coefficients, we prove an optimal level repulsion estimate for
the local log-gases. 
With these estimates, we are able to extend the method  of  \cite{C} to prove H\"older
 regularity for the solution to these  random walks problems. 
This shows that the  single gap distributions are universal for local log-gases with good boundary conditions, 
 which is  the key result of this paper.

 For   $\beta$-ensembles,   it is known that the rigidity 
of the eigenvalues ensures  that   boundary conditions are good with  high probability. 
Thus we can apply the local universality of single gap distribution to get the  single  gap universality
of the $\beta$-ensembles.
We remark, however, that the current result holds only for $\beta\ge1$
in contrast to $\beta>0$ in \cite{BEY, BEY2}, 
since the  current   proof heavily relies on the dynamics of 
the gradient flow of local log-gases.
For non-invariant ensembles,  a slightly longer argument using  the local relaxation flow 
is needed to connect the local universality result with that for the original Wigner ensemble. 
 This will be explained in Section~\ref{sec:wigner}. 

 In summary,  we have recast the question 
 of the  single   gap universality for random matrices, 
envisioned by Wigner in the sixties, 
  into 
a  problem  concerning   the 
  regularity of a  parabolic equation in divergence form studied by De Giorgi-Nash-Moser. 
Thanks to  the insight of Dyson and the important  progress by 
 Caffarelli-Chan-Vasseur \cite{C}, we are able to establish the  WDGM  universality conjecture 
for each individual gap
 via  De Giorgi-Nash-Moser's idea. 
We now introduce our models rigorously and state 
the main results.

\section{Main results}

 We will have two related results, one concerns the generalized Wigner ensembles, the
other one the general $\beta$-ensembles. 
We first define the generalized Wigner ensembles. 
  Let $H=(h_{ij})_{i,j=1}^N$  be an $N\times N$  hermitian or symmetric matrix  where the
 matrix elements $h_{ij}=\bar {h}_{ji}$, $ i \le j$, are independent 
random variables given by a probability measure $\nu_{ij}$ 
with mean zero and variance $\sigma_{ij}^2\ge 0$;
\be
  \E \, h_{ij} =0, \qquad \sigma_{ij}^2:= \E |h_{ij}|^2.
\label{aver}
\ee
The distribution $\nu_{ij}$ and its variance $\sigma_{ij}^2$ may depend on $N$,
 but we omit this fact in the notation. We also assume that the
normalized matrix elements have a uniform subexponential decay,
\be\label{subexp} 
  \P( |h_{ij}|> x \sigma_{ij})\le \theta_1 \exp{(-x^{\theta_2})}, \qquad x>0,
\ee
with some fixed  constants $\theta_1, \theta_2 >0$,  uniformly in $N, i, j$. In fact, 
with minor modifications of the proof,  an algebraic
decay 
$$
   \P( |h_{ij}| > x \sigma_{ij})\le C_Mx^{-M}
$$
with a large enough $M$ is also sufficient.

\begin{definition}[\cite{EYYBand}]
 The matrix ensemble $H$  defined  above is called generalized Wigner matrix if 
the following  assumptions hold on the variances of the matrix
elements \eqref{aver}
\begin{description}
\item[(A)] For any $j$ fixed
\be
   \sum_{i=1}^N \sigma^2_{ij} = 1 \, .
\label{sum}
\ee

\item[(B)]   There exist two positive constants, $C_{inf}$ and $C_{sup}$,
independent of $N$ such that
\be\label{1.3C}
\frac{C_{inf}}{N} \le \sigma_{ij}^2\leq \frac{C_{sup}}{N}.
\ee
\end{description} 
\end{definition}
 Let $\P$ and $\E$ denote the probability and the
expectation with respect to
this ensemble.

We will denote by  $\lambda_1\le \lambda_2 \le\ldots \le \lambda_N$
the eigenvalues of $H$.
In the special case when $\sigma^2_{ij}=1/N$ and $h_{ij}$ is  Gaussian, 
the joint probability distribution of the
eigenvalues  is given
\be\label{H}
\mu=\mu^{(N)}_{G}(\rd \bla)=
\frac{e^{-N\beta\cH(\bla)}}{Z_\beta}\rd \bla,\qquad \cH(\bla) =
\sum_{i=1}^N \frac{\lambda_{i}^{2}}{4} -  \frac{1}{N}  \sum_{i< j}
\log |\lambda_{j} - \lambda_{i}|.
\ee
The value of $\beta$ depends on the symmetry class of the matrix;
 $\beta=1$ for GOE, $\beta=2$ for GUE and $\beta=4$ for GSE.
Here $Z_\beta$ is the normalization factor so that $\mu$ is a  probability measure. 

It is well known that the density or the one point correlation function
of $\mu$ converges, as $N\to\infty$, to the Wigner semicircle law
\be\label{scdef}
   \varrho(x): =\frac{1}{2\pi}\sqrt{ (4-x^2)_+}.
\ee
 We use the notation $\gamma_{j}$
for the $j$-th quantile of this density, i.e. $\gamma_j$ is defined by
\be\label{defgamma}
   \frac{j}{N} = \int_{-2}^{\gamma_{j}}\varrho_{G}(x) \rd x.
\ee
We now define a class of test functions. Fix an integer $n$. We say that $O = O_N : \bR^n\to \R$,
a possibly $N$-dependent sequence of differentiable functions, is an {\it $n$-particle observable} if
\be\label{obs}
  O_\infty:=\sup_N \|O_N\|_\infty<\infty, \qquad \mbox{supp}\, O_N\subset [-O_{\mbox{\footnotesize supp}},
O_{\mbox{\footnotesize supp}}]^n
\ee
with some finite $O_{\mbox{\footnotesize supp}}$, independent of $N$,  but we allow $\| O'_N\|_\infty$ to grow with $N$. 
For any integers $A<B$ we also introduce the notation $\llbracket A, B\rrbracket : = \{ A, A+1, \ldots, B\}$.

Our main result on the generalized Wigner matrices 
asserts that the local gap statistics in the bulk of the spectrum
are universal for any general Wigner matrix, in particular
they coincide with those of the Gaussian case.

\begin{theorem}  [Gap universality for Wigner matrices] \label{thm:wigner} Let $H$ be 
a generalized Wigner ensemble
with subexponentially decaying matrix elements, \eqref{subexp}.
Fix  positive numbers $\al, O_\infty, O_{\mbox{\footnotesize supp}}$ and
an integer $n\in \bN$.
 There exists an $\e>0$ and $C>0$,   depending only on $\alpha$, $\O_\infty$ and
$O_{\mbox{\footnotesize supp}}$   such that for any $n$-particle observable $O=O_N$
satisfying \eqref{obs} we have 
\be\label{EEO}
   \Big| \big[\E -\E^{\mu}\big] O\big( N(\lambda_j-\lambda_{j+1}), N(\lambda_j-\lambda_{j+2}),
 \ldots , N(\lambda_j - \lambda_{j+n})\big)\Big|
    \le CN^{-\e}  \| O'\|_\infty
\ee
for any $j\in \llbracket \al N, (1-\al)N\rrbracket$ and
for any sufficiently large  $N\ge N_0$, where $N_0$ depends
on all parameters of the model, as well as on $n$, $\alpha$, $\O_\infty$ and
$O_{\mbox{\footnotesize supp}}$. 

More generally, for any $k, m \in \llbracket \al N, (1-\al)N\rrbracket$ we have
\begin{align} \label{EEO1}
   \Big| \E  O\big( & (N\varrho_k) (\lambda_k-\lambda_{k+1}),   (N\varrho_k)(\lambda_k-\lambda_{k+2}), \ldots , (N\varrho_k)(\lambda_k - \lambda_{k+n})\big) \\
& -\E^{\mu} O\big( (N\varrho_m)(\lambda_m-\lambda_{m+1}), (N\varrho_m)(\lambda_m-\lambda_{m+2}), \ldots , (N\varrho_m)(\lambda_m - \lambda_{m+n})\big)\Big|
    \le CN^{-\e} \| O'\|_\infty,  \non
\end{align}
where the local density $\varrho_k$ is defined by
$\varrho_k : =\varrho(\gamma_k)$.
\end{theorem}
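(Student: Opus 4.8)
The plan is to follow the dynamical three-step strategy and reduce both \eqref{EEO} and \eqref{EEO1} to the single gap universality of a \emph{local} $\beta$ log-gas with rigid (``good'') boundary conditions, which I take as the main local result of the paper. First I would regularize by the matrix Ornstein--Uhlenbeck flow: set $H_t\stackrel{d}{=}e^{-t/2}H+(1-e^{-t})^{1/2}V$ with $V$ a GUE (resp.\ GOE) matrix in the hermitian (resp.\ symmetric) case, and fix $t=N^{-1+\om}$ with a small $\om>0$. The eigenvalues of $H_t$ evolve by Dyson Brownian motion started from those of $H$. A continuity estimate --- of Green function comparison type, using only the local semicircle law and the optimal rigidity available for generalized Wigner matrices --- shows that passing from $H$ to $H_t$ changes the left sides of \eqref{EEO} and \eqref{EEO1} by at most $N^{-\e}$. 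Hence it suffices to prove gap universality for the Gaussian divisible ensemble $H_t$.

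The core of the argument is to show that after time $t\gg N^{-1}$ the Dyson Brownian motion has locally relaxed: I would couple the flow started from the eigenvalues of $H$ with the flow started from a sample of the Gaussian ensemble $\mu$ --- which, being stationary, has law $\mu$ at every time --- and prove that the two rescaled single gap distributions agree up to $N^{-\e}$. This is where the local relaxation flow enters: one adds an auxiliary strictly convex confining term to accelerate equilibration (the modification being controlled by Dirichlet-form and entropy estimates), conditions on the configuration outside a window of $N^a$ consecutive points, and is left comparing the single gap of two local log-gases with good boundary conditions. The new ingredient that makes this comparison possible is parabolic regularity in the spirit of De Giorgi--Nash--Moser: via a Helffer--Sj\"ostrand representation the sensitivity of the gap observable to the (rigid) initial and boundary data is expressed through a discrete parabolic equation in divergence form --- a long-range random walk in a random environment with conductances of order $(\lambda_i-\lambda_j)^{-2}$ --- and H\"older continuity of its solutions shows that the observable does not depend on which good configuration one started from, hence equals its value for $\mu$.

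Chaining the comparisons --- gaps of $H$ $\approx$ gaps of $H_t$ $\approx$ gaps of the Dyson Brownian motion issued from $\mu$ $=$ gaps of $\mu$ --- gives \eqref{EEO}. For \eqref{EEO1} one runs the same chain at index $k$ for $H$ and at index $m$ for $\mu$: both end at the Gaussian local log-gas at the corresponding point of the bulk, and after the normalization by $N\varrho_k$ (resp.\ $N\varrho_m$), which rescales the typical gap to order one, these local statistics coincide because they are translation invariant in the bulk. The remaining bookkeeping --- choosing $\om$ and the window exponent $a$, matching the rigidity scales, and tracking the resulting $\e$ and $N_0$ --- is routine once the inputs above are in place.

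The principal obstacle is in the local relaxation step: the conductances $(\lambda_i-\lambda_j)^{-2}$ of the effective random walk are random, depend on the very gap distribution under analysis, and in some space-time regimes are far more singular than the hypotheses of Caffarelli--Chan--Vasseur permit. I expect the crux of the work to be an essentially optimal level-repulsion estimate for the local log-gas, together with an extension of the De Giorgi--Nash--Moser iteration that tolerates such singular, random jump rates; everything else rests on this point.
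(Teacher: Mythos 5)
Your outline follows the same three-step dynamical strategy as the paper (Green-function/four-moment comparison, Dyson relaxation with conditioning on a local window, De Giorgi--Nash--Moser regularity for the Helffer--Sj\"ostrand random walk), but the comparison step between $H$ and $H_t$ is incomplete as stated. For \emph{single gap} observables the Green function comparison theorem cannot be run with only the local semicircle law and rigidity; the version needed (Theorem 1.10 of \cite{KY}) requires in addition a quantitative level-repulsion bound $\P^{\f w}(\cN(E-N^{-1-\al},E+N^{-1-\al})\ge 2)\le N^{-\al-\nu}$ for one of the two ensembles. Rigidity controls where eigenvalues sit relative to their classical locations but says nothing about near-degeneracies, so this bound is not a priori available for $H_t$. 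In the paper it is derived only \emph{after} the gap universality of $H_t$ has been established via the local relaxation step, by transferring the known level repulsion of GOE/GUE through \eqref{3022}. Your presentation --- GFC first to ``reduce to $H_t$,'' then relax $H_t$ --- is therefore circular unless you make the bootstrap explicit: prove universality for the Gaussian-divisible ensemble first, extract level repulsion for $H_t$ from it, and only then invoke the four-moment comparison.

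Two smaller points. The middle step is not a path coupling of two DBM flows: the paper works with the density $f_t$ relative to $\mu$, conditions on the exterior configuration $\by$, and uses Dirichlet-form and entropy estimates to show that $f_{t,\by}\mu_\by$ is close to $\mu_\by$ for $f_t\mu$-typical $\by$. A nontrivial piece of this (Lemma \ref{lm:rig1} and Lemma \ref{ec}) is verifying that such $\by$ satisfy the precise hypothesis $|\E^{\mu_\by}x_k-\al_k|\le CN^{-1}K^\xi$ of Theorem \ref{thm:local}; this does not follow from $\mu$-rigidity alone because the conditioning is with respect to $f_t\mu$, which may be asymptotically singular to $\mu$. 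Finally, your assertion that the local statistics at different bulk indices ``coincide because they are translation invariant in the bulk'' is itself the content of the local comparison (Theorem \ref{thm:local}, or Theorem \ref{thm:beta}, which the paper invokes to prove \eqref{EEO1}) and must be cited rather than assumed.
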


It is well-known that the  
gap distribution of  Gaussian random matrices  for all symmetry classes   can be explicitly
expressed via a Fredholm determinant provided that a certain local average  is taken, see \cite{De1, DG, DG1}. 
The result for a single gap, i.e. without local averaging,  was only achieved recently  in  the special 
case of the Gaussian unitary ensemble (GUE) by Tao \cite{Taogap}  (which  then  easily  implies
 the same results for  Hermitian Wigner  matrices satisfying  the  
  four moment matching condition). It is not clear if  
a similar argument can be applied to the GOE case.

\bigskip

We now define the  $\beta$-ensembles  with a general external potential.
Let $\beta>0$ be a fixed parameter.
Let $V(x)$ be a real analytic\footnote{In fact, $V\in C^4(\bR)$ is sufficient, see Remark~\ref{rem:c4}.}
 potential on $\bR$ that grows  faster than $(2+\eps)\log |x|$ 
at infinity and satisfies
\be\label{lowerder}
    \inf_\bR V'' >-\infty.
\ee
Consider the measure
\be\label{mubeta}
   \mu=\mu_{\beta, V}^{(N)}(\rd \bla)=
\frac{e^{-N\beta\cH(\bla)}}{Z_\beta}\rd\bla,\qquad \cH(\bla) =
\frac{1}{2}\sum_{i=1}^N V(\lambda_{i}) -  \frac{1}{N}  \sum_{i< j}
\log |\lambda_{j} - \lambda_{i}|.
\ee
Since $\mu$ is symmetric in all its variables,  we will mostly
 view it as a measure restricted to the cone
\be\label{simplex}
\Xi^{(N)}:= \{\bla \; :\; \lambda_1<\lambda_2< \ldots < \lambda_N\}\subset \RR^N.
\ee
Note that the Gaussian measure \eqref{H} is a special case of \eqref{mubeta}
with $V(\lambda)=\lambda^2/2$. 
In this case we use the notation $\mu_G$ for $\mu$.

Let
$$
   \varrho^{(N)}_1(\lambda) : = \E^\mu \frac{1}{N}\sum_{j=1}^N \delta(\lambda-\lambda_j)
$$
denote  the density, or the  one-point function, of $\mu$. It is well known 
\cite{APS, BPS} that
$\varrho^{(N)}_1$  converges weakly to
the equilibrium density $\varrho=\varrho_{V}$
as $N\to\infty$. The equilibrium density can be characterized as
the unique minimizer (in the set of probability measures on $\bR$ endowed
with the weak topology) of the functional
\be\label{varprin}
   I(\nu) =  \int V(t) \rd\nu (t)  - \iint \log|t-s|\rd\nu(t)\rd\nu(s).
\ee
In the case, $V(x)=x^2/2$, the minimizer is the Wigner semicircle law $\varrho=\varrho_G$, defined in
\eqref{scdef}, where the subscript $G$ refers to the Gaussian case. 
In the general case
we assume that 
$\varrho=\varrho_{V}$ is supported on a single compact interval, $[A,B]$ and $\varrho\in C^2(A,B)$. 
Moreover, we assume that  $V$ is {\it regular} in the sense that $\varrho$ is strictly positive on $(A, B)$
and vanishes as a square root at the endpoints, see (1.4) of \cite{BEY2}.
It is known that these condition are satisfied if, for example, $V$ is strictly convex.

For any $j\le N$ define the classical location of the $j$-th particle 
$\gamma_{j,V}$ by
\be\label{defgammagen}
   \frac{j}{N} = \int_A^{\gamma_{j,V}}\varrho_V(x) \rd x,
\ee
and for the Gaussian case we have $[A,B]=[-2,2]$ and we use the notation $\gamma_{j,G} =\gamma_j$
for the corresponding classical location, defined in \eqref{defgamma}. 
We set
\be\label{rhov}
   \varrho_j^V := \varrho_V(\gamma_{j,V}),\qquad
\mbox{and}\qquad \varrho_j^{G} : = \varrho_{G} (\gamma_{j,G})
\ee
to be the limiting density at the  classical  location of the $j$-th particle.
 Our main theorem on the 
$\beta$-ensembles is the following. 

\begin{theorem}  [Gap universality for $\beta$-ensembles]  \label{thm:beta} Let   $\beta \ge 1$   and $V$ be
 a real analytic\footnote{In fact, $V\in C^4(\bR)$ is sufficient, see Remark~\ref{rem:c4}.} potential
with \eqref{lowerder} such that $\varrho_V$ is  supported on a single compact interval,
 $[A,B]$, $\varrho_V\in C^2(A,B)$, and that  $V$ is regular. 
 Fix  positive numbers $\al, O_\infty, O_{\mbox{\footnotesize supp}}$,
an integer $n\in \bN$ and an $n$-particle observable $O=O_N$ satisfying \eqref{obs}. 
  Let $\mu=\mu_V = \mu_{\beta, V}^{(N)}$ be given by \eqref{mubeta}
and let $\mu_G$ denote the same measure for the Gaussian case.
Then there exist an $\e>0$, depending only on $\alpha, \beta$ 
and  the potential $V$,
and a constant $C$ depending on  $\O_\infty$ and
$O_{\mbox{\footnotesize supp}}$ such that
\begin{align}\label{betaeq}
 \Bigg| & \E^{\mu_V} O\Big( (N\varrho_k^V) (\lambda_k-\lambda_{k+1}), (N\varrho_k^V) (\lambda_{k}-\lambda_{k+2}), \ldots , 
(N\varrho_k^V) (\lambda_k - \lambda_{k+n})\Big) \\ \nonumber
  &  - \E^{\mu_{G}} 
  O\Big( (N\varrho_m^{G}) (\lambda_m-\lambda_{m+1}), (N\varrho_m^G) (\lambda_{m}-\lambda_{m+2}), \ldots , 
(N\varrho_m^G)(\lambda_m - \lambda_{m+n})\Big)
 \Bigg| 
  \le CN^{-\e}\| O'\|_\infty
\end{align}
for any $k,m\in \llbracket \al N, (1-\al)N\rrbracket$ and
for any sufficiently large   $N\ge N_0$, where $N_0$ depends
on $V$, $\beta$, as well as on $n$, $\alpha$, $O_\infty$ and $ O_{\mbox{\footnotesize supp}}$.
In particular, the distribution of the rescaled gaps
 w.r.t. $\mu_V$ 
does not depend on the index $k$ in the bulk.
\end{theorem}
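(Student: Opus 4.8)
The plan is to derive Theorem~\ref{thm:beta} from a \emph{local} gap universality statement for log-gases conditioned on a rigid external configuration, and to obtain that statement by a Dyson Brownian motion argument whose core is a De Giorgi--Nash--Moser type regularity estimate for a singular, long-range, random parabolic equation in divergence form. Throughout we use only the rigidity of $\beta$-ensembles with analytic regular $V$, available from \cite{BEY, BEY2} for $\beta\ge1$.

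\emph{Reduction to local log-gases.} Rigidity guarantees that, with overwhelming probability, $|\lambda_i-\gamma_{i,V}|\le N^{-1+\xi}$ for all bulk indices $i$. Fix a small $\omega>0$, put $L=N^{\omega}$, $I=\llbracket k-L,k+L\rrbracket$, and condition on the external configuration $\by=(\lambda_i)_{i\notin I}$, which lies in the ``good'' (rigid) set. The conditional law of $(\lambda_i)_{i\in I}$ is a log-gas on the $2L+1$ points of $I$ with one-body potential $\tfrac12 V(\lambda_i)-\tfrac1N\sum_{j\notin I}\log|\lambda_i-\lambda_j|$; after recentering at $\gamma_{k,V}$ and dilating by $N\varrho_k^V$ it becomes a standard local log-gas at local density $\approx1$, and because $\inf V''>-\infty$ while the logarithmic confinement exerted by $\by$ is strong over such a short window, this measure is strictly log-concave. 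The same reduction applied to $\mu_G$ around index $m$ produces another strictly log-concave local log-gas with a different but equally good boundary configuration $\by'$; the dilations by $N\varrho_k^V$ and $N\varrho_m^G$ normalize both local densities to $1$. Hence it suffices to prove that two local log-gases with good boundary conditions have single gap statistics agreeing up to $N^{-\e}\|O'\|_\infty$, and to average the outcome over the high-probability realizations of $\by$ and $\by'$.

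\emph{The dynamical argument for local log-gases.} Let $\mu^{(1)},\mu^{(2)}$ be the two local measures, $\mathcal{L}$ the generator of the reversible diffusion (local Dyson Brownian motion) with invariant measure $\mu^{(2)}$, and let $g_t$ solve $\partial_tg_t=\mathcal{L}g_t$ with $g_0=\rd\mu^{(1)}/\rd\mu^{(2)}$, so that $\E^{g_0\mu^{(2)}}O=\E^{\mu^{(1)}}O$ while the spectral gap of the log-concave $\mu^{(2)}$ forces $\E^{g_t\mu^{(2)}}O\to\E^{\mu^{(2)}}O$. Integration by parts gives $\partial_t\E^{g_t\mu^{(2)}}O=-\sum_i\int(\partial_ig_t)(\partial_iO)\,\rd\mu^{(2)}$; since $O$ depends only on gaps, $\sum_i\partial_iO=0$, so only the \emph{differences} $\partial_ig_t-\partial_{i'}g_t$ of derivatives at indices within $n$ of $k$ contribute, each weighted by $|\partial_iO|\lesssim N\|O'\|_\infty$. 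Differentiating the flow equation in $\lambda_i$ shows $u_i:=\partial_ig_t$ solves a discrete parabolic equation of divergence type, $\partial_tu_i=\mathcal{L}u_i+\sum_{j\ne i}c_{ij}(u_j-u_i)-b_iu_i$, with conductances $c_{ij}=\beta(\lambda_i-\lambda_j)^{-2}\ge0$ and diagonal term $b_i\ge0$ (nonnegative by the convexity secured above), both depending on the evolving configuration; by the Helffer--Sj\"ostrand/Naddaf--Spencer picture this is the Kolmogorov equation of a random walk on indices in the random environment supplied by the diffusion. Everything then reduces to H\"older continuity of $u$ in the index variable on microscopic scales: feeding a bound $|\partial_ig_t-\partial_{i'}g_t|\lesssim(|i-i'|/\sqrt{N^2t}\,)^{\theta}\sup|\partial g_t|$ into the identity for $\partial_t\E^{g_t\mu^{(2)}}O$ and integrating over $t$ up to the local relaxation time $\sim L^2/N^2$ (treating small $t$ by cruder a priori bounds) yields the desired $N^{-\e}\|O'\|_\infty$ estimate.

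\emph{The main obstacle.} The equation for $u$ is of the divergence-form jump type studied by Caffarelli--Chan--Vasseur \cite{C}, but their hypotheses fail here: the conductances are random, genuinely long-ranged ($c_{ij}\asymp\beta N^2/(i-j)^2$ for well-separated particles, with no finite-range cutoff), and not uniformly elliptic -- indeed $c_{i,i+1}$ becomes arbitrarily large on a small space-time set whenever consecutive eigenvalues come anomalously close, exceeding the singularity permitted in \cite{C}. The crux is therefore twofold. First, one proves an optimal level repulsion estimate for local log-gases with good boundary conditions, of the form $\P(\lambda_{i+1}-\lambda_i\le\delta/N)\lesssim\delta^{\beta+1}$ together with its multi-gap refinements, which quantitatively bounds the measure of the ``bad'' space-time region where $c_{ij}$ is out of range; this is one place where $\beta\ge1$ enters. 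Second, one adapts the De Giorgi--Nash--Moser / \cite{C} machinery -- Nash-type on-diagonal heat-kernel bounds, a Caccioppoli energy inequality, and a De Giorgi oscillation-decay lemma -- so that it tolerates the singular part of $c_{ij}$ concentrated on that small-measure set, producing a quantitative H\"older exponent $\theta>0$. A final subtlety is self-referential: the random environment is precisely the eigenvalue process whose gap statistics we wish to control, so rigidity and level repulsion must be propagated along the Dyson flow as a priori inputs, using that the flow keeps the configuration within the class of good log-gases with high probability. Combining this regularity estimate with the reduction step yields local gap universality, hence Theorem~\ref{thm:beta}, including the $k$-independence asserted at the end of its statement.
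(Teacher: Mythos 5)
Your proposal follows essentially the same route as the paper: reduce to conditioned local log-gases with rigid boundary data, express the difference of gap expectations via a Helffer--Sj\"ostrand/random-walk representation, observe that gap observables convert the slowly decaying logarithmic correlations into discrete derivatives of a parabolic solution, and then close the argument with a De Giorgi--Nash--Moser H\"older estimate adapted to the singular long-range coefficients by means of a new level-repulsion input (and this is indeed where $\beta\ge1$ enters). The one noteworthy technical variant is that you set up a time-dependent relaxation flow $g_t$ from $\mu^{(1)}$ to $\mu^{(2)}$, whereas the paper interpolates with a one-parameter family of \emph{stationary} Gibbs measures $\omega^r$ and applies the random-walk representation at equilibrium, so that the parabolic solution always has controllable initial data (a $\delta$-function against the smooth, decaying observable $\partial_b h_0$) -- this sidesteps the need to control $\nabla g_0$, and also cleanly absorbs the explicit configuration-interval matching (rescaling/translating so that $J_{\by}=J_{\wt\by}$ and verifying $|\E^{\mu_{\by}}x_j-\alpha_j|\le CN^{-1}K^{\xi}$) that the paper handles in Section 5.2 and that your reduction step leaves implicit.
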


Theorem \ref{thm:beta}, in particular, asserts that the single gap distribution  in the bulk is 
independent of the index $k$.   The special GUE case of this assertion is the content of \cite{Taogap} 
 where the proof uses some  special structures of  GUE.

\medskip

The proofs of both Theorems~\ref{thm:wigner} and \ref{thm:beta}
 rely on the uniqueness of the gap distribution
for a localized version of the equilibrium measure \eqref{H} with
a certain class of boundary conditions. This main technical result will be 
formulated in  Theorem~\ref{thm:local} in the next section after 
we introduce the necessary notations.  An orientation of the content of the paper 
will be given at the end of Section \ref{sec:loc}. 

We remark that  Theorem \ref{thm:beta} is stated  only for $\beta \ge 1$; on the contrary, 
the universality with local averaging  in  \cite{BEY, BEY2}
was proved for  $\beta>0$. The main reason is that the current proof 
 relies heavily on the dynamics of the gradient flow of local log-gases.
 Hence the well-posedness of the dynamics is crucial which is available
only for $\beta \ge 1$. 
On the other hand,  in  \cite{BEY, BEY2} we use only  certain Dirichlet  form inequalities 
 (see, e.g. Lemma~5.9 in \cite{BEY}),  which  we could prove
with an effective regularization scheme for all $\beta> 0$.   For $\beta < 1$ it is not clear if such a regularization 
can also be applied  to the new inequalities we will prove here.

\section{Outline of the main ideas in the proof}

For the orientation of the reader we briefly outline the three main concepts in the proof
without any technicalities. 

\medskip

{\it 1. Local Gibbs measures and their comparison}

\medskip

The first observation is that the macroscopic structure of the Gibbs measure
$\mu_{\beta, V}^{(N)}$, see \eqref{mubeta}, heavily depends on $V$ via the density $\varrho_V$.
The microscopic structure, however, is essentially determined by the logarithmic
interaction alone, the local density plays only the role of a scaling factor.
Once the measure is localized, its dependence on $V$ is reduced to a simple linear
rescaling. This gives rise to the idea to consider the {\it local Gibbs measures},
defined on $\cK$ consecutive particles  (indexed by a set $I$)   by conditioning on all other $N-\cK$ particles.
The frozen particles, denoted collectively by $\by =\{ y_j\}_{j\not \in I}$,
play the role of the boundary conditions. 
The potential of the local Gibbs measure $\mu_\by$ is given by $\frac{1}{2}V_\by(x) =
\frac{1}{2} V(x) - \frac{1}{N}\sum_{j\not\in I} \log
|x-y_j|$. From the rigidity property of the measure $\mu$, see \cite{BEY},
 the frozen particles are typically very close to their classical
locations determined by the appropriate quantiles of the equilibrium density $\varrho_V$.
 Moreover, from the Euler-Lagrange
equation of \eqref{varprin} we have $V(x) = 2\int \log |x-y|\varrho_V(y)\rd y$.
These properties, together with the choice $\cK\ll N$  ensure that $V_\by$ is small away
from the boundary. Thus, apart from boundary effects, the local Gibbs measure
is independent of the original potential $V$. In particular, its gap statistics
can be compared with that of the Gaussian ensemble after an appropriate rescaling.
For convenience, we scale all local measures so that the typical size of their
gaps is one.

\bigskip

{\it 2. Random walk representation of the covariance}

\bigskip

The key technical difficulty is to estimate the boundary effects
which is given by 
the correlation between the external potential $\sum_i V_\by(x_i)$
and the gap observable $O(x_j-x_{j+1})$ (for simplicity we look at
one gap only). We introduce the notation $\langle X ; Y\rangle:= \E XY -
 \E X \, \E Y$ to denote the covariance of two random variables $X$ and $Y$.
Following the more customary statistical physics terminology, 
we will refer to  $\langle X ; Y\rangle$  as correlation.  Due
to the long range of the logarithmic interaction, the 
two-point correlation function $\langle \lambda_i ; \lambda_j\rangle$ \nc
 of a log-gas decays only logarithmically in $|i-j|$, i.e. very slowly.
What we really need is the correlation between a particle
 $\lambda_i$ and a gap $\lambda_j-\lambda_{j+1}$
which  decays faster, as $|i-j|^{-1}$, but we need quite precise
estimates to exploit the gap structure.  

For any  Gibbs measure $\om(\rd\bx) = e^{-\beta \cH(\bx)}\rd \bx$ with strictly
convex Hamiltonian, $\cH''\ge c>0$, the correlation of
any two observables $F$ and $G$ can be expressed as
\be\label{rwr}
  \langle F(\bx); G(\bx) \rangle_\om = \frac{1}{2}\int_0^\infty
   \rd s \int \rd\om(\bx)\E_\bx \big[\;  \nabla G(\bx(s))\cU(s, \bx(\cdot)) \nabla F(\bx) \big].
\ee
Here $\E_\bx$ is the expectation for the (random) paths $\bx(\cdot)$ starting
from $\bx(0)=\bx$ and solving the canonical SDE for the measure $\om$:
$$
    \rd \bx(s) =\rd {\bf B}(s) - \beta \nabla\cH ( \bx(s))\rd s
$$
and $\cU(s) =\cU(s, \bx(\cdot))$ is the fundamental solution to the linear
system of equations
\be\label{matrixeq}
   \partial_s \cU(s) =- \cU(s) \cA(s), \qquad \cA(s): = \beta \cH''(\bx(s))
\ee
 with $\cU(0)=I$. 
Notice that the coefficient matrix $\cA(s)$, and thus the fundamental
solution, depend on the random path $\bx(s )$.

If $G$ is a function of the gap, $G(\bx)= O(x_j-x_{j+1})$,
 then \eqref{rwr} becomes
 \be\label{rwr1}
  \langle F(\bx); O(x_j-x_{j+1}) \rangle_\om = \frac{1}{2}\int_0^\infty
   \rd s \int \rd\om(\bx)\sum_{i\in I} 
  \E_\bx \Big[\; O'(x_j-x_{j+1}) \big(
\cU_{i,j}(s) -\cU_{i,j+1}(s)\big)  \pt_i F(\bx) \Big].
\ee
We will estimate the correlation \eqref{rwr1} by showing that for a typical
path $\bx(\cdot)$ the solution $\cU(s)$ is H\"older-regular in a sense that
$\cU_{i, j}(s)-\cU_{i, j+1}(s)$ is small if $j$ is away from the boundary and
$s$ is not too small. The exceptional cases require various technical cutoff estimates.

\bigskip

{\it 3. H\"older-regularity of the solution to \eqref{matrixeq}}

\bigskip

We will apply \eqref{rwr1} with the choice $\om = \mu_\by$ and with
a function $F$ representing the effects of the boundary conditions.  
For any fixed realization of the path $\bx(\cdot)$, we will view 
the equation \eqref{matrixeq} as a finite dimensional version of
a parabolic equation. The coefficient matrix,  the Hessian of
the local Gibbs measure, is computed explicitly. It can be
 written as
$\cA = \cB +\cW$, where $\cW\ge 0$ is  diagonal, $\cB$ is symmetric
with quadratic form
$$
    \langle \bu, \cB(s) \bu\rangle = \frac{1}{2} \sum_{i,j\in I} B_{ij}(s) (u_i-u_j)^2,
  \qquad B_{ij}(s):= \frac{\beta}{(x_i(s)-x_j(s))^2}.
$$
After rescaling the problem and writing it in microscopic coordinates
where the gap size is of order one,  \nc
for a typical path and large $i-j$ we have 
\be\label{cbs}
   B_{ij}(s) \sim \frac{1}{(i-j)^2}
\ee
by rigidity. We also have a lower bound for any $i\ne j$
\be\label{lowcbs}
  B_{ij}(s) \gtrsim \frac{1}{(i-j)^2},
\ee
at least with a very high probability.
If a matching upper bound were true for any $i\ne j$, then \eqref{matrixeq}
would be the discrete  analogue of the general  equation
\be
\partial_t u(t, x) = \int K(t, x, y) [u(t, y) - u(t, x) ] \rd y, \qquad t>0, \quad x, y\in \R^d 
\ee
  considered by  Caffarelli-Chan-Vasseur  in \cite{C}.   It is assumed that the 
 kernel $K$ is symmetric and there is a constant $0< s < 2$ such 
that the short distance singularity can be bounded by  
\be\label{Kxyt1}
C_1  |x-y|^{-d-s} \le K(t, x, y) \le C_2  |x-y|^{-d-s}
\ee
for some positive constants $C_1, C_2$. 
Roughly speaking, the integral operator corresponds to the behavior 
of the operator $|p|^{s}$,  where $p=-i\nabla$.  The main result of \cite{C} asserts
that for any $t_0>0$, the solution $u(t,x)$ is $\e$-H\"older continuous,
 $u\in C^\e( (t_0, \infty), \R^d)$,
for some positive exponent $\e$  that depends only on $t_0$, $C_1$, $C_2$. 
Further generalizations and related local regularity results
such as weak Harnack inequality can be found in \cite{FK}.

Our equation \eqref{matrixeq} is of this type with $d=s=1$, but it
is discrete and in a finite interval $I$ with a potential term. 
The key difference, however, is
that the coefficient  $B_{ij}(t)$   in the elliptic part of \eqref{matrixeq} 
can be singular in the sense that $B_{ij}(t) |i-j|^2$ is not uniformly bounded
 when $i,j$ are close to each other.
 In fact, by extending the reasoning
of Ben Arous and Bourgade \cite{BB}, the minimal gap $\min_i (x_{i+1}-x_i)$ for GOE 
is typically of order $N^{-1/2}$   in the microscopic coordinates we are using now.
 Thus the analogue of the uniform upper bound  \eqref{Kxyt1} does not even hold for a fixed $t$. 
The only control we can guarantee for the singular behavior of $B_{ij}$
with a large probability is the
estimate 
\be\label{Kass1}
   \sup_{0 \le s \le \si}\sup_{0 \le M\le CK\log K} \frac{1}{ 1+ s} \int_0^s \frac{1}{M}
 \sum_{i\in I\, : \, |i-Z| \le M}
 B_{i,i+1}(s)
 \rd s \le CK^{\rho}
\ee
with some small exponent $\varrho$ 
and for any $Z\in I$ far away from the boundary of $I$.
 This estimate essentially says that 
the space-time maximal function of $B_{i, i+1}(t)$   at a fixed space-time point $(Z,0)$ 
is bounded by $K^\rho$. 
Our main generalization of the result in \cite{C} is to show
that the weak upper bound \eqref{Kass1}, together with \eqref{cbs} and \eqref{lowcbs}
(holding up to a factor $K^\xi$)
are sufficient for proving a discrete version of the H\"older continuity at the point $(Z,0)$.
More precisely,  in  Theorem \ref{holderg} 
 we essentially show that there exists a $\fq>0$ such that 
for any fixed $\si\in [K^c, K^{1-c}]$, 
 the solution to \eqref{matrixeq} satisfies
\be\label{holds}
   \sup_{|j-Z|+ |j'-Z|\le \si^{1-\al}}
  | \cU_{i, j}(\si) - \cU_{i, j'}(\si)| \le C K^\xi
\si^{-1- \frac{1}{2}\fq\al}, 
\ee
with any $\al\in[0,1/3]$  if we can 
guarantee that $\rho$ and $\xi$ are sufficiently small. 
The exponent $\fq$ is a universal positive number and it plays the
role of the H\"older regularity exponent. In fact, to obtain H\"older
regularity around one space-time point $(Z,\si)$ as in \eqref{holds},
we need to assume the bound \eqref{Kass1} around several (but not
more than $(\log K)^C$) space-time points, which in our
applications can  be guaranteed with high probability.

Notice that 
 $\cU_{i,j}(\si)$ decays as $\si^{-1}$, hence \eqref{holds} provides an
additional decay for the discrete derivative. In particular, this
guarantees that the $\rd s$ integration in \eqref{rwr1} is finite
in the most critical intermediate regime $s\in [K^c, CK\log K]$.

The proof  of Theorem \ref{holderg} is given in Section~\ref{Caff}.
In this section we also formulate a H\"older regularity  result
for initial data in $L^\infty$ (Theorem~\ref{thm:hold}), which
 is the basis of all other results. Readers interested
in the pure PDE aspect of our work are referred to Section~\ref{Caff}
which  can be read independently of the other sections of the paper.

\nc

\section{Local equilibrium measures}

\subsection{Basic properties of local equilibrium measures} 
\label{sec:loc}

Fix two small positive numbers, $\al, \delta>0$. Choose two positive integer
 parameters $L, K$ such that
\be\label{K}
L\in \llbracket \alpha N,  (1-\alpha)N\rrbracket, \qquad
N^\delta\le K\le N^{1/4}.
\ee
We consider the parameters $L$ and $K$ fixed and often we
will not indicate them in the notation.
All results will hold for any sufficiently small $\al, \delta$ 
and for any sufficiently large $N\ge N_0$, where  the threshold $N_0$ depends
on $\al, \delta$  and maybe on other parameters of the model. 
 Throughout the paper we will use $C$ and $c$ to denote
positive constants which, among others,  may depend on $\al, \delta$
and on the constants in \eqref{subexp} and \eqref{1.3C}, but we will not emphasize
this dependence.  Typically  $C$ 
 denotes a large generic constant, while $c$ denotes a small one
whose values  may change from line to line.
These constants are independent of $K$ and $N$, which
are  the limiting large parameters of the problem, but they may depend on each other.
In most cases this interdependence is harmless since it only requires
that a fresh constant $C$ be sufficiently large or $c$ be sufficiently small, depending on the size
of the previously established generic constants. In some cases, however, 
the constants are related in a more subtle manner. In this case we will use $C_0, C_1, \ldots$
and $c_0, c_1, \ldots$  etc.
to denote  specific constants in order to be able to refer to them along the proof. 
\nc

For convenience, we set
$$
  \cK: = 2K+1.
$$
Denote $ I = \nc I_{L, K}:= \llbracket L-K, L+K \rrbracket$ the set of  $\cK$ consecutive indices in the bulk.
We will distinguish the inside and outside particles
by renaming them as
\be\label{renamexy}
(\lambda_1, \lambda_2, \ldots,
\lambda_N):=(y_{1}, \ldots y_{L-K-1}, x_{L-K},  \ldots, x_{L+K}, y_{L+K+1},
  \ldots y_{N}) \in \Xi^{(N)}.
\ee
 Note that the particles  keep their original indices.
The notation $\Xi^{(N)}$ refers to the simplex  \eqref{simplex}.
In short we will write
$$
\bx=( x_{L-K},  \ldots x_{L+K} ), \qquad \mbox{and}\qquad
 \by=
 (y_{1}, \ldots y_{L-K-1}, y_{L+K+1},
  \ldots y_{N}).
$$
These points are always listed  in increasing order, i.e. $\bx\in \Xi^{(\cK)}$ and
$\by \in \Xi^{(N-\cK)}$.
We will refer to the $y$'s as the  {\it external}
points and to the $x$'s as  {\it internal} points.

We will fix the external points (often called
 boundary conditions) and study
the conditional measures on the internal points.
We first define  the
{\it local equilibrium measure}  (or {\it local measure} in short) 
 on $\bx$ with boundary condition  $\by$ by
\begin{equation}\label{eq:muyde}
 \quad
\mu_{\by} (\rd\bx) : = \mu_\by(\bx) \rd \bx, \qquad
\mu_\by(\bx):=  \mu (\by, \bx) \left [ \int \mu (\by, \bx) \rd \bx \right ]^{-1},
\end{equation}
where $\mu=\mu(\by, \bx)$ is the (global) equilibrium measure \eqref{mubeta}
(we do not distinguish between the measure $\mu$ and its density function $\mu(\by, \bx)$
in the notation).
Note that for any fixed $\by\in \Xi^{(N-\cK)}$,  all  $x_j$
lie in the {\it  open  configuration interval},  denoted by   
$$
 J=J_\by:=(y_{L-K-1}, y_{L+K+1}).
$$
Define
$$
   \bar y: = \frac{1}{2}( y_{L-K-1}+y_{L+K+1})
$$
to be the midpoint of   the configuration interval.
We also introduce
\be\label{aldef}
\alpha_j: = \bar y + \frac{j-L}{\cK+1}|J|, \qquad j\in I_{L,K},
\ee
 to denote the $\cK$ equidistant points
within the interval $J$.

For any fixed $L, K, \by$,
the equilibrium measure can also be written as a Gibbs measure,
\be\label{muyext}
   \mu_\by  = \mu_{\by, \beta, V}^{(N)}= Z_\by^{-1} e^{-N\beta \cH_\by},
\ee
with Hamiltonian
\begin{align}
   \cH_\by (\bx): = & \sum_{i\in I} \frac{1}{2}  V_\by (x_i) -\frac{1}{N}
   \sum_{i,j\in I\atop i<j} \log |x_j-x_i|, \non \\
\label{Vyext} 
   V_\by(x) := & V(x) - \frac{2}{N}\sum_{j\not\in I} \log |x-y_j|.
\end{align}
Here
 $V_\by(x)$ can be viewed as the external potential of
a $\beta$-log-gas of the points $\{ x_i \; : \; i\in I\} $
in the configuration interval $J$.

\bigskip

Our main technical result, Theorem~\ref{thm:local} below,  asserts that,
for  $K,L$ chosen according to \eqref{K}, the local gap statistics 
are essentially independent
of $V$ and $\by$ as long as  the boundary conditions $\by$ are regular.
This property is expressed by defining the following set of
  ``good'' boundary conditions 
 with some given positive parameters $\nu,  \al$: 
\begin{align}\label{yrig}  
\cR_{L, K}= \cR_{L,K}(\nu,\al):= & \{ \by:  
     |y_k-\gamma_k|\le N^{-1+\nu},
 \quad \forall k \in \llbracket\alpha N, (1-\alpha)N\rrbracket \setminus I_{L, K} \} \\ \nonumber
  & \cap 
  \{ \by:   |y_k-\gamma_k|\le N^{-4/15+\nu}, 
 \quad \forall k \in \llbracket N^{3/5+\nu},  N- N^{3/5+\nu}\rrbracket 
  \} \\
 \nonumber  &  \cap 
  \{ \by: |y_k-\gamma_k|\le 1, \;\; \forall k \in\llbracket 1, N\rrbracket\setminus I_{L,K} \} .
\end{align}
In Section~\ref{sec:beta} we will see that this definition is tailored to 
the previously proven rigidity bounds for the $\beta$-ensemble, see \eqref{muR}.
The rigidity bounds for the generalized Wigner matrices are stronger,
see \eqref{rigidity}, so this definition will suit the needs of both proofs.

\medskip

\begin{theorem} [Gap universality for local measures]  \label{thm:local} 
Fix $L, \wt L$ and $\cK= 2 K+ 1$ satisfying \eqref{K}
with an exponent $\delta>0$. Consider two
boundary conditions $\by, \wt\by$ such that the configuration intervals coincide,
\be\label{J=J}
   J = (y_{L-K-1}, y_{L+K+1}) = (\wt y_{\wt L-K-1}, \wt y_{\wt L+ K+1}).
\ee
We consider the
measures $\mu = \mu_{ \by, \beta, V}$ and $\wt\mu = \mu_{ \wt\by, \beta, \wt V}$
defined as in \eqref{muyext},
with possibly two different external potentials $V$ and $\wt V$.
 Let $\xi>0$ be a small constant. 
 Assume that  $|J|$ satisfies 
\be\label{Jlen}
  |J|  =   \frac{\cK}{N \varrho(\bar y) } + O\Big(\frac{K^\xi}{N}\Big).
\ee
Suppose that   $\by, \wt\by\in \cR_{L,K}(\xi^2\delta/2,  \al/2 )$  
and that 
\be\label{Ex}
  \max_{j\in I_{L,K}}  \Big| \E^{ \mu_\by} x_j -  \alpha_j\Big| +  
 \max_{j\in I_{\wt L,K}} 
\Big| \E^{\wt \mu_{\wt\by}} x_j -  \alpha_j\Big|\le CN^{-1}K^\xi 
\ee
holds.
Let the integer number $p$ satisfy
$|p| \le K-K^{1-\xi^*}$ for some   small $\xi^*>0$.  Then 
there exists $\xi_0 > 0$, depending on
$\delta$,   such that if $\xi,\xi^* \le \xi_0$ 
 then  for   any $n$ fixed and  any  $n$-particle observable 
$O=O_N$ satisfying \eqref{obs} with fixed control parameters $O_\infty$ and $ O_{\mbox{\footnotesize supp}}$,
we have 
\begin{align}\label{univ}
 \Bigg|  \E^{\mu_\by} 
  O\big(  N(x_{L+p}- x_{L+p+1}), & \ldots  N(x_{L+p}-x_{L+p+n} ) \big) \\ \non
& -   \E^{ \wt\mu_{\tilde \by } } 
  O\big(  N(x_{\wt L + p}- x_{\wt L + p+1}), \ldots  N(x_{\wt L + p}-x_{\wt L +p+n} ) \big)
 \Bigg|\le C K^{-\e}\| O'\|_\infty 
\end{align}
for some $\e > 0$ depending on $\delta,\al$ 
and  for some $C$ depending 
on $O_\infty$ and $ O_{\mbox{\footnotesize supp}}$. This holds for any $N\ge N_0$ sufficiently large,
where $N_0$ depends on the parameters $\xi,\xi^*, \al$,  and  $C$ in \eqref{Ex}.
\end{theorem}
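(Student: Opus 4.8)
We deduce Theorem~\ref{thm:local} from the H\"older regularity estimate \eqref{holds} of Theorem~\ref{holderg} via the interpolation plus random-walk-representation scheme of Section~3. Since the configuration intervals coincide, we relabel and rescale the internal particles of both measures to $x_1<\dots<x_\cK$ lying in an interval of length $\sim\cK$ with gaps of order one; in these coordinates the gap at index $L+p$ for $\mu_\by$ and the gap at index $\wt L+p$ for $\wt\mu_{\wt\by}$ occupy the same relative position, and the two rescaled Hamiltonians differ only through their external potentials. Writing $V^{(r)}:=(1-r)V_\by+rV_{\wt\by}$, convexity of $x\mapsto-\log|x-y|$ on $J$ together with \eqref{lowerder} and regularity makes each $V^{(r)}$ uniformly convex on $J$; hence the measures $\mu_r$ with external potential $V^{(r)}$ form a log-concave family interpolating $\mu_0=\mu_\by$ and $\mu_1=\wt\mu_{\wt\by}$ (the endpoint singularities of $V^{(r)}$ at $\pt J$ are correct because of \eqref{J=J}). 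A first task is to check that the rigidity bounds hold for every $\mu_r$: since $V^{(r)}$ is again a ``good'' external potential, the local rigidity estimates for log-concave measures with boundary conditions in $\cR_{L,K}$ apply to all $\mu_r$.

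\textbf{Reduction to a time integral.} Let $\Phi$ denote the rescaled observable $O\big(x_{L+p}-x_{L+p+1},\dots,x_{L+p}-x_{L+p+n}\big)$ and set $h(x):=\tfrac{N}{2}\big(V_{\wt\by}-V_\by\big)(\bar y+x/N)$, so that the two Hamiltonians differ by $\sum_{i}h(x_i)$ and $h'(x)=\tfrac12\big(V_{\wt\by}-V_\by\big)'(\bar y+x/N)$. The fundamental theorem of calculus in $r$ and the representation \eqref{rwr1} (applied to $\om=\mu_r$, $F=\sum_i h(x_i)$, $\pt_iF=h'(x_i)$, $G=\Phi$) give
\be\label{pl:id}
 \E^{\mu_\by}\Phi-\E^{\wt\mu_{\wt\by}}\Phi=\frac{\beta}{2}\int_0^1\!\rd r\int_0^\infty\!\rd s\;\E^{\mu_r}\E_\bx\Big[\sum_{\ell=1}^n(\pt_\ell O)(\dots)\sum_{i}\big(\cU_{i,L+p}(s)-\cU_{i,L+p+\ell}(s)\big)\,h'(x_i)\Big],
\ee
so it suffices to bound the triple integral by $CK^{-\e}\|O'\|_\infty$. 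The force $h'$ enters only through the \emph{difference} of the two external potentials: using the Euler--Lagrange equation $V'(x)=2\int\log|x-y|\,\varrho_V(y)\rd y$ differentiated, the good-boundary condition $\by,\wt\by\in\cR_{L,K}$, \eqref{Jlen} and \eqref{Ex}, the leading contributions cancel and one obtains, for $x_i$ away from $\pt J$, the two crucial bounds $|h'(x_i)|\lesssim K^{C\xi}\cK^{-1}$ and $|h''(x_i)|\lesssim K^{C\xi}\cK^{-2}$ (with only a mild, at worst $\mathrm{dist}(i,\pt I)^{-1}$-type, blow-up near the edge). Thus the force is of size $\cK^{-1}$ and its spatial \emph{variation} is one further power of $\cK$ smaller.

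\textbf{Running the time integral.} For each $r$ we split the $s$-integral into three regimes and write $\sum_i(\cU_{i,j}-\cU_{i,j+\ell})h'(x_i)=h'(x_j)\sum_i(\cU_{i,j}-\cU_{i,j+\ell})+\sum_i(\cU_{i,j}-\cU_{i,j+\ell})\big(h'(x_i)-h'(x_j)\big)$, with $j=L+p$. For the first term we use that $\cU(s)$ conserves mass up to the $O(\cK^{-1})$-small external killing $\cW$, so that $\big|\sum_i(\cU_{i,j}(s)-\cU_{i,j+\ell}(s))\big|\lesssim sK^{C\xi}\cK^{-2}$ for $s\lesssim\cK$ (and $\cU(s)$ is negligible for $s\gtrsim CK\log K$); together with $|h'(x_j)|\lesssim K^{C\xi}\cK^{-1}$ this piece integrates to $\lesssim K^{C\xi}\cK^{-1}\le K^{-\e}$. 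For the second term we use $|h'(x_i)-h'(x_j)|\le\|h''\|_\infty|x_i-x_j|\lesssim K^{C\xi}\cK^{-2}|i-j|$ by rigidity, and H\"older regularity: on an event of probability $\ge1-K^{-D}$ over the joint law of $\bx\sim\mu_r$ and the coupled path $\bx(\cdot)$ the hypotheses \eqref{cbs}, \eqref{lowcbs}, \eqref{Kass1} hold at the $\le(\log K)^C$ relevant space-time points, so Theorem~\ref{holderg} gives $\big|\cU_{i,L+p}(s)-\cU_{i,L+p+\ell}(s)\big|\le CK^\xi s^{-1-\frac12\fq\al}$ uniformly in $i$ for $s\in[K^c,CK\log K]$ (with $\al\le1/3$; the requirement $\ell\le s^{1-\al}$ is automatic), whence
\be\label{pl:bd}
 \sum_i\big|\cU_{i,L+p}(s)-\cU_{i,L+p+\ell}(s)\big|\,\big|h'(x_i)-h'(x_j)\big|\lesssim K^{C\xi}\cK^{-2}\cdot K^\xi s^{-1-\frac12\fq\al}\cdot\cK^{2}=K^{(C+1)\xi}s^{-1-\frac12\fq\al},
\ee
which integrates over $s\in[K^c,CK\log K]$ to $\lesssim K^{(C+1)\xi-\frac12 c\fq\al}\le K^{-\e}$ once $\xi$ (hence $\rho$) is chosen small relative to $c\fq\al$; for $s\le K^c$ one uses that $h'$ is essentially constant on the range of $i$ reached by $\cU(s)$ and the near-conservation of mass, and for $s\ge CK\log K$ the semigroup has decayed below any power of $K$; on the complementary small-probability event one uses $|\cU_{i,j}|\le1$. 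Combining the three regimes with the $\rd r$-integration in \eqref{pl:id}, and summing the analogous estimates over $\ell=1,\dots,n$, yields \eqref{univ}.

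\textbf{Main obstacle.} The crux is supplying the input for Theorem~\ref{holderg}. Propagating rigidity of $\mu_r$ along the Dyson-type flow gives \eqref{cbs}--\eqref{lowcbs} for the rates $B_{ij}(s)=\beta(x_i(s)-x_j(s))^{-2}$, but the space-time maximal-function bound \eqref{Kass1} for the singular rates $B_{i,i+1}(s)$ is far more delicate: since the minimal gap of a $\beta$-log-gas can be as small as $N^{-1/2}$ in microscopic units, no fixed-time upper bound is available, and \eqref{Kass1} must be extracted from an optimal level-repulsion estimate for the local measures $\mu_r$ and then shown to survive the dynamics at all the reference points needed by \eqref{holds}. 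Establishing that level-repulsion estimate, together with the bounds $|h'|\lesssim K^{C\xi}\cK^{-1}$, $|h''|\lesssim K^{C\xi}\cK^{-2}$ on the external force and its variation, is where essentially all the work lies; granting these and Theorem~\ref{holderg}, the bookkeeping above closes the proof.
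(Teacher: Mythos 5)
Your high-level scheme agrees with the paper's: interpolate between $\mu_\by$ and $\wt\mu_{\wt\by}$ along a family of log-gases on the same configuration interval, express the $r$-derivative as a covariance, rewrite the covariance via the Helffer--Sj\"ostrand/random-walk representation, and control the resulting space-time integral by the H\"older regularity of Theorem~\ref{holderg} together with the smallness and near-cancellation of the boundary force $h_0$. You also correctly identify the level repulsion estimate for the interpolating measures (Theorem~\ref{lr2}, Lemma~\ref{lm:inter}) as the genuinely hard input. However, the bookkeeping as written has two concrete gaps.

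First, your treatment of the bad event where the averaged level-repulsion bound \eqref{Kass1} fails is insufficient. You state that the good event has probability $\ge 1-K^{-D}$ and that ``on the complementary small-probability event one uses $|\cU_{i,j}|\le 1$.'' In fact the set $\wt\cQ^c_{\si,Z}$ has probability only $\lesssim K^{C\xi-\rho}$ (see \eqref{K22}) --- a small but not superpolynomially small negative power of $K$. With only the trivial bound $|\cU_{i,j}|\le 1$ in hand, the $\rd s$-integral over the window $[K^c, CK\log K]$ of length $\sim K\log K$ produces a contribution of order $K^{1+C\xi-\rho/2}$ after a Schwarz inequality, which is \emph{not} small. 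The paper closes this by applying the $L^1\to L^\infty$ decay of the semigroup (Proposition~\ref{prop:heat}, $\|\bv(s)\|_\infty\lesssim K^{\xi'}/s$) inside the expectation before the Schwarz inequality (see \eqref{cut}); only then does the $s$-integral become integrable and the small probability factor become decisive. This decay estimate is essential and your argument does not use it.

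Second, the integration by parts $\sum_i h'(x_i)(\cU_{ij}-\cU_{i,j+\ell}) = h'(x_j)\sum_i(\cdot) + \sum_i(h'(x_i)-h'(x_j))(\cdot)$ introduces a bound on the variation of the force, and you take $|h'(x_i)-h'(x_j)|\le |i-j|\,\|h''\|_\infty \lesssim |i-j|K^{C\xi}\cK^{-2}$ uniformly in $i$ in arriving at \eqref{pl:bd}. By \eqref{Vby1} in micro-coordinates, $|h'(x_i)|\sim K^\xi/d(x_i)$ and hence $|h''(x_i)|\sim K^\xi/d(x_i)^2$, which is $O(K^\xi)$ --- not $O(K^{C\xi}\cK^{-2})$ --- when $i$ is within a bounded distance of $\pm K$. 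You acknowledge a ``mild'' $d^{-1}$ blow-up of $h'$ near the edge but then use the uniform Lipschitz bound in \eqref{pl:bd}, where the $\cK^2$ comes from summing $|i-j|$ over all $i$; for boundary $i$ this sum has the wrong size. The fix (and what the paper actually does) is to drop the integration by parts entirely and simply use $\sum_b |\pt_b h_0| \lesssim K^\xi\log K$ (coming from the $1/d$ decay of $\pt_b h_0$, \eqref{ini1}), combined with the uniform-in-$b$ H\"older estimate $|v^b_p(\si)-v^b_{p+1}(\si)|\le CK^\xi\si^{-1-\frac12\fq\al}$; the edge contributions are then automatically logarithmically summable. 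Your first ``mass conservation'' piece, which the decomposition creates, is thus an unnecessary complication whose claimed bound $|\sum_i(\cU_{ij}-\cU_{i,j+\ell})|\lesssim sK^{C\xi}\cK^{-2}$ would itself require a careful argument (the total mass $\sum_i\cU_{ij}(s)$ decays at rate $\sim\cK^{-1}$, so the stated $\cK^{-2}$ requires the cancellation between the two columns, not just ``near conservation'').

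Finally, the small-$s$ regime is treated by the paper with a quantitative finite speed of propagation estimate (Lemma~\ref{lem-finite}) combined with the smallness of $\pt_b h_0$ for $b$ in the interior; your statement that ``$h'$ is essentially constant on the range of $i$ reached by $\cU(s)$'' points in the right direction but is not a substitute for that estimate, especially since the range of $\cU(s)$ for $s\sim K^c$ already extends to $i$ across the whole bulk.
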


In the following two theorems  we establish
rigidity and level repulsion estimates for the local log-gas $\mu_\by$ 
with  good boundary conditions $\by$. 
 While both rigidity and level repulsion  are basic questions for log gases, 
our main motivation to prove these theorems is to use them in the proof of Theorem \ref{thm:local}. 
The  current form of the  level repulsion estimate is  new, a weaker form
was proved  in (4.11) of \cite{BEY}.  
 The rigidity estimate 
 was proved  for the global equilibrium measure $\mu$  in \cite{BEY}. 
 {F}rom this estimate, 
one can  conclude that $\mu_\by$ has a good rigidity bound for 
a set of boundary conditions with high probability  w.r.t. the global measure $\mu$.  However, 
we will need  a rigidity estimate  for  $\mu_\by$ 
for a set of $\by$'s with  high probability with respect to 
some different measure,  which may be
asymptotically singular to $\mu$ for large $N$. 
 For example,  in the proof  for the gap universality of  Wigner
 matrices such a measure is given by the time evolved 
measure $f_t \mu$, see Section~\ref{sec:wigner}. 
The following result asserts that a rigidity estimate holds 
for $\mu_\by$ provided that $\by$ itself satisfies a
 rigidity bound and  an extra  condition, \eqref{Exone},  holds.
 This provides explicit criteria to describe the set of 
``good'' $\by$'s whose measure w.r.t. $f_t\mu$ can then be estimated
with different methods.

\begin{theorem}  [Rigidity estimate for local measures]  \label{thm:omrig} 
 Let $L$ and $K$ satisfy \eqref{K} with $\delta$
 the exponent appearing in \eqref{K}. Let $\xi, \al$ be any
fixed positive constants.
For   $\by \in \cR_{L, K}(\xi\delta/2,\al) $  consider
the local equilibrium measure $\mu_\by$ defined in \eqref{muyext}
  and  assume that
\be\label{Exone}
   \Big| \E^{ \mu_\by} x_j -  \alpha_j\Big| 
\le CN^{-1}K^\xi, \quad  j\in I=I_{L,K},
\ee
is satisfied.
Then there are positive constants $C, c$, depending on $\xi$,
such that  for any $k \in  I $ and $u>0$, 
\be\label{rig}
   \P^{\mu_\by}\Big( \big| x_k - \al_k\big| \ge u K^{ \xi } N^{-1}\Big)\le C e^{-c u^2 }.
\ee
\end{theorem}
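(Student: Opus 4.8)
The plan is to derive the rigidity bound \eqref{rig} from concentration of measure for strictly log-concave Gibbs measures combined with the assumed bound \eqref{Exone} on the expectations $\E^{\mu_\by}x_j$. The key point is that the Hamiltonian $N\beta\cH_\by$ of $\mu_\by$ is convex in the region $\Xi^{(\cK)}$ where the measure is supported: indeed, with $V_\by$ as in \eqref{Vyext}, one has $\partial^2_{x_i}(N\beta\cH_\by)\ge \frac{N\beta}{2}\inf V'' $ from the external part and a positive-definite contribution $\frac{\beta}{N}\sum_{i<j}(x_i-x_j)^{-2}$ from the logarithmic interaction; moreover the convexity can be quantified because the frozen particles $\by$, being in $\cR_{L,K}$, sandwich the configuration interval $J=J_\by$, and the nearest external points $y_{L-K-1}, y_{L+K+1}$ produce a confining force of size $\gtrsim N/|J|^2 \gtrsim N^3/\cK^2$ on the block of internal particles. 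Thus $\mu_\by$ satisfies a logarithmic Sobolev / Bakry-\'Emery inequality with a good constant, which via Herbst's argument yields sub-Gaussian concentration: for any $1$-Lipschitz function $\Phi$ of $\bx$ (in the Euclidean metric),
\be
   \P^{\mu_\by}\Big( \big|\Phi - \E^{\mu_\by}\Phi\big| \ge t \Big) \le C\exp\big(-c\, \kappa\, t^2\big),
\ee
where $\kappa$ is the (lower bound on the) convexity modulus.

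The main obstacle is that naive convexity only gives $\kappa \gtrsim N$ (from $\inf V''$), which would produce a fluctuation scale $N^{-1/2}$, far too weak compared to the claimed $K^\xi N^{-1}$. To get the correct scale one must exploit the rigidity of the boundary points more carefully. I would proceed as in the proof of rigidity for the global measure in \cite{BEY}: first establish an \emph{a priori} (weak) rigidity bound for $\mu_\by$ — say $|x_k-\al_k|\le N^{-1+\epsilon'}$ with overwhelming probability — by a bootstrap using the local law / loop-equation estimates already available for $\mu$ together with the stochastic domination of $\mu_\by$ by measures with slightly perturbed boundary conditions; here the hypotheses $\by\in\cR_{L,K}(\xi\delta/2,\al)$ and \eqref{Exone} are exactly what is needed to seed the bootstrap. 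Then, on the (overwhelming-probability) event where all particles are within $N^{-1+\epsilon'}$ of their equidistant reference points $\al_k$, the logarithmic interaction terms $(x_i-x_j)^{-2}$ are comparable to $(N/\cK)^2 (i-j)^{-2}$, so the restricted Hessian is bounded below — in the subspace orthogonal to the "average displacement" direction — by a matrix of the form $\mathrm{const}\cdot N^2\cdot(\text{discrete fractional Laplacian on }I)$, whose inverse has diagonal entries of order $\cK N^{-2}$. This upgrades the effective convexity modulus in the relevant directions from $N$ to $N^2/\cK$, and a localized Brascamp–Lieb / Bakry–Émery estimate then gives $\var^{\mu_\by}(x_k)\lesssim \cK N^{-2}\lesssim K^{2\xi}N^{-2}$, i.e. fluctuations of order $K^\xi N^{-1}$.

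Finally, combining the variance bound with the sub-Gaussian tail from the (now quantitatively improved) log-Sobolev inequality, and centering at $\al_k$ rather than $\E^{\mu_\by}x_k$ using the hypothesis \eqref{Exone} — which controls $|\E^{\mu_\by}x_k-\al_k|\le CN^{-1}K^\xi$ — yields
\be
   \P^{\mu_\by}\Big(\big|x_k-\al_k\big|\ge uK^\xi N^{-1}\Big)
   \le \P^{\mu_\by}\Big(\big|x_k-\E^{\mu_\by}x_k\big|\ge (u-C)K^\xi N^{-1}\Big)\le Ce^{-cu^2}
\ee
for $u$ large, with the range $u=O(1)$ handled trivially by adjusting constants, which is \eqref{rig}. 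The delicate part throughout is making the self-referential structure non-circular: the improved Hessian bound requires an a priori rigidity input, so one must run a bootstrap argument, each step improving the rigidity exponent, exactly as in the log-gas rigidity proofs of \cite{BEY}. I expect this bootstrap — controlling how the conditional measure $\mu_\by$ inherits rigidity from $\by$'s own rigidity plus \eqref{Exone} — to be the technical heart of the argument, whereas the concentration-of-measure step is standard once the right convexity modulus is in hand.
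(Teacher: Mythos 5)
Your general framework (convexity of $\mu_\by$, LSI/Herbst, and a self-improving bootstrap) points in the right direction, and you correctly identify the bootstrap as the technical heart; but the specific route you sketch has two genuine gaps. First, you propose to seed the bootstrap with an a priori rigidity bound for $\mu_\by$ obtained ``from the local law / loop-equation estimates already available for $\mu$''. This is exactly the move that the theorem is designed to avoid. Rigidity for the global measure $\mu$ only yields rigidity for the conditional measure $\mu_\by$ when $\by$ lies in a $\mu$-typical set; the paper's whole point (stated just before the theorem) is that in the application $\by$ is sampled from $f_t\mu$, a measure that may be singular to $\mu$, so one needs a rigidity statement for \emph{every} $\by\in\cR_{L,K}$ satisfying \eqref{Exone}, not for $\mu$-typical $\by$. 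Moreover, the loop equation machinery of \cite{BEY} that you invoke requires analytic potential, and $V_\by$ is not analytic — the paper makes this explicit and says \eqref{Exone} is introduced precisely to replace the loop-equation input. Second, your mechanism for the quantitative gain — upgrading the Hessian on a rigidity event and then applying Brascamp--Lieb — is not what \cite{BEY} or this paper does, and it does not close cleanly: Brascamp--Lieb requires a pointwise lower bound on $\cH''$, so you must first control the measure of the complement of the rigidity event, and any bound obtained this way is only on the variance, which you must then convert into a Gaussian tail, requiring yet another pass through LSI on the good event; none of this is spelled out. There is also an arithmetic slip: the claimed variance $\cK N^{-2}\lesssim K^{2\xi}N^{-2}$ is false for small $\xi$ (you are off: the diagonal of the inverse of the fractional Laplacian on an interval of length $\sim K$ scales like $\log K$, not $\cK$, so the correct heuristic variance is $\sim N^{-2}\log K$), and the confining force estimate $\gtrsim N^3/\cK^2$ is larger than what \eqref{Hyconv} actually gives ($\cH_\by''\ge cN/K$).

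The actual proof works entirely intrinsically with $\mu_\by$ and obtains the quantitative gain by a different device: it applies Herbst not to individual coordinates $x_k$ (which would give only $\sqrt{K}/N$ fluctuations from the naive LSI constant $K/N$) but to \emph{locally averaged} positions $x_k^{[M]}=\frac{1}{2M+1}\sum_{|j-k|\le M}x_j$, whose gradient has $\ell^2$-norm $\sim M^{-1/2}$ and therefore concentrate at scale $\sqrt{K/M}/N$ even with the naive LSI. It then runs a multiscale induction $M_A>M_{A-1}>\dots>M_1\sim K^\xi$ in which at each scale it compares $\mu_\by$ by relative entropy to a penalized measure $\om_\gamma=\mu_\by\, e^{-\phi^{(k,M_\gamma)}}$ that suppresses large spreads, uses Lemma~\ref{lem:concGapsOmega} (a generalization of Lemma~3.14 of \cite{BEY}) to control $x_{k'}^{[M']}-x_k^{[M]}$ under $\om_\gamma$, and then transfers back to $\mu_\by$ through the entropy estimate \eqref{entt}. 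The assumption \eqref{Exone} enters only to center the estimates, replacing the loop-equation computation of $\E^\mu\lambda_k$. This scheme never needs to upgrade the Hessian beyond \eqref{Hyconv} and never touches Brascamp--Lieb.
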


Now we state the level repulsion  estimates which will be proven
in Section~\ref{sec:lr}.

\begin{theorem}  [Level repulsion estimate for local measures] 
 \label{lr2} Let $L$ and $K$ satisfy \eqref{K} 
and let $\xi, \al$ be any
fixed positive constants.
 Then for  $\by \in \cR_{L,K}=\cR_{L, K}(\xi^2\delta/2,\al) $ we have the  following estimates: 

\noindent 
i)  [Weak form of level repulsion]  For any $s>0$ 
 we have 
\be\label{k521}
\P^{ \mu_\by} [  x_{i+1} - x_{i} \le s/N   ] \le
  C \left ( N  s \right ) ^{\beta + 1},  \qquad i\in\llbracket L-K-1, L+K\rrbracket 
\ee
and 
\be\label{l21}
\P^{ \mu_\by} [   x_{i+2} - x_{i} \le s/N   ] \le  C  ( N   s  ) ^{2 \beta + 1}
 \qquad i\in\llbracket L-K-1, L+K-1\rrbracket . 
\ee
(Here we used the convention that $x_{L-K-1} := y_{L-K-1}, x_{L+K+1}:= y_{L+K+1}$.)

\noindent 
ii)   [Strong form of level repulsion] 
Suppose that there exist  positive constants   $C, c$ such that the following rigidity estimate holds
for any $k\in I$:  
\be\label{weakrig}
   \P^{\mu_\by}\Big( |x_k-\al_k|\ge CK^{\xi^{ 2}} N^{-1}\Big) \le C \exp{(-K^{c})}.
\ee
Then there exists small a constant $\theta$, depending on $C, c$ in \eqref{weakrig},
such that for any $s\ge \exp (- K^{\theta})$.
we have 
\be\label{k52}
\P^{ \mu_\by} [  x_{i+1} - x_{i} \le s/N   ] \le
  C \left ( K^{\xi } s  \log N \right ) ^{\beta + 1}, \qquad i\in\llbracket L-K-1, L+K\rrbracket 
\ee
and 
\be\label{l20}
\P^{ \mu_{\by}} [  x_{i+2} - x_{i} \le s/N   ] \le
  C \left ( K^{\xi } s   \log N \right ) ^{2 \beta + 1},  \qquad i\in\llbracket L-K-1, L+K-1\rrbracket .
\ee
\end{theorem}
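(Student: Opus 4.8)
\medskip

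The plan is to reduce both estimates to bounds on the one- and two-dimensional conditional densities of $\mu_\by$. Writing the density as $\mu_\by(\bx)\propto\prod_{i<j\in I}|x_i-x_j|^\beta\prod_{i\in I}e^{-N\beta V_\by(x_i)/2}$ and conditioning on all internal coordinates except $x_i$, the law of $x_i$ on $(x_{i-1},x_{i+1})$ is proportional to $\phi_i(x):=|x-x_{i-1}|^\beta|x-x_{i+1}|^\beta e^{-N\beta W(x)}$, where $W(x):=\tfrac12 V_\by(x)-\tfrac1N\sum_{l\in I,\,l\neq i-1,i,i+1}\log|x-x_l|$ collects the remaining terms (and analogously with two free coordinates). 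The Vandermonde factor $|x-x_{i+1}|^\beta$ is the engine of level repulsion: on $\{x_{i+1}-x_i\le s/N\}$ it is at most $(s/N)^\beta$, and integrating $x_i$ over that window of length $s/N$ supplies one further power, which together produce the exponent $\beta+1$ in \eqref{k521} and \eqref{k52}; for \eqref{l21} and \eqref{l20} the three factors $|x_{i+1}-x_i|^\beta$, $|x_{i+2}-x_{i+1}|^\beta$, $|x_{i+2}-x_i|^\beta$ together with a two-dimensional integration over a box of side $s/N$ give a power at least $2\beta+1$. Everything therefore comes down to bounding the ratio $\int_{x_{i+1}-s/N}^{x_{i+1}}\phi_i\big/\int_{x_{i-1}}^{x_{i+1}}\phi_i$, i.e.\ to showing that a microscopic window near the right endpoint of the conditioning interval carries a controlled share of the mass; at the endpoints $i\in\{L-K-1,L+K\}$ the same mechanism applies with the boundary factor $|x-y_j|^\beta$ already present in $e^{-N\beta V_\by(x)/2}$.

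For part (i), where only $\by\in\cR_{L,K}$ is available, I would first note that $\cH_\by$ is strictly convex — indeed $V_\by''=V''+\tfrac2N\sum_{j\notin I}(x-y_j)^{-2}$ is bounded below by \eqref{lowerder} — so $W$ is convex and $e^{-N\beta W}$ is log-concave. Combining the log-concavity with the a priori localization of the configuration interval $J$ (its endpoints are rigid because $\by\in\cR_{L,K}$) and the equilibrium relation $\tfrac12 V'(x)=\mathrm{P.V.}\!\int\varrho_V(y)(x-y)^{-1}\,dy$, one obtains a soft no-clumping estimate for $\mu_\by$, say that with probability $\ge 1-N^{-D}$ no interval of length $(N(\log N)^{10})^{-1}$ contains two internal particles. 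On that event $|W'|=O((\log N)^{C})$ on $J$ away from the endpoints (the nearby terms of $\tfrac1N\sum_l(x-x_l)^{-1}$ are controlled by the gap lower bound, the far ones by the crude bound $\tfrac1N\sum_l N|i-l|^{-1}=O(\log K)$), hence $\int_{x_{i-1}}^{x_{i+1}}\phi_i$ is bounded below by the contribution of the middle third of $(x_{i-1},x_{i+1})$, and comparing numerator and denominator yields $\P^{\mu_\by}[x_{i+1}-x_i\le s/N]\le C(Ns)^{\beta+1}$. The prefactor $N$, rather than a power of $K$, is precisely the price of this soft control: without sharp rigidity $J$ is only pinned down to length $\sim K/N$ and the finest comparison scale we can certify is $1/N$. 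The complementary event forces one of the $O(1)$ gaps neighbouring the window to be small as well; one then decomposes according to the sizes of these $O(1)$ gaps, harvests the extra Vandermonde power from each small gap, and the resulting short recursion closes since only finitely many neighbouring gaps can matter. The two-gap bound \eqref{l21} follows the same way after freeing $x_{i+1}$ and $x_{i+2}$, the additional Vandermonde powers leaving ample room for the exponent $2\beta+1$.

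For part (ii), the soft input is replaced by the quantitative rigidity hypothesis \eqref{weakrig}. On the event $\Omega_0:=\{|x_k-\alpha_k|\le CK^{\xi^2}N^{-1}\ \forall k\in I\}$, which has $\mu_\by$-probability at least $1-C\exp(-K^c)$, the particles $x_{i-1},x_{i+1}$ (and $x_{i-2},x_{i+2}$ for the two-gap estimate) automatically lie within $O(K^\xi/N)$ of one another, $e^{-N\beta W}$ is flat up to a bounded factor on that scale, and $\int_{x_{i-1}}^{x_{i+1}}\phi_i$ can be compared to a window of size of order $1/(K^\xi\log N)$ instead of $1/N$ — this is the origin of the improved prefactor $K^\xi\log N$ in \eqref{k52}--\eqref{l20}. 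On $\Omega_0^c$ one uses only $\mathbf 1(x_{i+1}-x_i\le s/N)\le 1$ together with $\P^{\mu_\by}(\Omega_0^c)\le C\exp(-K^c)$; requiring this term to be absorbed into the right-hand side of \eqref{k52} is exactly what forces the lower cutoff $s\ge\exp(-K^\theta)$ with $\theta<c$. Finally \eqref{l20} is obtained by the same scheme together with a short dyadic induction on the width $r=x_{i+3}-x_{i-1}$: on $\{r>s/N\}$ the clean two-dimensional estimate applies, while $\{r\le s/N\}$ confines five particles to an even smaller interval and is handled by the same mechanism with one extra particle, the recursion closing after $O(1)$ steps.

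The step I expect to be the genuine obstacle is the background control $|W'|\lesssim N$ in part (i) \emph{without} any quantitative rigidity of the internal particles: one must rule out that, for some adversarial good boundary $\by$, the frozen internal particles conspire to put a large drift on the coordinate being integrated. I would settle this by extracting the no-clumping estimate purely from the log-concavity of $\mu_\by$ — convexity (Brascamp--Lieb) bounds, available since $\cH_\by''>0$ by \eqref{lowerder} — together with the rigid configuration interval supplied by $\by\in\cR_{L,K}$. This is enough precisely because part (i) only demands the crude prefactor $N$; the sharper analysis that would otherwise be needed is sidestepped in part (ii) by invoking \eqref{weakrig} directly.
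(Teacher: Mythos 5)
Your high-level plan — extract one power of $(Ns)$ from integrating $x_i$ over a window of width $s/N$, and $\beta$ more powers from the Vandermonde factor $|x_i - x_{i+1}|^\beta$ — is the right power count, and it is also the engine behind the paper's argument. But the route you take to make this rigorous has a genuine gap, and it is precisely at the point you flag as ``the genuine obstacle.''

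Your proof requires a uniform pointwise bound on $W'$, for which you propose a ``soft no-clumping estimate'': with probability $\ge 1-N^{-D}$ no interval of length $(N(\log N)^{10})^{-1}$ contains two internal particles, to be derived from the log-concavity of $\mu_\by$ via Brascamp--Lieb. This fails on two counts. First, the estimate is simply false: the minimal nearest-neighbour gap among $\cK\sim K$ particles of a $\beta$ log-gas at local density $\sim 1$ is typically of order $K^{-1/(\beta+1)}N^{-1}$, which is far below $(N(\log N)^{10})^{-1}$, so the complementary event has probability going to $1$, not to $N^{-D}$. Second, Brascamp--Lieb controls fluctuations of $x_k$ about its mean (with variance $\lesssim K/N^2$ via \eqref{Hyconv}); it gives no lower bound on gaps. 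A small-gap tail bound \emph{is} a level repulsion estimate, which is what you are trying to prove — so the argument is circular unless the recursion you sketch (``decompose according to the sizes of the $O(1)$ neighbouring gaps\ldots{} the short recursion closes'') is made to work, and as sketched it is not clear it does. The same circularity reappears in your part (ii): restriction to $\Omega_0$ gives $|x_k-\alpha_k|\le CK^{\xi^2}/N$, but since $K^{\xi^2}/N\gg 1/N$ this does not prevent $x_{i+1}-x_{i-1}$ from being tiny, so the denominator $\int_{x_{i-1}}^{x_{i+1}}\phi_i$ in your ratio is still not bounded below.

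The paper's proof avoids the conditional-density route entirely and thereby sidesteps the need for any pointwise control of $W'$. The first-moment bound $\P^{\mu_\by}(x_{L-K}-y_{L-K-1}\le s/N)\lesssim Ks\log N$ is obtained by a \emph{global} change of variables $w_j=(1-\varphi)^{-1}x_{L+j}$ that shrinks all internal coordinates simultaneously; one only has to track how the Vandermonde, the $\rd\bx$ Jacobian, and the external potential $V_\by$ transform, and these produce the multiplicative correction $1-CK^2\varphi\log N$ with no gap estimate used as input (Lemma \ref{43}). The $\beta+1$ power is then obtained not by integrating a conditional density but by the auxiliary-measure identity
$\P^{\mu_\by}[X\le s/N]=\E^{\mu_0}[\mathbf 1(X\le s/N)X^\beta]/\E^{\mu_0}[X^\beta]$
with $\mu_0\propto X^{-\beta}\mu_\by$, $X=x_{L-K}-y_{L-K-1}$: the numerator gives $(s/N)^\beta\cdot Ks\log N$ and the denominator is bounded below again by the scaling lemma, all without a ratio of conditional integrals. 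Finally, the paper's route from the endpoint gap to an interior gap is by conditioning on all but $K^\xi$ particles near index $i$ (so that the index being studied becomes an endpoint of a smaller local measure), and \emph{then} invoking the endpoint bound: for part (i) in the form \eqref{k3-1} valid for arbitrary boundary, which costs the prefactor $N$; for part (ii) using \eqref{weakrig} to show the new boundary is itself in a good set $\cR_{L',K'}$, which gives the prefactor $K^\xi\log N$. If you want to pursue your conditional-density approach, you would need to replace the no-clumping step with an argument that does not presuppose what you are proving — the paper's change-of-variables computation in Lemma \ref{43} is essentially that replacement.
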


 We remark that the estimates  \eqref{l20}  and \eqref{l21} on the second gap
are not needed for the main proof, we listed them only for possible further reference.
The exponents  
are not optimal; one would expect  them to be $ 3 \beta+3$. With  some extra work, it 
should  not be difficult  to get the optimal exponents. 
Moreover,  our results can be extended  to  $x_{i+k} - x_i$ for any $k$ finite.
 We also mention that the assumption \eqref{weakrig} required in part ii)
is  weaker than  what we prove in  \eqref{rig}. In fact, the weaker form  \eqref{weakrig}
of the rigidity would be enough throughout the paper except at one
place, at the end of the proof of Lemma~\ref{lm:91}.

Theorem~\ref{thm:local} is our key result. 
In  Sections \ref{sec:beta} and \ref{sec:wigner} we will show how to use 
Theorem~\ref{thm:local} to prove the main Theorems~\ref{thm:wigner}
and \ref{thm:beta}. Although  the basic structure of the proof of Theorem \ref{thm:beta} is 
 similar to the one given in \cite{BEY}
where a locally  averaged  version
 of this  theorem was proved under a  locally
averaged  version of  
Theorem~\ref{thm:local}, here we have  to verify the assumption \eqref{Ex}
which will be done in Lemma~\ref{lm:goody1}. 
 The proof of Theorem~\ref{thm:wigner}, on the other hand, 
is very different from the recent proof of universality in \cite{EYYBand, EYY2}. 
 This  will be explained  
in Section~\ref{sec:wigner}.

The proofs of the auxiliary
 Theorems~\ref{thm:omrig} and \ref{lr2} 
will be given in Section \ref{sec:profmu}. 
The proof of Theorem~\ref{thm:local} will start from Section~\ref{sec:comp}
 and will continue until the end of the paper. 
At the beginning of  Section~\ref{sec:comp} 
we will  explain the main ideas of the proof. 
For readers interested in the proof of Theorem~\ref{thm:local}, 
Sections~\ref{sec:beta} and \ref{sec:wigner} can be skipped.

\subsection{Extensions and further results}

 We formulated Theorems~\ref{thm:local}, \ref{thm:omrig} 
and \ref{lr2} with assumptions requiring that the boundary conditions $\by$
are ``good''. 
  In fact, all these results hold in a more general setting.

\begin{definition}\label{def:regU} An external potential $U$ of a $\beta$-log-gas of $K$
points in a configuration interval $J=(a,b)$ is called {\it $K^\xi$-regular}, if
the following bounds hold:
\begin{align}\label{Jlengthgen}
    |J| & =   \frac{\cK}{N \varrho(\bar y) } + O\Big(\frac{K^\xi}{N}\Big), 
\\
\label{Vby1gen}
   U'(x) & = \varrho(\bar y) \log \frac{d_+(x)}{d_-(x)}
   + O\Big(\frac{K^\xi}{N d(x)}\Big),   \qquad x\in J,
\\
\label{Vbysecgen}
   U''(x) & \ge \inf V'' +\frac{c}{ d(x)},   \qquad x\in J,
\end{align}
with some positive $c>0$ and
for some small $\xi>0$,
where
$$
   d(x) := \min\{ |x-a|, |x-b|\}
$$
is the distance to the boundary of $J$ and
$$ 
 d_-(x) := d(x) + \varrho(\bar y)N^{-1}K^\xi, 
\qquad d_+(x) := \max\{ |x-a|, |x-b|\} + \varrho(\bar y) N^{-1}K^\xi.
$$
\end{definition}

The following lemma, proven 
in Appendix~\ref{sec:goody},
asserts that ``good'' boundary conditions $\by$
give rise to regular external potential $V_\by$.

\begin{lemma}\label{lm:goody} Let $L$ and $K$ satisfy \eqref{K} 
 and $\delta$ is the exponent appearing in \eqref{K}. 
Then for any $\by \in  \cR_{L, K}(\xi \delta/2, \al/2)$
the external potential $V_\by$ \eqref{Vyext} on the configuration interval $J_\by$
is $K^\xi$-regular;
\begin{align}\label{Jlength}
    |J_\by| & =   \frac{\cK}{N \varrho(\bar y) } + O\Big(\frac{K^\xi}{N}\Big), 
\\
\label{Vby1}
   V_\by'(x) & = \varrho(\bar y) \log \frac{d_+(x)}{d_-(x)}
   + O\Big(\frac{K^\xi}{N d(x)}\Big),   \qquad x\in J_\by,
\\
\label{Vbysec}
   V_\by''(x) & \ge \inf V'' +\frac{c}{ d(x)},   \qquad x\in J_\by.
\end{align}
\end{lemma}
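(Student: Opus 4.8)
The plan is to prove Lemma~\ref{lm:goody} by a direct computation, tracking carefully the effect of replacing the external points $\by$ by their classical locations $\gamma_k$ and estimating the resulting errors using the rigidity hypothesis $\by\in\cR_{L,K}(\xi\delta/2,\al/2)$ together with the Euler--Lagrange equation $V(x)=2\int\log|x-y|\varrho_V(y)\,\rd y$ for the equilibrium density. First I would record the definitions: $V_\by(x)=V(x)-\frac2N\sum_{j\not\in I}\log|x-y_j|$, so that $V_\by'(x)=V'(x)-\frac2N\sum_{j\not\in I}\frac{1}{x-y_j}$ and $V_\by''(x)=V''(x)+\frac2N\sum_{j\not\in I}\frac{1}{(x-y_j)^2}$. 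The strategy is to compare each of these sums to the corresponding integral against $\varrho_V$, which by the variational equation reproduces $V'$ (resp.\ controls $V''$).

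\medskip

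For \eqref{Jlength}: the configuration interval is $J_\by=(y_{L-K-1},y_{L+K+1})$, and since $L\pm K$ are in the bulk, the rigidity bound $|y_k-\gamma_k|\le N^{-1+\xi\delta/2}$ together with $|\gamma_{L+K+1}-\gamma_{L-K-1}|=\frac{\cK}{N\varrho(\gamma_L)}+O(K^2/N^2)$ (from the definition \eqref{defgammagen} of the quantiles and $\varrho_V\in C^2$) gives the claimed length, after noting $\varrho(\bar y)=\varrho(\gamma_L)+O(K/N)$ since $|J|\sim K/N$ and $\varrho\in C^2$; the $O(K^\xi/N)$ error absorbs everything. For \eqref{Vby1}: I would split the sum $\frac2N\sum_{j\not\in I}\frac{1}{x-y_j}$ according to how far $y_j$ is from the bulk and use three regimes of rigidity in \eqref{yrig} — the strong bound $N^{-1+\nu}$ in the bulk, the intermediate $N^{-4/15+\nu}$ near the edge, and the trivial $O(1)$ bound at the extreme edge. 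Replacing $y_j$ by $\gamma_j$ in $\frac{1}{x-y_j}$ produces an error $\lesssim\frac{|y_j-\gamma_j|}{d(x)^2}$ for $j$ adjacent to $I$ and better for $j$ far away; summing and using $\sum_{j\text{ near }I}|y_j-\gamma_j|\lesssim K^\xi N^{-1}\cdot(\text{number of relevant }j)$ versus the $1/d(x)$ weighting yields the error $O(K^\xi/(Nd(x)))$. The main term $\frac2N\sum_{j\not\in I}\frac{1}{x-\gamma_j}$ is then a Riemann sum for $2\int_{[A,B]\setminus J}\frac{\varrho_V(y)}{x-y}\rd y$; subtracting this from $V'(x)=2\,\mathrm{p.v.}\int\frac{\varrho_V(y)}{x-y}\rd y$ leaves $2\int_J\frac{\varrho_V(y)}{x-y}\rd y$, and since $\varrho_V$ is nearly constant $=\varrho(\bar y)$ on the small interval $J$, this integral evaluates to $\varrho(\bar y)\log\frac{d_+(x)}{d_-(x)}$ up to the stated error — the regularized distances $d_\pm$ arising precisely because the discrete sum cuts off the logarithmic singularity at scale $N^{-1}$ and the outermost point of $I$ sits at distance $\sim\varrho(\bar y)^{-1}N^{-1}$ from the endpoint of $J$. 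For \eqref{Vbysec}: here I only need a lower bound, which is easy — $V''(x)\ge\inf V''$ trivially, and $\frac2N\sum_{j\not\in I}\frac{1}{(x-y_j)^2}\ge\frac2N\frac{1}{(x-y_{L-K-1})^2}\wedge\frac{1}{(x-y_{L+K+1})^2}\gtrsim\frac1N\cdot\frac1{d(x)^2}\cdot\frac1{d(x)}$? — more carefully, keeping the two terms from $j=L-K-1$ and $j=L+K+1$ (the endpoints of $J$, which however are excluded), one keeps instead the $\sim N d(x)$ nearest external points each contributing $\gtrsim 1/(Nd(x)^2)$ only for those within $d(x)$, giving $\gtrsim c/d(x)$ after summing a geometric-type series; the positivity of all omitted terms makes this a genuine lower bound.

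\medskip

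The main obstacle I expect is the careful bookkeeping in \eqref{Vby1}: one must show that the cumulative effect of the rigidity fluctuations $|y_j-\gamma_j|$, summed against the kernel $1/(x-y_j)$ with the singular $1/d(x)$ weight near the boundary, does not exceed $O(K^\xi/(Nd(x)))$, and simultaneously that the Riemann-sum error in passing from $\frac1N\sum\frac{1}{x-\gamma_j}$ to the integral stays within the same bound — the latter is delicate near $x$ close to the boundary of $J$, where the summand varies rapidly, and is where the precise definition of $d_\pm$ (shifting by $\varrho(\bar y)N^{-1}K^\xi$ rather than $N^{-1}$) earns its keep. The three-scale structure of $\cR_{L,K}$ in \eqref{yrig}, with the weaker $N^{-4/15+\nu}$ bound near the edge, is exactly what is needed so that the contribution of the edge particles — far from $J$, hence hitting the kernel $1/(x-y_j)\sim 1/|\gamma_j-\bar y|$ which is bounded below on that range — is still negligible. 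Everything else (the length bound and the convexity bound) is routine given rigidity and $\varrho_V\in C^2(A,B)$ with square-root vanishing at the edges.
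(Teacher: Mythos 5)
Your overall strategy coincides with the paper's: replace the external points $y_j$ by their classical locations $\gamma_j$, compare the resulting Riemann sum to the integral against $\varrho_V$ via the Euler--Lagrange identity $\tfrac12 V'(x)=\int\frac{\varrho_V(y)}{x-y}\,\rd y$, and control the replacement errors with the three-tier rigidity built into $\cR_{L,K}$. Your treatments of \eqref{Jlength} and \eqref{Vbysec} are essentially correct.

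There is, however, a genuine gap in your argument for \eqref{Vby1}, concentrated at the external points nearest to $I$. You propose to carry out the replacement $y_j\mapsto\gamma_j$ for \emph{all} $j\not\in I$, claiming a per-term error $\lesssim|y_j-\gamma_j|/d(x)^2$ for $j$ adjacent to $I$. The actual per-term error is $|y_j-\gamma_j|/\big(|x-y_j|\,|x-\gamma_j|\big)$, and for $j$ just beyond $L\pm(K+1)$ the factor $|x-\gamma_j|$ is not controlled by $d(x)$: rigidity only gives $|y_j-\gamma_j|\le N^{-1+\xi\delta/2}$, which is large compared to the microscopic spacing, so $\gamma_j$ can lie \emph{inside} the configuration interval $J$, and then $1/(x-\gamma_j)$ has the wrong sign and blows up as $x$ approaches $\gamma_j$. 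The replacement simply fails there, and your claimed per-term bound does not hold. The paper's proof handles this by inserting a buffer of $K^\xi$ indices: it defines an auxiliary potential $U$ in which the $\sim 2K^\xi$ external points with $K<|j-L|<K+K^\xi$ are dropped entirely, and only those with $|j-L|\ge K+K^\xi$ are moved to $\gamma_j$. The dropped terms are bounded crudely by $|x-y_j|^{-1}\le d(x)^{-1}$, contributing $O(K^\xi/(Nd(x)))$, while the remaining $j$ enjoy the safe lower bound $|x-\gamma_j|\gtrsim N^{-1}(|j-L|-K)\gtrsim N^{-1}K^\xi$, since $K^\xi\gg N^{\xi\delta/2}$. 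This buffer is also exactly what produces the shift $\varrho(\bar y)N^{-1}K^\xi$ in $d_\pm$: the Euler--Lagrange integral that yields the logarithm is ultimately taken over $(\gamma_{L-K-K^\xi},\gamma_{L+K+K^\xi})$ rather than over $J$ itself. You correctly sensed that the definition of $d_\pm$ ``earns its keep'' near the boundary, but as written your sketch would stall precisely at the boundary terms without this buffer device.
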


\noindent
 {\it Remark.} The proofs of Theorems~\ref{thm:local}, \ref{thm:omrig}, \ref{lr2} and
\ref{lr}  do not use the explicit
form of $V_\by$ and $J_\by$; they  depend only on the property that 
  $V_\by$ on $J_\by$ is regular.

\section{Gap universality for $\beta$-ensembles:  proof of Theorem~\ref{thm:beta} }\label{sec:beta}

\subsection{Rigidity bounds and its consequences}

The aim of this section is to use Theorem~\ref{thm:local} to prove Theorem~\ref{thm:beta}. 
In order to  verify   the assumptions of Theorem~\ref{thm:local}, we first  recall 
the rigidity estimate  w.r.t. $\mu$ defined in \eqref{mubeta}. 
Recall that
 $\gamma_k=\gamma_{k,V}$ denotes the classical location 
of the $k$-th point \eqref{defgammagen}.  For the case of convex potential, in 
Theorem 3.1 of \cite{BEY} it was proved that  for any fixed
$\alpha>0$ and $\nu>0$, there are constants
 $C_0,c_1,c_2>0$   such that for any $N\geq 1$ and $k\in\llbracket \alpha N,(1-\alpha) N\rrbracket$,
\begin{equation}\label{bulkrig}
\P^\mu\left(
|\lambda_k-\gamma_k|> N^{-1+\nu}\right)\leq  C_0\exp{(-c_1N^{c_2})}. 
\end{equation}
The same  estimate holds also  for 
the non-convex case,  see Theorem 1.1 of \cite{BEY2}, by using a convexification argument.

Near the spectral edges, a somewhat weaker control  was  proven  for the convex case,
 see Lemma 3.6
of \cite{BEY}  which states that   for any $\nu>0$ there are  $C_0,c_1,c_2>0$ such that
\be\label{nearedge}
    \P^\mu\left(|\lambda_k-\gamma_k|> N^{-4/15+\nu}\right)\leq   C_0\exp{(-c_1N^{c_2})}
\end{equation}
 for any $N^{3/5+\nu}\le k \le N-N^{3/5+\nu}$,
 if $N\ge N_0(\nu)$ is sufficiently
large. We can choose $C_0, c_1, c_2$ to be the same in \eqref{bulkrig} and \eqref{nearedge}. 
 Combining this result with  the convexification argument in \cite{BEY2},  one can show that 
the estimate \eqref{nearedge} holds also for the non-convex case.

Finally, we have a very weak control that holds for all points  (see (1.7) in \cite{BEY2}): 
  for any $C>0$ there are positive constants $C_0, c_1$ and $ c_2$ such that
 \be\label{global}
    \P^\mu\left(|\lambda_k-\gamma_k|>  C \right)\leq  C_0\exp{(-c_1N^{c_2})}.
\end{equation}
Given $C$, we can choose the positive constants $C_0, c_1, c_2$   to be the same in
\eqref{bulkrig},  \eqref{nearedge} and \eqref{global}. 
 
The set  $\cR_{L, K}$ in \eqref{yrig} was exactly defined
as the set of events that  these three rigidity estimates hold.
{F}rom \nc \eqref{bulkrig},  \eqref{nearedge} and \eqref{global} we have
\be
   \P^\mu ( \cR_{L, K}(\nu,\al))\ge 1- C_0\exp{(-c_1N^{c_2})}
\label{muR}
\ee
for any $\nu>0$, $\al>0$
with some positive constants $C_0, c_1, c_2$ that depend on $\nu$ and $\al$.

\begin{remark}\label{rem:c4}
The real analyticity of $V$ in this paper is used only to obtain the rigidity results \eqref{bulkrig}
\eqref{nearedge} and \eqref{global} using earlier results from \cite{BEY, BEY2}.
After the first version of the current work appeared 
in the ArXiv,  
jointly with P. Bourgade we proved the following stronger rigidity result  
(Theorem 2.4 of \cite{BEYedge})
For any $\beta>0$, $\xi>0$ and $V\in C^4(\bR)$, 
regular with equilibrium density supported on a single interval $[A,B]$,
there is a  $c>0$ and $N_0$ such that
\begin{equation}\label{rignew}
\P^\mu\left(|\lambda_k-\gamma_k|> N^{-\frac{2}{3}+ \xi }(\hat k)^{-\frac{1}{3}}
\right)\leq e^{- N^c}, \qquad \forall k\in\llbracket 1,N\rrbracket 
\end{equation}
holds for any $N\ge N_0$. This result allows us to relax the original real analyticity
condition  to $V\in C^4(\bR)$. It would also allow us to redefine
the  set  $\cR_{L, K}$ in \eqref{yrig} to the more transparent set  appearing in \eqref{rig}, 
but this generalization does not affect the rest of the proof.
\end{remark}

\begin{lemma}\label{lm:goody1} Let $L$ and $K$ satisfy \eqref{K}  and $\delta$ 
is the exponent appearing in \eqref{K}.  Then for any small $\xi$ and $\al$ 
 there exists
a set $\cR^* = \cR^*_{L,K, \mu}(\xi^{ 2 } \delta/2, \al/2)
\subset  \cR_{L, K}(\xi^{ 2 } \delta/2, \al/2)$ such that
\be\label{muR*}
   \P^\mu (\cR^*)\ge 1- C_0\exp{\Big(-\frac{1}{2}c_1N^{c_2}\Big)} 
\ee
with the constants $C_0, c_1, c_2$ from \eqref{bulkrig}. 
Moreover,
 for any $\by \in \cR^*$ we have
\be\label{Exmu}
\big|\E^{\mu_\by} x_k -\al_k\big|\le CN^{-1}K^\xi, \qquad k\in I_{L,K},
\ee
where  $\al_k$ was defined in  \eqref{aldef}.
\end{lemma}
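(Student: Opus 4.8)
The plan is to read off \eqref{Exmu} purely from the rigidity of the \emph{global} measure $\mu$ together with the tower property $\E^{\mu_{\by}}[\,\cdot\,]=\E^{\mu}[\,\cdot\mid\by]$; no a priori control on the local measures $\mu_{\by}$ will be used (in particular Theorem~\ref{thm:omrig} cannot be invoked here, as that would be circular). Recall from \eqref{renamexy} that $x_k=\lambda_k$ for $k\in I:=I_{L,K}$, and write $\nu:=\xi^2\delta/2$. The first step is to compare $\al_k$ with the classical location $\gamma_k=\gamma_{k,V}$: since $L\in\llbracket\al N,(1-\al)N\rrbracket$, the point $\gamma_L$ lies in a compact subinterval of $(A,B)$ on which $\varrho_V\in C^2$ stays bounded away from $0$ (regularity of $V$), so by \eqref{defgammagen} the map $j\mapsto\gamma_{L+j}$ has derivative $(N\varrho_V(\gamma_{L+j}))^{-1}$ and second derivative $O(N^{-2})$, whence $\gamma_{L+j}=\gamma_L+\frac{j}{N\varrho_V(\gamma_L)}+O(j^2N^{-2})$ for $|j|\le K+1$. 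On $\cR_{L,K}(\nu,\al/2)$ the endpoints $y_{L-K-1},y_{L+K+1}$ of $J$ lie in the bulk just outside $I$, hence are within $N^{-1+\nu}$ of their classical locations; substituting this into the definitions of $\bar y$, $|J|$ and then of $\al_k$ in \eqref{aldef}, and comparing with the expansion above, one gets $|\al_k-\gamma_k|=O(N^{-1+\nu})+O(K^2N^{-2})\le CN^{-1}K^\xi$ for all $k\in I$, where the last bound uses $N^\delta\le K\le N^{1/4}$ and $\nu=\xi^2\delta/2\le\delta\xi$ (the same estimate incidentally verifies \eqref{Jlen} on $\cR_{L,K}$).

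The probabilistic core is to show, for each fixed $k\in I$, that $|\E^{\mu_{\by}}x_k-\gamma_k|\le N^{-1}K^\xi$ off a superexponentially small set. The trick I would use is to split
\[
  \E^{\mu_{\by}}|x_k-\gamma_k|\ \le\ N^{-1+\nu}+R_k(\by),\qquad
  R_k(\by):=\E^{\mu_{\by}}\big[\,|x_k-\gamma_k|\,\mathbf{1}\{|x_k-\gamma_k|>N^{-1+\nu}\}\,\big],
\]
so that the ``bulk'' part is deterministically $\le N^{-1+\nu}\le N^{-1}K^\xi$ and only the remainder $R_k$ needs a probabilistic bound --- and, crucially, its \emph{mean} is already tiny: by the tower property $\E^{\mu}R_k(\by)=\E^{\mu}[|x_k-\gamma_k|\mathbf{1}\{|x_k-\gamma_k|>N^{-1+\nu}\}]$, so H\"older's inequality with exponent $3$, the global bulk rigidity $\P^{\mu}(|x_k-\gamma_k|>N^{-1+\nu})\le C_0e^{-c_1N^{c_2}}$ of \eqref{bulkrig}, and the elementary moment bound $\E^{\mu}|x_k-\gamma_k|^3\le C$ (a routine consequence of \eqref{global} and the growth of $V$) give $\E^{\mu}R_k(\by)\le Ce^{-\frac23 c_1N^{c_2}}$. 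Since $N^{-1}K^\xi\ge N^{-1+\delta\xi}$, Markov's inequality now loses only a polynomial factor:
\[
  \P^{\mu}\big(R_k(\by)>N^{-1}K^\xi\big)\ \le\ N\,\E^{\mu}R_k(\by)\ \le\ CN\,e^{-\frac23 c_1N^{c_2}},
\]
and on the complement of this event, combined with the first paragraph, $|\E^{\mu_{\by}}x_k-\al_k|\le\E^{\mu_{\by}}|x_k-\gamma_k|+|\gamma_k-\al_k|\le N^{-1+\nu}+N^{-1}K^\xi+CN^{-1}K^\xi\le C'N^{-1}K^\xi$.

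Finally I would take $\cR^*:=\cR_{L,K}(\nu,\al/2)\cap\bigcap_{k\in I}\{R_k(\by)\le N^{-1}K^\xi\}$, a set of boundary conditions determined by $\by$ alone which, by the last two paragraphs, satisfies \eqref{Exmu} with a fresh constant $C$; a union bound over the $\cK=2K+1\le 3N^{1/4}$ indices $k\in I$ together with \eqref{muR} then gives $\P^{\mu}((\cR^*)^c)\le C_0e^{-c_1N^{c_2}}+3CN^{5/4}e^{-\frac23 c_1N^{c_2}}\le C_0e^{-\frac12 c_1N^{c_2}}$ for $N\ge N_0$, which is \eqref{muR*} --- here the strict gap $\tfrac23>\tfrac12$ is what swallows the polynomial prefactor. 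The one step that needs care is the bulk/tail decomposition: a naive Markov bound applied to $\E^{\mu_{\by}}|x_k-\gamma_k|$ itself would only yield a probability of order $N^\nu K^{-\xi}$, since dividing by the tiny threshold $N^{-1}K^\xi$ destroys any benefit from high moments; isolating the deterministic bulk contribution so that the random remainder has superexponentially small mean is precisely what makes a first-moment estimate (with the harmless factor $N$) sufficient. The remaining ingredients --- the quantile Taylor expansion and the moment bound --- are routine.
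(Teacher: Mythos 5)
Your proof is correct and follows the same overall strategy as the paper's: carve out a good set $\cR^*$ by a condition on the conditional measures $\mu_\by$, prove \eqref{muR*} by combining the tower property with Markov's inequality and the global rigidity bound \eqref{bulkrig}, and control $|\al_k-\gamma_k|$ by a Taylor expansion of the quantile map on $\cR_{L,K}$, exactly as in \eqref{ggg}--\eqref{alga}. The one genuine difference is the quantity to which Markov's inequality is applied, and hence what defines $\cR^*$. The paper in \eqref{R*} defines $\cR^*$ via the conditional \emph{tail probability}, $\by\in\cR^*$ iff $\P^{\mu_\by}(|x_k-\gamma_k|>N^{-1+\nu})\le e^{-\frac{1}{2}c_1N^{c_2}}$ for all $k\in I$, and then deduces the expectation bound \eqref{xg} on $\cR^*$ using that $|x_k-\gamma_k|$ is deterministically $O(1)$ for $\by\in\cR_{L,K}$ (this is where the otherwise-weak third clause $|y_j-\gamma_j|\le1$ of \eqref{yrig} earns its keep). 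You instead apply Markov directly to the tail \emph{expectation} $R_k(\by)$ and control its $\mu$-mean by H\"older together with a third-moment bound for the global measure $\mu$. Both routes work and are only a reshuffling of the same two steps; yours is marginally more direct in going straight for the quantity needed for \eqref{Exmu}, but in exchange it leans on $\E^\mu|x_k-\gamma_k|^3\le C$, which is true (since $V$ grows superlogarithmically, the relevant tails are integrable) but is a small extra input that the paper's version sidesteps by the deterministic boundedness built into $\cR_{L,K}$; if you want to avoid importing it, applying H\"older under $\mu_\by$ instead, where $|x_k-\gamma_k|\le C$ holds outright on $\cR_{L,K}$, recovers the paper's estimate \eqref{xg}. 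Your closing remark about why the bulk/tail split is essential correctly identifies the mechanism common to both proofs.
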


{\it Proof.}  
 For any $\nu>0$ define
\be \label{R*}
  \cR^*_{L,K,\mu}(\nu, \al) : = 
  \Big\{ \by\in \cR(\nu,\al) \; : \; 
\P^{\mu_\by}\left(|x_k-\gamma_k|> N^{-1+\nu}
\right)\le  \exp{\big(-\frac{1}{2} c_1N^{c_2}\big)},  \;\; \forall k\in I_{L,K}
 \Big\}
\ee
with the $\nu$-dependent constants $ c_1, c_2>0$ from \eqref{muR}.
Note that  $\cR^*$, unlike $\cR$, depends on the underlying measure $\mu$
through the family of its conditional measures $\mu_\by$. 
Applying \eqref{muR} for $\nu=\xi^2\delta/2$  and setting
$\cR =  \cR_{L, K}(\xi^2 \delta/2, \al/2)$, $\cR^* = \cR^*_{L,K, \mu}(\xi^{ 2 } \delta/2, \al/2)$,
we have
$$
   \P^\mu( \cR^*) \ge 1-  C_0\exp{\big(-\frac{1}{2} c_1N^{c_2}\big)}
$$
with some $C_0, c_1, c_2$.
Now if $\by\in \cR^*$, then
\be\label{xg}
   \big| \E^{\mu_\by}x_k -\gamma_k\big|\le C_0e^{-c_1N^{c_2}/3}  + CN^{-1}K^{\xi^2},  \qquad k\in I_{L,K}.
\ee
 In order to prove \eqref{Exmu}, it remains to show that $|\alpha_k - \gamma_k| $ is 
 bounded by $CN^{-1}K^{\xi}$ for any   $k\in I_{L,K}$.
To see this,  we can use that $\varrho\in C^1$ away from the edge, thus
$$
      \varrho(x) =\varrho(\bar y) + O(x - \bar y)
$$
 (recall that $\bar y$ is the midpoint of $J$). 
By Taylor expansion we
have
$$
k-(L-K-1) = N\int_{\gamma_{L-K-1}}^{\gamma_k} \varrho  =N\int_{y_{L-K-1}}^{\gamma_k} \varrho +  O(N^{\xi\delta/2})
= N|\gamma_k-y_{L-K-1}|\varrho(\bar y)  + O(N|J|^2+N^{\xi\delta/2}),
$$
i.e.
\be\label{ggg}
     \gamma_k = y_{L-K-1} + \frac{k-L+ K+1}{N\varrho(\bar y)} + O(N^{-1}K^\xi).
\ee
 Here we used that $J=J_\by$ satisfies  \eqref{Jlength} according to Lemma~\ref{lm:goody},
since $\by\in \cR_{L,K}(\xi^2\delta/2, \al/2)\subset \cR_{L,K}(\xi\delta/2, \al) $. 
 Comparing \eqref{ggg} with the definition of $\al_k$,
\eqref{aldef}, using \eqref{Jlength} and the fact that $\bar y- y_{L-K-1}=\frac{1}{2}|J|$, we have
\be\label{alga}
   |\al_k-\gamma_k| \le CN^{-1}K^\xi.
\ee
Together with \eqref{xg} this implies \eqref{Exmu}
and this completes the proof of Lemma~\ref{lm:goody1}. \qed

\subsection{Completing the proof of Theorem~\ref{thm:beta} }

 We first notice that it is sufficient to prove Theorem~\ref{thm:beta}
for the special case $m=N/2$, i.e. when the local statistics for the Gaussian measure is 
considered   at  the central point of the spectrum. Indeed, once Theorem~\ref{thm:beta} is proved for any $V$,  $k$
and $m=N/2$, then  with the choice $V(x)= x^2/2$ we can use it to establish that
the local statistics for the Gaussian measure around  any fixed index $k$  in the bulk
coincide with the local statistics in the middle. \nc  So from now
on we assume $m=N/2$, but we carry the notation $m$ for simplicity.

Given $k$ and  $m=N/2$ as in \eqref{betaeq}, we first choose $L, \wt L, K$,
satisfying \eqref{K} (maybe with a smaller $\al$ than given in Theorem~\ref{thm:beta}),
 so that  $k=L+p$, $m=\wt L +p$ hold for some 
$|p|\le K/2$.
In particular
\be\label{wtL}
|\wt L - N/2|\le K. 
\ee
For brevity, we use $\mu=\mu_V$ and $\wt \mu=\mu^G$ in accordance
with the notation of Theorem~\ref{thm:local}.

We consider $\by \in R_{L,K,\mu}^*(\xi^2\delta/2, \al)$ 
and $\wt \by\in R_{\wt L, K,\wt\mu}^*(\xi^2\delta/2,\al)$, where
 $\delta$ is the exponent appearing in \eqref{K}.
We omit the arguments and  recall that
\be\label{except}
   \mu  (  R_{L,K,\mu}^*)  \ge 1-  C_0\exp{\Big(-\frac{1}{2}c_1N^{c_2}\Big)}, \qquad  
 \wt\mu(R_{\wt L, K,\wt\mu}^*) \ge 1-  C_0\exp{\Big(-\frac{1}{2}c_1N^{c_2}\Big)} 
\ee
with some positive constants.

\begin{proposition}\label{prop:local}  With the above choice of the parameters and 
for  any $\by \in R_{L,K,\mu}^*(\xi^2\delta/2, \al)$ and $\wt \by\in R_{\wt L, K,\wt\mu}^*(\xi^2\delta/2,\al)$,
we have
\begin{align}\label{univ1}
 \Bigg|  \E^{\mu_\by} 
  O\big(  (N\varrho^V_{L+p})& (x_{L+p}- x_{L+p+1}), \ldots   (N\varrho^V_{L+p})(x_{L+p}-x_{L+p+n} ) \big) \\ \non
& -   \E^{\wt \mu_{\wt\by}} 
  O\big(  (N\varrho^G_{\wt L +p})(x_{\wt L + p}- x_{\wt L + p+1}), \ldots  (N\varrho^G_{\wt L +p}) 
(x_{\wt L + p}-x_{\wt L +p+n} ) \big)
 \Bigg|\le C K^{-\e} \| O'\|_\infty,
\end{align}
 where $\e$ is from Theorem~\ref{thm:local}. 
\end{proposition}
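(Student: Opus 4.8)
The plan is to deduce this directly from Theorem~\ref{thm:local} by verifying all of its hypotheses for the two local measures $\mu_\by$ and $\wt\mu_{\wt\by}$, after reducing the rescaling factors $N\varrho^V_{L+p}$ and $N\varrho^G_{\wt L+p}$ appearing in the observable to the factor $N$ that Theorem~\ref{thm:local} uses. First I would match the two configuration intervals. This is not automatic: $\by$ and $\wt\by$ are sampled independently, so their configuration intervals $J_\by=(y_{L-K-1},y_{L+K+1})$ and $J_{\wt\by}=(\wt y_{\wt L-K-1},\wt y_{\wt L+K+1})$ need not coincide. The standard fix (as in \cite{BEY}) is to introduce an intermediate step: one rescales/shifts one of the measures by an affine map so that the intervals agree, and controls the error incurred in the observable. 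Concretely, since $\by\in \cR^*_{L,K,\mu}$ and $\wt\by\in \cR^*_{\wt L,K,\wt\mu}$, the rigidity bounds \eqref{bulkrig}, \eqref{nearedge}, \eqref{global} and Lemma~\ref{lm:goody} give that $|J_\by|$ and $|J_{\wt\by}|$ both equal $\cK/(N\varrho(\bar y))+O(K^\xi/N)$ with the appropriate $\varrho=\varrho_V$, resp. $\varrho_G$, evaluated near the respective midpoints; since both $L+p$ and $\wt L+p$ index bulk points whose classical densities are $\varrho^V_{L+p}$ and $\varrho^G_{\wt L+p}$, an affine change of variables of the form $x\mapsto a x+b$ with $a=\varrho^G_{\wt L+p}/\varrho^V_{L+p}+O(K^\xi/K)$ turns the $\mu_\by$-side gap $(N\varrho^V_{L+p})(x_{L+p}-x_{L+p+j})$ into $N(x'_{L+p}-x'_{L+p+j})$ in the new coordinates, up to a multiplicative error $1+O(K^{\xi-1})$, which costs only $O(K^{\xi-1}n O_{\mbox{\footnotesize supp}})\|O'\|_\infty$ in the observable; the same on the $\wt\mu$-side. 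After this reduction both configuration intervals can be taken to be the same interval $J$ of length $\cK/(N\varrho(\bar y))+O(K^\xi/N)$, so \eqref{J=J} and \eqref{Jlen} hold.

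Next I would verify the remaining hypotheses of Theorem~\ref{thm:local}. The membership $\by,\wt\by\in\cR_{L,K}(\xi^2\delta/2,\al/2)$ (with $\al/2$ in place of $\al$, harmlessly) is exactly the definition of $\cR^*\subset\cR$. The expectation condition \eqref{Ex}, namely $\max_{j\in I_{L,K}}|\E^{\mu_\by}x_j-\alpha_j|\le CN^{-1}K^\xi$ and likewise for $\wt\mu_{\wt\by}$, is precisely \eqref{Exmu} of Lemma~\ref{lm:goody1} applied to $\mu=\mu_V$ and then to $\wt\mu=\mu_G$; one must check that the affine rescaling introduced above does not destroy this — but an affine map sends the equidistant points $\alpha_j$ to the equidistant points of the rescaled interval, and the bound \eqref{Ex} is affine-covariant up to the $O(K^{\xi-1})$-type errors already absorbed. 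The index constraint $|p|\le K-K^{1-\xi^*}$ holds since $|p|\le K/2$ by our choice of $L,\wt L$. Finally $\xi,\xi^*$ are taken below the threshold $\xi_0(\delta)$. With all hypotheses in place, \eqref{univ} of Theorem~\ref{thm:local} yields
\[
  \Big| \E^{\mu_\by} O\big(N(x'_{L+p}-x'_{L+p+1}),\ldots,N(x'_{L+p}-x'_{L+p+n})\big)
   - \E^{\wt\mu_{\wt\by}} O\big(N(x'_{\wt L+p}-x'_{\wt L+p+1}),\ldots\big)\Big| \le CK^{-\e}\|O'\|_\infty,
\]
and undoing the affine reductions (which cost at most $CK^{\xi-1}\|O'\|_\infty\le CK^{-\e}\|O'\|_\infty$ after possibly shrinking $\e$) gives \eqref{univ1}.

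The main obstacle I anticipate is the interval-matching step: Theorem~\ref{thm:local} is stated under the hypothesis \eqref{J=J} that the two configuration intervals literally coincide, whereas the intervals determined by $\by$ and $\wt\by$ differ both in length (by $O(K^\xi/N)$) and in location. One must be careful that the affine map bringing them into coincidence (i) has Jacobian close enough to the ratio of local densities that the rescaled gaps are $N(x_{\,\cdot\,}-x_{\,\cdot\,})$ up to a genuinely negligible error, and (ii) maps a measure of the form $\mu_{\by,\beta,V}$ to another measure of the same form $\mu_{\by',\beta,V'}$ with a new (still $C^4$, regular) potential $V'$ and a new boundary $\by'$ that still lies in the relevant good set — this is why Theorem~\ref{thm:local} was deliberately stated allowing two different external potentials $V$ and $\wt V$. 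Once this bookkeeping is done, the rest is a direct citation of Theorem~\ref{thm:local} and Lemma~\ref{lm:goody1}. This is essentially the argument of the corresponding step in \cite{BEY}, adapted from the locally averaged observable to the single-gap observable; the only genuinely new input relative to \cite{BEY} is that here Theorem~\ref{thm:local} is the single-gap statement rather than its averaged version, but its use is identical.
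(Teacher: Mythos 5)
Your proposal captures the right overall structure and arrives at the correct conclusion, but it takes a genuinely different route from the paper on the crucial interval-matching step, and it omits one of the paper's real ingredients. The paper does this in three sub-steps: (1) it matches the local densities \emph{exactly} using the dilation symmetry of the Gaussian ensemble (replacing $V(x)=x^2/2$ by $V_s(x)=s^2x^2/2$), under which the rescaled observable with the $N\varrho^G_m$ factor is literally invariant, so this step costs nothing; (2) it then handles the residual $O(K^{\xi-1})$ mismatch in configuration-interval lengths \emph{not} by a further change of variables but by replacing $\wt\by$ with $s\wt\by$ while keeping the quadratic potential fixed, and it controls the resulting change of measure through an entropy/LSI estimate, Lemma~\ref{lm:res}, using the convexity bound \eqref{Hyconv}; (3) finally a trivial shift. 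Your route instead applies a single exact affine change of variables on each side (which pushes the measure forward exactly, so there is \emph{no} measure-level error) and absorbs the density factors into the affine Jacobian, accepting an $O(K^{\xi-1}\|O'\|_\infty)$ discrepancy in the observable; you then invoke Theorem~\ref{thm:local} directly, exploiting that it already allows two distinct external potentials. This is a legitimate alternative and in some sense cleaner, since it bypasses Lemma~\ref{lm:res}; the trade-off is that one must carefully verify the hypotheses of Theorem~\ref{thm:local} for the affine-transformed pair — that $\by',\wt\by''$ stay in the good set $\cR$, that \eqref{Jlen} and \eqref{Ex} survive, that the transformed potential is still $K^\xi$-regular in the sense of Definition~\ref{def:regU} — and you only gesture at this with ``affine covariance.'' Those checks do go through (classical locations, equidistant $\alpha_j$'s, rigidity bounds and the expectation condition all transform covariantly and $a=1+O(K^{\xi-1})$ is of order one), but a complete argument would need to spell them out at the same level of detail at which the paper proves Lemma~\ref{lm:res}. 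The main thing to be aware of is that the paper's step~2 is not a cosmetic rescaling of the observable: it is a genuine comparison of two distinct measures and is proved with a relative-entropy argument, which your proposal does not appear to recognize as a separate lemma.
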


Theorem~\ref{thm:beta} follows immediately from \eqref{except} and this proposition.  \qed

The rest of this section is devoted to the proof of  Proposition~\ref{prop:local}.

\bigskip

{\it Proof of Proposition~\ref{prop:local}.} We will apply Theorem~\ref{thm:local},
but first we have to bring the two measures onto the same configuration interval $J$
to satisfy \eqref{J=J}.  
This will be done in  three  steps. 
First, using the scale invariance of the Gaussian log-gas 
 we rescale it so that the local density
approximately matches with that of $\mu_V$. 
This will guarantee that the two configuration intervals have almost the
same length. In the second step
we adjust the local  Gaussian  log-gas  $\wt \mu_{\wt\by}$  so that $J_{\wt\by}$ has
exactly the correct length. Finally, we shift the two intervals so that they coincide. This allows us 
to apply Theorem~\ref{thm:local} to conclude the local statistics are identical.

The local densities $\varrho_V$ around $\gamma_{L+p,V}$ and 
$\varrho_G$ around $\gamma_{\wt L+p,G}$ may considerably differ. So in
the first step we rescale
the Gaussian log-gas  so that
\be\label{match}
   \varrho_V( \gamma_{L+p,V}) = \varrho_G(\gamma_{\wt L+p,G}).
\ee
To do that, recall
that we defined the Gaussian log-gas  with the standard $V(x)=x^2/2$
external potential, but we could choose $V_s(x) = s^2x^2/2$ with any fixed $s>0$
and consider the Gaussian  log-gas 
$$
  \mu_G^s (\bla)\sim \exp\big(-N\beta \cH_s(\bla)\big), \qquad
\cH_s(\bla): =  \frac{1}{2}\sum_{i=1}^N V_s(\lambda_i) -\frac{1}{N}\sum_{i<j} 
\log|\lambda_j-\lambda_i|.  
$$
This results in a rescaling of the semicircle density $\varrho_G$ 
to $\varrho_G^s(x):= s\varrho_G(sx)$ and $\gamma_{i, G}$ to 
$\gamma_{i, G}^s:=s^{-1}\gamma_{i, G}$ for any $i$,
so $\varrho_G(\gamma_{i,G})$ gets rescaled to $\varrho_G^s(\gamma_{i, G}^s)
=s\varrho_G(\gamma_{i,G})$. In particular, $\varrho_G(\gamma_{\wt L+p, G})$ 
is rescaled to $s\varrho_G(\gamma_{\wt L+p,G})$, and thus
choosing $s$ appropriately, we can achieve that \eqref{match} holds
 (keeping the left hand side fixed). 
Set
$$
 \cO_s(\bx) : = O\big(  (N\varrho_G^s(\gamma_{m,G}^s))(x_m-x_{m+1}), \ldots ,  (N\varrho_G^s(\gamma_{m,G}^s))
(x_m-x_{m+n})\big), \qquad m= \wt L +p,
$$
and notice that  $\cO_s(\bx) = \cO(s\bx)$. This means that
the local gap statistics $\E^{\mu_G^s} \cO_s$
is independent of the scaling parameter $s$, since
the product $(N\varrho^{G}_m) (x_m-x_{m+a})$   (notation defined in \eqref{rhov})  is unchanged under the scaling.
 So we can work
with the rescaled Gaussian measure. For notational simplicity
we will not carry the $s$ parameter further and we just assume that \eqref{match} holds
with the original Gaussian $V(x)=x^2/2$.

We  have now achieved that the two densities at  at some points 
 of the configuration intervals coincide, but
the lengths of these two intervals still slightly differ. 
In the second step we match them exactly.
Since $\by  \in R_{L,K}(\xi\delta/2, \al)$ and $\wt \by\in R_{\wt L, K}(\xi\delta/2,\al)$,
from  \eqref{Jlength} 
in Lemma~\ref{lm:goody} we see that
\begin{align}\label{JJY}
  |J_\by|=  & \;  | y_{L+K+1} -y_{L-K-1}| = \frac{\cK}{N\varrho_V(\bar y)} + O(N^{-1}K^\xi) \\
 |J_{\wt \by}|= & \;  | \wt y_{L+K+1} -\wt y_{L-K-1}|
   = \frac{\cK}{N\varrho_G(\overline {\wt y})} + O(N^{-1}K^\xi).
\end{align}
Since $\varrho_V$ is $C^1$, we have  for any $|j|\le K$,
\begin{align*}
  |\varrho_V(\bar y) -\varrho_V(\gamma_{L+j,V}) |\le & C  | \bar y - \gamma_{L+j,V}| \\
  \le  & C  | \bar y - y_{L,V}| +C|\gamma_{L+j,V}-\gamma_{L,V}|+ O(N^{-1}K^\xi) \le C K N^{-1},
\end{align*}
and similarly for $\varrho_G (\overline {\wt y})$. 
 
 Using \eqref{JJY}, \eqref{match}
and that the densities are separated away from zero, we easily obtain that
\be\label{JJ}
 s:= \frac{  |J_\by|}{ |J_{\wt \by}|} 
\qquad \mbox{satisfies} \qquad s=s_{\by,\wt\by}= 1+  O( K^{-1+\xi}).
\ee
Note that this $s$ is different from the scaling parameter in the first step
but it will play a similar role so we use the same notation.
For each fixed $\by, \wt \by$ we can now scale the conditional 
Gaussian log-gas   $\mu_{\wt \by}$
by a factor $s$, i.e. change $\wt\by$ to $s\wt\by$,
 so that after rescaling $ |J_\by|= |J_{s\wt \by}|$.

We will now show that this rescaling does not alter the gap statistics:
\begin{lemma}\label{lm:res} Suppose that $s$ satisfies \eqref{JJ}
and let $\mu=\mu_G$ be the Gaussian log-gas.  
Then we have 
\be
  \big| \big[\E^{\mu_{\wt\by}} - \E^{\mu_{s\wt\by}}\big]
  \cO(\bx)\big| \le  CK^{-1+\xi} 
\label{sy}
\ee
with
$$
 \cO(\bx) : = O\big(  (N\varrho_m^G)(x_m-x_{m+1}), \ldots , (N\varrho_m^G)(x_m-x_{m+n})\big)
$$
for any $\wt L -K \le m \le \wt L +K-n$
(note that the observable is not rescaled).
\end{lemma}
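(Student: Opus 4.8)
\textit{Proof plan for Lemma~\ref{lm:res}.} The scaling $\bx\mapsto s^{-1}\bx$ maps the local Gaussian log-gas $\mu_{\wt\by}$ on the configuration interval $J_{\wt\by}$ to the local Gaussian log-gas on the interval $J_{s\wt\by}$ of length $s|J_{\wt\by}|$; concretely, if $\bx$ is distributed according to $\mu_{s\wt\by}$ then $s^{-1}\bx$ is distributed according to $\mu_{\wt\by}$. Hence
\be\label{lm:res:rescale}
   \E^{\mu_{s\wt\by}}\cO(\bx) = \E^{\mu_{\wt\by}} \widetilde\cO(\bx), \qquad
   \widetilde\cO(\bx) := O\big( (N\varrho_m^G)(s x_m - s x_{m+1}), \ldots, (N\varrho_m^G)(s x_m - s x_{m+n})\big).
\ee
So the task reduces to estimating $|\E^{\mu_{\wt\by}}[\cO(\bx) - \widetilde\cO(\bx)]|$, i.e. the effect of replacing the argument $N\varrho_m^G(x_m - x_{m+a})$ by $sN\varrho_m^G(x_m-x_{m+a})$ in a fixed differentiable function $O$. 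By the mean value theorem, the $a$-th argument changes by $(s-1)N\varrho_m^G(x_m-x_{m+a})$, so pointwise
\be\label{lm:res:mvt}
   |\cO(\bx) - \widetilde\cO(\bx)| \le \|O'\|_\infty \sum_{a=1}^n |s-1|\, N\varrho_m^G\, |x_m - x_{m+a}|.
\ee

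First I would take expectations in \eqref{lm:res:mvt} with respect to $\mu_{\wt\by}$ and bound $\E^{\mu_{\wt\by}} N|x_m-x_{m+a}|$. This is where the rigidity input enters: since $\wt\by\in\cR_{\wt L,K}(\xi\delta/2,\al)$ and (after the first-step rescaling) the hypothesis \eqref{Ex}/\eqref{Exone} is in force, Theorem~\ref{thm:omrig} applies to $\mu_{\wt\by}$ and gives the subgaussian concentration \eqref{rig} of each $x_j$ around $\al_j$. Combining $|x_m - x_{m+a}| \le |x_m - \al_m| + |\al_m - \al_{m+a}| + |\al_{m+a} - x_{m+a}|$ with \eqref{rig} and the explicit spacing $|\al_{m+a}-\al_m| = a|J|/(\cK+1) = O(N^{-1})$ from \eqref{aldef} and \eqref{Jlength}, we get $\E^{\mu_{\wt\by}} N|x_m-x_{m+a}| \le C$ (with $C$ independent of $K,N$, absorbing the harmless $K^\xi$ factors from rigidity since we only need $\xi$ small). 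Plugging this and $|s-1| = O(K^{-1+\xi})$ from \eqref{JJ} into the expectation of \eqref{lm:res:mvt} yields $|\E^{\mu_{\wt\by}}[\cO - \widetilde\cO]| \le Cn\|O'\|_\infty K^{-1+\xi}$, which together with \eqref{lm:res:rescale} is exactly \eqref{sy}. (Here $\varrho_m^G$ is a fixed order-one constant, bounded away from $0$ and $\infty$ in the bulk, so it is absorbed into $C$.)

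The only mild subtlety — and the step I would be most careful about — is making sure Theorem~\ref{thm:omrig} is legitimately applicable to $\mu_{\wt\by}$, i.e. that the rigidity-of-expectation hypothesis \eqref{Exone} holds for the Gaussian conditional measure with boundary condition $\wt\by$. In the present context this is guaranteed because $\wt\by \in R^*_{\wt L,K,\wt\mu}$, and Lemma~\ref{lm:goody1} (applied with $V(x)=x^2/2$) gives precisely \eqref{Exmu}, namely $|\E^{\mu_{\wt\by}} x_k - \al_k| \le CN^{-1}K^\xi$. So no new argument is needed; one simply has to track that the $\xi$ here is compatible with (a possibly smaller) $\xi_0$ from Theorem~\ref{thm:omrig}. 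An alternative, more elementary route that avoids invoking the full strength of Theorem~\ref{thm:omrig} is to bound $\E^{\mu_{\wt\by}} N|x_m-x_{m+a}|$ directly: the finite-speed/convexity structure of the Gaussian log-gas together with the first gap estimate \eqref{k521} of Theorem~\ref{lr2} controls the lower tail of $N(x_{m+a}-x_m)$, while the rigidity of $x_m$ and $x_{m+a}$ controls the upper tail, again giving an $O(1)$ bound. Either way, the conclusion \eqref{sy} follows.
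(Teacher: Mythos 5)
There is a genuine gap in your proof, and it is at the very first step. Your claim ``if $\bx$ is distributed according to $\mu_{s\wt\by}$ then $s^{-1}\bx$ is distributed according to $\mu_{\wt\by}$'' is false. Under the change of variables $\bz=\bx/s$, the logarithmic terms in the Hamiltonian (both the $x$--$x$ interaction and the interaction with the boundary points $s\wt y_j$) do scale correctly up to an additive constant, but the Gaussian confining potential does not: $V(s z)=s^2z^2/2\ne V(z)$. Carrying out the change of variables carefully shows that $\bx\sim\mu_{s\wt\by}$ implies $\bx/s\sim\mu_{\wt\by}^s$, where $\mu_{\wt\by}^s$ is the local measure with the {\it rescaled} Gaussian potential $V_s(x)=\tfrac12 s^2x^2$ and boundary condition $\wt\by$, not the original $\mu_{\wt\by}$. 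Your identity \eqref{lm:res:rescale} therefore does not hold, and the whole proof is predicated on it. (If the exact scaling you assert were true, the comparison of $\mu_{\wt\by}$ to $\mu_{s\wt\by}$ would require no probabilistic input at all and reduce to a purely pointwise estimate on $O$ — this should already raise a flag.)

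The missing piece is the comparison between $\mu_{\wt\by}^s$ and $\mu_{\wt\by}$, which is precisely the nontrivial part of the paper's proof. The paper bounds the relative entropy $S(\mu_{\wt\by}^s\,|\,\mu_{\wt\by})$ using the logarithmic Sobolev inequality for $\mu_{\wt\by}$ (with LSI constant of order $K/N$, from the convexity bound \eqref{Hyconv}), a Dirichlet-form estimate involving $|NV_s'(x_i)-NV'(x_i)|^2=N^2(s^2-1)^2x_i^2$, and the fact that $|x_i|\le CK/N$ with high probability because the middle index satisfies $|\wt L-N/2|\le K$ (\eqref{wtL}). This yields $S(\mu_{\wt\by}^s\,|\,\mu_{\wt\by})\le CK^4N^{-2}(s-1)^2$, which with $|s-1|=O(K^{-1+\xi})$ and $K\le N^{1/4}$ is negligible. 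Without some such quantitative argument that the change of the quadratic confinement is invisible on the local scale, the proof is incomplete. Note also that your final paragraph's ``alternative elementary route'' does not rescue matters, since it too takes \eqref{lm:res:rescale} for granted.

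One more, minor, observation: for the step you did address — bounding $\E^{\mu_{\wt\by}}|\cO-\widetilde\cO|$ — invoking the full rigidity Theorem~\ref{thm:omrig} is unnecessary. Since $O$ has compact support $[-O_{\mathrm{supp}},O_{\mathrm{supp}}]^n$ and the perturbation is multiplicative by a factor $s=1+O(K^{-1+\xi})$, the difference $\cO(\bx)-\cO(s\bx)$ vanishes unless all arguments $N\varrho^G_m(x_m-x_{m+a})$ are $O(1)$, and on that event the mean value theorem gives the bound $C\|O'\|_\infty|s-1|$ pointwise. This is how the paper handles that term, and it is cleaner.
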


{\it Proof.}
Define the Gaussian log-gas  
$$
   \mu_{\wt\by}^s \sim e^{-N\beta\cH_{\wt\by}^s}
$$
with $\cH^s_{\wt \by}$ defined exactly as in  \eqref{Vyext} but $V_\by(x)$ is replaced with 
$$
   V_{\wt\by}^s (x) = V_s(x) - \frac{2}{N}\sum_{j\not\in \wt I}\log |x-\wt y_j|, \qquad V_s(x) = \frac{1}{2} s^2x^2, \qquad 
     \wt I : = \llbracket \wt L-K, \wt L +K\rrbracket . 
$$
Then  by scaling
\be\label{sca}
    \E^{\mu_{s\wt\by}}  \cO(\bx) =  \E^{\mu_{\wt\by}^s}  \cO(\bx/s)
  = \E^{\mu_{\wt\by}^s}  \cO(\bx)  +   O\big( \| O'\|_\infty  |s-1|),
\ee
where in the last step we used that the observable $O$ is a differentiable function  with compact support. The error term 
is negligible by \eqref{JJ} and \eqref{K}.

In order to control $\big[\E^{\mu_{\wt\by}^s} -\E^{\mu_{\wt\by}} \big] \cO(\bx)$, it is
sufficient to bound  the relative entropy $S(\mu_{\wt\by}^s|\mu_{\wt\by})$.
However, for any $\by\in R_{L,K}$ we have
\be\label{Hyconv}
  \cH_\by'' \ge \min_{x\in J_\by}\frac{1}{N}\sum_{j\not \in I} \frac{1}{|x-y_j|^2} 
 \ge \frac{cN}{K}
\ee
with a positive constant. Applying this for $\wt \by$,
we see that  $\mu_{\wt\by}$ satisfies the  logarithmic Sobolev inequality (LSI)
$$
    S(\mu_{\wt\by}^s|\mu_{\wt\by}) \le \frac{CK}{N} D (\mu_{\wt\by}^s|\mu_{\wt\by}),
$$
where 
$$
   S(\mu|\om): = \int \Big(\frac{\rd \mu}{\rd\om} \log \frac{\rd \mu}{\rd\om}\Big) \rd\om,
  \qquad D(\mu|\om) : = \frac{1}{2  N}\int \Big|\nabla \sqrt{\frac{\rd\mu}{\rd\om}}\Big|^2
 \rd\om
$$
is the relative entropy and the relative Dirichlet form of two probability measures.
Therefore
$$
    S(\mu_{\wt\by}^s|\mu_{\wt\by})
 \le \frac{CK}{N^2}\E^{\mu_{\wt\by}} \sum_{i\in \wt I} |N V_s'(x_i)- NV'(x_i)|^2
  = CK(s^2-1)^2 \E^{\mu_{\wt\by}} \sum_{i\in \wt I} x_i^2
  \le CK^4N^{-2}(s-1)^2.
$$
In the last step we used \eqref{wtL} which, by rigidity for
the Gaussian log-gas, guarantees that $|x_i|\le CK/N$ 
with very high probability for any $i\in \wt I$.
Together with \eqref{sca} and \eqref{JJ} we  obtain \eqref{sy}
and this proves Lemma~\ref{lm:res}. \qed

\bigskip

Summarizing, we can from now on assume that \eqref{match} holds and that
$\by,\wt\by$ satisfy $|J_\by| = |J_{\wt\by}|$.  By a straightforward shift we can
also assume that  $J_\by =J_{\wt\by}$   so that  the condition \eqref{J=J} of
Theorem~\ref{thm:local} is satisfied.  The condition \eqref{Jlen}
has  already been proved to hold in Lemma~\ref{lm:goody}. 
Condition \eqref{Ex} follows from the definition of the sets $\cR_{L,K,\mu}^*$
 and  $\cR_{\wt L,K,\wt\mu}^*$, 
see Lemma~\ref{lm:goody1}.
Thus all conditions of Theorem~\ref{thm:local} 
are verified. Finally, we remark that the multiplicative
factors $\varrho^V_{L+p}$ and $\varrho^G_{\wt L +p}$ in \eqref{univ1}
coincide by  \eqref{match} and \eqref{rhov}.
 Then Theorem~\ref{thm:local}
(with an observable $O$ rescaled by the
common factor $\varrho^V_{L+p}=\varrho^G_{\wt L +p}$) implies Proposition~\ref{prop:local}. \qed

\section{Gap universality for Wigner matrices:  proof of  
Theorem~\ref{thm:wigner} }\label{sec:wigner}

In our recent results on the universality of Wigner matrices \cite{ESY4, EYYBand, EYY2},
 we established the
 universality   for  Gaussian  divisible matrices 
by  establishing the local ergodicity of the Dyson Brownian motion 
 (DBM).  By local ergodicity     we meant  
 an effective estimate on the time to
equilibrium for  local average of observables depending on the gap. In fact, we gave
an almost optimal estimate on this time.
Then we used the Green function comparison theorem to connect  Gaussian
divisible matrices  to general Wigner matrices. 
The   local ergodicity   of DBM  
was done by studying the flow of the global Dirichlet form. 
The  estimate on the global Dirichlet form   in all these works 
was sufficiently  strong so 
that it  implied the ``ergodicity for locally averaged observables"
without having to go through  the local 
equilibrium measures. 
 In the earlier
 work \cite{ErdRamSchYau},  however,  we used an approach common in the hydrodynamical limits by studying the properties 
of local equilibrium measures.  
Since by Theorem \ref{thm:local}
we now know the local equilibrium measures very well, 
we will now combine the virtue of both methods to prove  Theorem~\ref{thm:wigner}. 
To explain the new method we will be using, we first  recall 
the standard approach to the universality  from  \cite{ESY4, EYYBand, EYY2}
that consists  of the following three steps:

\begin{itemize}
\item[i)] rigidity estimates on the precise location of the eigenvalues.

\item[ii)] Dirichlet form  estimates  and local ergodicity of  DBM. 

\item[iii)]  Green function comparison theorem  to remove the small
Gaussian convolution.
\end{itemize}

In order to prove the single gap universality,  we will need to apply a similar strategy
for the local equilibrium measure $\mu_\by$. However, apart from establishing rigidity for $\mu_\by$, 
we will  need to 
 strengthen   Step ii). The idea is to use 
Dirichlet form estimates 
 as in the previous approach, but we then use  these  estimates 
to show that  the ``local structure"  after the evolution of the  DBM for a short time
is characterized by  the local equilibrium  $\mu_\by$ in a strong sense,  i.e. without
averaging.  Since Theorem~\ref{thm:local} 
provides a single gap universality for the  local equilibrium  $\mu_\by$, this proves the single gap universality 
 after a short time DBM evolution 
 and thus obtain the strong form of the Step ii) without averaging the observables.
Notice that the key input 
here is Theorem~\ref{thm:local} which contains an effective
estimate on the time to equilibrium for each single gap. We will
call this property the {\it strong local ergodicity of DBM.}
In particular, our result shows that the local averaging taken in
our previous works is not essential.

\bigskip

We now  recall the rigidity estimate which asserts that 
the eigenvalues 
$\lambda_1, \lambda_2, \ldots, \lambda_N$
of a generalized Wigner matrix follow the Wigner semicircle law
$\varrho_G(x)$ \eqref{scdef} in 
a very strong local sense. 
More precisely, Theorem 2.2. of \cite{EYYrigi} states that  the 
eigenvalues are near  their classical locations, $\{\gamma_j\}_{j=1}^N$,
\eqref{defgamma}, in the sense that 
\be\label{rigidity}
\P \Bigg\{  \exists j\; : \; |\lambda_j-\gamma_j| 
\ge (\log N)^{\zeta}  \Big [ \min \big ( \, j ,  N-j+1 \,  \big) \Big  ]^{-1/3}   N^{-2/3} \Bigg\}
 \le  C\exp{\big[-c(\log N)^{\phi \zeta} \big]}
\ee
for
any exponent $\zeta$ satisfying 
$$
    A_0 \log\log N \le\zeta \le \frac{\log (10N)}{10\log\log N}
$$
where the positive constants $C, \phi, A_0$, depend only on $C_{inf}, C_{sup}, \theta_1, \theta_2 \nc $,
see \eqref{subexp}, \eqref{1.3C}.
In particular, for any fixed
$\alpha>0$ and  $\nu>0$,  there are constants
 $C_0,c_1, c_2>0$  such that for any $N\geq 1$ and $k\in\llbracket \alpha N,(1-\alpha) N\rrbracket$,
we have  
\begin{equation}\label{bulkrig1}
\P\left(|\lambda_k-\gamma_k|> N^{-1+\nu}\right)\leq  C_0\exp{\big(-c_1N^{c_2}\big)} 
\end{equation}
and  \eqref{rigidity} also implies 
\be\label{l2dist}
\E \sum_{k=1}^N (\lambda_k- \gamma_k)^2 \le N^{-1+2 \nu }
\ee
for any $\nu>0$. 
 The constants $C_0, c_1, c_2$ may be different from 
the ones in \eqref{bulkrig} but they play a similar role so
we keep their notation. 
With a slight abuse of notation, we introduce the
 set $\cR_{L,K} = \cR_{L,K}(\xi,\al)$ from \eqref{yrig}
in the generalized Wigner setup as well, just $\gamma_k$
denote the classical localitions with respect to the semicircle
law, see  \eqref{defgamma}. In particular \eqref{rigidity}
implies that for any $\xi,\al>0$
\be\label{muR1}
   \P\big( \cR_{L,K}(\xi,\al)\big)\ge 1-   C_0\exp{\big(-c_1N^{c_2}\big)}
\ee
holds with some positive constants $C_0, c_1, c_2$, analogously to 
\eqref{muR}. We remark that the rigidity bound \eqref{rigidity}
for the generalized Wigner matrices is optimal (up to logarithmic factors)
throughout the spectrum
and it gives a stronger control than 
 the estimate used in the intermediate regime 
in the second line of the definition \eqref{yrig}. 
For the forthcoming argument the weaker estimates are sufficient, so
for notational simplicity we will not modify the definition of $\cR$.

\medskip

The Dyson Brownian motion (DBM) describes the evolution of the eigenvalues
 of a  flow of Wigner matrices, $H=H_t$, 
 if each matrix element $h_{ij}$ evolves according to independent
 (up to symmetry restriction) 
Brownian motions.  The  dynamics of the matrix elements are given 
by an Ornstein-Uhlenbeck (OU) process  which leaves the standard Gaussian
distribution invariant. 
 In the Hermitian case, the OU process for the rescaled matrix elements 
 $v_{ij}: = N^{1/2}h_{ij}$ is given by the 
stochastic differential equation 
\be
  \rd v_{ij}= \rd \beta_{ij} - \frac{1}{2} v_{ij}\rd t, \qquad
i,j=1,2,\ldots N,
\label{zij}
\ee
where $ \beta_{ij}$,  $i <  j$, are independent complex Brownian
motions with variance one and $ \beta_{ii}$ are real 
Brownian motions of the same variance.  The real symmetric
case is analogous, just $\beta_{ij}$ are real Brownian motions. 

 Denote the distribution of 
the eigenvalues $\bla=(\lambda_1, \lambda_2,\ldots, \lambda_N)$
 of $H_t$ at  time $t$
by $f_t (\bla)\mu (\rd \bla)$
where the Gaussian measure $\mu$ is given by \eqref{H}.
The density $f_t=f_{t,N}$ satisfies the forward equation 
\be\label{dy}
\partial_{t} f_t =  \cL f_t,
\ee
where
\be
\cL=\cL_N:=   \sum_{i=1}^N \frac{1}{2N}\partial_{i}^{2}  +\sum_{i=1}^N
\Bigg(- \frac{\beta}{4} \lambda_{i} +  \frac{\beta}{2N}\sum_{j\ne i}
\frac{1}{\lambda_i - \lambda_j}\Bigg) \partial_{i}, \quad 
\partial_i=\frac{\partial}{\partial\lambda_i},
\label{L}
\ee
with $\beta=1$ for the real symmetric case and $\beta=2$ 
in the complex hermitian case.
The initial data $f_0$ given by the original generalized Wigner matrix.

 Now we define a useful technical tool that was first introduced in \cite{ESY4}.
For any $\tau>0$
denote by $W=W^\tau$  an  auxiliary  potential   defined by 
\be
    W^\tau(\bla): =    \sum_{j=1}^N
W_j^\tau (\lambda_j)   , \qquad W_j^\tau (\lambda) := \frac{1}{2 \tau } (\lambda_j -\gamma_j)^2,
\label{defW}
\ee
i.e. it is a quadratic confinement on scale $\sqrt \tau $ for each eigenvalue
near its classical location, where the parameter $\tau>0$ will be chosen later.

\begin{definition} \label{def:locallyConstrained}
We define the probability measure $\rd\mu^{\tau}:= Z_{\tau}^{-1} e^{- N \beta  \cH^\tau} $, where 
the total Hamiltonian is given by 
\begin{equation}\label{eqn:omega}
  \cH^\tau: = \cH +W^\tau.
\end{equation}
 Here   $\cH$ is the Gaussian Hamiltonian given by \eqref{H}
and  $Z_\tau=Z_{\mu^{\tau}}$ is the partition function. 
The measure
$\mu^{\tau}$ will be referred to as the relaxation measure 
with  relaxation time $\tau$. 
\end{definition}

Denote by $Q$ the  following quantity 
\be
 Q:= \sup_{0\le t\le  1} 
 \frac{1}{N}
 \int \sum_{j=1}^N(\lambda_j-\gamma_j)^2
 f_t( \bla )\mu(\rd \bla). 
\label{assume}
\ee
Since  $H_t$ is a generalized Wigner matrix for all $t$,  \eqref{l2dist}
implies that 
\be
  Q\le N^{-2+2\nu}
\label{Qbound}
\ee
for any $\nu>0$ if $N\ge N_0(\nu)$ is large enough.

Recall the definition of the Dirichlet form w.r.t. a probability measure $\om$
$$
 D^\om(\sqrt{g}):= \sum_{i=1}^ND_i^\om(\sqrt{g}), \qquad
 D_i^\om(\sqrt{g}):= \frac{1}{2  N  } \int  |\partial_i \sqrt{ g}|^2 \rd\om 
 =   \frac{1}{  8   N}\int  |\partial_i \log g |^2 g\rd\om, 
$$
and the definition of the relative entropy of two probability measures $g\om$ and $\om$
$$
  S(g\om|\om) := \int g\log g \rd\om.
$$
Now we recall Theorem 2.5 from \cite{EYBull}:

\begin{theorem}\label{thm1}
For any  $\tau >0 $ and consider the local relaxation 
measure $ \mu^{\tau}$. 
Set $\psi:= \frac {  d \mu^{\tau}} { d \mu } $ and
let  $g_t: = f_t/\psi$.
Suppose there is a constant $m$ such that 
\be\label{entA}
S(f_{\tau} \mu^\tau | \mu^\tau )\le CN^m.
\ee
Then for any $ t \ge \tau N^{\e'}$ 
the entropy and the Dirichlet form satisfy the estimates:
\be\label{1.3}
S(g_t \mu^\tau | \mu^\tau) \le
 C   N^2    Q \tau^{-1}, \qquad
D^{\mu^\tau} (\sqrt{g_t})
\le CN^2  Q \tau^{-2},
\ee
 where the constants depend on  $\e'$  and $m$.
\end{theorem}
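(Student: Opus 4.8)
The plan is to run the classical relative entropy dissipation argument for the flow $f_t$ with reference measure $\mu^\tau$; the only new bookkeeping is that $\cL$ is reversible for $\mu$ rather than for $\mu^\tau$. Put $S_t:=S(g_t\mu^\tau|\mu^\tau)$ and recall $g_t\mu^\tau=f_t\mu$. Since $\cL$ is self-adjoint with respect to $\mu$ with carr\'e du champ $\frac1{2N}|\nabla\cdot|^2$, differentiating $S_t$ and integrating by parts against $\mu$ (using $\int\cL f_t\,\rd\mu=0$) gives $\partial_t S_t=-\frac1{2N}\int\nabla f_t\cdot\nabla\log g_t\,\rd\mu$. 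Writing $f_t=\psi g_t$, $\rd\mu=\psi^{-1}\rd\mu^\tau$ and $\nabla\log\psi=\nabla\log(\rd\mu^\tau/\rd\mu)=-N\beta\nabla W^\tau$, this rearranges to
\[
  \partial_t S_t=-4\,D^{\mu^\tau}(\sqrt{g_t})+\frac{\beta}{2}\int\nabla W^\tau\cdot\nabla g_t\,\rd\mu^\tau,
\]
where the first term is the genuine dissipation and the second is the error caused by the mismatch of the invariant measure.

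To bound the error, write $\nabla g_t=2\sqrt{g_t}\,\nabla\sqrt{g_t}$ and apply Cauchy--Schwarz:
\[
  \Big|\frac{\beta}{2}\int\nabla W^\tau\cdot\nabla g_t\,\rd\mu^\tau\Big|
   \le \beta\Big(\int|\nabla W^\tau|^2 g_t\,\rd\mu^\tau\Big)^{1/2}\big(2N\,D^{\mu^\tau}(\sqrt{g_t})\big)^{1/2}.
\]
Because $W^\tau$ is quadratic, $\partial_j W^\tau=\tau^{-1}(\lambda_j-\gamma_j)$, so the first factor equals $\tau^{-2}\int\sum_j(\lambda_j-\gamma_j)^2 f_t\,\rd\mu\le NQ\tau^{-2}$ by the definition \eqref{assume} of $Q$. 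A Young inequality then absorbs half of the dissipation and leaves a floor of the asserted order:
\[
  \partial_t S_t\le -2\,D^{\mu^\tau}(\sqrt{g_t})+CN^2Q\tau^{-2}.
\]

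Finally we invoke the logarithmic Sobolev inequality for $\mu^\tau$. On the convex simplex the Gaussian Hamiltonian is uniformly convex, $\cH''\ge\frac12$, and $W^\tau$ adds $\tau^{-1}$ to the Hessian, so $(\cH^\tau)''\ge\tau^{-1}\Id$ and the potential $N\beta\cH^\tau$ of $\mu^\tau$ has Hessian $\ge N\beta\tau^{-1}\Id$; by Bakry--\'Emery, $S(g\mu^\tau|\mu^\tau)\le C\tau\,D^{\mu^\tau}(\sqrt g)$. Combined with the previous bound this yields $\partial_t S_t\le -\frac{c}{\tau}S_t+CN^2Q\tau^{-2}$, and integration from $\tau$ to $t$ with the initial bound \eqref{entA} gives $S_t\le e^{-c(t-\tau)/\tau}S_\tau+CN^2Q\tau^{-1}$. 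For $t\ge\tau N^{\e'}$ the transient term is at most $e^{-cN^{\e'}/2}CN^m$, smaller than any power of $N$ and in particular negligible compared with $N^2Q\tau^{-1}$; this is the first estimate in \eqref{1.3}. For the second, integrate $\partial_s S_s\le -2D^{\mu^\tau}(\sqrt{g_s})+CN^2Q\tau^{-2}$ over $[t,t+\tau]$ and use the entropy bound just proved to get $\tau^{-1}\int_t^{t+\tau}D^{\mu^\tau}(\sqrt{g_s})\,\rd s\le CN^2Q\tau^{-2}$; a standard argument controlling the time variation of $s\mapsto D^{\mu^\tau}(\sqrt{g_s})$ then upgrades this time average to the pointwise bound in \eqref{1.3}.

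The essential step is the second one: one must verify that the drift-discrepancy term $\int\nabla W^\tau\cdot\nabla g_t\,\rd\mu^\tau$ can be absorbed into the dissipation up to precisely the floor $N^2Q\tau^{-2}$. This is possible because $W^\tau$ is quadratic --- so $\nabla W^\tau$ is linear and its $L^2(\mu^\tau)$ size is controlled by the a priori rigidity quantity $Q$ --- and because the diffusion part of $\cL$ coincides with that of the generator reversible for $\mu^\tau$, so the whole discrepancy lies in the drift. Everything else is the textbook entropy method together with the uniform convexity of the Gaussian log-gas Hamiltonian, which makes the relaxation time of $\mu^\tau$ proportional to $\tau$.
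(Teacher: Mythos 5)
The paper does not prove Theorem~\ref{thm1} here; it is quoted as Theorem 2.5 of \cite{EYBull}, whose proof is the entropy dissipation method of \cite{ESY4}. Your argument reproduces that method faithfully: the identity $\partial_t S_t=-4D^{\mu^\tau}(\sqrt{g_t})+\tfrac{\beta}{2}\int\nabla W^\tau\cdot\nabla g_t\,\rd\mu^\tau$ is correct (note $\nabla\log\psi=-N\beta\nabla W^\tau$ and the normalization of $D^{\mu^\tau}$ in the paper is $\tfrac1{2N}\int|\nabla\cdot|^2$, which produces the factor $4$); your Cauchy--Schwarz/Young bound of the drift-discrepancy term against $NQ\tau^{-2}$ is precisely the purpose for which $Q$ is introduced in \eqref{assume}; the Bakry--\'Emery LSI for $\mu^\tau$ with constant $C\tau$ from $(\cH^\tau)''\ge\tau^{-1}\Id$ is the right closing ingredient; and the Gronwall step with $e^{-c(N^{\e'}-1)}$ killing the polynomial initial entropy is correct. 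This is essentially the same route as the cited proof.

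One point you should make explicit, and one small slip. The passage from the time-averaged bound $\tau^{-1}\int D^{\mu^\tau}(\sqrt{g_s})\,\rd s\le CN^2Q\tau^{-2}$ to the pointwise bound at time $t$ is genuinely not automatic, because $s\mapsto D^{\mu^\tau}(\sqrt{g_s})$ is not monotone: the flow is reversible for $\mu$, not $\mu^\tau$. The standard fix uses that $D^{\mu}(\sqrt{f_s})$ \emph{is} non-increasing together with the two-sided comparison $D^{\mu}(\sqrt{f_s})\le 2D^{\mu^\tau}(\sqrt{g_s})+CN^2Q\tau^{-2}$ and its reverse (the same elementary computation used in the Corollary right after the theorem). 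For this relay to run in the right direction you must integrate over $[t-\tau,t]$, pick $s_*\le t$ where $D^{\mu^\tau}(\sqrt{g_{s_*}})$ is at most its average, transfer to $D^\mu(\sqrt{f_{s_*}})$, use monotonicity to pass from $s_*$ to $t\ge s_*$, and transfer back. Your choice of $[t,t+\tau]$ produces $s_*\ge t$, which is the wrong direction for the monotonicity of $D^\mu(\sqrt{f_\cdot})$; reverse the interval and spell out the two-sided comparison and you are done.
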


\begin{corollary} Fix  $\fa > 0$ and let  $\tau \ge  N^{- \fa}$.
Under the assumptions of Theorem \ref{thm1},  for any $ t \ge \tau N^{\e'}$
the entropy and the Dirichlet form satisfy the estimates:
\be\label{1.31}
D^\mu (\sqrt{f_t})
\le CN^2  Q \tau^{-2}.
\ee
Furthermore, if the initial data of the DBM, $f_0$, is given by a generalized Wigner ensemble, then 
\be\label{1.32}
D^\mu (\sqrt{f_t})
\le C N^{ 2  \fa + 2  \nu   }
\ee
for any $\nu>0$. 
\end{corollary}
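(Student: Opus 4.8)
The plan is to deduce the Corollary directly from Theorem~\ref{thm1} by comparing the Dirichlet form with respect to $\mu$ to the one with respect to $\mu^\tau$, and then inserting the bound on $Q$.

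First I would relate $D^\mu(\sqrt{f_t})$ to $D^{\mu^\tau}(\sqrt{g_t})$. Writing $\psi = \rd\mu^\tau/\rd\mu$ and $g_t = f_t/\psi$, one has $f_t\,\rd\mu = g_t\,\rd\mu^\tau$, so the two Dirichlet forms are comparing derivatives of $\sqrt{f_t}$ and of $\sqrt{g_t}=\sqrt{f_t/\psi}$. Since $\psi$ is (up to normalization) $e^{-N\beta W^\tau}$ with $W^\tau(\bla)=\sum_j \frac{1}{2\tau}(\lambda_j-\gamma_j)^2$, the logarithmic derivative $\partial_i\log\psi$ is smooth and equals $-\frac{N\beta}{\tau}(\lambda_i-\gamma_i)$ plus a constant. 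Using $\partial_i\log f_t = \partial_i\log g_t + \partial_i\log\psi$ and the elementary inequality $(a+b)^2\le 2a^2+2b^2$ in the entropy-form representation $D_i^\om(\sqrt h)=\frac{1}{8N}\int|\partial_i\log h|^2 h\,\rd\om$, I get
\be\label{eq:Dcompare}
  D^\mu(\sqrt{f_t}) \le 2 D^{\mu^\tau}(\sqrt{g_t}) + \frac{1}{4N}\int |\nabla\log\psi|^2 f_t\,\rd\mu.
\ee
The first term is bounded by $CN^2 Q\tau^{-2}$ by Theorem~\ref{thm1} (whose hypothesis \eqref{entA} holds for generalized Wigner initial data by the entropy bound for $f_0$ relative to $\mu$, or can be assumed since we are quoting the theorem). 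For the second term, $|\nabla\log\psi|^2 = \sum_i (N\beta/\tau)^2(\lambda_i-\gamma_i)^2$ (the additive constants drop after differentiation), so
\be\label{eq:psiterm}
  \frac{1}{4N}\int |\nabla\log\psi|^2 f_t\,\rd\mu
  = \frac{\beta^2 N}{4\tau^2}\,\frac{1}{N}\int \sum_i (\lambda_i-\gamma_i)^2 f_t\,\rd\mu \cdot N
  \le \frac{\beta^2 N^2}{4\tau^2}\, Q,
\ee
using the definition \eqref{assume} of $Q$ and $t\le 1$. Combining \eqref{eq:Dcompare} and \eqref{eq:psiterm} yields \eqref{1.31}, i.e. $D^\mu(\sqrt{f_t})\le CN^2 Q\tau^{-2}$ for $t\ge\tau N^{\e'}$, with the constant depending on $\e'$, $\beta$ and $m$.

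Next, to get \eqref{1.32}, I would simply substitute the bounds on $Q$ and $\tau$. Since the initial data $f_0$ comes from a generalized Wigner ensemble, $H_t$ is a generalized Wigner matrix for every $t\in[0,1]$ (the OU flow \eqref{zij} preserves the variance profile and the subexponential decay), so the rigidity estimate \eqref{l2dist} applies at every time $t$, giving $\frac1N\int\sum_j(\lambda_j-\gamma_j)^2 f_t\,\rd\mu \le N^{-2+2\nu}$ uniformly in $t\le 1$; taking the supremum gives $Q\le N^{-2+2\nu}$, which is \eqref{Qbound}. With $\tau\ge N^{-\fa}$ we have $\tau^{-2}\le N^{2\fa}$, hence
\be
  D^\mu(\sqrt{f_t}) \le C N^2 \cdot N^{-2+2\nu}\cdot N^{2\fa} = C N^{2\fa+2\nu}
\ee
for any $\nu>0$, which is \eqref{1.32}.

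The only genuinely delicate point is the comparison \eqref{eq:Dcompare}: one must make sure that the cross term arising from $|\partial_i\log f_t|^2 = |\partial_i\log g_t + \partial_i\log\psi|^2$ is handled correctly and that no boundary terms or integrability issues arise when passing between the $\mu$- and $\mu^\tau$-representations. Since $W^\tau$ is smooth and quadratic and $\mu,\mu^\tau$ are mutually absolutely continuous with a smooth density ratio, this is routine, and the $2a^2+2b^2$ splitting is lossless enough for our purposes (any fixed constant is harmless). Everything else is bookkeeping: plugging the hypotheses $\tau\ge N^{-\fa}$, $t\ge\tau N^{\e'}$ and the rigidity-based bound $Q\le N^{-2+2\nu}$ into \eqref{1.3}.
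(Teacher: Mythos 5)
Your proposal is correct and follows essentially the same route as the paper's own proof: split $\partial_i\log f_t = \partial_i\log g_t + \partial_i\log\psi$, apply $(a+b)^2\le 2a^2+2b^2$ to get $D^\mu(\sqrt{f_t}) \le 2D^{\mu^\tau}(\sqrt{g_t}) + 2N^2Q\tau^{-2}$, invoke Theorem~\ref{thm1} for the first term, and use the explicit quadratic form of $\log\psi$ together with the definition of $Q$ for the second. The passage from \eqref{1.31} to \eqref{1.32} by inserting \eqref{Qbound} and $\tau\ge N^{-\fa}$ is likewise the same.
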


\begin{proof}
Since $g_t = f_t/\psi$, we have
\begin{align*}
 D^\mu (\sqrt{f_t}) & = \sum_{i=1}^N  \frac{1}{ 8    N} \int |\partial_i   \log g_t + \partial_i \log \psi |^2  f_t  \rd\mu  \\
& \le  \frac{1}{ 4   N} \sum_{i=1}^N  \int |\partial_i   \log g_t  |^2  f_t  \rd\mu
+    \frac{1}{  4   N} \sum_{i=1}^N   \int | \partial_i \log \psi |^2  f_t  \rd\mu \\
& \le   2   D^{\mu^\tau} (\sqrt{g_t}) + 2 N^2Q\tau^{-2}.
\end{align*}
Thus \eqref{1.31}  follows from Theorem \ref{thm1}.  Finally,
\eqref{1.32} follows from \eqref{1.31} and \eqref{Qbound}.
\end{proof}

\bigskip

Define $f_\by$ to be the conditional density of $f \mu$ given $\by$ w.r.t. $\mu_\by$,
i.e. it is defined by the relation $f_\by \mu_\by = (f\mu)_\by$. 
For any $\by\in \cR_{L,K}$ we have the convexity bound  \eqref{Hyconv}.
Thus we have the logarithmic Sobolev inequality 
\be\label{lsi}
S(f_{\by}  \mu_\by | \mu_\by)  \le  C \frac K N
 \sum_{i\in I} D_i^{\mu_\by} ( \sqrt {f_{\by}} )
\ee
and the bound  
\be\label{lsi2}
\int \rd \mu_\by | f_\by - 1| \le  \sqrt { S(f_{\by}  \mu_\by | \mu_\by)} 
  \le C  \sqrt {  \frac K N    
 \sum_{i\in I} D_i^{\mu_\by} ( \sqrt {f_{\by}} )}  .
\ee
To control the Dirichlet forms $D_i$ for most external
configurations $\by$, we need the following Lemma.

\begin{lemma}\label{lm:rig1} Fix $\fa>0$,  $\nu>0$,  and $\tau \ge N^{-\fa}$. Suppose  the initial data $f_0$ 
of the DBM is given by a generalized Wigner ensemble.
Then,  with some small $\e'>0$,  
for any $ t \ge \tau N^{\e'}$   there exists a set $\cG_{L, K}\subset\cR_{L,K}$ of good boundary conditions $\by$
with
\be\label{PG}
   \P^{f_t \mu} (\cG_{L, K})\ge 1-CN^{-\e'},
\ee
such that for any $\by\in \cG_{L, K}$ we have 
\be\label{29}
\sum_{i\in I} D_i^{\mu_\by}( \sqrt { f_{t, \by}} ) \le 
 C N^{3\e' +2 \fa+2\nu}, \quad f_{t, \by} =  (f_t)_\by,
 \quad I=I_{L,K},
\ee
and for any bounded observable  $O$ 
\be\label{300}
  \big| [\E^{f_{t, \by} \mu_\by  } - \E^{\mu_\by} ] O(\bx)  \big|
 \le C K^{1/2} N^{2\e' +\fa+ \nu - 1/2}.
\ee 
Furthermore, 
 for any $k\in I$  we also have 
\be\label{30}
  |\E^{f_{t, \by} \mu_\by  } x_k - \gamma_k| \le CN^{-1+\nu}.
\ee  
\end{lemma}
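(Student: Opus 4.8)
The plan is to deduce all three estimates \eqref{29}, \eqref{300} and \eqref{30} from a single upper bound on the \emph{conditional} Dirichlet forms $\sum_{i\in I}D_i^{\mu_\by}(\sqrt{f_{t,\by}})$, valid for all $\by$ outside an exceptional set of small $f_t\mu$-measure, together with the rigidity of $f_t\mu$ itself. The key algebraic identity is the superadditivity of the Dirichlet form under conditioning: writing $f_t(\bx,\by)=f_{t,\by}(\bx)\,\bar f_t(\by)$ with $\bar f_t(\by):=\int f_t(\bx,\by)\,\mu_\by(\rd\bx)$ and using that $\bar f_t$ does not depend on the internal variables $\bx$, so that $\partial_{x_i}\sqrt{f_t}=\sqrt{\bar f_t}\,\partial_{x_i}\sqrt{f_{t,\by}}$ for $i\in I$, one gets
$$
 D^\mu(\sqrt{f_t})\ \ge\ \frac1{2N}\sum_{i\in I}\int|\partial_{x_i}\sqrt{f_t}|^2\,\rd\mu\ =\ \E^{f_t\mu}\Big[\sum_{i\in I}D_i^{\mu_\by}(\sqrt{f_{t,\by}})\Big].
$$
Since $f_0$ is a generalized Wigner ensemble, the a priori entropy bound \eqref{entA} holds with a fixed $m$, so the Corollary to Theorem~\ref{thm1} yields $D^\mu(\sqrt{f_t})\le CN^{2\fa+2\nu}$ for every $t\ge\tau N^{\e'}$; hence $\E^{f_t\mu}\big[\sum_{i\in I}D_i^{\mu_\by}(\sqrt{f_{t,\by}})\big]\le CN^{2\fa+2\nu}$.

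By Markov's inequality the set $\cG^{(1)}:=\big\{\by:\ \sum_{i\in I}D_i^{\mu_\by}(\sqrt{f_{t,\by}})\le N^{3\e'+2\fa+2\nu}\big\}$ then satisfies $\P^{f_t\mu}(\cG^{(1)})\ge1-CN^{-3\e'}$, and on it the bound \eqref{29} holds by construction. For $\by\in\cR_{L,K}$ the conditional Hamiltonian is uniformly convex, $\cH_\by''\ge cN/K$ by \eqref{Hyconv}, so the logarithmic Sobolev inequality \eqref{lsi} applies; combined with the entropy--$L^1$ inequality \eqref{lsi2} this gives, on $\cG^{(1)}\cap\cR_{L,K}$,
$$
 \int\rd\mu_\by\,|f_{t,\by}-1|\ \le\ C\Big(\frac KN\sum_{i\in I}D_i^{\mu_\by}(\sqrt{f_{t,\by}})\Big)^{1/2}\ \le\ C\,K^{1/2}N^{\frac32\e'+\fa+\nu-\frac12},
$$
and \eqref{300} follows from $\big|[\E^{f_{t,\by}\mu_\by}-\E^{\mu_\by}]O(\bx)\big|\le\|O\|_\infty\int\rd\mu_\by\,|f_{t,\by}-1|$ together with $\frac32\e'\le2\e'$.

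The delicate point is \eqref{30}, which I expect to be the main obstacle: the $L^1$-bound just obtained is far too weak --- applied to $O=x_k$ it only gives $|\E^{f_{t,\by}\mu_\by}x_k-\E^{\mu_\by}x_k|\lec K^{1/2}N^{3\e'/2+\fa+\nu-1/2}$, because the logarithmic Sobolev constant of $\mu_\by$ carries the lossy factor $K/N$. Instead I would use that $\E^{f_{t,\by}\mu_\by}x_k=\E^{f_t\mu}[\lambda_k\mid\by]$ and exploit that $H_t$ is again a generalized Wigner matrix, so the strong (superpolynomial) rigidity \eqref{rigidity} holds with respect to $f_t\mu$. A routine tail argument from \eqref{rigidity} (with \eqref{subexp} for the far tail) gives, for every fixed integer $p$ and every bulk index $k$, $\E^{f_t\mu}(\lambda_k-\gamma_k)^{2p}\le C_pN^{-2p+p\nu/4}$ and $|\E^{f_t\mu}\lambda_k-\gamma_k|\le N^{-1+\nu/2}$. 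Conditional Jensen for the convex function $x\mapsto(x-\E^{f_t\mu}\lambda_k)^{2p}$ then transfers the moment bound to the conditional mean,
$$
 \E^{f_t\mu}\big(\E^{f_t\mu}[\lambda_k\mid\by]-\E^{f_t\mu}\lambda_k\big)^{2p}\ \le\ \E^{f_t\mu}\big(\lambda_k-\E^{f_t\mu}\lambda_k\big)^{2p}\ \le\ C_pN^{-2p+p\nu/4},
$$
and Markov with threshold $N^{-1+\nu/2}$, followed by a union bound over the $|I|=\cK\le2N^{1/4}+1$ indices $k\in I$, produces a set $\cG^{(2)}$ with $\P^{f_t\mu}(\cG^{(2)})\ge1-C_pN^{1/4-3p\nu/4}$ on which $|\E^{f_{t,\by}\mu_\by}x_k-\gamma_k|\le N^{-1+\nu}$ for all $k\in I$; choosing $p=p(\nu,\e')$ large enough makes $\P^{f_t\mu}(\cG^{(2)})\ge1-CN^{-\e'}$.

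Finally one sets $\cG_{L,K}:=\cR_{L,K}\cap\cG^{(1)}\cap\cG^{(2)}$; since $\P^{f_t\mu}(\cR_{L,K})\ge1-C_0e^{-c_1N^{c_2}}$ by \eqref{muR1}, this set has $\P^{f_t\mu}(\cG_{L,K})\ge1-CN^{-\e'}$, and \eqref{29}, \eqref{300} and \eqref{30} all hold on it. To summarize where the difficulty lies: smallness of $f_{t,\by}-1$ alone cannot yield \eqref{30}; one must feed in the rigidity of $f_t\mu$, push it through the conditional expectation by Jensen, and retain enough moments to absorb the union bound over the $\cK\le2N^{1/4}+1$ particles of $I$ --- which is precisely why the superpolynomial form \eqref{rigidity} of rigidity, rather than the polynomial bound \eqref{bulkrig1}, is needed here.
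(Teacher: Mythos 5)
Your proposal is essentially correct and follows the same overall architecture as the paper: use the additivity of the Dirichlet form under conditioning together with the Corollary to Theorem~\ref{thm1} to get $\E^{f_t\mu}\big[\sum_{i\in I}D_i^{\mu_\by}(\sqrt{f_{t,\by}})\big]\le CN^{2\fa+2\nu}$, apply Markov to get $\cG^{(1)}$ and \eqref{29}, and then deduce \eqref{300} from \eqref{lsi2}. The only genuine divergence from the paper is how you obtain \eqref{30}. The paper applies a conditional Markov inequality directly to the indicator: since $\P^{f_t\mu}(|\lambda_k-\gamma_k|\ge N^{-1+\nu})\le C_0e^{-c_1N^{c_2}}$ by \eqref{bulkrig1}, the set of $\by$ for which $\P^{f_{t,\by}\mu_\by}(|x_k-\gamma_k|\ge N^{-1+\nu})>e^{-\frac12 c_1N^{c_2}}$ has $f_t\mu$-measure at most $C_0e^{-\frac12 c_1N^{c_2}}$; outside this set \eqref{30} is immediate (the tail is absorbed using the crude global bound $|x_k-\gamma_k|\le C$ on $\cR_{L,K}$). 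Your route — conditional Jensen applied to $x\mapsto(x-\E^{f_t\mu}\lambda_k)^{2p}$, followed by a $2p$-th moment Markov inequality and a union bound over $I$ — is also valid and proves the same estimate, though it is somewhat heavier machinery for what can be handled with a one-line Fubini/Markov argument on the indicator. Your closing remark about needing the "superpolynomial form \eqref{rigidity} rather than the polynomial bound \eqref{bulkrig1}" is a misreading: \eqref{bulkrig1} is already superpolynomial ($C_0e^{-c_1N^{c_2}}$), which is precisely what the paper uses; there is no genuine distinction between the two for this step, and both your approach and the paper's can be run off either bound.
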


\begin{proof} In this proof, we omit the subscript $t$, i.e. we use $f=f_t$.
By definition of the conditional measure and by  \eqref{1.32}, we have 
$$
  \E^{f \mu}   \sum_{i\in I} D_i^{\mu_\by}( \sqrt { f_\by}) =  \sum_{i\in I} D_i^{\mu}( \sqrt f ) 
\le D^{\mu}( \sqrt f )  \le   C N^{2\fa+2\nu}.
$$
By Markov inequality, \eqref{29}
holds for all $\by$ in a set $\cG^1_{L,K}$ with $\P^{f\mu}(\cG^1_{L, K})\ge 1- CN^{-3\e'}$.
Without loss of generality, by \eqref{muR1} we can assume that $\cG^1_{L,K}\subset \cR_{L,K}$.
The estimate \eqref{300} now follows from \eqref{29} and \eqref{lsi2}. 

Similarly, the rigidity bound \eqref{bulkrig1}
with respect to $f \mu$ can  be  translated to the measure  $f_\by\mu_\by$,  i.e.
there exists a set $\cG^2_{L, K}$, with 
$$  
  \P^{f\mu}(\cG^2_{L, K}) \ge   1-C_0\exp{\big( -\frac{1}{2}c_1N^{c_2}\big)},
$$
such that for any $\by\in \cG^2_{L, K} $ and 
for any $k\in I$, we have
$$
   \P^{f_\by\mu_\by} \Big(  |x_k - \gamma_k| \ge N^{-1+\nu}\Big) \le  \exp{\big(-\frac{1}{2}c_1N^{c_2}\big)}.
$$
In particular, we can conclude \eqref{30} 
 for any $\by\in \cG^2_{L,K}$. 
Setting $\cG_{L,K} := \cG^1_{L,K}\cap  \cG^2_{L,K}$
we proved the lemma.
\end{proof}

\medskip

\begin{lemma}\label{ec}  Fix $\fa>0$, $\nu>0$,  and $\tau \ge N^{-\fa}$.
 Suppose  the initial data $f_0$ of the DBM is given by a generalized Wigner ensemble.
Then, with some small $\e'>0$,  for any $ t \ge \tau N^{\e'}$, $k\in I$ and   $\by\in \cG_{L, K}$, we have 
\be\label{31}
 | \E^{\mu_\by} x_k - \E^{  f_{t, \by} \mu_\by } x_k|\le 
KN^{-3/2+\nu+ \fa+2\e'} .
\ee
In particular, if the parameters chosen such that
$$
KN^{-3/2+\nu+ \fa+2\e'} \le N^{-1}K^\xi, \qquad  \mbox{and} \quad N^{-1+\nu}\le N^{-1}K^\xi 
$$
 with some small $\xi>0$, 
then 
\be\label{Ex1}
 \big|\E^{\mu_\by}x_k - \al_k\big|\le CN^{-1}K^\xi,  \qquad k\in I,
\ee
where $\alpha_k$ is defined in \eqref{aldef}. In other words,  the analogue of\eqref{Ex}
 is satisfied.  
\end{lemma}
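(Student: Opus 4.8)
The plan is to prove \eqref{31} first and then obtain \eqref{Ex1} as an immediate consequence. Fix $\by\in\cG_{L,K}\subset\cR_{L,K}$ and write $f=f_{t,\by}=(f_t)_\by$ for brevity. Since $\int(f-1)\rd\mu_\by=0$, we may center and write $\E^{f\mu_\by}x_k-\E^{\mu_\by}x_k=\int\big(x_k-\E^{\mu_\by}x_k\big)(f-1)\rd\mu_\by$, so I would estimate this quantity through the entropy inequality
\[
  \pm\big(\E^{f\mu_\by}x_k-\E^{\mu_\by}x_k\big)\le\frac1t\,S(f\mu_\by|\mu_\by)+\frac1t\log\E^{\mu_\by}e^{\pm t(x_k-\E^{\mu_\by}x_k)},\qquad t>0,
\]
and then optimize over $t$. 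This reduces the lemma to two ingredients: an upper bound on the relative entropy $S(f\mu_\by|\mu_\by)$, and a Gaussian-type bound for the centered exponential moments of the single particle $x_k$ under the reference measure $\mu_\by$.

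For the entropy I would use the logarithmic Sobolev inequality \eqref{lsi} — available on $\cR_{L,K}$ thanks to the convexity bound \eqref{Hyconv} — together with the Dirichlet-form estimate \eqref{29} for $\by\in\cG_{L,K}$, giving $S(f\mu_\by|\mu_\by)\le C\frac KN\sum_{i\in I}D_i^{\mu_\by}(\sqrt f)\le CKN^{-1+3\e'+2\fa+2\nu}$. For the exponential moments, the point is that \eqref{Hyconv} makes $\mu_\by$ strictly log-concave on its (convex) configuration domain, with the Hessian of $-\log\mu_\by$ bounded below by $N\beta\,\cH_\by''\ge cN^2/K$; by the Bakry--Émery criterion $\mu_\by$ then satisfies a logarithmic Sobolev inequality with constant of order $K/N^2$, and since $x_k$ is a $1$-Lipschitz function of the internal coordinates, the Herbst argument yields the sub-Gaussian bound $\log\E^{\mu_\by}e^{\pm t(x_k-\E^{\mu_\by}x_k)}\le CKt^2/N^2$. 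Inserting both bounds into the entropy inequality and minimizing $t\mapsto t^{-1}S(f\mu_\by|\mu_\by)+CKt/N^2$ at $t\asymp N\,(S(f\mu_\by|\mu_\by)/K)^{1/2}$ gives $\big|\E^{f\mu_\by}x_k-\E^{\mu_\by}x_k\big|\le CN^{-1}\big(K\,S(f\mu_\by|\mu_\by)\big)^{1/2}\le CKN^{-3/2+\nu+\fa+2\e'}$, which is \eqref{31}. The estimate \eqref{Ex1} then follows by the triangle inequality from \eqref{31}, the bound $|\E^{f\mu_\by}x_k-\gamma_k|\le CN^{-1+\nu}$ of \eqref{30}, and the elementary comparison $|\gamma_k-\alpha_k|\le CN^{-1}K^\xi$ (proved by the same Taylor expansion as in \eqref{alga}, now with the semicircle density), once the stated smallness of the parameters makes each of these three contributions at most $CN^{-1}K^\xi$.

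The step I expect to need the most care is the single-particle concentration for $\mu_\by$. It is tempting to invoke the rigidity bound of Theorem~\ref{thm:omrig} for $\mu_\by$, but that would be circular: its hypothesis \eqref{Exone} is precisely the conclusion \eqref{Ex1} we are proving. The way out is to use only the \emph{unconditional} input \eqref{Hyconv} — uniform convexity of the local Hamiltonian with Hessian of size $N^2/K$ — which already forces the fluctuation of a single particle to be sub-Gaussian on the scale $\sqrt K/N$; this is exactly the scale that, multiplied against the entropy bound, produces the extra factor $N^{-1}$ and hence the $N^{-3/2}$ decay of \eqref{31}, in contrast with the $N^{-1/2}$ decay one gets for a generic bounded observable in \eqref{300}. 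A minor technical point to verify is that Bakry--Émery (equivalently, Brascamp--Lieb) is applicable on the ordered configuration simplex inside $J$, which is a convex set, so that the log-concavity argument is legitimate there; the singular logarithmic interaction only contributes a positive semidefinite term to $\cH_\by''$ and so does not obstruct this.
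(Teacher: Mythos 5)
Your proposal is correct, and it takes a genuinely different route from the paper. The paper proves \eqref{31} \emph{dynamically}: it runs the reversible heat flow $\partial_s q_s = \cL_\by q_s$ with generator given by the Dirichlet form $D^{\mu_\by}$, starting from $q_0 = f_\by$, writes $\E^{\mu_\by}x_k - \E^{f_\by\mu_\by}x_k$ as a time integral of $\frac{1}{2N}\int\partial_k q_u\,\rd\mu_\by$ up to an exponentially small tail beyond $K^{\e'}\tau_K$, applies a Schwarz inequality with a free parameter $R$ to split off $R\,(\partial_k\sqrt{q_u})^2$ plus $R^{-1}$, bounds the time-integrated Dirichlet form by the initial relative entropy, converts that entropy to a Dirichlet form via the LSI \eqref{lsi}, and finally optimizes over $R$. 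You instead use a \emph{static} argument: the Donsker--Varadhan entropy inequality for the observable $x_k$, with the entropy bounded through LSI plus \eqref{29} and the exponential moment bounded through Bakry--\'Emery/Herbst on the convex configuration simplex, followed by optimization over the conjugate parameter $t$. Both arguments feed on exactly the same convexity input \eqref{Hyconv} and the same Dirichlet form estimate \eqref{29}, and the free-parameter optimizations are structurally equivalent (your $t$ plays the role of the paper's $R$), so the two proofs land on the same $K N^{-3/2+O(\e')}$ scale; your version in fact avoids the slightly lossy time-truncation factor $K^{\e'}$ and is arguably more elementary since it does not invoke the auxiliary flow. Your observation that invoking Theorem~\ref{thm:omrig} here would be circular, and that the Herbst step must instead rest on the unconditional bound \eqref{Hyconv} on the ordered convex simplex, is correct and matches the spirit of the paper's remark after the lemma that \eqref{300} applied to $O(\bx)=x_k$ would be far too weak: the gain comes precisely from the sub-Gaussian fluctuation scale $\sqrt{K}/N$ of a single particle under the strictly convex local measure. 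The deduction of \eqref{Ex1} from \eqref{31}, \eqref{30} and the comparison $|\gamma_k-\alpha_k|\le CN^{-1}K^\xi$ is exactly what the paper does.
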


Notice that if we apply \eqref{300} 
with the special choice $O(\bx) = x_k$ then the error estimate would be much worse than  \eqref{31}. 
We wish to emphasize that \eqref{Ex1} is not an obvious fact although we know that it holds 
for $\by$ with high probability w.r.t. the equilibrium measure $\mu$.  The key point of \eqref{Ex1}
is that it holds for any $\by\in \cG_{L, K}$ and thus with "high probability" w.r.t  $f_t \mu$!

\medskip  

\begin{proof} Once again, we omit the subscript $t$. 
The estimate \eqref{Ex1}  is a simple consequence of \eqref{31}, \eqref{30}
and \eqref{alga}.
To prove \eqref{31},  we run the reversible dynamics
\be\label{dk}
   \partial_s q_s = \cL_\by q_s
\ee
starting from initial data $q_0= f_\by$,  where the generator $ \cL_\by$ is the unique 
reversible generator with the Dirichlet form $D^{\mu_\by}$, i.e., 
$$
-\int f \cL_\by \,  g  \,  \rd \mu_\by = \sum_{i \in I} \frac {1}{ 2 N } \int   \nabla_i f \cdot \nabla_i g \, \rd \mu_\by. 
$$
\nc
Recall  that from the convexity bound \eqref{Hyconv}, $\tau_K = K/N$ is the time 
to equilibrium of this dynamics.  After 
 differentiation and integration we get,  
$$
 \Big| \E^{\mu_\by} x_k - \E^{  f_\by \mu_\by }  x_k \Big|= 
\Big|\int_0^{K^{\e'} \tau_K } \rd u \frac{1}{2  N} \int 
  (\partial_k  {q_u} ) \rd\mu_\by \Big|+ O(\exp{(-cK^{\e'})}).
$$
{F}rom the Schwarz inequality  with a free parameter $R$, we can bound the last line by 
$$
 \frac{1}{N} \int_0^{K^{\e'} \tau_K } \rd u  \int \Big(  R (\partial_k \sqrt{q_u} )^2 +R^{-1}\Big) 
\rd\mu_\by+  O(\exp{(-cK^{\e'})}).
$$
Dropping the trivial subexponential error term and
using that the time integral of the Dirichlet form 
is bounded by the initial entropy, we can bound the last line by 
$$
   RS(f_{\by} \mu_\by | \mu_\by) + \frac{K^{\e'} \tau_K   }{NR}.
$$
Using the logarithmic Sobolev inequality for $\mu_\by$ and  optimizing 
the parameter $R$, we can bound the last term by   
\begin{align}\label{g1}
  \Big| \E^{\mu_\by} x_k - \E^{  f_\by \mu_\by }  x_k \Big|  & \le \tau_K  R    
 \sum_{i\in I} D_i^{\mu_\by} (\sqrt{ f_{\by}})  + \frac{K^{\e'} \tau_K }{NR}+ O(\exp{(-cK^{\e'})}) \nonumber  \\
&  \le  \frac{ K^{\e'} \tau_K }{N^{1/2}}\Big(  \sum_{i\in I} D_i^{\mu_\by} (\sqrt{ f_{\by}})  \Big)^{1/2}+ O(\exp{(-cK^{\e'})}).
\end{align}  
 Combining this bound with \eqref{29}, we 
obtain \eqref{31}.
\end{proof} 

\medskip

 We now prove the following comparison 
for the local statistics of $\mu$ and $f_t\mu$, where $\mu$ is
the Gaussian $\beta$-ensemble, \eqref{mubeta}, with quadratic $V$,
and $f_t$ is the solution of \eqref{dy} with initial data $f_0$ given by
the original generalized Wigner matrix.

\begin{lemma}\label{lm:COMP} Fix  $n>0$,  $\fa>0$ and $\tau\ge N^{-\fa}$.  Then 
for sufficient small $\fa$ there exist positive $\e$ and $\e'$ such that
for any $t\ge \tau N^{\e'}$, for any $n$   and for any $n$-particle observable $O$
we have
\be\label{301}
   \Big| \big[\E^{f_t \mu}  -\E^{\mu}\big] O\big( N(x_j-x_{j+1}), N(x_{j}-x_{j+2}),
 \ldots , N(x_j - x_{j+n})\big)\Big|
    \le CN^{-\e} \| O'\|_\infty , 
\ee
for any $j\in \llbracket \al N, (1-\al)N\rrbracket$ and
for any sufficiently large $N$. 
\end{lemma}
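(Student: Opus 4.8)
The plan is to condition on the external points $\by=\{\lambda_k\}_{k\notin I_{L,K}}$ and to compare the resulting conditional single-gap statistics of $f_t\mu$ and of $\mu$ via Theorem~\ref{thm:local}, re-using the analysis of the conditional Gaussian measures $\mu_\by$ already carried out in Section~\ref{sec:beta}. Given $j\in\llbracket\al N,(1-\al)N\rrbracket$, I would first choose $K=\lfloor N^{\delta}\rfloor$ with $\delta$ small (so that \eqref{K} holds), and $L$ together with $|p|\le K/2$ so that $j=L+p$; for $N$ large this yields $|p|\le K-K^{1-\xi^*}$ as required by Theorem~\ref{thm:local}. Writing $O(\ldots)=O\big(N(x_j-x_{j+1}),\ldots,N(x_j-x_{j+n})\big)$, one has
\begin{equation}\label{plan:cond}
\E^{f_t\mu}\,O(\ldots)=\int\E^{f_{t,\by}\mu_\by}\,O(\ldots)\,\rd(f_t\mu)(\by),\qquad
\E^{\mu}\,O(\ldots)=\int\E^{\mu_\by}\,O(\ldots)\,\rd\mu(\by),
\end{equation}
because the conditional measure of $\mu$ on the internal points is $\mu_\by$, while that of $f_t\mu$ is $f_{t,\by}\mu_\by$.

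Next I would restrict both $\by$-integrations to good sets. On the $\mu$-side I use $\cR^*=\cR^*_{L,K,\mu}(\xi^2\delta/2,\al/2)$ from Lemma~\ref{lm:goody1}: it carries all but $C_0\exp(-\tfrac12c_1N^{c_2})$ of the $\mu$-mass, it is contained in $\cR_{L,K}(\xi^2\delta/2,\al/2)$, and on it the estimate \eqref{Exmu} --- which is exactly the hypothesis \eqref{Ex} of Theorem~\ref{thm:local} --- holds. On the $f_t\mu$-side I use $\cG_{L,K}\subset\cR_{L,K}$ from Lemma~\ref{lm:rig1} (with the parameters required by Theorem~\ref{thm:local}): it carries all but $CN^{-\e'}$ of the $f_t\mu$-mass, by \eqref{300} one may replace $\E^{f_{t,\by}\mu_\by}O(\ldots)$ by $\E^{\mu_\by}O(\ldots)$ at the cost of $CK^{1/2}N^{2\e'+\fa+\nu-1/2}\|O\|_\infty$ (a negative power of $N$ once $\fa,\e',\nu,\delta$ are small, since $K\le N^{1/4}$), and by Lemma~\ref{ec} the bound \eqref{Ex1}, which again is the hypothesis \eqref{Ex}, holds for \emph{every} $\by\in\cG_{L,K}$. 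This last point is the only genuinely nonobvious one: \eqref{Ex} is needed not $\mu$-almost surely but for $f_t\mu$-typical $\by$, and $f_t\mu$ may be asymptotically singular with respect to $\mu$; it was, however, secured in Lemma~\ref{ec} by running the reversible $\mu_\by$-dynamics for a time $K^{\e'}\tau_K$ and using the Dirichlet-form bound \eqref{29}.

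It then remains to see that $\by\mapsto\E^{\mu_\by}O(\ldots)$ is essentially independent of the (good) boundary condition. Fixing a reference $\wt\by\in\cR^*$, for any $\by$ lying in $\cG_{L,K}$ or in $\cR^*$ both $\mu_\by$ and $\mu_{\wt\by}$ are conditional measures of the \emph{same} Gaussian $\beta$-ensemble; their configuration intervals obey \eqref{Jlength} and have lengths agreeing up to a factor $1+O(K^{-1+\xi})$, so after the rescaling (Lemma~\ref{lm:res}) and shift used in the proof of Proposition~\ref{prop:local} the hypotheses \eqref{J=J}, \eqref{Jlen} and \eqref{Ex} of Theorem~\ref{thm:local} are all met, giving $|\E^{\mu_\by}O(\ldots)-\E^{\mu_{\wt\by}}O(\ldots)|\le CK^{-\e}\|O'\|_\infty$. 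Averaging this against $f_t\mu$ restricted to $\cG_{L,K}$ and against $\mu$ restricted to $\cR^*$ shows that both $\E^{f_t\mu}O(\ldots)$ and $\E^{\mu}O(\ldots)$ equal $\E^{\mu_{\wt\by}}O(\ldots)$ up to $CK^{-\e}\|O'\|_\infty$, the replacement error above, and the exceptional-set contributions $CN^{-\e'}\|O\|_\infty+C_0\exp(-\tfrac12c_1N^{c_2})\|O\|_\infty$. Since $\supp O\subset[-O_{\mbox{\footnotesize supp}},O_{\mbox{\footnotesize supp}}]^{n}$ forces $\|O\|_\infty\le 2O_{\mbox{\footnotesize supp}}\|O'\|_\infty$, every error is of the form $CN^{-\e}\|O'\|_\infty$ once $\fa,\delta,\e',\nu,\xi$ are fixed small enough, which is \eqref{301}.

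The bulk of the work sits in the inputs rather than in this assembly: Theorem~\ref{thm:local} (the De Giorgi--Nash--Moser regularity of the random-walk representation), Lemma~\ref{lm:rig1} (Dirichlet-form control of $f_{t,\by}$ for $f_t\mu$-typical $\by$) and, above all, Lemma~\ref{ec}, which upgrades \eqref{Ex} to the $f_t\mu$-good set. The one thing requiring care here is the simultaneous choice of the small exponents $\fa,\delta,\e',\nu,\xi$ so that each error becomes a strictly negative power of $N$; the binding constraint is that $\tau=N^{-\fa}$ --- and hence the smallest admissible time $t\simeq N^{-\fa+\e'}$ --- be small enough that $K^{1/2}N^{2\e'+\fa+\nu-1/2}$ in \eqref{300} stays below $N^{-c}$, which in turn determines how small a Gaussian convolution the subsequent Green-function comparison step must remove.
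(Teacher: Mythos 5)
Your proposal is correct and follows essentially the same route as the paper: disintegrate both expectations over the boundary configuration $\by$, use Lemma~\ref{lm:rig1} (via \eqref{300} and \eqref{PG}) to pass from $f_{t,\by}\mu_\by$ to $\mu_\by$ on the $f_t\mu$-typical good set $\cG_{L,K}$, secure the hypothesis \eqref{Ex} for such $\by$ through Lemma~\ref{ec}, fix a reference $\wt\by\in\cR^*$ from Lemma~\ref{lm:goody1}, and compare $\mu_\by$ to $\mu_{\wt\by}$ by Theorem~\ref{thm:local}. The only point where you are marginally more explicit than the paper is in invoking the rescaling/shift of the configuration intervals (Lemma~\ref{lm:res} and the argument of Proposition~\ref{prop:local}) before applying Theorem~\ref{thm:local}; the paper leaves this implicit, but since both local measures here are conditional Gaussian ensembles this step is indeed needed to realize \eqref{J=J}, and your treatment of it is appropriate.
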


\begin{proof}  We will apply Lemma~\ref{lm:rig1}, and we 
choose $L = j$.  Since  $K\le N^{1/4}$,  \nc the right hand side of  \eqref{300} 
is smaller than $N^{-\e}$.  Then we have 
\be\label{302}
   \Big| \big[\E^{f_{t, \by} \mu_\by}  -\E^{\mu_\by }\big] O\big( N(x_j-x_{j+1}),
 N(x_{j}-x_{j+2}), \ldots , N(x_j - x_{j+n})\big)\Big|
    \le CN^{-\e}, 
\ee
for all $\by\in \cG_{L, K}$ with the probability of  $\cG_{L, K}$ satisfying  \eqref{PG}. 
 Choose any   $\tilde \by \in \cR^\ast $ defined in Lemma \ref{lm:goody1}.
 We now apply Theorem \ref{thm:local} 
with both $\mu_\by$ and $\mu_{\tilde \by}$
 given by  local Gaussian $\beta$-ensemble.
Thus   the  estimate
 \eqref{Ex}  is 
guaranteed by \eqref{Exmu} and   \eqref{Ex1}.   Since $\by, \wt\by \in \cR= \cR_{L,K}(\xi^2\delta/2, \al)$
and Lemma~\ref{lm:goody} guarantees \eqref{Jlen}, 
 we  can apply Theorem \ref{thm:local}  so that 
\be\label{3031}
   \Big| \big[\E^{\mu_\by}  -\E^{ \mu_{\tilde \by} }\big] O\big( N(x_j-x_{j+1}), N(x_{j}-x_{j+2}), 
\ldots , N(x_j - x_{j+n})\big)\Big|
    \le CN^{-\e} \|O'\|_\infty, 
\ee
for all $\by\in \cG_{L, K}$ and $\tilde \by \in \cR^\ast$.
 Since  $\P^\mu ( \cR^\ast) \ge 1 - N^{-\e}$, see \eqref{muR*}, 
we have thus proved that  
\be\label{303}
   \Big| \big[\E^{\mu_\by}  -\E^{  \mu }\big] O\big( N(x_j-x_{j+1}), N(x_{j}-x_{j+2}), 
\ldots , N(x_j - x_{j+n})\big)\Big|
    \le CN^{-\e}\|O'\|_\infty , 
\ee
for all $\by\in \cG_{L, K}$. 
{F}rom  \eqref{302}, \eqref{303} and the probability estimate \eqref{PG} for $\cG_{L, K}$, 
 with possibly reducing $\e$ so that $\e\le \e'$,  we obtain that   
\be\label{3022}
   \Big| \big[\E^{f_{t} \mu}  -\E^{ \mu }\big] O\big( N(x_j-x_{j+1}), N(x_{j}-x_{j+2}), \ldots , N(x_j - x_{j+n})\big)\Big| 
    \le CN^{-\e} \|O' \|_\infty. 
\ee
This proves Lemma~\ref{lm:COMP}. 
\end{proof}

\newcommand{\f}[1]{\boldsymbol{\mathrm{#1}}}

\medskip

Recall that $H_t$ is the generalized Wigner matrix  whose    matrix elements evolve by independent OU processes. 
Thus  in Lemma~\ref{lm:COMP} 
we have proved that the local statistics of  $H_t$,  for $t\ge N^{-2\fa +\e'}$,
 is the same as the corresponding Gaussian one for 
any initial  generalized matrix $H_0$.  
Finally, we need to approximate the generalized Wigner ensembles by Gaussian divisible ones.
 The idea  of approximation
first appeared in \cite{EPRSY} via a  ``reverse heat flow'' argument and   was also  used in \cite{TV}  via a four moment 
theorem. We will follow the Green function comparison theorem of \cite {EYYBand, EYY2} and in particular, the 
result in \cite{KY} since these results were formulated and proved  for the generalized  Wigner matrices.

Theorem 1.10 from \cite{KY} implies  that if the first four  moments of
two generalized Wigner ensembles
 $H^{\f v}$ and $H^{\f w}$  are the same  then 
\be\label{12}
 \lim_{N\to \infty} \big [ \E^{\f v} - \E^{{\f w}}   \big ] 
 O\big( N(x_j-x_{j+1}), N(x_{j}-x_{j+2}), \ldots , N(x_j - x_{j+n})\big)   \;=\; 0,
\ee
provided that one of the ensembles, say  $H^{\f w}$,  satisfies the following   level repulsion estimate:
For any $\kappa > 0$, there is an $\alpha_0 > 
0$ such that for any $\alpha$ satisfying $0 < \alpha \le \alpha_0$ there exists a  $\nu > 0$ \nc such 
that
\be\label{lrb}
\P^{\f w} \left (   \cN (E - N^{-1 - \alpha} , E + N^{-1 - \alpha})  \ge 2 \right  ) \;\leq\; N^{-\alpha-\nu}
\ee
for all $E \in [-2 + \kappa, 2 - \kappa]$,  where $\cN(a,b)$ denotes the
number of eigenvalues in the interval $(a,b)$.  Although  this theorem was stated 
with the assumption that all four moments
of the matrix elements  
of the two ensembles match exactly, it in fact only requires  that the first three moments match exactly and 
the differences of the fourth moments are  less than $N^{-c''}$   for some  small $c'' > 0$.  The relaxation 
of the fourth moment assumption was carried out in details in \cite{EYYBand, EKYY2} and we will not repeat it here.

We now apply \eqref{12}  with the choice  $H^{\f v}$ being  the   generalized Wigner ensemble 
for which we wish to prove the universality
and $H^{\f w} = H_t$  with  $t = N^{-c'}$ for some small $c'$. The  necessary estimate on the 
level repulsion \eqref{lrb} 
follows from the gap universality  and the rigidity estimate  for 
$H_t$. More precisely, for any energy $E$ in the bulk , choose  the index $k$ such that $ |\gamma_{k} - E |\le C /N$. 
Then from the rigidity estimate \eqref{rigidity}, we have for any $c > 0$ that 
\begin{align*}
 \P^{\f w} \Big(   \cN (E - N^{-1 - \alpha} , &\,  E + N^{-1 - \alpha})  \ge 2 \Big)    \\
  \;\leq\; &
\sum_{j:  |j-k| \le N^{c_0}} \P^{\f w} \left (    \lambda_{j+1} - \lambda_j \le  N^{-1 - \alpha} \right  )
  + e^{-N^{c}}  \\
 \;\leq\; &   \sum_{j:  |j-k| \le N^c} \Big [   \P^{\mu} \left (    \lambda_{j+1} - \lambda_j \le  N^{-1 - \alpha} \right  )
 +  CN^{-\e} \Big ] + e^{-N^{c}}   \\
 \;\leq\; &
C N^{\xi} N^{- (\beta + 1) \alpha+ c}  + CN^{\al-\e}  .
\end{align*}
 Here in the first inequality we used the rigidity \eqref{rigidity}   and in the second inequality 
we used \eqref{3022} with an observable $O$ that is 
a smoothed version of the characteristic function on scale $N^{-\al}$, i.e. $\|O'\|_\infty \le CN^\al$.
In the last step  we used the level repulsion bound for GOE/GUE for  $\beta = 1$ or $2$, respectively. 
The level repulsion bound for GOE/GUE is well-known; 
  it also 
 follows from  part ii) of Theorem~\ref{lr2} and the fact  that
\eqref{l20} holds for all $\by\in \cR_{L.K}$, i.e.  with  a very high probability (see \eqref{muR}).
Finally we  choose $\alpha_0 \le  \e/4$. Then  for any $\alpha< \alpha_0$, there exist 
 small exponents $\nu, c, \xi$
 such that $\nu + c +\xi < \alpha$. 
This proves \eqref{lrb} for the ensemble
$ H_t$.

Following  \cite{EYY2}, we  construct an auxiliary Wigner matrix $ H_0$ such that  the first three moments of 
$ H_t$
 and   the {\it original} matrix $H^{\f v}$ 
are identical while the differences of the fourth moments are  less than $N^{-c''}$   for some  small $c'' > 0$ 
depending on $c'$   (see Lemma 3.4 of \cite{EYY2}).
The gap statistics of $H^\bv$ and $H^{\f w} = H_t$ coincide by  \eqref{12} and the gap statistics
of $H_t$ coincides with those of GUE/GOE by Lemma~\ref{lm:COMP}.
This completes the proof of \eqref{EEO} showing
that the local gap statistics with the same gap-label $j$  is identical for
the generalized Wigner matrix and the Gaussian case.
 The proof of \eqref{EEO1} 
follows now  directly from Theorem~\ref{thm:beta} that, in particular, compares the local
gap statistics for different gap labels ($k$ and $m$) in the Gaussian case.  
This completes the proof
Theorem~\ref{thm:wigner}. \qed

\section{Rigidity and level repulsion  of  local measures}\label{sec:profmu}

\subsection{Rigidity of  $\mu_\by$:   proof of Theorem~\ref{thm:omrig} }
\label{sec:rig}

We will prove Theorem~\ref{thm:omrig} using a method similar to the proof of Theorem 3.1 in \cite{BEY}. 
Theorem 3.1 of \cite{BEY} was proved by a quite complicated argument involving induction on scales and 
the loop equation.  The loop equation, however, requires analyticity of the potential
and it cannot be applied to prove Theorem~\ref{thm:omrig} 
for a local measure whose potential $V_\by$ is not analytic. 
 We note, however, that  in \cite{BEY}  the loop equation 
was used only to estimate  the {\it expected locations}  of the particles. 
 Now this estimate is given  as a condition  by  \eqref{Exone}
and thus we can adapt the proof in \cite{BEY} to the current setting.
For later application, however, we will need a stronger form of
the rigidity bound, namely we will establish that the tail of
the gap distribution has a Gaussian decay. This stronger statement
requires some modifications to the argument from \cite{BEY}
which therefore we partially repeat here. 
 We now introduce the notations needed to prove 
Theorem~\ref{thm:omrig}. 

Let $\theta$ be a  continuously differentiable  nonnegative function with $\theta=0$ on $[-1,1]$ 
and $\theta''\geq 1$ for $|x|>1$.
We can take for example $\theta(x)=(x-1)^2 \mathds{1}_{x>1}+(x+1)^2 \mathds{1}_{x<-1}$
 in the following.

For any $m\in\llbracket\alpha N,(1-\alpha) N\rrbracket$ and any integer
 $1\leq  M\leq \alpha N$, we denote $I^{(m,M)}=\llbracket m-M,m+M\rrbracket$ and 
$ \cM=|I^{(m,M)}|=2M+1$.   Let $\eta: =\xi/3$. 
For any $k,M$ with $|k-L|\le K-M$,
 define
\be\label{def:phi}
\phi^{(k,M)}(\bx):=
\sum_{ i<j, \; i,j \in I^{(k,M)} }
\theta\left(\frac{ N(x_i-x_j)}{\cM K^{2\eta}}\right).
\ee 
Let  
$$ 
   \om^{(k,M)}_\by = Z_{\by, \phi}\mu_\by e^{- \phi^{(k,M)}},
$$
where $Z_{\by,\phi}$ is a normalization constant. 
Choose an increasing sequence of integers, $ M_1 < M_2 < \ldots < M_A$
such that $M_1= K^{\xi}$, $M_A=  CK^{1-2\eta}$ with a large constant $C$,
 and $M_\ga/M_{\ga-1}\sim K^{\eta}$  (meaning that $cK^\eta\le M_\ga/M_{\ga-1}\le C K^{\eta}$). 
We can choose the sequence such that $A\le C\eta^{-1}$.
We set
$\om_\ga : = \om^{(k, M_\ga)}_\by$ and we study the rigidity properties of the
measures $\om_A, \om_{A-1}, \ldots ,\om_1$ in this order.
Note that  $\mu_\by = \om_A$  since 
$\by \in \cR_{L, K}=\cR_{L, K}(\xi\delta/2,\al) $ guarantees that  $|x_i-x_j|\le |J_\by|\le CK/N$,
see \eqref{Jlength}, thus for $M=M_A= CK^{1-2\eta}$
the argument of $\theta$ in \eqref{def:phi} is smaller than 1,
so $\phi\equiv 0$ in this case.
We also introduce the notation
$$
   x_k^{[M]}: = \frac{1}{2M+1}\sum_{j=k-M}^{k+M} x_j.
$$

\begin{definition} 
We say that $\mu_\by$ has exponential rigidity on scale $\ell$  if there are constants
 $C, c$, such that  the following bound holds
\be\label{rigg}
  \P^{\mu_\by}( |x_k-\al_k|\ge \ell + uK^{\xi}N^{-1})\le Ce^{-cu^2} ,\qquad u>0,
\ee
for any $k\in I$. 
\end{definition}

First we prove that $\mu_\by$ has exponential rigidity on scale $M_AN^{-1}$.
Starting from $\ga=A$, 
by the Herbst bound and the logarithmic
Sobolev inequality for $\mu_\by$ with LSI constant of order $K/N$ \eqref{lsi}, we have
for any $k \in \llbracket L-K+M_A, L+K-M_A\rrbracket$ that
\be 
\P^{\mu_\by}\Big(  \Big |x_k^{[M_A]} -
\E^{\mu_\by} x_k^{[M_A]} \Big|\ge \frac{b}{\sqrt{M_A}} \Big)
 \le e^{-c(N/K)Nb^2}, \qquad b\ge 0,
\ee
i.e.
\be\label{A1}
 \P^{\mu_\by}\Big(  \Big |x_k^{[M_A]} -
 \E^{\mu_\by} x_k^{[M_A]} \Big|\ge   \frac {u K^{\eta}} N 
  \Big)
  \le Ce^{-c u^2 }.
\ee
Using the estimate  \eqref{Ex1}
we have that 
$$
\Big | \E^{\mu_\by} x_k^{[M_A]} -  \al_k^{[M_A]} \Big|
\le  
  CN^{-1}K^\xi.  
$$

Thus  we obtain
\be\label{Aav}
 \P^{\mu_\by}\Big(  \Big |x_k^{[M_A]} - \al_k^{[M_A]} \Big|\ge CN^{-1}K^\xi +
 \frac {u K^{\eta}} N 
  \Big)
  \le Ce^{-c u^2 }.
\ee
Since $x_{k-M}^{[M]}\le x_k \le x_{k+M}^{[M]}$ and the $\al_k$'s are regular
with spacing of order $1/N$, we  get
$$
   x_k-\al_k\le x_{k+M}^{[M]} - \al_{k-M}^{[M]}
  \le x_{k+M}^{[M]} - \al_{k+M}^{[M]} + CMN^{-1}
$$
and we also have a similar lower bound. Thus
\be\label{Ak}
 \P^{\mu_\by}\Big(  \Big |x_k - \al_k \Big|\ge CM_A N^{-1} + \frac {u K^{\eta}} N  \Big)
  \le Ce^{-c u^2 }
\ee
for any $k \in \llbracket L-K+2 M_A, L+K- 2M_A\rrbracket$, where we used that
$M_A\ge K^\xi$.
If $k \in \llbracket L-K,  L-K+2 M_A \rrbracket$, then
$$
    x_k -\al_k \le  x_{L-K+2 M_A} - \al_{L-K+2M_A}  + CM_AN^{-1}
$$
and
$$
   x_k-\al_k \ge y_{L-K-1} - \al_k \ge - CM_AN^{-1}. 
$$
Thus we have the estimate 
$$
   |x_k -\al_k| \le  |x_{L-K+2 M_A} - \al_{L-K+2M_A}|  + CM_AN^{-1}.
$$
Since \eqref{Ak} holds for the difference $x_{L-K+2 M_A}- \al_{L-K+2M_A}$, 
  we have  that it holds for $x_{k}-\al_k$
as well (with at most an adjustment of $C$) for any $k \in \llbracket L-K,  L-K+2M_A \rrbracket$.
Similar argument holds for $k \in \llbracket  L+K-2M_A, L+K \rrbracket$.  
Thus we proved \eqref{Ak} for all $k \in \llbracket L-K, L+K\rrbracket$,
i.e. we showed exponential rigidity on scale $M_AN^{-1}$.

\bigskip

Now we use an induction on scales and we show that if

\begin{itemize}

\item[(i)] for any $ k\in \llbracket L-K+M_\ga, L+K-M_\ga\rrbracket$ we have
\be\label{jscale1}
 \P^{\mu_\by}\Big( |x_k^{[M_\ga]}-\al_k^{[M_\ga]}|\ge 
  uK^{\xi}N^{-1}\Big)\le Ce^{-cu^2}, \qquad u\ge 0;
\ee

\item[(ii)]  exponential rigidity holds on some scale $M_\ga N^{-1}$,
\be\label{jscale}
 \P^{\mu_\by}\Big( |x_k-\al_k|\ge CM_\ga N^{-1} + uK^{\xi}N^{-1}\Big)\le Ce^{-cu^2}, \qquad k\in I, 
\quad u\ge 0;
\ee

\item[(iii)] we have the entropy bound
\be\label{entt}
 S(\mu_\by|\om_{\ga}) \le C e^{-cM_{\ga}^2K^{-5\eta}},
\ee
\end{itemize}
then (i)--(iii) also hold with $\ga$ replaced by $\ga-1$  as long as $M_{\ga-1}\ge K^\xi$. 
The iteration can be started from $\gamma=A$, since
\eqref{jscale1} and \eqref{jscale} were proven in \eqref{Aav}
and in \eqref{Ak} (even with a better bound), and \eqref{entt}
is trivial for $\gamma=A$ since $\om_A=\mu_\by$.

We first notice that on any scale $M_\ga$, the bound \eqref{jscale1} implies
\eqref{jscale} by the same argument as we concluded \eqref{Ak} for any $k\in I$
from \eqref{Aav}. So we can focus on proving \eqref{jscale1} and \eqref{entt}
on the  scale $M_{\ga-1}$.

To prove \eqref{entt} on scale $M_{\ga-1}$, 
 notice that \eqref{jscale}, with the choice $u= M_\ga K^{-\xi}$, implies
\be
   \P^{\mu_\by}( |x_k-\al_k|\ge CM_\ga N^{-1} )\le Ce^{-cM_\ga^2K^{-2\xi}}, \qquad k\in I.
\label{sser}
\ee
Since
$$
\theta\left(\frac{ N(x_i-x_j)}{\cM_{\ga-1} K^{2\eta}}\right)= 0
$$
unless $|x_i-x_j|\ge CM_{\ga-1} N^{-1}K^{2\eta} \ge CM_\ga N^{-1}K^\eta$, 
we have that the scale $ CM_\ga N^{-1}$ is by a factor $K^{\eta}$ smaller than the scale
of $x_i-x_j$  built into the definition of $\phi^{(k, M_{\ga-1})}$, see \eqref{def:phi}. 
But for $i,j\in I^{(k, M_{\ga-1})}$ we have
$|x_i-x_j|\le |x_i-\al_i|+|x_j-\al_j| + CM_{\ga-1}N^{-1}$. Thus
$ \phi^{(k, M_{\ga-1})}=0$ unless we are on the event described in \eqref{sser}
at least for one $k$.
Moreover, $|\nabla  \phi^{(k, M_{\ga-1})}(\bx)|\le N^C$ for any configuration $\bx$ in $J$.
Thus, following the argument in Lemma 3.15 of \cite{BEY}, via the logarithmic Sobolev
inequality for $\mu_\by$, we get
\be\label{wert}
   S(\mu_\by|\om_{\ga-1}) \le  CKN^{-1} \E^{\mu_\by} |\nabla  \phi^{(k, M_{\ga-1})}|^2
 \le CN^C e^{-cM_\ga^2K^{-2\xi}}\le C e^{-cM_{\ga-1}^2K^{-5\eta}}.
\ee
 Here we used
that the prefactor $N^C$ can be absorbed in 
the exponent 
by using that  $M_\ga^2K^{-2\xi} -  M_{\ga-1}^2K^{-5\eta}  \ge 
K^{2\xi - 5 \eta} = K^\eta \ge N^{\eta\delta}$. Here we have  used $\xi =3 \eta$ and $M_{\ga-1}\ge K^\xi$.
 We will not need it here, but we note that the
same bound on the opposite relative entropy,
$$
   S(\om_{\ga-1}|\mu_\by) 
 \le C e^{-cM_{\ga-1}^2K^{-5\eta}},
$$
is also correct. 
Thus \eqref{entt} for $\ga-1$ is proved.

\medskip

Now we focus on proving \eqref{jscale1} on the scale $M_{\ga-1}$.
Set $1\le M'\le M\le K$ and fix an index $k\in I$ such that $|k-L|\le K-M$.
We state the following 
 slightly generalized version of  Lemma 3.14 of \cite{BEY}

\begin{lemma}\label{lem:concGapsOmega} 
For any integers $1\leq M'\leq M\leq K$,  $k\in\llbracket L-K+M,  L +K-M \rrbracket$
and   $k'\in\llbracket  k-M+M',  k+M-M' \rrbracket$, 
we have
$$
\P^{\omega^{(k,M)}}\left(\left|\lambda_{k'}^{[M']}-\lambda_k^{[M]}
-\E^{\omega^{(k,M)}}\left(\lambda_{k'}^{[M']}- \lambda_k^{[M]}\right)\right|>\frac{u K^{2\eta} }{N}
\sqrt{\frac{M}{M'}}\right)
\leq C e^{-c u^2}.
$$
\end{lemma}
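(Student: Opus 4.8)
The plan is to reduce the statement to a concentration estimate for a single Lipschitz function on the convex measure $\omega^{(k,M)}$, and then invoke the logarithmic Sobolev inequality together with the Herbst bound. First I would observe that the measure $\omega^{(k,M)}_\by$ is still log-concave: its density differs from $\mu_\by$ only by the extra factor $e^{-\phi^{(k,M)}}$, and $\phi^{(k,M)}$ is convex because each summand $\theta\!\left(N(x_i-x_j)/(\cM K^{2\eta})\right)$ is a convex function of $\bx$ (composition of the convex $\theta$ with the linear map $\bx\mapsto N(x_i-x_j)/(\cM K^{2\eta})$). Hence the Bakry--\'Emery criterion applies and $\omega^{(k,M)}$ inherits a logarithmic Sobolev inequality. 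The key quantitative point is that $\mathcal H_\by''\ge cN/K$ away from the boundary, exactly as in \eqref{Hyconv}, so $\omega^{(k,M)}$ satisfies an LSI with constant of order $K/N$, uniformly over $\by\in\cR_{L,K}$.

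Next I would identify the relevant test function. Set
$$
 g(\bx):=\lambda_{k'}^{[M']}(\bx)-\lambda_k^{[M]}(\bx)
 =\frac{1}{2M'+1}\sum_{j=k'-M'}^{k'+M'}x_j-\frac{1}{2M+1}\sum_{j=k-M}^{k+M}x_j .
$$
This is a linear functional of $\bx$ with coefficient vector $\bv$ having at most $2M'+1$ entries equal to $\pm 1/(2M'+1)$ (from the $k'$-block) and $2M+1$ entries equal to $\mp 1/(2M+1)$ (from the $k$-block), with possible cancellation on the overlap. A direct computation gives
$$
 \|\bv\|_2^2\;\le\;\frac{1}{2M'+1}+\frac{1}{2M+1}\;\le\;\frac{C}{M'},
$$
so $g$ is Lipschitz with constant $\|\nabla g\|_\infty=\|\bv\|_2\le C M'^{-1/2}$. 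Applying the Herbst argument with the LSI constant $\sim K/N$ yields, for $b\ge 0$,
$$
 \P^{\omega^{(k,M)}}\!\left(\bigl|g-\E^{\omega^{(k,M)}}g\bigr|>b\right)
 \le 2\exp\!\left(-\,c\,\frac{N}{K}\cdot\frac{b^2}{\|\nabla g\|_\infty^2}\right)
 \le 2\exp\!\left(-\,c\,\frac{N M'}{K}\,b^2\right).
$$
Choosing $b=\dfrac{u K^{2\eta}}{N}\sqrt{M/M'}$ makes the exponent equal to $-c\,u^2 K^{4\eta-1} M$. Since $M\le K$ and $\eta=\xi/3$, for the exponents in play $K^{4\eta-1}M$ is bounded below by an absolute constant (indeed $M\ge M'\ge 1$ and the factor $K^{4\eta}$ only helps), which gives the claimed bound $Ce^{-cu^2}$, possibly after adjusting $c$.

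The only genuinely delicate point — and the one I would treat with care rather than as routine — is the interplay between the LSI constant and the ranges of the indices: the bound $\mathcal H_\by''\ge cN/K$ requires $k,k'$ and their $M$- and $M'$-neighborhoods to stay within $I=I_{L,K}$, which is exactly why the hypotheses restrict to $k\in\llbracket L-K+M, L+K-M\rrbracket$ and $k'\in\llbracket k-M+M', k+M-M'\rrbracket$; I would check that these ranges guarantee every summed index $x_j$ lies strictly inside the configuration interval so that the convexity bound \eqref{Hyconv} genuinely applies to $\omega^{(k,M)}$. The rest is a verbatim adaptation of Lemma 3.14 of \cite{BEY}, the only new feature being the replacement of $\mu_\by$ by its tilt $\omega^{(k,M)}$, which is handled uniformly by the Bakry--\'Emery remark above. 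I would then remark that the $K^{2\eta}$ factor in the fluctuation scale is not sharp but is all that is needed, and is produced automatically by the normalization $\cM K^{2\eta}$ inside $\phi^{(k,M)}$ which controls the size of the region where the tilt is active.
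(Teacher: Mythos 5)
Your reduction to Herbst is sound in structure, but the quantitative conclusion is wrong, and the error is located precisely in the sentence ``for the exponents in play $K^{4\eta-1}M$ is bounded below by an absolute constant.'' It is not: with $\eta=\xi/3$ small, $K^{4\eta-1}\to 0$, and the lemma (and its actual use in the paper) allows $M$ as small as $M_1=K^{\xi}=K^{3\eta}$, for which $K^{4\eta-1}M\sim K^{7\eta-1}\to 0$. Tracing your own computation: with the global LSI constant $\sim K/N^2$ for $\om^{(k,M)}$ and $\|\nabla g\|_\infty^2\sim 1/M'$, Herbst gives fluctuations of $g$ at scale $\sqrt{K/(N^2M')}=\sqrt{K}/(N\sqrt{M'})$, whereas the lemma claims the much smaller scale $(K^{2\eta}/N)\sqrt{M/M'}$; the ratio is $\sqrt{K/M}\,K^{-2\eta}$, which blows up when $M\ll K^{1-4\eta}$. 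So the concentration you obtain is genuinely weaker than what is asserted.

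What is missing is the role of the cut-off $\phi^{(k,M)}$ beyond mere preservation of convexity. The claimed fluctuation scale $(K^{2\eta}/N)\sqrt{M/M'}$ reflects the fact that $g=\lambda_{k'}^{[M']}-\lambda_k^{[M]}$ depends only on the inner block $I^{(k,M)}$, and that the \emph{conditional} measure on this block (given the particles outside $I^{(k,M)}$) enjoys a much better convexity bound: for $i\in I^{(k,M)}$ the diagonal Hessian $\frac{1}{N}\sum_{j\notin I^{(k,M)}}(x_i-x_j)^{-2}$ contributes curvature of order $N/M$ (not $N/K$) once the nearby outer particles are located where rigidity and the tilt keep them, giving a conditional LSI constant of order $M/N^2$ instead of $K/N^2$. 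This improvement by the factor $K/M$ is exactly what turns your exponent $u^2K^{4\eta-1}M$ into $u^2K^{4\eta}$, which is $\ge u^2$. Implementing this requires conditioning on the outer configuration, verifying the conditional convexity uniformly on the relevant event (this is where the tilt $\phi^{(k,M)}$, which confines the inner block to a window $\lesssim \cM K^{2\eta}/N$, is used), and controlling the additional fluctuation of $\E[g\mid\text{outer}]$; this is the content of Lemma~3.14 of \cite{BEY}, to which the paper appeals. Your argument as written does not exploit any of the gain coming from the tilt or from the locality of $g$, and therefore cannot reach the stated estimate for small $M$.
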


Compared with  Lemma 3.14 of \cite{BEY}, we first note that $N^\e$ 
in Lemma 3.14 \cite{BEY} is changed to $K^{2\eta}$ due to that  $\phi^{(k,M)}(\bx)$ in \eqref{def:phi}
is defined with a $K^{2\eta}$  factor instead of $N^\e$. Furthermore, 
here we allowed the center at the scale  $M'$ 
to be different from $k$. The only condition is that the 
interval $\llbracket  k'-M',  k'+M' \rrbracket 
\subset \llbracket  k-M,  k+M \rrbracket$. The proof of this lemma is
 identical to that of Lemma 3.14 of \cite{BEY}.

In particular, for any  $\ga=2,3, \ldots A$
and with  $M'=M_{\ga-1}$ and $M=M_\ga\le K^{  \eta }M_{\ga-1}$
and with any  choice of $k_\ga\in \llbracket L-K +M_\ga, L+K-M_\ga\rrbracket$,
 $k_{\ga-1}\in \llbracket L-K +M_{\ga-1}, L+K-M_{\ga-1} \rrbracket$,
so that $\llbracket  k_{\ga-1}-M_{\ga-1},  k_{\ga-1}+M_{\ga-1} \rrbracket 
\subset \llbracket  k_\ga-M_\ga,  k_\ga+M_\ga \rrbracket$, we get
\be\label{tel}
\P^{\omega_\ga}\left(\left|x_{k_{\ga-1}}^{[M_{\ga-1}]}-x_{k_\ga}^{[M_\ga]}
-\E^{\omega_\ga  }\left(x_{k_{\ga-1}}^{[M_{\ga-1}]}- x_{k_\ga}^{[M_\ga]}\right)\right|>
\frac{u K^{5\eta/2}}{N}\right)\leq  C e^{-c u^2}.
\ee

The entropy bound \eqref{entt} and the boundedness of $x_k$ imply that
$$
  \big|\E^{\omega_\ga  } x_k - \E^{\mu_\by}x_k\big| \le C \sqrt{ S(\mu_\by|\om_{\ga})}
   \le  C e^{-cM_{\ga}^2K^{-5\eta}};
$$
 where $M_{\ga}^2K^{-5\eta} \ge K^{2\xi-5\eta} \ge K^\eta$ ($\eta = \xi/3$). 
We  can combine it with \eqref{Exone} to have
$$
 \big| \E^{\omega_\ga  } x_k - \al_k \big| \le CK^\xi/N.
$$
The measure $\om_\ga$ in \eqref{tel} can also be changed to $\mu_\by$ at the expense of 
an entropy term  $S(\mu_\by|\om_\gamma)$.  Using \eqref{entt}, we thus have
\be\label{te1}
\P^{\mu_\by}\left(\left|x_{k_{\ga-1}}^{[M_{\ga-1}]}- x_{k_\ga}^{[M_\ga]}
-\left(\al_{k_{\ga-1}}^{[M_{\ga-1}]}- \al_{k_\ga}^{[M_\ga]}\right)\right|\ge CK^\xi N^{-1}+
\frac{u K^{5\eta/2}}{N}\right)\leq  C e^{-c u^2} + C e^{-cM_{\ga}^2K^{-5\eta}}.
\ee
Combining it with \eqref{jscale1} and recalling $\xi =3\eta$, we get
\be\label{te2}
\P^{\mu_\by}\left(\left|x_{k_{\ga-1}}^{[M_{\ga-1}]}- \al_{k_{\ga-1}}^{[M_{\ga-1}]}\right|\ge
 CK^\xi N^{-1}+
\frac{u K^{\xi}}{N}\right)\leq  C e^{-c u^2} + C e^{-cM_{\ga}^2K^{-5\eta}}.
\ee
This gives \eqref{jscale1} on scale $M_{\ga-1}$ if $u\le c M_\ga K^{-5\eta/2}$
with a small constant $c$.
Suppose now that $u\ge cM_\ga K^{-5\eta/2}$,  which, in particular, means
that $u\ge cK^{-\eta/2}$.  Then,   by \eqref{jscale}, we have 
\begin{align*}
   \P^{\mu_\by}\Big(\left|x_{k_{\ga-1}}^{[M_{\ga-1}]}-  \al_{k_{\ga-1}}^{[M_{\ga-1}]}\right| & \ge
 CK^\xi N^{-1}+ \frac{u K^{\xi}}{N}\Big) \\
& \le   \P^{\mu_\by}\left(\left|x_{k_{\ga-1}}^{[M_{\ga-1}]}- \al_{k_{\ga-1}}^{[M_{\ga-1}]}\right|\ge
 C M_\ga N^{-1} + (1- CK^{-\eta/2})u\frac{ K^{\xi}}{N}\right) \\
 & \le   \sum_{ k \in I} \P^{\mu_\by}\Big( |x_k-\al_k|\ge CM_\ga N^{-1}
 + (1- CK^{-\eta/2})u \frac{K^{\xi}}{N}\Big) \\
& \le 
 C K  e^{-c(1- CK^{-\eta/2})^2u^2} \le Ce^{-c'u^2}.
\end{align*}
This proves \eqref{jscale1} for $\gamma-1$.
Note that the constants slightly deterioriate at each iteration step,
but the number of iterations is finite (of order $1/\eta =  3/\xi$),
so eventually the constants $C, c$ in \eqref{rig} may depend on $\xi$.
In fact, since the deterioriation is minor, one can  also prove
\eqref{rig} with $\xi$-independent constants,
but for simplicity of the presentation we did not follow the change of these
constants at each step.

After completing the iteration, from \eqref{jscale} for $\gamma=1$, $M_1=K^\xi$, we have
$$
 \P^{\mu_\by}\Big( |x_k-\al_k|\ge CK^\xi N^{-1} + uK^{\xi}N^{-1}\Big)\le Ce^{-cu^2}, \qquad k\in I;
$$
This concludes \eqref{rig} for $u\ge 1$. Finally,
\eqref{rig} is trivial for $u\le 1$  if the constant $C$ is sufficiently large. 
This completes the
proof of Theorem~\ref{thm:omrig}. 
\qed

\subsection{Level repulsion estimates
of  $\mu_\by$: proof of Theorem~\ref{lr2}}\label{sec:lr}

We now prove the level repulsion  estimate, Theorem~\ref{lr2},  
 for   the local log-gas $\mu_\by$ 
with  good boundary conditions $\by$. There are two key ideas in the following argument. 
We first recall the weak level repulsion estimate (4.11) in \cite{BEY}, which 
 in the current notation  asserts
$$
   \P^{\mu_\by} ( x_{L-K}- y_{L-K-1} \le s/N) \le CNs
$$
for any $s>0$, and similar estimates may be deduced for internal gaps. 
Compared with \eqref{k521}, this estimate does not contain any $\beta$ exponent,
moreover, in order to obtain \eqref{k52}, the $N$ factor has to be reduced to $K^\xi$
(neglecting the irrelevant $\log N$ factor). 
Our first idea is to run this proof for a 
local measure  with only $K^\xi$ particles
to reduce the  $N$   factor to $K^{  \xi}$.  The second idea involves introducing
 some auxiliary measures to catch some of the 
$\beta$ related factors. We first introduce these two auxiliary measures which 
are  slightly modified  versions of the local equilibrium measures:   
\be
 \mu_0 := \mu_{\by,0} = 
 Z_0   (x_{L-K} - y_{L-K-1})^{-\beta} \mu_\by; \quad  \mu_1 := \mu_{\by,1}=Z_1 W^{-\beta}\mu_\by, 
 \ee
 \be
\; W =   (x_{L-K} - y_{L-K-1})  (x_{L-K+1} - y_{L-K-1}),  
\ee
where  $Z_0, Z_1 $  are chosen for normalization.  
In other words, we drop the term $(x_{L-K} - y_{L-K-1})^\beta$ from  
the measure $ \mu_\by$ in $\mu_0$ and we drop $W^{\beta} $
in $\mu_1$.   To estimate the upper gap, $y_{L+K+1}- x_{L+K}$,  similar  results will be needed when we drop
the term $(y_{L+K+1}-x_{L+K})^\beta$ and the analogous version of $W$, but we
will not state them explicitly.
We first prove the  following results which are weaker than  Theorem~\ref{lr2}.

\begin{lemma} \label{lr} Let $L$ and $K$ satisfy \eqref{K} and
consider the
local equilibrium measure $\mu_\by$ defined in \eqref{muyext}.

i) Let $\xi, \al$ be any
fixed positive constants and
let $\by \in \cR_{L, K}(\xi\delta/2,\al) $. 
Then for any $s>0$ we have 
\be\label{k5}
\P^{ \mu_\by} [  x_{L-K} - y_{L-K-1} \le s/N   ] \le
  C \left ( Ks \log N \right ) ^{\beta + 1},
\ee
and
\be\label{l2}
\P^{ \mu_\by} [  x_{L-K+1} - y_{L-K-1} \le s/N   ] \le
  C \left ( Ks \log N \right ) ^{ 2 \beta + 1}.
\ee

ii)  Let $\by$ be arbitrary with the only condition that
 $|y_i| \le C$ for all $i$. Then for any $s>0$  
 we have the weaker estimate  
\begin{align}\label{k55}
\P^{ \mu_\by}  [ x_{L-K} -y_{L-K-1} \le s/N ] & \le     \left (  \frac{Cs  K }{|J_\by|}   \right )^{\beta + 1},
  \\
\label{l27}
\P^{ \mu_{\by,j}} [ x_{L-K+1} -y_{L-K-1} \le s/N ] & \le    \left ( \frac{Cs K }{|J_\by|} \right )^{2 \beta + 1},   \qquad j=0,1.
\end{align}
\end{lemma}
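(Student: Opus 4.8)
\emph{Strategy.} In both parts the task is to control the law of the particle, or of the pair of particles, adjacent to the left endpoint $a:=y_{L-K-1}$ of the configuration interval $J_\by=(a,b)$, $b:=y_{L+K+1}$; the right endpoint is symmetric. Write the density of $\mu_\by$ as
\[
 \mu_\by(\bx)\ \propto\ \Big(\prod_{i\in I}(x_i-a)^\beta(b-x_i)^\beta\Big)\ \prod_{i<j\in I}(x_j-x_i)^\beta\ R_\by(\bx),
\]
where $R_\by(\bx)=e^{-\frac{N\beta}{2}\sum_{i\in I}V(x_i)}\prod_{i\in I}\prod_{l\notin I,\ l\neq L\pm(K+1)}|x_i-y_l|^\beta$ collects the ``regular'' factors. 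By the definitions of $\mu_0,\mu_1$ we have $\mu_\by=Z_0^{-1}(x_{L-K}-a)^\beta\mu_0=Z_1^{-1}W^\beta\mu_1$, so
\[
 \P^{\mu_\by}\big[x_{L-K}-a\le s/N\big]=Z_0^{-1}\,\E^{\mu_0}\big[(x_{L-K}-a)^\beta\mathbf{1}_{\{x_{L-K}-a\le s/N\}}\big]\le Z_0^{-1}\,(s/N)^{\beta}\,\P^{\mu_0}\big[x_{L-K}-a\le s/N\big],
\]
and likewise for $x_{L-K+1}-a$ via $\mu_1$, on whose event $W\le(s/N)^2$. Thus each of \eqref{k5}, \eqref{l2}, \eqref{k55}, \eqref{l27} reduces to (a) an upper bound on the negative moment $Z_0^{-1}=\E^{\mu_\by}[(x_{L-K}-a)^{-\beta}]$, resp. $Z_1^{-1}=\E^{\mu_\by}[W^{-\beta}]$ --- both finite since the hard-edge powers cancel exactly --- and (b) an upper bound on the density at $a$ of $x_{L-K}-a$ under $\mu_0$ and of $x_{L-K+1}-a$ under $\mu_0$ and $\mu_1$. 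In the stripped measures these wall densities no longer vanish, and the higher exponents $2\beta+1$ in \eqref{l2}, \eqref{l27} come from the extra hard-edge distances entering $W\le(s/N)^2$, so that already the estimate ``wall density is bounded'' produces $\beta+1$ and $2\beta+1$.

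\emph{Part (ii): arbitrary $\by$.} Here (a) and (b) follow from a direct configuration-space estimate. Conditioning on all internal particles but $x_{L-K}$ (resp. but $x_{L-K}$ and $x_{L-K+1}$), write the conditional density of the free variable(s) on $(a,x_{L-K+1})$ (resp. on the ordered cell $\{a<x_{L-K}<x_{L-K+1}<x_{L-K+2}\}$), pull out the explicit Vandermonde and hard-edge factors, and note that the repulsion from the remaining particles is monotone on the left half of the conditioning window while $R_\by$ and $e^{-\frac{N\beta}{2}V}$ vary there by at most a universal constant in the range of $s$ where the claimed bound is non-trivial. Integrating the explicit factors gives the powers $\beta+1$ and $2\beta+1$; the normalisation is bounded below by restricting the configuration integral to the event that the $x_j$ sit near the equidistant points $\alpha_j$ of $J_\by$, which inserts the spacing scale $|J_\by|/\cK$ and yields $(CsK/|J_\by|)^{\beta+1}$ and $(CsK/|J_\by|)^{2\beta+1}$. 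The hypothesis $|y_i|\le C$ enters only to keep $V$ and the far external factors bounded on the bounded set carrying the measure.

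\emph{Part (i): good $\by$.} I would run the same scheme with the sharper input of Lemma~\ref{lm:goody}: $|J_\by|=\cK/(N\varrho(\bar y))+O(K^\xi/N)$ and, crucially, $|V_\by'(x)|\lesssim\log K$ uniformly on $J_\by$ (the error $O(K^\xi/(Nd_-(x)))$ is $O(1)$ since $d_-(x)\ge\varrho(\bar y)N^{-1}K^\xi$, and $\varrho(\bar y)\log(d_+/d_-)\lesssim\log K$ near $a$). Together with the rigidity of a good $\by$, which makes $\sum_{i\in I}(x_i-a)^{-1}=O(N\log N)$ once $x_{L-K}-a$ is at its natural scale $\asymp1/N$, this shows that $R_\by$ and the integrated contribution of the remaining internal particles have logarithmic derivative $O(N\log N)$ on a window of width $c/(N\log N)$ about a point at distance $\asymp1/N$ from $a$. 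Plugging this into the reduction (a)--(b) controls the normalisation on that narrower window, so that --- after tracking the $K$-dependence of $Z_0^{-1}$ and of the $\mu_0$-wall density --- one obtains $\P^{\mu_\by}[x_{L-K}-a\le s/N]\le C(Ks\log N)^{\beta+1}$, the extra $\log N$ over the ``expected'' $(Ns)^{\beta+1}$ reflecting the narrowing of the comparison window to scale $1/(N\log N)$. Feeding the same estimates through $\mu_1$ for the second particle gives \eqref{l2}; the crude bound of part (ii) is used only as an a priori input ensuring the relevant integrals are finite.

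\emph{Main difficulty.} The heart of the matter is (b): extracting the correct power of $s$ \emph{with the correct $K$-power} in the wall-density bound for $\mu_0,\mu_1$. When several of the leftmost internal particles are simultaneously anomalously close to $a$, the naive one-particle conditioning deteriorates, and one must run a short induction over the number of particles within $O(1/(N\log N))$ of the wall, peeling them off one at a time exactly as $\mu_0\mapsto\mu_1$ does and feeding in the second-gap estimate \eqref{l2} --- itself proved inside the same loop. Keeping the constants and $K$-powers from blowing up through this finite iteration, and checking that the part (ii) estimate is strong enough to seed the bootstrap in part (i), is the technically heaviest point.
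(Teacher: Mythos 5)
Your opening reduction is exactly the paper's. You write $\mu_\by$ as a tilted version of the stripped measures $\mu_0,\mu_1$ and observe that
\[
\P^{\mu_\by}[X\le s/N]=\frac{\E^{\mu_0}\big[X^\beta\mathbf 1_{\{X\le s/N\}}\big]}{\E^{\mu_0}[X^\beta]},\qquad X:=x_{L-K}-y_{L-K-1},
\]
and likewise with $W=(x_{L-K}-a)(x_{L-K+1}-a)$ and $\mu_1$; this is precisely the paper's \eqref{k4}--\eqref{k41}. Upper-bounding the numerator by $(s/N)^\beta\P^{\mu_0}[X\le s/N]$ and lower-bounding the denominator via a tail estimate for $\mu_0$ is also the same. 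So the skeleton is right.

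Where you genuinely diverge, and where there is a real gap, is in the crucial intermediate step: bounding $\P^{\mu_0}[X\le s/N]$ (resp.\ $\P^{\mu_1}$) by $CKs\log N$ for good $\by$, which is the content of the paper's Lemma~\ref{43}. You propose to do this by a one-particle conditioning: freeze $x_{L-K+1},\dots,x_{L+K}$, estimate the conditional density of $x_{L-K}$ on $(a,x_{L-K+1})$, and lower-bound its normalisation ``by restricting the configuration integral to the event that the $x_j$ sit near the equidistant points $\alpha_j$.'' That last step is asking for rigidity of the \emph{internal} particles under $\mu_0$ (or $\mu_1$). But rigidity of the internal particles is not available here: Theorem~\ref{thm:omrig} gives rigidity of $\mu_\by$ only under the extra hypothesis \eqref{Exone}, which Lemma~\ref{lr} deliberately does \emph{not} assume (it assumes only $\by\in\cR_{L,K}$), and the paper never establishes any rigidity estimate for the stripped measures $\mu_0,\mu_1$ at all. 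Your worry about ``several leftmost particles simultaneously anomalously close to the wall'' requiring a finite ``induction over the number of particles within $O(1/(N\log N))$'' is a symptom of this: a conditional one-particle estimate has no control when the conditioning variables misbehave, so you would need an internal rigidity input you do not have.

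The paper avoids all of this with a genuinely different device. In the proof of Lemma~\ref{43} it makes a \emph{global} change of variables: shrink the configuration interval to $[-a+a\varphi,a-a\varphi]$, substitute $w_j=(1-\varphi)^{-1}x_{L+j}$, and compare the resulting partition function $Z_\varphi$ to $Z_0$. The only input is the regular spacing of the \emph{external} points $y_k$ (exactly what $\by\in\cR_{L,K}$ supplies), which gives $\sum_{k<L-2K}\frac{\varphi w}{w-y_k}\le C\varphi K\log N$ --- the harmonic sum over the far external particles is what produces the $\log N$, not a sum over internal $x_i$ as you suggest. This yields $Z_\varphi/Z_0\ge 1-CK^2\varphi\log N$, and since $Z_\varphi/Z_0\le\P^{\mu_\by}[x_{L-K}-a\ge a\varphi]$, the tail bound follows at once with the choice $\varphi=s/(aN)$, with no rigidity of internal particles and no multi-particle induction. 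Your part~(ii) sketch has the same structural issue, though there the hypotheses are so weak that a crude one-particle estimate might be completed; the paper again uses the global scaling argument (with $(1-\varphi)^{\beta N}$ in place of the $\log N$ sum) and gets \eqref{k3-1}, \eqref{k34-1} directly. In short: the first reduction is correct and shared, but the core tail estimate is where the proof lives, and your route through conditional densities plus internal rigidity is not available under the stated hypotheses.
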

To prove Lemma \ref{lr}, we  first prove    estimates  even  
weaker than  \eqref{k5}--\eqref{l27}  for $\mu_\by$ and  $\mu_{\by,j}$.

\begin{lemma}\label{43} Let $L$ and $K$ satisfy \eqref{K}.

i) Let $\xi, \al$ be any
fixed positive constants and
let $\by \in \cR_{L.K}= \cR_{L, K}(\xi\delta/2,\al) $, 
then we have for any $s>0$
\begin{align}
\label{k3}
\P^{ \mu_\by}  ( x_{L-K} -y_{L-K-1} \le s/N )& \le   C K  s \log N , 
\\
\label{k34}
\P^{ \mu_{\by,j}}  ( x_{L-K} -y_{L-K-1} \le s/N )& \le   C K  s \log N  ,  \qquad j=0,1.
\end{align} 

ii) Let $\by$ be arbitrary with the only condition that
 $|y_i| \le C$ for all $i$. Then for any $s>0$ 
 we have the weaker estimate  
\begin{align}\label{k3-1}
\P^{ \mu_\by}  ( x_{L-K} -y_{L-K-1} \le s/N )& \le      \frac{Cs K }{|J_\by|},   
\\
\label{k34-1}
\P^{ \mu_{\by,j}}  ( x_{L-K} -y_{L-K-1} \le s/N )& \le     \frac{Cs K }{|J_\by|} ,   \qquad j=0,1.
\end{align}
\end{lemma}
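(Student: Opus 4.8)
The plan is to reduce everything to a one-dimensional estimate by integrating out the single coordinate $x_{L-K}$ with all remaining coordinates (in particular the boundary point $a:=y_{L-K-1}$ and the neighbour $b':=x_{L-K+1}$) held fixed. For each of the three measures the conditional law of $x:=x_{L-K}$ on $(a,b')$ has the form $h(x)=|x-a|^{\kappa\beta}\,\psi(x)$, where $\kappa=1$ for $\mu_\by$ and $\kappa=0$ for $\mu_{\by,0},\mu_{\by,1}$ — the powers $(x_{L-K}-a)^\beta$, resp.\ $W^\beta$, being cancelled by construction in the latter two — and $\psi(x)=\exp\!\big(-\tfrac{N\beta}2\widetilde V(x)\big)\prod_{i\in I,\,i>L-K}|x-x_i|^{\beta/N}$, with $\widetilde V(x):=V(x)-\tfrac2N\sum_{j\notin I,\,j\ne L-K-1}\log|x-y_j|$ smooth and finite on a neighbourhood of $a$ (the $y_j$ being distinct from $a$). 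Thus
\[
 \P\big(x_{L-K}-a\le s/N\big)=\E\!\left[\frac{\int_a^{a+s/N}h(x)\,\rd x}{\int_a^{b'}h(x)\,\rd x}\right],
\]
and I would bound the numerator by the explicit vanishing, $\int_a^{a+s/N}h\le(s/N)^{\kappa\beta+1}\sup_{[a,a+s/N]}\psi$ (keeping, when several external points accumulate near $a$, their $|x-y_j|^\beta$ factors paired with $|x-a|^{\kappa\beta}$), and the denominator from below by the mass of $h$ over a reference sub-interval $I_\ast\subset(a,b')$ that stays away from $a$. Everything then reduces to a geometric factor from the size and distance of $I_\ast$, and the oscillation $\sup_{[a,a+s/N]}\psi/\inf_{I_\ast}\psi$ of the smooth part.

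For part (ii), where $\by$ is only assumed bounded, I would first rescale so that the configuration interval becomes $[0,1]$; the rescaled potential has bounded second derivative (it inherits $\inf V''>-\infty$, cf.\ \eqref{Hyconv}), so the rescaled measure is a perturbation of a pure (Jacobi-type) log-gas of $\cK$ particles on $[0,1]$, for which the first gap is of order $\cK^{-1}$ and the variation of $\log\psi$ is controlled on that scale. Taking $I_\ast$ of size and distance $\sim\cK^{-1}$ in the rescaled picture and undoing the scaling yields the claimed $CsK/|J_\by|$, the borderline case $\kappa=0$ — no explicit repulsion at all — producing the weakest but still sufficient version, which is exactly what is needed for $\mu_{\by,0}$ and $\mu_{\by,1}$.

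For part (i), where $\by\in\cR_{L,K}(\xi\delta/2,\al)$, I would instead invoke the sharp deterministic regularity of $V_\by$ supplied by Lemma~\ref{lm:goody}: the bound $|V_\by'(x)|\le C\big(\log N+K^\xi/(Nd(x))\big)$ following from \eqref{Vby1}, together with \eqref{Vbysec}, controls the variation of $\log\psi$ over $[a,a+s/N]$ by $O(s\log N)$ once the singularity at $a$ has been split off, so that $\sup_{[a,a+s/N]}\psi\le e^{Cs\log N}\inf_{[a,a+s/N]}\psi$ and $\psi$ varies by at most polynomial factors across $(a,b')$; the same regularity pins the bulk of $h$ near $\alpha_{L-K}$, i.e.\ at distance $\sim N^{-1}$ from $a$ (up to $K^\xi$ factors). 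Hence the $K/|J_\by|$ loss of part (ii) is effectively replaced by $N$, and the ratio is $\le CNs\,e^{Cs\log N}$, which is $\le CKs\log N$ for $s$ below a constant and trivially true otherwise. The cases $\mu_{\by,0},\mu_{\by,1}$ run identically; for $\mu_{\by,1}$ one notes in addition that its extra $(x_{L-K+1}-a)^{-\beta}$ factor only reweights the frozen neighbour $b'$. (Where the reference interval would run into an anomalously small $b'-a$, one absorbs that contribution by integrating out a few further coordinates $x_{L-K+1},\dots$ up to the fixed endpoint $y_{L+K+1}$, which provides the required room.)

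The step I expect to be the main obstacle is precisely this control of the smooth factor $\psi$ — equivalently of the frozen log-interactions with the internal particles $x_i$ and, in part (ii), with the external points $y_j$ — in the immediate vicinity of $a$. One is not allowed to use the rigidity of $\mu_\by$ here, since Theorem~\ref{thm:omrig} is proved under the extra hypothesis \eqref{Exone} which is not assumed in Lemma~\ref{43}, so these particles could in principle be badly clustered near $a$, and only the purely deterministic inputs are available — the lower bound on $V_\by''$ and, in part (i), the $K^\xi$-regularity of $V_\by$. This is exactly why at this stage one can extract only the crude bounds linear in $s$, with the $\log N$ loss in part (i) and the $|J_\by|$ denominator in part (ii); the sharper $(\beta+1)$ and $(2\beta+1)$ powers stated in Lemma~\ref{lr} and Theorem~\ref{lr2} are then recovered by bootstrapping these estimates against one another and against the auxiliary measures $\mu_{\by,0},\mu_{\by,1}$.
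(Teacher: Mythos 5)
Your proof proposal takes a genuinely different route from the paper's, and it runs into a structural gap that you partially flag but do not resolve.

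The paper proves \eqref{k3} by a global dilation argument: after centering the interval $J$, it compares the partition function $Z_\varphi$ (where all coordinates are restricted to the shrunk interval $(-a+a\varphi, a-a\varphi)$) with $Z_0$ via the change of variables $w_j=(1-\varphi)^{-1}x_{L+j}$. The crucial point is that the internal interaction factor $\prod_{i<j}|x_i-x_j|^\beta$ is preserved under this simultaneous scaling up to the explicit scalar $(1-\varphi)^{\beta\binom{\cK}{2}}$, so those factors \emph{drop out} of the comparison; one is left only with comparing $V_\by(S+(1-\varphi)w)$ with $V_\by(S+w)$, and that is controlled deterministically by regularity of $\by$ (for part (i)) or boundedness (for part (ii)). Your approach instead integrates out the single coordinate $x_{L-K}$ with $x_{L-K+1},\dots,x_{L+K}$ frozen, and the ratio $\int_a^{a+s/N}h/\int_a^{b'}h$ then does carry the full weight of the frozen internal particles through the factor $\prod_{i>L-K}(x_i-x)^\beta$ inside your $\psi$. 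This factor is \emph{not} controlled by \eqref{Vby1}--\eqref{Vbysec}; those bounds concern only the external potential $V_\by$, i.e.\ the $y_j$'s, not the $x_i$'s.

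This is the genuine gap. The oscillation $\sup_{[a,a+s/N]}\psi/\inf_{I_*}\psi$ is governed by $\sum_{i>L-K}\log\frac{x_i-a}{x_i-a-s/N}$, and to make this $O(s\log N)$ you would need the $x_i$'s to be spread out on scale $1/N$ near $a$, i.e.\ a rigidity input. But Theorem \ref{thm:omrig} requires the extra hypothesis \eqref{Exone}, which is precisely what is \emph{not} available in Lemma~\ref{43} (indeed Lemma~\ref{43} and Lemma~\ref{lr} are inputs to \emph{establishing} rigidity later). The fallback you propose — decompose on whether $b'-a$ is small and, in the bad case, integrate out further coordinates — essentially re-poses the same small-gap estimate for $x_{L-K+1},x_{L-K+2},\dots$, and it is not shown that this recursion closes; in fact the bad events get folded back into probabilities of the same kind you are trying to bound. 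A symptom of this is that your claimed per-realization ratio $CNs\,e^{Cs\log N}$ is already larger by a factor $\sim N/(K\log N)$ than the target $CKs\log N$, so even granting the $\psi$-oscillation estimate the arithmetic does not close. Note also a small typo: the internal interaction exponent in $\psi$ should be $\beta$, not $\beta/N$, since the $1/N$ in the Hamiltonian is compensated by the $N\beta$ prefactor.

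For the record, your instinct to condition is not misplaced; the paper does exactly that in the proof of Theorem~\ref{lr2}, conditioning on $z_j=x_j$ for $j\notin\wt I$ and then applying Lemma~\ref{lr}. But there the conditioning happens \emph{after} Lemma~\ref{43} and the rigidity statement \eqref{weakrig} are in hand, so the frozen particles are controlled. At the level of Lemma~\ref{43} itself, the global dilation is what makes the argument purely deterministic in the $x$'s.
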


\begin{proof} We will prove  \eqref{k3},
the same proof with only change of notations
 works for \eqref{k34}  case as well.
 We will comment on this at the end of the proof. 

For notational simplicity, we first  shift the coordinates by $S$  such that in the new coordinates  
$\bar y=0$, i.e. $y_{L-K-1}=-y_{L+K+1}$ and $J$ is symmetric to the origin.
With the notation   $a:=-y_{L-K-1}$ and $I=\llbracket L-K, L+K \rrbracket$,
 we first estimate the following quantity, for any  $0\le \varphi\le c$ (with a small constant) 
\begin{align}
Z_\varphi :=  & \int\ldots\int_{-a+ a \varphi }^{a- a \varphi}   \rd \bx
 \prod_{i,j\in I\atop i < j} (x_i-x_j)^\beta
e^{- N\frac{\beta}{2} \sum_j V_\by (S+ x_j)  } \nonumber \\
 & = (1-\varphi)^{ K+\beta K(K-1)/2} \int\ldots\int_{-a }^{a}   \rd \bw
\prod_{i < j} (w_i-w_j)^\beta e^{- N \frac{\beta}{2}\sum_j V_\by (S+ (1-\varphi) w_j)},
\nonumber
\end{align}
where we set 
\be\label{change}
w_j:=(1-\varphi)^{-1}x_{L+j}, \qquad \rd \bx =  \prod_{|j|\le K} \rd x_{L+j}\qquad
\rd\bw =   \prod_{|j|\le K} \rd w_j.
\ee
By definition,
\be\label{ch21}
e^{- N \frac{\beta}{2}  V_\by (S+ (1-\varphi) w_j) }
   = e^{- N \frac{\beta}{2}  V (S+ (1-\varphi) w_j)}
  \prod_{k \le L-K-1} ( (1-\varphi) w_j - y_k)^\beta
 \prod_{k \ge L+K+1} ( y_k-(1-\varphi) w_j)^\beta.
\ee
For the smooth potential $V$, we have 
\be
\Big |  V (S+ (1-\varphi) w_j)) -  V( S+ w_j ) \Big | 
\le  C |\varphi w_j| 
\le \frac { C K \varphi } { N }
\ee
with a constant depending on $V$,
where we have used $|w_j|\le a \le C K / N$ which follows from $|J_\by|\le C K /N$
due to $ \by \in  \cR_{L, K} $,  see \eqref{Jlength}.

Using $(1-\varphi) w_j - y_k \ge (1-\varphi) (w_j - y_k)$ for $L-2K \le k \le  L-K-1$
and the  identity
$$
 (1-\varphi) w_j - y_k =  ( w_j - y_k)\Big [ 1  -  \frac { \varphi w_j}{w_j - y_k}   \Big ]
$$ 
 for any $k$,
we have 
\be\label{66}
\prod_{k \le L-K-1} ( (1-\varphi) w_j - y_k)^\beta \ge (1-\varphi)^{\beta K}
 \prod_{ k \le  L-K-1} (  w_j - y_k)^\beta    \prod_{n  <  L-2K}  
\Big [ 1  -  \frac { \varphi w_j}{w_j - y_n}   \Big ] ^\beta, 
\ee
and a similar estimate holds for $k\ge L+K+1$. After multiplying these
estimates for all $j=1,2, \ldots, K$, 
we thus have the bound 
\be
\frac{Z_\varphi}{Z_0}  \ge \Bigg[ e^{- C\beta K\varphi } (1-\varphi)^{ \beta K}
\min_{|w|\le a} \Bigg( \prod_{k  <  L-2K} \Big [ 1  -  \frac { \varphi w}{w - y_k}   \Big ] ^\beta 
\prod_{k  >  L+2 K} \Big [ 1  -  \frac { \varphi w}{ y_k - w}   \Big ] ^\beta \Bigg) \Bigg]^K.
\ee 
Recall that $\by \in  \cR_{L, K} $, i.e. we have the  rigidity bound for $\by$
with accuracy  $N^{-1}K^\xi \ll  K/N \sim a$, see \eqref{yrig}, i.e.
$y_k$'s are regularly spaced on scale $a$ or larger.
Combining this with 
 $|w| \le a\le C K /N$,   we have 
\be
\sum_{k\le L-2K} \frac { \varphi w}{w - y_k} \le   C \varphi K \log N  .  
\ee
Hence 
\be
 \prod_{k  <  L-2K} \Big [ 1  -  \frac { \varphi w}{w - y_k}   \Big ] ^\beta
 \ge   1 - C \varphi K \log N ,
\ee
and similar bounds hold for the $k \ge L+2K$ factors.
Thus for  any  $\varphi\le c $  
we get 
$$
   \frac{Z_\varphi}{Z_0} \ge 1- C\big( \beta K^2 + K^2\log N\big)\varphi 
\ge 1- CK^2\varphi  (\log N).
$$

Now we choose $\varphi := s/(aN)$ and recall $a \sim K /N$. 
 Therefore the $\mu_\by$-probability of $ x_{L+1}-y_L\ge a \varphi = s/N$ 
can be estimated by
$$
\P^{\mu_\by}  ( x_{L-K}- y_{L-K-1}  \ge  s / N ) \ge\frac{Z_\varphi}{Z_0}
\ge 1-  C   K s    (\log N).
$$
for all $ sK \log N $ sufficiently small. If $sK \log N $ is large,
then \eqref{k3} is automatically satisfied.
  This proves \eqref{k3}. 
  
   In order to prove \eqref{k3-1}, we now 
 drop the assumption $\by \in  \cR_{L, K} $ and replace it with $|y_i|\le C$. 
Instead of \eqref{66}, we now have 
\be\label{666}
\prod_{k \le L-K-1} ( (1-\varphi) w_j - y_k)^\beta \ge (1-\varphi)^{\beta N}
 \prod_{ k \le  L-K-1} (  w_j - y_k)^\beta,
\ee 
and a similar estimate holds for $k\ge L+K+1$.
We thus have the bound 
\be
\P^{\mu_\by}  ( x_{L-K}- y_{L-K-1}  \ge  s / N ) \ge \frac{Z_\varphi}{Z_0}
  \ge  \Bigg[ e^{- C\beta K\varphi } (1-\varphi)^{ \beta N}\Bigg]^K
\ge 1 - C \varphi     N  K. 
\ee 
With the choice   $\varphi := s/(|J_\by|N)$ this proves  \eqref{k3-1}.

The proof of \eqref{k34}  and \eqref{k34-1}  for $\mu_{\by,0}$ 
is very similar, just 
the $k=L-K-1$ factor is missing from \eqref{ch21}
in case of  $j=-K$.  For $\mu_{\by,1}$, two factors
are missing.  These modifications do \nc not alter the basic estimates. 
This concludes the proof of Lemma \ref{43}. 
\end{proof}

\bigskip 
\noindent 
{\bf Proof of Lemma \ref{lr}.}
Recalling the definition of $ \mu_0$ and setting $X:=  x_{L-K} - y_{L-K-1}$
for brevity, we have
\be\label{k4}
\P^{ \mu_\by} [ X \le s/N   ] =  
 \frac { \E^{ \mu_0}  [  1 ( X \le s/N  )  X^\beta ] }
{ \E^{ \mu_0 }  [  X^\beta ]}.
\ee
{F}rom \eqref{k34}
 we have 
$$
\E^{ \mu_0}  [  {\bf 1}
 ( X \le s/N  )  X^\beta ] \le C  (s/N)^\beta K s  \log N 
$$
 and with the choice $s=cK^{-1}  (\log N)^{-1}$ in \eqref{k34} we also have  
$$
\P^{ \mu_0}\left  (  X \ge    \frac{c }{N K \log N }  \right )  \ge 1/2 
$$
with some positive constant $c$.
This implies that  
$$
\E^{ \mu_0 }  [  X^\beta ] \ge \frac 1 2  \left ( \frac c {N K \log N }\right )^\beta.
$$
We have thus proved that 
\be
\P^{ \mu_\by} [  X \le s/N   ] \le   C  (s/N)^\beta K s  \log N 
  \left ( {N K  \log N}  \right )^\beta 
= C \left ( { Ks \log N}  \right ) ^{\beta + 1},
\ee
 i.e. we obtained \eqref{k5}. 

 For the proof of \eqref{l2}, we similarly use 
\be\label{k41}
\P^{ \mu_\by} [ x_{L-K+1} - y_{L-K-1} \le s/N   ] =  
 \frac { \E^{ \mu_1}  [  1 ( x_{L-K+1} - y_{L-K-1} \le s/N  )  W^\beta ] }
{ \E^{ \mu_1 }  [  W^\beta ]}.
\ee
{F}rom \eqref{k34}  
 we have 
$$
\E^{ \mu_1}  [  {\bf 1}
 ( x_{L-K+1} - y_{L-K-1} \le s/N  )  W^\beta ] 
\le   (s/N)^{2 \beta} \P^{\mu_1} [  x_{L-K} - y_{L-K-1} \le s/N ] 
\le C  (s/N)^{2 \beta} K s \log N. 
$$
By the same inequality  and with the choice $s=cK^{-1} (\log N)^{-1}$,  we  have
$$
\P^{ \mu_1}\left  (  W \ge    \frac{c }{(N K  \log N )^2 }  \right )  \ge 1/2 
$$
with some positive constant $c$.
This implies that  
$$
\E^{ \mu_1 }  [  W^\beta ] \ge \frac 1 2  \left ( \frac c { (N K \log N)^2 }\right )^\beta.
$$
We have thus proved that 
\be\label{l211}
\P^{ \mu_\by} [  x_{L-K+1} - y_{L-K-1} \le s/N   ] \le   C  (s/N)^{2\beta} K s  \log N
  \left ( {(N K \log N)^2 }  \right )^\beta 
= C \left ( Ks \log N  \right ) ^{2\beta + 1},
\ee
 which proves \eqref{l2}.   Finally, \eqref{k55} and \eqref{l27} can be proved using \eqref{k3-1} and \eqref{k34-1}.
 This  completes the proof of Lemma \ref{lr}. \qed 

\bigskip

{\it Proof of Theorem~\ref{lr2}.} For a given $i$,
define the index set
$$
   \wt I: =  \llbracket   
 \max (i -K^{\xi}, L-K-1 ) ,  \min (i + K^{\xi}, L+K+1) \rrbracket
$$
to be the indices in a $K^{\xi}$ neighborhood of $i$.
We further condition the measure $\mu_\by$ on the points
$$
    z_j:= x_j\; \qquad  j\in \wt I^c: = I_{L,K}\setminus \wt I
$$
and we let $\mu_{\by,\bz}$ denote the conditional measure
on the remaining $x$ variables $\{ x_j\; : \; j\in \wt I\}$.
Setting $L'=i$, $K' = K^{\xi}$, 
from the rigidity estimate  \eqref{weakrig} 
 we have $(\by, \bz) \in \cR=\cR_{L',K'}(\xi^2\delta/2,\alpha)$  with a
very high probability w.r.t. $\mu_\by$.  
 We will now apply  \eqref{k5}   to the measure $\mu_{\by,\bz}$
with a new $\delta'=\delta\xi$  and $K'=K^\xi$.  This ensures that
the condition $N^{\delta'}\le K'$ is satisfied
and by the remark after \eqref{K}, the change of $\delta$
affects only the threshold $N_0$.
We  obtain
\be\label{k51}
\P^{ \mu_{\by, \bz}} [  x_{i} - x_{i+1} \le s/N   ] \le
  C \left ( K^{\xi } s  \log N \right ) ^{\beta + 1}
\ee
with a high probability in $\bz$ w.r.t. $\mu_\by$.
 The subexponential lower bound on $s$, assumed in part ii) of Theorem~\ref{lr2}, 
 allows us to include the probability of
the complement of $\cR$ in the estimate, we thus have proved \eqref{k52}.
Similar argument but with \eqref{k5} replaced by  \eqref{l2} yields  \eqref{l20}.  

 To prove the weaker bounds  \eqref{k521}, \eqref{l21} for any $s>0$, 
we  may assume that  $L-K \le i \le L $;  $i>L$ is treated similarly. 
Since $\by\in \cR_{L,K}$, we have $|J_\by|\ge cK/N$. 
We consider two cases, either $x_i-y_{L-K-1} \le c' K /N $ 
or  $x_i-y_{L-K-1} \ge c' K /N $ with $c'<c/2$.  In the first case,  we condition
on $x_{L-K}, \ldots, x_i$ and we 
apply \eqref{k3-1} to the measure 
$\nu_1 = \mu_{\by, x_{L-K}, \ldots x_i}$. The configuration interval
of this measure  has length at least $cK/(2N)$, so we have 
\be\label{nu1}
\P^{ \nu_1}  ( x_{i+1} -x_{i} \le s/N ) \le    \frac{CKs}{cK/(2N)}   \le CNs.   
\ee
In the second case,  $x_i-y_{L-K-1} \ge c' K /N $, we condition on
$x_{i+1}, x_{i+2}, \ldots x_{L+K}$. The corresponding measure, denoted by
 $\nu_2 = \mu_{\by, x_{i+1}, \ldots x_{L+K}}$, has a configuration interval of
length at least $c'K/N$. We can now have the estimate \eqref{nu1}
for $\nu_2$. \nc
 Putting these two estimates together, we have proved \eqref{k521}. 
Finally \eqref{l21} can be proved in a similar way. This completes the proof of 
Theorem~\ref{lr2}.

\qed

\section{Proof  of Theorem \ref{thm:local}} \label{sec:pflocal}

\subsection{Comparison  of the local statistics 
of two local measures}\label{sec:comp}

In this section, we start to compare gap distributions  of two local log-gases
 on the same configuration interval but with different external potential and 
boundary conditions. 
We will express the differences of gap distributions between two measures 
in terms of  random walks in  time dependent 
random environments.
{F}rom now on, we use microscopic  coordinates  and we relabel the indices so that the coordinates 
of  $x_j$ are  $j \in I=\{-K, \ldots, 0, 1, \ldots K\}$, i.e. we set $L=\wt L =0$ in the earlier notation.
 This will have the effect that the labelling of the external points $\by$ will not
run from 1 to $N$, but from some $L_-<0$ to $L_+>0$ with $L_+-L_- = N$. The important input is 
that the index set $I$ of the internal points is macroscopically
 separated away from the edges, i.e. $|L_\pm|\ge \al N$. 

 The local equilibrium measures and their Hamiltonians 
will be denoted by the same symbols, $\mu_\by$ and $\cH_\by$, as before,
but with a slight abuse of notations we redefine them now  to the microscopic scaling.   
Hence we have  two measures  $\mu_\by = e^{ -\beta \nc H_\by} /Z_\by $ and 
$\mu_{\wt \by}= e^{  -\beta \nc\wt H_{\wt \by}} /Z_{\wt \by}$, 
 defined on the same configuration interval $J=J_\by=J_{\wt\by}$ with center
$\bar y$,  which, for simplicity, we assumed   $\bar y = 0$. 
 The local density at the center is $\varrho(0)>0$. 
The  Hamiltonian is given by 
$$
   \cH_\by (\bx): = \sum_{i\in I}  \frac{1}{2} V_{ \by} ( x_i) -
   \sum_{i,j\in I\atop i<j} \log |x_j-x_i| 
$$
\be\label{Vz} 
   V_\by (x) :=  N V(x/N) - 2\sum_{j\not\in I} \log |x-y_j|,
\ee
and $\wt H_{\wt \by}$ is defined in a similar way with $V$ in \eqref{Vz} replaced
 with another external potential  $\wt V$.
Recall also the assumption that $V'', \wt V'' \ge -C$ \eqref{lowerder}.
 We will need the rescaled version of the bounds \eqref{Jlength}, \eqref{Vby1}
and \eqref{Vbysec}, i.e.
\begin{align}\label{Jlengthresc}
    |J_\by| & =   \frac{\cK}{\varrho(0) } + O(K^\xi), 
\\
\label{Vby1resc}
   V_\by'(x) & = \varrho(0) \log \frac{d_+(x)}{d_-(x)}
   + O\Big(\frac{K^\xi}{d(x)}\Big),   \qquad x\in J,
\\
\label{Vbysecresc} 
  V_\by''(x) & \ge \frac{\inf V''}{N} + \frac{c}{d(x)},  \qquad x\in J,
\end{align}
where
\be\label{ddef}
   d(x) := \min\{ |x-y_{-K-1}|, |x-y_{K+1}|\}
\ee
is the distance to the boundary and  we redefined $d_\pm(x)$ as 
$$
 d_-(x) := d(x) + \varrho(0)K^\xi, 
\qquad d_+(x) := \max\{ |x-y_{-K-1}|, |x-y_{K+1}|\} + \varrho(0) K^\xi.
$$
 The rescaled version of Lemma~\ref{lm:goody} states that 
\eqref{Jlengthresc}, \eqref{Vby1resc} and \eqref{Vbysecresc}
hold for any $\by \in \cR_{L,K}(\xi\delta/2, \al/2)$, where
the set $\cR_{L,K}$, originally defined in \eqref{yrig}, 
is expressed in microscopic coordinates.

We also  rewrite \eqref{Ex}  in the microscopic coordinate as 
\be\label{Exm}
   | \E^{ \mu_\by} x_j -  \alpha_j| +  | \E^{\wt \mu_{\wt \by}} x_j -  \alpha_j |\le C  K^{\xi}, \quad 
\ee
where 
\be\label{aldefnew}
\alpha_j: = \frac{j}{\cK +1} |J| 
\ee 
is the rescaled version of the definition given in \eqref{aldef},
but we keep the same notation.

 The Dirichlet form is also redefined; 
 in microscopic coordinates it is now given by 
\be\label{Dirdef}
 D^{\mu_\by} (\sqrt{g})= \sum_{i\in I} D^{\mu_\by}_i (\sqrt{g})=  \frac{1}{2}
\sum_{i \in I}  \int |\partial_i \sqrt{ g}|^2 \rd\mu_\by .
\ee
Due to the rescaling,  the LSI from \eqref{lsi}   now takes the form, for $\by \in \cR_{L, K}$,
\be\label{lsim}
S(g   \mu_\by | \mu_\by)  \le  CK    D^{\mu_\by} ( \sqrt {g} ). 
\ee

Define the interpolating  measures 
\be\label{omd}
\om_{\by, {\wt \by}}^r =    Z_r e^{-\beta r (\wt V_{\wt \by} (\bx) - V_\by (\bx) )}  \mu_\by,
 \qquad  r\in[0,1], 
\ee
so that $ \om_{\by, {\wt \by}}^1 =\wt\mu_{\wt \by}$ and $\om_{\by, {\wt \by}}^0= \mu_\by$
 ($Z_r$ is a normalization constant). 
This is again a local log-gas with Hamiltonian
\be\label{Hyy}
  \cH_{\by, \wt\by}^r (\bx)= \frac{1}{2}\sum_{i\in I} V_{\by, \wt\by}^{r}(x_i) 
  - \sum_{i<j} \log |x_i-x_j|
\ee
and external potential
\begin{align*}
   V_{\by, \wt\by}^{r}(x) : & =   (1-r)  V_\by(x)+ r \wt V_{\wt\by}(x) \\
    V_\by(x): & =  NV(x/N)-  2 \nc\sum_{j\not\in I} \log(x-y_i),  \\
  \wt V_{\wt\by}(x): & =  N\wt V(x/N)-  2 \nc \sum_{j\not\in I} \log(x-\wt y_i). 
\end{align*}
The Dirichlet for $D^\om$ w.r.t. the measure $\om = \om_{\by, {\wt \by}}^r$ is defined
similarly to \eqref{Dirdef}. 

For any bounded smooth function $Q (\bx)$ with compact support we can express the difference
of the expectations w.r.t. two different measures  $\mu_\by$ and $\mu_{\wt \by}$ as 
\be\label{rcorr}
 \E^{\wt \mu_{\wt \by}} Q(\bx) -  \E^{\mu_{\by}} Q(\bx)   = \int_0^1 \frac{\rd}{\rd r} 
 \E^{\om_{\by, {\wt \by}}^r}
  Q(\bx) \rd r =  
\int_0^1    \beta  \langle h_0(\bx);  Q  (\bx )  \rangle_{\om_{\by, {\wt \by}}^r } \rd r,
\ee
\nc
where
   \be\label{h0def}
h_0 = h_0(\bx)= \sum_{i \in I}   ( V_\by(x_i) - \wt V_{\wt \by}(x_i) )
\ee
and $\langle f ; g\rangle_\om : = \E^\om fg - (\E^\om f)(\E^\om g)$ denotes the correlation. 
{F}rom now on, we will fix $r$. 
Our main result is the following   estimate on the gap correlation function.

\begin{theorem}\label{cor}   
 Consider two smooth potentials $V, \wt V$ with $V'', \wt V''\ge -C$ 
and two boundary conditions, $\by, \wt\by\in \cR_{L=0,K}(\xi^2\delta/2,\al)$,
with some sufficiently small $\xi$,  such that
$J=J_\by = J_{\wt\by}$. 
 Assume that
 \eqref{Exm} holds
  for both boundary conditions $\by,\wt\by$. 
Then, in  particular, the rescaled version of the
rigidity bound \eqref{rig} and the level repulsion  bounds~\eqref{k52}, \eqref{l20}  hold for both
 $\mu_\by$ and $\wt\mu_{\wt\by}$ by Theorem~\ref{thm:omrig} and Theorem~\ref{lr2}.

 Fix $\xi^*>0$. Then there exist  $\e> 0$ and $C>0$, 
depending on $\xi^*$, such that for
any sufficiently small $\xi$, for any 
 $0 \le r \le 1$ and  for $ |p| \le K^{1-\xi^*}$  we have 
\be\label{eq:cor}
 | \langle h_0;  O(x_{p}-x_{p+1},\ldots x_{p}-x_{p+n} ) 
 \rangle_{\om_{\by, {\wt \by}}^r }| \le   K^{C  \xi} K^{-\e} \|O'\|_\infty
\ee
 for any $n$-particle observable $O$,
 provided that  
$K\ge K_0(\xi,\xi^*, n)$  is large enough.
\end{theorem}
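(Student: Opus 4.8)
\emph{Plan of proof.} The backbone is the random walk representation of the covariance. First I would note that $\om^r_{\by,\wt\by}$ has a strictly convex Hamiltonian: by~\eqref{Vbysecresc}, which applies to the convex combination $V^r_{\by,\wt\by}$, the one--body part of $(\cH^r_{\by,\wt\by})''$ is already bounded below by $c/d(x)\ge c'/K>0$, and the logarithmic interaction only adds a nonnegative term, so the identity~\eqref{rwr1} is available. Applying it with $F=h_0$ from~\eqref{h0def} and with the multi--gap observable, $\langle h_0;O\rangle_{\om^r_{\by,\wt\by}}$ becomes $\frac12\int_0^\infty \rd s$ of an expectation — over $\om^r_{\by,\wt\by}$ and over the canonical SDE paths $\bx(\cdot)$ — of $\sum_a\partial_aO\sum_{i\in I}\big(\cU_{ip}(s)-\cU_{i,p+a}(s)\big)\partial_ih_0(\bx)$, where $\cU$ solves~\eqref{matrixeq}. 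The second preliminary ingredient is an estimate on $\partial_ih_0=V_\by'(x_i)-\wt V_{\wt\by}'(x_i)$: since the configuration interval $J$ — hence the functions $d,d_\pm$ — is common to $\by$ and $\wt\by$, the leading term $\varrho(0)\log(d_+/d_-)$ of~\eqref{Vby1resc} cancels, so $|\partial_ih_0(\bx)|\le CK^\xi/d(x_i)$, and on rigid configurations $\sum_{i\in I}|\partial_ih_0|\le CK^\xi\log K$.

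The task then reduces to bounding the time integral of the weighted sum $\sum_i\big(\cU_{ip}(s)-\cU_{i,p+a}(s)\big)\partial_ih_0$, and I would split $[0,\infty)$ into four regimes. (i) For $s$ below a tiny threshold one linearizes: on the good set $\cA(s)$ is bounded entrywise, so the discrete gradient of $\cU$ is $(\delta_{ip}-\delta_{i,p+a})+O(s\|\cA\|)$, and summed against $\partial_ih_0$ its main term $\partial_ph_0-\partial_{p+a}h_0$ is $O(nK^\xi/K)$ because $|p|\le K^{1-\xi^*}$ places $x_p$ near the centre of $J$, where $V''=O(1/K)$. (ii) For moderate times, up to $K^c$, one uses a crude off--diagonal bound $\cU_{ip}(s)\lesssim s/(i-p)^2$ away from $p$ (via one--jump/Nash estimates, rigidity and the maximal--function bound~\eqref{Kass1}) together again with the centrality of $p$: the localization of $\cU_{\cdot p}$ around $p$ and $\sum_{i\text{ near }p}|\partial_ih_0|=O(K^\xi/K)$ make this regime small. (iii) For $s\in[K^c,CK\log K]$ — the critical window — one invokes the quantitative discrete Hölder estimate~\eqref{holds} of Theorem~\ref{holderg} with $Z=p$, which upgrades the a priori decay $\cU_{ip}(s)\sim s^{-1}$ to $|\cU_{ip}(s)-\cU_{i,p+a}(s)|\le CK^\xi s^{-1-\frac12\fq\al}$; paired with $\sum_i|\partial_ih_0|\le CK^\xi\log K$ and integrated, this contributes at most $CK^{C\xi}(K^c)^{-\frac12\fq\al}\|O'\|_\infty$, which is $K^{-\e}\|O'\|_\infty$ once $\xi$ is small. (iv) Beyond $CK\log K$ the dynamics has relaxed (the LSI constant of $\om^r_{\by,\wt\by}$ is $O(K)$) and the contribution is exponentially small.

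In every regime one restricts the configuration $\bx$ and the path $\bx(\cdot)$ to a ``good'' event on which rigidity and the level--repulsion bounds~\eqref{k52},~\eqref{l20} of Theorem~\ref{lr2} hold; this is exactly what verifies the hypotheses of Theorem~\ref{holderg} — the two--sided bounds~\eqref{cbs},~\eqref{lowcbs} up to a factor $K^\xi$ and the space--time maximal--function bound~\eqref{Kass1} around $(\log K)^C$ space--time points, which control the singular quantities $B_{i,i+1}=\beta/(x_i-x_{i+1})^2$. Off the good event the integrand has only an a priori bound polynomial in $N$, while the good event has probability $1-e^{-cK^\theta}$ (here the subexponential lower bound on the gaps in part~ii) of Theorem~\ref{lr2} is essential, and $K^\theta\ge N^{\delta\theta}\gg\log N$), so the complementary contribution is negligible. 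Further technical cutoffs handle the ``exceptional'' indices $i$ near the endpoints of $I$, where $d(x_i)^{-1}$ and $B_{ij}$ are large but few in number, and one checks that the hypothesis~\eqref{Exm} — hence the rigidity of Theorem~\ref{thm:omrig} and the level--repulsion input, and thus all of the above — propagates to every interpolating measure $\om^r_{\by,\wt\by}$, $0\le r\le 1$. Integrating~\eqref{eq:cor} over $r$ via~\eqref{rcorr} then yields Theorem~\ref{thm:local}.

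The main obstacle is twofold, and is why this argument occupies the rest of the paper. First, Theorem~\ref{holderg} itself: the jump rates $B_{ij}$ are \emph{not} uniformly comparable to $|i-j|^{-2}$ — by the reasoning of Ben Arous--Bourgade the minimal gap of GOE is as small as $N^{-1/2}$ in microscopic units, so the analogue of the uniform two--sided bound~\eqref{Kxyt1} of Caffarelli--Chan--Vasseur fails, and their De Giorgi--Nash--Moser iteration must be extended to coefficients controlled only in the weak, space--time--averaged sense of~\eqref{Kass1}; this is carried out in Section~\ref{Caff}. Second, the random environment in~\eqref{matrixeq} depends on the very gap distribution one is estimating, so the sharp level--repulsion estimates of Theorem~\ref{lr2} — proved independently — are the indispensable a priori input that breaks the circularity, and obtaining them with the correct $K^\xi$ (rather than $N$) prefactor, as needed in regime (iii), is delicate.
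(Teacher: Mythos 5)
Your backbone---the random walk representation, the good sets $\cG$ and $\wt\cQ$, the $L^1\to L^\infty$ decay to control the complement of $\wt\cQ$, the H\"older regularity in the critical window $[K^c,CK\log K]$, and exponential relaxation beyond---is exactly the paper's (Sections~\ref{sec:rw}--\ref{sec:proof81}), and your preliminary observations (convexity of $\cH^r_{\by,\wt\by}$ via~\eqref{Vbysecresc}, the cancellation of the leading $\log(d_+/d_-)$ term in $\partial_ih_0$, the logarithmic summability $\sum_i|\partial_ih_0|\lesssim K^\xi\log K$) are all correct and all used.

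Where your argument has a genuine gap is in the short-time regimes~(i) and~(ii). In~(i) you assert that ``on the good set $\cA(s)$ is bounded entrywise,'' which is false: the set $\cG$ records rigidity, $|x_j(s)-\alpha_j|\le K^{\xi'}$, which gives a \emph{lower} bound $B_{ij}\gtrsim K^{-\xi'}|i-j|^{-2}$ and an upper bound only for $|i-j|\gtrsim K^{\xi'}$---nearest-neighbour rates $B_{i,i+1}=\beta(x_{i+1}-x_i)^{-2}$ remain unbounded on $\cG$ (the minimal gap is typically as small as $N^{-1/2}$). Upper control on $B$ comes only from $\wt\cQ$ and only in the time-averaged sense of~\eqref{K2}, which is too weak to justify the Taylor expansion $\cU(s)=I-s\cA+O(s^2\|\cA\|^2)$: $\|\cA\|$ has no a~priori pointwise bound, so regime~(i) as written does not close. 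Your claimed off-diagonal bound $\cU_{ip}(s)\lesssim s/(i-p)^2$ in~(ii) is likewise stronger than is provable here: what the paper establishes (Lemma~\ref{lem-finite}) is only $|v^b_p(s)|\lesssim K^{\rho_1+2\xi'+1/2}\sqrt{s+1}/|p-b|$, decay by \emph{one} power rather than two, obtained through a Gronwall argument with exponential weights that crucially relies on~\eqref{Kass1} to control the accumulated jump flux of the unbounded rates. A ``one-jump'' heuristic would require exactly the uniform upper bound on $B$ that is missing.

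The paper's way around both difficulties is a different decomposition of~\eqref{cut3}, joint in $(\sigma,b)$ rather than in $\sigma$ alone. For $b$ in the interior, $|b|\le K^{1-\theta_5}$, one uses for \emph{all} $\sigma$ only the $L^p\to L^\infty$ decay~\eqref{decay} with $p$ slightly above $1$ (which renders the $\sigma\to 0$ singularity integrable) combined with $|\partial_b h_0|=O(K^{\xi'-1})$ there---no structure on $\cU$ beyond its normalized $\sigma^{-1}$ decay is needed. For $b$ near the boundary, $|b|>K^{1-\theta_5}$, and $\sigma\le K^{1/4}$, one needs and uses the finite speed of propagation lemma, exploiting $|p-b|\ge \frac12 K^{1-\theta_5}$ (this is where $|p|\le K^{1-\xi^*}$ and the choice $\theta_5<\xi^*$ enter). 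Your time-only splitting can be made to work, but only if regime~(i) is replaced by the $L^{1+\varepsilon}\to L^\infty$ device and regime~(ii) is run with the weaker, but sufficient, finite speed estimate of Lemma~\ref{lem-finite}, together with the separation of the sum over $b$ into interior and boundary ranges. As written, the linearization in~(i) rests on a false premise, and the estimate in~(ii) is not justified.
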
 

 Notice that this theorem is formulated in terms of $K$ being 
the only large parameter; $N$ disappeared.
 We also remark that the restriction $ |p| \le K^{1-\xi^*}$  can be
easily relaxed to $ |p| \le K-K^{1-\xi^*}$ with an additional argument
conditioning on set  $\{ x_i \; : \; i\in I\setminus \wt I\}$ to ensure that $p$ 
is near the middle of the new index set $\wt I$. We will not need
this more general form in this paper.

 First we complete the proof of Theorem~\ref{thm:local}
assuming Theorem~\ref{cor}. 

\medskip

{\it Proof of Theorem~\ref{thm:local}.} 
 The family of measures $\om_{\by, {\wt \by}}^r$, $0\le r\le 1$,
interpolate between $\mu_\by$ and $\wt \mu_{\wt\by}$. So we can express the right hand side of \eqref{univ},
in the rescaled coordinates and with $L=\wt L=0$ as
\be\label{rinteg}
  \Big| [\E^{\mu_\by} - \E^{\wt \mu_{\wt\by}}] O(x_{p}-x_{p+1},\ldots x_{p}-x_{p+n} ) \Big|
  \le \int_0^1 \rd r  \frac{\rd}{\rd r}\E^{\om_{\by, {\wt \by}}^r } O(x_{p}-x_{p+1},\ldots x_{p}-x_{p+n} ).
\ee
Using \eqref{rcorr} and \eqref{eq:cor} we obtain that this difference is bounded by $K^{C\xi}K^{-\e}$.
Choosing  $\xi$ sufficiently small
so that $K^{C\xi}K^{-\e}\le K^{-\e/2}$, we obtain
 \eqref{univ}  (with $\e/2$ instead of $\e$). This completes the proof of 
Theorem~\ref{thm:local}.
\qed

\medskip

In the  rest of the paper
we will prove Theorem \ref{cor}.
The main difficulty 
is due to the fact that 
the correlation function  of the points, $\langle x_i;  x_j  \rangle_{\om}$,  decays only logarithmically.
 In fact, for the GUE, Gustavsson 
proved that  (Theorem 1.3 in \cite{Gus}) 
\be
\langle x_i; x_j \rangle_{GUE}  \sim 
     \log \frac {N}{  [ |i-j|+1]},
\ee
and a similar formula is expected for $\om$.   
 Therefore, it is very difficult to prove 
Theorem \ref{cor} based on this slow logarithmic decay. 
We notice that, however, the correlation function of the type 
\be
\langle g_1(x_i); g_2( x_j-x_{j+1}) \rangle_\om
\ee
decays much faster  in $|i-j|$ due to that  the second factor $g_2(x_j-x_{j+1})$ 
 depends only on the difference.  Correlations of the form 
$\langle g_1(x_i-x_{i+1}); g_2( x_j-x_{j+1}) \rangle_\om$ decay even faster.  
The  fact that observables of differences 
of particles behave much nicer was a basic observation  in our
 previous approach \cite{ESY4, EYYBand, EYY2} of universality.

The measure $\om= \om_{\by, {\wt \by}}^r $ is closely related to 
the measures $\mu_\by$ and $\mu_{\wt \by}$.  
Our first task
in Section~\ref{sec:omprop}  is to show that both the  rigidity and level repulsion
estimates hold w.r.t. the measure $ \om$.
Then  we will  
rewrite  the correlation functions in terms of a random walk
 representation in Proposition \ref{prop:repp}.  
The  decay of correlation functions will be 
translated into a regularity property
 of the corresponding parabolic equation, whose proof  will be 
the main content of Section  \ref{Caff}.  Section \ref{sec:corproof} consists
 of various cutoff estimates  to remove the singularity of  the diffusion 
coefficients in the random walk representations. 
We emphasize that these cutoffs are critical  at $\beta=1$; we do not know if 
our argument can be extended to  $\beta< 1$.

\subsection{Rigidity  and level repulsion of  the  interpolating measure 
$ \om^r_{\by, {\wt \by}}$}\label{sec:omprop}  

In this section we establish  rigidity and level repulsion
results for the interpolating measure $ \om_{\by, {\wt \by}}^r$,
similar to the ones established for $\mu_\by$ in Section~\ref{sec:profmu}
and stated in Theorems~\ref{thm:omrig} and~\ref{lr2}.

\begin{lemma}\label{lm:inter}  Let $L$ and $K$ satisfy \eqref{K} and 
$\by, \wt \by  \in \cR_{L, K}( \xi^2 \nc \delta/2,\al) $. 
With the notation $\om =  \om_{\by, {\wt \by}}^{r} $
there exist constants  $C$,  $\theta_3$,  $C_2$  and $C_3$  such that 
 the following estimates hold:

i) [Rigidity bound]   
\be\label{rigi}
   \P^{\om}\big( \big| x_i- \alpha_i\big| \ge C K^{ C_2 \xi^{2 } }\big)\le C e^{- K^{\theta_3}}, 
 \quad i \in I.
\ee

ii) [Weak form of  level repulsion]  For any $s>0$ we have 
\be\label{level}
\P^{  \om} \big(  x_{i+1} - x_{i} \le s   \big) \le
  C \left ( N s \right ) ^{\beta + 1}, \quad  i \in \llbracket L-K-1, L+K\rrbracket,  \quad s>0,
\ee
\be\label{secondlevel}
\P^{  \om} \big(  x_{i+2} - x_{i} \le s   \big) \le
  C \left ( N s \right ) ^{2\beta + 1}, \quad i \in \llbracket L-K-1, L+K-1\rrbracket , \quad s>0,
\ee

iii) [Strong form of level repulsion] With some small $\theta>0$,  for any
$  s\ge  \exp{(-K^\theta)}$  we have 
\be\label{level11}
\P^{  \om} \big(  x_{i+1} - x_{i} \le s   \big) \le
  C \left ( K^{ C_3 \nc\xi } s \right ) ^{\beta + 1}, \quad 
 i \in \llbracket L-K-1, L+K\rrbracket,  
\ee
\be\label{secondlevel11}
\P^{  \om} \big(  x_{i+2} - x_{i} \le s   \big) \le
  C \left ( K^{C_3\xi} s \right ) ^{2\beta + 1}, \quad i \in \llbracket L-K-1, L+K-1\rrbracket , 
\ee

iv) [Logarithmic Sobolev inequality] 
 \be\label{lsi22}
S(g  \om    |  \om )  \le  CK   D^{ \om } ( \sqrt {g} ).  
\ee
\end{lemma}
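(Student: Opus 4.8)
The plan is to transfer all four estimates from $\mu_\by$ (and $\wt\mu_{\wt\by}$) to the interpolating measure $\om = \om^r_{\by,\wt\by}$, using that $\om$ differs from $\mu_\by$ only through the bounded, convex-up-to-a-controlled-error reweighting $e^{-\beta r(\wt V_{\wt\by}-V_\by)}$. First I would record the key structural fact: by Lemma~\ref{lm:goody} (in its rescaled form \eqref{Jlengthresc}--\eqref{Vbysecresc}), both $V_\by$ and $\wt V_{\wt\by}$ are $K^\xi$-regular on the common interval $J$, hence so is the convex combination $V^r_{\by,\wt\by} = (1-r)V_\by + r\wt V_{\wt\by}$ appearing in \eqref{Hyy}; in particular $\om$ is a local log-gas whose external potential is $K^\xi$-regular and satisfies $(V^r)'' \ge \inf V''/N + c/d(x)$. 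The Remark after Lemma~\ref{lm:goody} says precisely that Theorems~\ref{thm:omrig}, \ref{lr2} depend only on this regularity, not on the explicit form of $V_\by$; so in principle (i), (ii), (iii) follow by re-reading those proofs with $V^r$ in place of $V_\by$, once we verify the hypothesis \eqref{Exone} (namely $|\E^\om x_j - \alpha_j| \le CK^\xi$) for $\om$.

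Thus the real work is to propagate the expected-location bound \eqref{Exm} from $\mu_\by$ to $\om$. For (iv), the LSI \eqref{lsi22}, I would argue directly: the Hessian of $\cH^r_{\by,\wt\by}$ satisfies the same convexity lower bound $\cH'' \ge cN/K$ as in \eqref{Hyconv} (the logarithmic interaction gives a uniform bound on $\frac1N\sum_{j\notin I}|x-y_j|^{-2}$, and the potential contributes a nonnegative-up-to-$O(1/N)$ term by $K^\xi$-regularity), so the Bakry--Émery criterion yields \eqref{lsi22} with LSI constant $O(K)$. This also hands us a Herbst concentration bound for $\om$. To get \eqref{Exone} for $\om$: write $\E^\om x_j - \E^{\mu_\by} x_j$ and bound it by running the reversible dynamics for $\mu_\by$ exactly as in the proof of Lemma~\ref{ec} — the displacement of $\E x_j$ along an interpolation in the potential is controlled by $\tau_K R\sum_i D_i^{\mu_\by}(\sqrt{g}) + \tau_K/(NR)$ where $g$ is the relative density $\om/\mu_\by$; here $g = Z_r e^{-\beta r(\wt V_{\wt\by}-V_\by)}$ is explicit, and $\sum_i D_i^{\mu_\by}(\sqrt g) \le \frac1N \E^{\mu_\by}\sum_i |\partial_i \log g|^2 g$, which by $K^\xi$-regularity of both potentials ($|V_\by' - \wt V_{\wt\by}'|(x) \lesssim K^\xi/d(x)$ plus the bounded smooth parts) and the level-repulsion/rigidity bounds for $\mu_\by$ is $\le K^{C\xi}/N$ with very high probability. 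Combined with $\tau_K = K/N$ this gives $|\E^\om x_j - \E^{\mu_\by} x_j| \le K^{C\xi}\cdot N^{-1}\cdot N = $ — more carefully, after rescaling, a bound of size $K^{C\xi}$, which together with \eqref{Exm} gives $|\E^\om x_j - \alpha_j| \le CK^{C_2\xi^2}$ (adjusting the exponent since $\xi$ here plays the role of $\xi^2$ in the hypothesis on $\by$). Feeding this into Theorem~\ref{thm:omrig} gives (i), and then into the strong form of Theorem~\ref{lr2} gives (iii); the weak forms (ii), \eqref{level}--\eqref{secondlevel} follow from part i) of Theorem~\ref{lr2}, which needs only $\by,\wt\by \in \cR$ and the regularity of the potential, with no rigidity input.

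The main obstacle I expect is controlling the Dirichlet form $\sum_i D_i^{\mu_\by}(\sqrt g)$ uniformly near the boundary of $I$: the difference of potentials $V_\by' - \wt V_{\wt\by}'$ involves $\sum_{j\notin I}(\log|x-y_j| - \log|x-\wt y_j|)$, whose derivative behaves like $K^\xi/d(x)$, so $|\partial_i \log g|^2$ is not integrable against Lebesgue measure near $d(x)\to 0$ — one must use the level repulsion estimate \eqref{k52}/\eqref{l20} for $\mu_\by$ (valid since $\mu_\by$ satisfies the rigidity \eqref{rig}) to show $\E^{\mu_\by}[d(x)^{-2}\mathbf 1(\cdot)]$ is only mildly divergent, of order $K^{C\xi}$, after integrating the tail $\P^{\mu_\by}(x_{\pm K}$ close to $y_{\pm K\mp 1}) \lesssim (K^\xi s\log N)^{\beta+1}$ with $\beta\ge 1$. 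This is exactly the place where $\beta \ge 1$ is used, mirroring the remark in the introduction that the cutoffs are critical at $\beta=1$. Everything else is a transcription of the arguments already given for $\mu_\by$ in Sections~\ref{sec:rig} and~\ref{sec:lr}, together with the observation that $\om$, $\mu_\by$, $\mu_{\wt\by}$ are mutually absolutely continuous with $K^{O(\xi)}$-bounded relative entropies, so tail events transfer between them at subexponential cost.
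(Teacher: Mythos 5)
Your overall skeleton is right: $V^r_{\by,\wt\by}$ inherits $K^\xi$-regularity, so once \eqref{Exone} holds for $\om$, parts (i)--(iii) come from re-running Theorems~\ref{thm:omrig} and~\ref{lr2} (this is exactly what the paper does), and (iv) is a direct Bakry--\'Emery bound from \eqref{convexc}. The substantive part of the proof is therefore establishing $|\E^\om x_j-\alpha_j|\le CK^{C\xi}$, and here your route has a genuine gap.

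You propose to control $|\E^\om x_j-\E^{\mu_\by}x_j|$ by the reversible-dynamics argument of Lemma~\ref{ec}, which produces a bound of the form $\tau_K R\sum_i D_i^{\mu_\by}(\sqrt g)+\tau_K/(NR)$ with $g=d\om/d\mu_\by$. But
\[
\sum_i D_i^{\mu_\by}(\sqrt g)=\frac{1}{8N}\sum_i\E^{\om}\big|\partial_i\log g\big|^2
=\frac{\beta^2r^2}{8N}\sum_i\E^{\om}\big|\wt V_{\wt\by}'(x_i)-V_\by'(x_i)\big|^2,
\]
and the expectation is with respect to $\om$, not $\mu_\by$. You then invoke ``level-repulsion/rigidity bounds for $\mu_\by$'' to finish — but the object needs $\om$-moments, and at this stage of the argument you have no rigidity or strong level repulsion for $\om$ at all: that is precisely what you are trying to prove. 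Weak level repulsion (part~ii) alone is too crude here: for $\beta=1$ it gives $\P^\om[d(x_{\pm K})<s]\lesssim (Ns)^2$, so $\E^\om[d(x_{\pm K})^{-2}]$ is only controlled at scale $N^2$ (and is in fact logarithmically divergent without the $\log_\epsilon$ regularization), nowhere near the required $K^{C\xi}$. This is a circularity, not a detail to be transcribed.

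The paper avoids this entirely with Lemma~\ref{ent}: the key identity
\[
\min\big(S(\om|\mu_\by),\,S(\om|\wt\mu_{\wt\by})\big)\le \big[\E^{\wt\mu_{\wt\by}}-\E^{\mu_\by}\big](\cH_\by-\cH_{\wt\by})
\]
(derived from Jensen and the entropy inequality) bounds the relative entropy purely by expectations with respect to the two \emph{reference} measures $\mu_\by,\wt\mu_{\wt\by}$, for which rigidity and strong level repulsion are already available. Moreover, Taylor-expanding $V_\by-\wt V_{\wt\by}$ around $\alpha_i$ and using the cancellation of the deterministic term under $\E^{\wt\mu_{\wt\by}}-\E^{\mu_\by}$ (see \eqref{842}), the residual singularity is only $K^\xi/d(x)$, i.e.\ a \emph{first} power of $d^{-1}$ multiplied by $|x_i-\alpha_i|\lesssim K^\xi$, whereas your Dirichlet form involves $d^{-2}$. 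For $\beta=1$ the first-power singularity is borderline integrable under the $(K^\xi s\log N)^{\beta+1}$ level repulsion estimate; the second power is not. So even granting your access to the relevant moments, your bound would be strictly weaker. Once the entropy bound $\le K^{C\xi}$ is in hand, the desired estimate on $\E^\om|x_j-\alpha_j|$ follows from the entropy inequality and the Gaussian tail of \eqref{rig}, as in \eqref{86}; your dynamics argument does not produce an entropy bound at all (which you also implicitly rely on in your last sentence, without deriving it). To repair your proposal, you should replace the Dirichlet-form step with the entropy-inequality argument of Lemma~\ref{ent} and the expansion~\eqref{842}.
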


Note that in \eqref{rigi} we state only the weaker form of the rigidity
bound, similar to \eqref{weakrig}. It is possible to prove the strong form of rigidity with  Gaussian tail \eqref{rig} 
for $\om$, but we will not need it in this paper. 

 The level repulsion bounds  will mostly be used in the following estimates
which trivially follow from \eqref{level}--\eqref{secondlevel11}:

\begin{corollary}\label{cor:mom} Under the assumptions of Lemma~\ref{lm:inter}, for any  $p< \beta + 1$  we have
\be\label{expinv}
   \E^\om \frac{1}{|x_i-x_{i+1}|^p} \le C_p K^{C_3\xi}, \qquad   i \in \llbracket L-K-1, L+K\rrbracket, 
\ee
and for any  $p<2\beta+1 $
\be\label{expinvsec}
   \E^\om \frac{1}{|x_i-x_{i+2}|^p} \le C_p K^{C_3\xi}, \qquad  i \in \llbracket L-K-1, L+K-1\rrbracket. 
\ee
\qed
\end{corollary}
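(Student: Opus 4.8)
The plan is to derive both moment bounds directly from the level repulsion estimates of Lemma~\ref{lm:inter} by the layer-cake (tail-integration) formula; no new probabilistic input is needed, since all the hard work is already contained in \eqref{level}--\eqref{secondlevel11}. Write $g_i := x_{i+1}-x_i$ for the first gap. Because all internal points (and the two adjacent boundary points, under the convention $x_{L-K-1}=y_{L-K-1}$, $x_{L+K+1}=y_{L+K+1}$) lie in the configuration interval $J$, one has the deterministic bound $0<g_i\le |J|$, and by \eqref{Jlengthresc} the length $|J|$ is comparable to $K$, in particular $|J|\ge 1$ once $K$ is large. Hence
\be
   \E^\om g_i^{-p} = |J|^{-p} + p\int_0^{|J|} s^{-p-1}\,\P^\om(g_i < s)\,\rd s ,
\ee
so since $|J|^{-p}\le 1$ it remains to bound the integral.

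I would split $[0,|J|]$ into three ranges. On the middle range $s\in[e^{-K^\theta}, c'K^{-C_3\xi}]$, with $c'$ a small constant chosen so that the right-hand side of \eqref{level11} is $\le 1$ there, I apply the strong level repulsion \eqref{level11}, which gives $s^{-p-1}\P^\om(g_i<s)\le C K^{(\beta+1)C_3\xi} s^{\beta-p}$; since $p<\beta+1$ the exponent satisfies $\beta-p>-1$, so the integral is finite, dominated by the upper endpoint, and contributes at most $C_p K^{(\beta+1)C_3\xi}(K^{-C_3\xi})^{\beta-p+1}=C_p K^{C_3 p\,\xi}$, which is of the claimed form after enlarging the constant $C_3$. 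On the upper range $s\in[c'K^{-C_3\xi},|J|]$ I simply use $\P^\om(g_i<s)\le 1$, so $p\int s^{-p-1}\rd s\le (c'K^{-C_3\xi})^{-p}$, again of the same order. On the lower range $s\in[0,e^{-K^\theta}]$, where \eqref{level11} is not available, I fall back on the weak level repulsion \eqref{level}, $\P^\om(g_i<s)\le C(Ns)^{\beta+1}$, obtaining a contribution $\le C_p N^{\beta+1}e^{-(\beta-p+1)K^\theta}$; since $K\ge N^\delta$ by \eqref{K} we have $K^\theta\ge N^{\delta\theta}\gg \log N$, so this term is exponentially small and negligible. Summing the three pieces gives \eqref{expinv}. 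The bound \eqref{expinvsec} for the second gap $x_{i+2}-x_i$ is obtained in exactly the same way, using \eqref{secondlevel} and \eqref{secondlevel11} in place of \eqref{level} and \eqref{level11}; the relevant exponent is now $2\beta+1$, which is exactly why the hypothesis $p<2\beta+1$ appears (it is the integrability threshold at $s=0$).

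There is essentially no genuine obstacle here: the statement is a bookkeeping consequence of Lemma~\ref{lm:inter}. The only two points needing a moment's attention are (i) that the strong bound \eqref{level11} is valid only for $s\ge e^{-K^\theta}$, so the tiny-gap regime must be covered separately by the crude bound \eqref{level}, which suffices precisely because $K$ is a fractional power of $N$; and (ii) that the exponent produced in the middle range depends on $p$ and on the constants of Lemma~\ref{lm:inter}, so one should allow $C_3$ in the corollary to be a (slightly larger) generic constant and $C_p$ to depend on $p$ through a factor $(\beta-p+1)^{-1}$, which blows up as $p\uparrow\beta+1$ — consistent with the stated range of $p$.
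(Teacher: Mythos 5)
Your proof is correct and is exactly the derivation the paper has in mind — the corollary is stated with a bare \qed because the authors regard the layer-cake computation from \eqref{level}--\eqref{secondlevel11} as immediate, and you have spelled out precisely that computation, including the two non-automatic points: covering the tiny-gap regime $s<e^{-K^\theta}$ with the weak bound \eqref{level} (made harmless by $K\ge N^\delta$), and observing that the exponent picked up in the main range is $pC_3\xi$ rather than $C_3\xi$, so the corollary's $C_3$ should be read as a generic enlarged constant (which is consistent with how the paper actually uses these bounds, always with $\xi$ taken small).
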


The key to translate the  rigidity   estimate of the measures $\mu_\by$ and 
$\mu_{\wt \by}$ to  the measure $\om=\om_{\by, {\wt \by}}^r$
is to show that the analogue of \eqref{Exm}
holds for $\om$.

\begin{lemma}\label{lm:91}
Let $L$ and $K$ satisfy \eqref{K} and 
$\by, \wt \by  \in \cR_{L, K}(\xi\delta/2,\al) $. Consider
the local equilibrium measure $\mu_\by$ defined in \eqref{Vyext}
  and  assume that \eqref{Ex} is satisfied. 
Let  $ \om_{\by, {\wt \by}}^r$ be the measure   defined in \eqref{omd}.  
Recall that $\al_k$ denote the equidistant
points in $J$, see \eqref{aldefnew}.
Then there exists a constant $C$, independent of  $\xi$, such that 
\be\label{Exm3}
    \E^{ \om_{\by, {\wt \by}}^r }   \left | x_j -  \alpha_j \right |
   \le C  K^{ C  \xi}.
\ee
\end{lemma}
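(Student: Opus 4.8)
I would prove \eqref{Exm3} by interpolating along the family $\om^r=\om^r_{\by,\wt\by}$ of \eqref{omd} and tracking the mean of $x_j$ as $r$ runs from $0$ to $1$. Since $\om^0=\mu_\by$, $\om^1=\wt\mu_{\wt\by}$ and \eqref{Exm} holds for both endpoints, it suffices to bound $|\E^{\om^r}x_j-\E^{\mu_\by}x_j|$, and by the covariance identity \eqref{rcorr} this equals $\big|c\beta\int_0^r\langle h_0;x_j\rangle_{\om^s}\,\rd s\big|$ with $h_0$ as in \eqref{h0def}. The key structural point is that the leading parts of the two external potentials cancel: by the rescaled form \eqref{Vby1resc} of Lemma~\ref{lm:goody}, both $V_\by'(x)$ and $\wt V_{\wt\by}'(x)$ equal $\varrho(0)\log\big(d_+(x)/d_-(x)\big)$ up to $O(K^\xi/d(x))$, so $g:=V_\by-\wt V_{\wt\by}$ satisfies $|g'(x)|\le CK^\xi/d(x)$ on $J$; hence $\nabla h_0=(g'(x_i))_{i\in I}$ is small, and the whole task is to show $|\langle h_0;x_j\rangle_{\om^r}|\le K^{O(\xi)}$ — much smaller than the slow logarithmic decay of $\langle x_i;x_j\rangle$ alone would give.

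To estimate the correlation I would use the Helffer--Sj\"ostrand/random-walk representation \eqref{rwr} (with $F=h_0$, $G=x_j$), which writes $\langle h_0;x_j\rangle_{\om^r}$ as $\tfrac12\int_0^\infty\rd s\int\rd\om^r(\bx)\sum_i g'(x_i)\,\E_\bx[\cU_{ji}(s)]$, and then bound it by Cauchy--Schwarz and Brascamp--Lieb as $\sqrt{\var_{\om^r}(h_0)\,\var_{\om^r}(x_j)}$. Two deterministic facts about the Hessian $\cB+\cW$ of the Hamiltonian of $\om^r$ do the work. First, with $B_{ik}(s)=\beta/(x_i-x_k)^2$, rigidity (Theorem~\ref{thm:omrig} applied to $\om^r$) forces $B_{ik}\gtrsim(i-k)^{-2}$ once $|i-k|$ exceeds the rigidity scale, and a logarithmic test function then bounds the diagonal of $(\cB+\cW)^{-1}$ by $O(\log K)$, so $\var_{\om^r}(x_j)\lesssim\log K$. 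Second, $\var_{\om^r}(h_0)\lesssim K^{2\xi}\,\E^{\om^r}\sum_i 1/d(x_i)$ by Brascamp--Lieb and \eqref{Vbysecresc}, and $\sum_i 1/d(x_i)$ is dominated by an $O(\log K)$ bulk contribution plus the few indices with $x_i$ near an endpoint of $J$; the latter are controlled by the level-repulsion/inverse-gap-moment bounds of Theorem~\ref{lr2} (equivalently \eqref{expinv} for $\om^r$), which hold only for $\beta\ge1$ — this is where that hypothesis enters. Together these give $\var_{\om^r}(h_0)\le K^{O(\xi)}$ and hence $|\langle h_0;x_j\rangle_{\om^r}|\le K^{O(\xi)}$, and integrating in $r$ and using \eqref{Exm} for $\mu_\by$ yields \eqref{Exm3}.

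The difficulty is that the two inputs just used — rigidity and level repulsion \emph{for the interpolating measure $\om^r$} — are Theorems~\ref{thm:omrig} and~\ref{lr2}, whose only substantive hypothesis is the very bound \eqref{Exone}/\eqref{Exm3} we are proving; note that $\om^r$ does have a $K^\xi$-regular external potential, since a convex combination $(1-r)V_\by+r\wt V_{\wt\by}$ of $K^\xi$-regular potentials is again $K^\xi$-regular (the bounds \eqref{Jlengthresc}--\eqref{Vbysecresc} are stable under convex combinations), so only \eqref{Exone} is missing. I would therefore run the argument as a continuity/bootstrap in $r$: let $r^\ast$ be the supremum of those $r\in[0,1]$ for which $\max_{j\in I}|\E^{\om^s}x_j-\alpha_j|\le K^{C\xi}$ holds for all $s\le r$, with $C$ a suitable fixed constant and $\xi$ small; this set is closed and contains $r=0$ by \eqref{Exm}, on $[0,r^\ast]$ the correlation estimate above (integrated in $r$) gives the strict improvement $\max_j|\E^{\om^s}x_j-\alpha_j|\le CK^\xi+K^{O(\xi)}<\tfrac12 K^{C\xi}$, and by continuity of $r\mapsto\E^{\om^r}x_j$ this propagates slightly past $r^\ast$, forcing $r^\ast=1$; the $L^1$ statement \eqref{Exm3} then follows from the rigidity of $\om^r$. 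The main obstacle — and the single place where the Gaussian-tail rigidity \eqref{rig}, rather than its weak form \eqref{weakrig}, is genuinely needed — is precisely the control of the $1/d(x_i)$-singularities near $\partial J$: there the rigidity tail gets multiplied by worst-case factors that are polynomial in $K$, so only honest Gaussian decay keeps $\E^{\om^r}\sum_i 1/d(x_i)$, and with it $\var_{\om^r}(h_0)$, of size $K^{O(\xi)}$; the rest (the resolvent/test-function bound, the Brascamp--Lieb step, the $s$-integration, and the bookkeeping of constants in the bootstrap) is routine.
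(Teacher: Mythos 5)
Your proposal takes a genuinely different route from the paper's. The paper's proof (Lemmas~\ref{lm:91} and \ref{ent}) is an entropy argument that needs no information at all about the interpolating measure $\om^r$ beyond its definition. It establishes the elementary inequality $\min\big(S(\om^r|\mu_\by),S(\om^r|\wt\mu_{\wt\by})\big)\le\big[\E^{\wt\mu_{\wt\by}}-\E^{\mu_\by}\big](V_\by-\wt V_{\wt\by})$ via Jensen, bounds the right side by $K^{C\xi}$ using only the rigidity (Theorem~\ref{thm:omrig}) and level repulsion (Theorem~\ref{lr2}) of the two \emph{endpoint} measures $\mu_\by$ and $\wt\mu_{\wt\by}$ — both already available from \eqref{Ex} and the regularity of $\by,\wt\by$ — and then transfers to $\om^r$ by the entropy inequality plus the Gaussian tail in \eqref{rig}. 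There is no circularity to resolve: all a priori estimates are for measures whose \eqref{Exone}-type hypothesis is given. Your route instead differentiates in $r$, invokes the Helffer--Sj\"ostrand/random-walk covariance formula and Brascamp--Lieb, and is forced to supply rigidity and level repulsion for $\om^r$ itself, which you handle with a continuity/bootstrap in $r$. Your structural observations are correct and nontrivial — in particular, the cancellation $|g'(x)|\lesssim K^\xi/d(x)$ from \eqref{Vby1resc}, and the use of $(\cH'')^{-1}\le\cW^{-1}$ to get $\var_{\om^r}(h_0)\lesssim K^{2\xi}\E\sum_i 1/d(x_i)$ so that only a first inverse moment of the boundary gap is needed (crucial at $\beta=1$).

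The one point I would push back on is that your bootstrap is not obviously self-improving, and you treat the verification as ``routine.'' Applying Theorems~\ref{thm:omrig} and~\ref{lr2} to $\om^s$ under the bootstrap hypothesis $\max_j|\E^{\om^s}x_j-\alpha_j|\le K^{C\xi}$ yields rigidity and level-repulsion bounds whose exponents scale with $C\xi$, not $\xi$; those exponents then re-enter the estimate of $\E^{\om^s}\sum_i 1/d(x_i)$ and of $\var_{\om^s}(x_j)$, so the ``improved'' bound comes out as $CK^\xi+K^{C'C\xi}$ for a constant $C'$ coming from \eqref{k52}/\eqref{expinv}, and you need $C'<1$ to close, which you have not checked and which is not clearly true (for $\beta=1$ the inverse-gap moment exponent is at the edge of integrability). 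The argument could perhaps be rescued by an iterated bootstrap starting from the crude a priori bound $|\E^{\om^r}x_j-\alpha_j|\le|J|\lesssim K$, but this is additional work. The paper's entropy route avoids all of this: the Gibbs variational inequality of Lemma~\ref{ent} gives a clean, non-circular bound whose constants never feed back into the hypotheses.
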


{\it Proof of Lemma~\ref{lm:91}.}
We first prove the following estimate on the entropy.  
\begin{lemma}\label{ent}
Suppose $\mu_1$ is a probability measure and $\om = Z^{-1} e^g d \mu_1$
for some function $g$ and normalization $Z$. 
Then we can bound the entropy by 
\be\label{88}
S:= S(\om| \mu_1) = \E^\om g - \log \E^{\mu_1} e^g  \le \E^\om g - \E^{\mu_1} g.
\ee
Consider two probability  measures $\rd \mu_i = Z_i^{-1} e^{- H_i} \rd \bx $, $i=1,2$.
Denote by $g$ the function 
\be
g = r (  H_1 - H_2), \quad 0 < r < 1
\ee
and set $\om= Z^{-1} e^g d \mu_1$ as above. Then we can bound the entropy by 
\be\label{89}
\min (S(\om| \mu_1), S(\om| \mu_2)) \le\Big [ \E^{\mu_2} - \E^{\mu_1}   \Big ]  (H_1-H_2).
\ee
\end{lemma}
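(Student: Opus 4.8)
The plan is to reduce both displays of Lemma~\ref{ent} to one elementary inequality and then do bookkeeping. The basic fact I would isolate first is: if $\nu$ is a probability measure and $\om = Z^{-1}e^{g}\,\nu$ with $Z=\E^{\nu}e^{g}$, then
$S(\om|\nu)=\E^{\om}g-\log\E^{\nu}e^{g}$, which is just the definition of relative entropy since $\log(\rd\om/\rd\nu)=g-\log Z$; and by Jensen's inequality applied to the concave function $\log$ one has $\log\E^{\nu}e^{g}\ge \E^{\nu}g$, hence $S(\om|\nu)\le \E^{\om}g-\E^{\nu}g$. Taking $\nu=\mu_1$ this is exactly \eqref{88}.

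For \eqref{89} I would apply this bound twice. With $g=r(H_1-H_2)$ and $\om = Z^{-1}e^{g}\mu_1$, the first application gives $S(\om|\mu_1)\le r\bigl(\E^{\om}-\E^{\mu_1}\bigr)(H_1-H_2)$. To obtain the companion estimate I rewrite $\om$ with reference measure $\mu_2$: since $\rd\mu_i\propto e^{-H_i}\,\rd\bx$, a direct computation gives $\rd\om\propto e^{-(1-r)H_1-rH_2}\,\rd\bx\propto e^{-(1-r)(H_1-H_2)}\,\rd\mu_2$, so $\om$ again has the tilted form, now with tilt $g'=-(1-r)(H_1-H_2)$ over $\mu_2$. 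The basic fact then yields $S(\om|\mu_2)\le (1-r)\bigl(\E^{\mu_2}-\E^{\om}\bigr)(H_1-H_2)$.

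Finally I combine the two. Write $a=\E^{\mu_1}(H_1-H_2)$, $c=\E^{\om}(H_1-H_2)$, $b=\E^{\mu_2}(H_1-H_2)$. Relative entropies are nonnegative, so the two bounds above force $0\le r(c-a)$ and $0\le (1-r)(b-c)$, i.e. $a\le c\le b$; in particular $b-a\ge 0$. Since $0<r<1$, $S(\om|\mu_1)\le r(c-a)\le c-a\le b-a=\bigl(\E^{\mu_2}-\E^{\mu_1}\bigr)(H_1-H_2)$, which gives \eqref{89}; the symmetric argument gives the same estimate for $S(\om|\mu_2)$, which is why the statement is phrased with a minimum.

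The argument is entirely soft — Jensen plus bookkeeping — so I do not expect a real obstacle. The only point requiring care is the direction of the inequalities and the observation that one must use $S\ge 0$ to order the three expectations $a\le c\le b$: the two one-sided entropy bounds do not combine without it. One should also keep in mind that these identities implicitly use finiteness and positivity of the partition functions $Z, Z_1, Z_2$, which holds under the standing assumptions on $\mu_1,\mu_2$.
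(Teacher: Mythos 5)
Your proof of \eqref{88} is identical to the paper's: it is just the definition of relative entropy for a tilted measure plus Jensen applied to $\log$. For \eqref{89}, however, you take a genuinely different and in fact cleaner route. The paper invokes the Gibbs variational (entropy) inequality $\E^{\om}h\le \log\E^{\mu_1}e^{h}+S(\om|\mu_1)$ with $h=g/r$, identifies $\log\E^{\mu_1}e^{(H_1-H_2)}=-\log\E^{\mu_2}e^{-(H_1-H_2)}$ via the partition functions, applies Jensen once more, and arrives at $S(\om|\mu_1)\le \frac{r}{1-r}\big[\E^{\mu_2}-\E^{\mu_1}\big](H_1-H_2)$; to drop the prefactor $\frac{r}{1-r}$ it then splits into the cases $r\le 1/2$ and $r>1/2$, exchanging the roles of $H_1,H_2$ in the latter. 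You instead observe that $\om$ is simultaneously a tilt of $\mu_1$ (by $r(H_1-H_2)$) and of $\mu_2$ (by $-(1-r)(H_1-H_2)$), apply \eqref{88} with each reference measure, and then use nonnegativity of relative entropy to deduce the ordering $\E^{\mu_1}(H_1-H_2)\le\E^{\om}(H_1-H_2)\le\E^{\mu_2}(H_1-H_2)$; the desired bound then follows with no case distinction and no appeal to the entropy inequality. Your version even yields a marginally stronger conclusion: it bounds \emph{both} $S(\om|\mu_1)$ and $S(\om|\mu_2)$ by $\big[\E^{\mu_2}-\E^{\mu_1}\big](H_1-H_2)$, so the minimum in \eqref{89} is not actually needed, whereas the paper's case split only controls one of the two. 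The argument is correct and self-contained.
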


\begin{proof} The first inequality is a trivial consequence of the Jensen  inequality 
$$
S= \E^\om g - \log \E^{\mu_1} e^g \le \E^\om g - \E^{\mu_1} g.
$$ 
The entropy inequality yields that 
\be\label{entroin}
\E^\om g \le r \log \E^{\mu_1} e^{g/r}  + r S.
\ee
By the definition  of $g$,  we have  
\[
\log \E^{\mu_1} e^{g/r} 
= - \log  \int e^{ - g/r} \rd\mu_2  \le  \E^{\mu_2} g/r.
\]
Using this inequality and \eqref{entroin}  in \eqref{88}, we have proved 
\be
S\le  \frac r { 1-r}  \Big [ \E^{\mu_2} - \E^{\mu_1}   \Big ]  (H_1-H_2).
\ee
We can assume that $r \le 1/2 \le 1-r$ since otherwise we can switch the roles of $H_1$ and $H_2$. 
Hence \eqref{89} holds and this concludes the proof of Lemma \ref{ent}.
\end{proof}
\medskip

We now apply this lemma with  $\mu_2 = \wt \mu_{\wt \by}$ and  $\mu_1 = \mu_{ \by}$ to prove that 
\be\label{85}
\min[ S(\om_{\by, {\wt \by}}^r|\mu_\by), S(\om_{\by, {\wt \by}}^r|\wt \mu_{\wt \by}) ]  \le   K^{ C \xi}
\ee
To see this, by definition  of $g$  and the rigidity estimate \eqref{rig},  we have 
\begin{align}\label{842}
\E^{\mu_2} g- \E^{\mu_1} g  &  =  \frac{r}{2}\Big [ \E^{\mu_2} -  \E^{\mu_1} \Big ] \sum_{i \in I}
\Big [ V_\by(x_i) - \wt V_{\wt\by}(x_i)
 \Big ]  \non \\
& =   \frac{r}{2}\Big [ \E^{\mu_2} -  \E^{\mu_1} \Big ] \sum_{i \in I} \int_0^1\rd s\Big [ V_\by'(s\al_i+(1-s)x_i ) - \wt V_{\wt\by}'(s\al_i+(1-s)x_i)
 \Big ] (x_i-\al_i)  \non \\
& = \Big [ \E^{\mu_2} +  \E^{\mu_1} \Big ] O\Big(  \sum_{i \in I}
 \sup_{s\in [0,1]}\frac{K^\xi}{d(s\al_i+(1-s)x_i)}|x_i-\al_i|\Big) \le  K^{C \xi}.
\end{align}
 In the first step we used that the leading term  $V_\by(\al_i) - \wt V_{\wt\by}(\al_i)$
in the Taylor expansion is deterministic, so it vanishes after taking the
difference of two expectations. 
In the last step we used
that with a very high $\mu_1$- or $\mu_2$-probability
 $d(s\al_i+(1-s)x_i)\sim d(\al_i)$ 
are equidistant up to an additive error
$K^\xi$ if $i$ is  away from the boundary, i.e., $-K+K^{C\xi}\le i\le  K-K^{C\xi}$,
see \eqref{rig}. For indices near the boundary, say $-K\le i \le-K+K^{C\xi}$,  we used 
$d(s\al_i+(1-s)x_i)\ge c\min\{1, d(x_{-K})\}$.  Noticing that
$d(x_{-K})= x_{-K} - y_{-K-1}$,  the
level repulsion  bound \eqref{k52} 
 (complemented with the weaker bound \eqref{k5}
that is valid for all $s>0$) 
guarantees that the short distance singularity  $[d(x_{-K})]^{-1}$
 has an  $\E^{\mu_{1,2}}$ expectation that is bounded by $CK^{C\xi}$.

We now assume that \eqref{85} holds with the
 choice of $S(\om_{\by, {\wt \by}}^r|\mu_\by)$ for simplicity 
of notation. 
By the entropy inequality, we have  
\be\label{86}
\E^{\om_{\by, {\wt \by}}^r} |x_i - \alpha_i| \le 
 \log   \E^{\mu_\by}    e^{ | x_i - \alpha_i | } + K^{ C \xi} .
\ee
From the Gaussian tail  of the rigidity estimate  \eqref{rig}, we have 
\be
\log   \E^{\mu_\by}    e^{  | x_i - \alpha_i | } \le K^{C \xi}. 
\ee
Using this bound in \eqref{86} 
we have proved \eqref{Exm3} and this concludes the proof of Lemma \ref{lm:91}. 
\qed

 {\it Proof of Lemma~\ref{lm:inter}.} 
Given \eqref{Exm3}, 
 the proof of \eqref{rigi} follows the argument in the proof of 
Theorem \ref{thm:omrig},  applying it to $\xi^2$ instead of $\xi$. 
  Once the rigidity bound \eqref{rigi} is proved, 
we can follow the proof of Theorem~\ref{lr2} to obtain all four level
repulsion estimates, \eqref{level}--\eqref{secondlevel11}, 
analogously to the proofs of \eqref{k521}, \eqref{l21}, \eqref{k52} and \eqref{l20}, respectively. 
 The $\log N$ factor can be incorporated into $K^{C_3\xi}$.

\medskip 

 Finally, to prove \eqref{lsi22}, 
let $  \cL^\om$   
be the reversible generator given by the Dirichlet form 
\be\label{LK}
 -\int f \cL^\om f \rd \om_{\by, {\wt \by}}^r   =    \frac{1}{2}  \sum_{|j| \le K}  \int   (\partial_j f )^2 
\rd \om_{\by, {\wt \by}}^r.
\ee
Thus  for the Hamiltonian $\cH=\cH^r_{\by,\wt\by}$ of the measure $\om=\om_{\by, {\wt \by}}^r$ (see \eqref{Hyy}), 
 we have 
\begin{align}\label{convexc}
    \Big\langle  \bv ,     \nabla^2    \cH(\bx)\bv\Big\rangle 
&  =     \frac{1}{2} \sum_i   \Big [ (1-r) V_\by''(x_i) + r \wt V_{\wt \by}''(x_i) \Big ]    v_i^2  +
 \sum_{i<j} \frac{( v_i -  v_j)^2}{(x_i-x_j)^2}   \ge \frac{c}{K} \sum_i v_i^2, 
 \end{align}
by using \eqref{Vbysecresc}  and $d(x)\le CK$ for good boundary conditions. \nc
Thus  LSI takes the form 
\be\label{lsim2}
S(g \om    | \om )  \le CK 
 D^{\om } ( \sqrt {g} ). 
\ee
This completes the proof of Lemma~\ref{lm:inter}. \qed

\medskip

The dynamics given by the generator $\cL^\om$ 
 with respect to the   interpolating  measure 
 $\om =  \om_{\by, {\wt \by}}^{r} $ 
 can also be characterized by the following SDE  
\be
  \rd x_i = \rd B_i + \beta \Big[ - \frac{1}{2}  (V_{\by,\wt\by}^{r})'   ( x_i ) +
 \frac{1}{2}\sum_{j\ne i} \frac{1}{(x_i-x_j)}\Big] \rd t,
\label{SDE}
\ee
where $(B_{-K}, B_{-K+1}, \ldots, B_K)$ is a family of independent standard Brownian motions.
 With a slight abuse of notations,  when we talk about the process, 
we will use $\P^\om$ and $\E^\om$ to denote the probability 
and expectation w.r.t. this dynamics with initial data $\om$, i.e., in equilibrium. 
This dynamical point of view gives rise to a representation for
the correlation \eqref{eq:cor} in terms random walks in random environment.

Starting from Section~\ref{sec:corproof} 
we will focus on proving Theorem~\ref{cor}. 
The proof 
 is based on dynamical idea and it
 will be completed in Section~\ref{sec:proof81}.

\section{Local statistics of the interpolating
measures: Proof of Theorem~\ref{cor}}\label{sec:corproof}

\subsection{Outline of the proof of Theorem~\ref{cor}}\label{sec:int}

Theorem~\ref{cor} will be proved by the following main steps. 
We remind the readers that the boundary conditions $\by, \wt \by$ are in the good sets 
and we have chosen $L = 0$ for convenience.  For simplicity, we assume that $n=1$,
i.e. we consider a single gap observable $O(x_p-x_{p+1})$.

 {\it Step 1. Random walk representation.}  The starting point is
a representation formula for the correlation
 $\langle h_0, O(x_p-x_{p+1})\rangle_\om$. 
For any smooth  observables    $F(\bx)$ and  $Q(\bx)$ and   any time $T > 0$
 we have the following representation formula for the time dependent 
correlation function
(see \eqref{reppgeneral} for the precise statement):
\be\label{RW}
\E^\om  Q(\bx) \, F(\bx)    - \E^\om   Q (\bx(0))  F(\bx (T))
= \frac{1}{2}\int_0^{T}   \rd S \; \E^\om
    \sum_{b\in I} \pt_b Q(\bx(0)) \langle \nabla F(\bx(S)) , \bv^b(S, \bx(\cdot)) \rangle.
\ee
Here the  path $\bx(\cdot)$ is the solution of the reversible stochastic dynamics
with equilibrium measure  $\om$, \eqref{SDE}.
We  use the notation $\E^\om$ also for the expectation with respect to the path  measure
starting from the initial distribution $\om$
and $\langle \cdot, \cdot \rangle$ denotes the inner product in  $\R^\cK$, recalling that $|I|=2K+1=\cK$.  Furthermore,
for any $b\in I$ and for any fixed path $\bx(\cdot)$,
 the vector  $\bv^b(t)= \bv^b(t, \bx(\cdot))\in \R^\cK$ is the solution to the
equation
\be\label{veq}
   \partial_t \bv^{b} (t) =  - \cA(t) \bv^b (t), \quad t\ge 0, \qquad v_j^b(0)=\delta_{bj}.
\ee
The matrix $\cA(t)$  depends on time through the path $\bx(t)$ and it is  given by
 $$
\cA(t): = \beta \nabla^2 \cH_{\by,\wt\by}^r(\bx(t)).
$$
From \eqref{Hyy},  it is 
 of the form $\cA(t)=\wt \cA(\bx(t))= \wt \cB(\bx(t))+ \wt \cW(\bx(t)) $ with $\wt \cW(\bx(t)) \ge 0$.
The matrix elements of $\wt \cB$ is given by:
\be\label{Bdef'}
[\wt \cB ( \bx )  \bv]_j = - \sum_{k\ne j} \wt B_{jk}( \bx ) (v_k- v_j), \quad 
 \wt B _{jk}(\bx) = \frac \beta { (x_j-x_k)^2}, \qquad j\ne k.
\ee
Furthermore,  $\cA (t) \ge C K^{-1}$,  \eqref{convexc},  and  the time to equilibrium for
 the $\bx(t)$ process is of order $K$  (Corollary \ref{cor:repp}).
Applying this representation to  $O(x_{p}-x_{p+1})$ and   cutting off the time integration
 at $C_1 K \log K$ with some large constant $C_1$, we will have (see \eqref{repp}) 
\begin{align}\label{repp'}
\langle h_0; & O(x_{p}-x_{p+1}) \rangle_{\om }
 \\
 &  = \frac{1}{2}\nc\int_0^{C_1K\log K}   \rd \sigma   
    \sum_b  \E^\om \Big[ \pt_b h_0(\bx)  
  O'(x_p-x_{p+1})  \big(v_p^b (\sigma )  - v^b_{p+1} (\sigma)\big) \Big]  
 + O\Big( \| O'\|_\infty K^{-2}\Big),
 \nonumber
\end{align}
It is easy to check that $\partial_b h_0$ satisfies the estimate
with some small $\xi'$ (see \eqref{ini1})
\be\label{ini1'}
  |\partial_b h_0(\bx) | 
 \leq \frac{K^{\xi'}}{\min ( |x_b-K|, |x_b + K|) +1 }.
\ee 

\bigskip

{\it Step 2. Cutoff of bad sets.} Setting $\cT: = [0, C_1 K\log K]$,  we define 
the  ``good set" of paths (see \eqref{K1})  for which the  rigidity estimate holds
uniformly in time: 
\be\label{K1'}
\cG :
=  \Big\{
 \sup_{s \in\cT} \;\sup_{ |j| \le K } |x_j(s)-\alpha_j| \le K^\xip  \Big\}, 
 \ee
 where  $\xi'$ is s small parameter to be specified later 
and  $\alpha_j$ is the classical location given by \eqref{aldefnew}.
For any    $Z\in I$ 
 and $\si\in \cT$  
we also define the following event   that the gaps between particles
 near $Z$ are not too small in an appropriate average sense
by   
\begin{align}
\cQ_{\si,Z} := \Big\{   \sup_{s\in \cT } 
\sup_{ 1 \le M \le K} \frac{1}{1+|s-\si|} \Big| \int_s^{\si} \rd a \; \frac{1}{M}  
\sum_{i\in I \, : \, |i-Z|\le M }  
\frac  1  {  \big| x_i(a)-x_{i+1}(a) \big|^{2} }\Big|\le K^{\rho} \Big\},  \label{K2}
\end{align}
where $\rho  > 0$ is a small parameter to be specified later. 
By convention we set $x_i(a)=y_i$ whenever $|i|>K$.
 We will need that
the gaps are not too small not only near $Z$ but also near
the boundary so we
define the  new good set  
\be\label{whQdef}
\wh \cQ_{\si,Z} := \cQ_{\si,Z} \cap \cQ_{\si, - K} \cap \cQ_{\si, K}.
\ee
Finally,  we need to control the gaps not just around one time $\sigma$
but around a sequence of times that dyadically accumulate at $\si$. 
The significance of this stronger condition will only be
clear in the proof of our version of the De Giorgi-Nash-Moser bound
in Section~\ref{Caff}. 
We define
\be\label{wtQdef}
  \wt \cQ_{\si,Z} : = \bigcap_{\tau\in \Xi }\wh \cQ_{\si + \tau ,Z},
\ee
where 
\be\label{Kassnew}
  \Xi: = \big\{ - K\cdot 2^{-m}(1+2^{-k}) \; : \; 0\le k,m \le C\log K  \big\}.
\ee

 We will choose $Z$ near the center of the interval $I$ and  
 show  in \eqref{K11} and \eqref{K22} that the bad events are small in the sense that 
\be\label{K11'}
\P^\om ( \cG^c ) \le  C e^{ - K^{\theta }}
\ee
with some $\theta>0$, 
and
\be \label{K22'}
   \P^\om ( \wt \cQ^c_{\si,Z}   ) \le  C K^{ C_4 \xi-  \rho}  
\ee
for each fixed  $Z\in I$   and fixed
 $\si\in \cT$ where $\xi$ is introduced in Theorem \ref{cor}. 
 Notice that while the rigidity bound \eqref{K11'} holds with a very high probability, the control
on small gaps \eqref{K22'} is much weaker due to the power-law behavior of the level repulsion estimates.

Our goal is to insert the characteristic functions of
 the good sets into the expectation in \eqref{repp'}. More precisely, we will prove in 
\eqref{cut3} that 
\begin{align}\label{cut3'}
\big|  \langle h_0;  & O(x_{p}-x_{p+1}) \rangle_{\om }  \big| \\
& \le \frac{1}{2} \|O'\|_\infty  \int_0^{C_1 K \log K}   
   \sum_{b\in I}  \E^\om \Big[  \wt\cQ_{\si,Z}  \cG
      |\pt_b h_0(\bx)|  
  \big| v_p^b (\sigma )  - v^b_{p+1} (\sigma) \big|\Big] \rd\sigma 
 +  O\Big(\|O'\|_\infty K^{-  \rho/6  }\Big) . \non
\end{align}  
(With a slight abuse of notations
we use $\cG$ and $\wt\cQ_{\si,Z}$ also to denote the characteristic function of these sets.)
To prove this inequality, we note that the contribution of the bad set $\cG^c$ can be estimated by \eqref{K11'}. 
To bound the contribution of  the bad set $\wt\cQ_{\si,Z}^c$, 
the estimate  \eqref{K22'} alone is not strong enough due to the time integration in \eqref{cut3'}.
We will need a time-decay estimate for the solution $\bv^b(\sigma)$. 
On the good set $\cG$, 
the matrix element $B _{jk}$ satisfies 
\be
B _{jk}(s) = \frac \beta { (x_j(s)-x_k(s))^2} 
 \ge   \frac {b}    { (j-k)^2}, \quad 0 \le s \le \si, \quad j\ne k.
\ee  
with $b= \beta K^{-2\xi'}$.
With this estimate, we will show in \eqref{decay}  that,  for any $1\le p\le q\le \infty$,  the 
following decay estimate
for the solution to \eqref{veq} holds:
\be\label{decay'}
\| \bv(s) \|_q  \le  ( sb)^{-(  \frac{1}{p}  - \frac 1 q)} 
    \| \bv(0) \|_{ p }, \qquad 0<s\le\si.
\ee
This allows us to prove \eqref{cut3'}.

\bigskip

{\it Step 3. Cutoff of the contribution from near the  center.} 
From \eqref{ini1'},  $\partial_b h_0(\bx)$  decays as a power law  
when   $x_b$ moves away from the boundary of $J$, i.e., when
the index  $b$ moves away from $\pm K$.
 With the decay estimate \eqref{decay'}, it is not difficult to show that the contribution of $b$ 
in the interior,  i.e.,  the terms with $ |b| \le K^{1-c}$ for some $c> 0$
 in the sum in \eqref{cut3'},   is negligible.  

\bigskip

{\it Step 4. Finite speed of  propagation.} 
We will prove that in the good set $\cG  \cap  \wt \cQ_{\si,Z}$ the dynamics   \eqref{veq}
satisfies the finite speed of  propagation estimate 
\be\label{finite'}
|v_p^b(s)| \le  \frac {C K^{c + 1/2 } \sqrt  {s  +1 }  } {   |p- b |}.
\ee
for some small constant $c$ (see \eqref{finite}).  This estimate is not optimal, 
but it allows us to cutoff the contribution in \eqref{cut3'} for time 
$ \sigma  \le K^{1/4}$ for $b$ away from the center,  i.e., $ K \ge  |b| \ge K^{1-c}$.
In this step we use that $|p|\le K^{1-\xi^*}$ ($\xi^*$ is some small constant) 
 and the exponents are chosen such that $|p-b|\ge cK^{1-c}$. 

\bigskip

{\it Step 5. Parabolic regularity with singular coefficients}.  Finally,  
we have to estimate the r.h.s of \eqref{cut3'} in the regime $K^{1/4}\le \sigma \le C_1K\log K$
and for $|p|\le K^{1-\xi^*}$ with the choice $Z=p$. This estimate will work uniformly in $b$.  
We will show that
 for all  paths in $\cG  \cap  \wt \cQ_{\si,p}$, 
 any 
 solution to \eqref{veq} satisfies the H\"older regularity estimate  in the interior, i.e., 
 for some constants $\al, \fq> 0$, 
\be\label{HC'}
\sup_{ |j-p| + |j'-p| \le \si^{ 1- \al} } 
   | v_j (\si )  -  v_{j'}  (\si)  |  \le CK^\xi \sigma^{-1-\frac{1}{2}\fq \al}.
\ee

Notice that the regularity depends on the time $\sigma$ and that is why we need the short time cutoff in the previous step. 
This estimate \eqref{HC'}  allows us to complete  the proof 
 that $\langle h_0;   O(x_{p}-x_{p+1}) \rangle_{\om } \to 0$ as $K \to \infty$. 
The H\"older estimate will be stated as Theorem~\ref{holderg} and the entire  Section~\ref{Caff} will be devoted 
to its proof.

\subsection{Random Walk Representation}\label{sec:rw}

First we will recall a general formula for the correlation functions of the
process \eqref{SDE} through a random walk representation, see \eqref{corrrep} below. 
This  equation in a lattice  setting  was given  in  Proposition 2.2 of
 \cite{DGI} (see also Proposition 3.1 in \cite{GOS}). The random walk representation 
already appeared  in the earlier paper of Naddaf and Spencer \cite{NS},  which  was a
probabilistic formulation  of the idea
 of Helffer and Sj\"ostrand \cite{HS}.

In this section we will work in a general setup.  Let $J\subset\R$ be an interval
and $I$ an index set with cardinality $|I|=\cK$. 
Consider a convex Hamilton function $\cH(\bx)$ on $J^\cK$  and
let $\bx(s)$ be the solution to 
\be
  \rd x_i = \rd B_i + \beta \partial_i \cH(\bx)  \rd t, \qquad i\in I,
\label{SDEgen}
\ee
with initial condition $\bx(0)=\bx \in J_\by^I$, 
where $\{ B_i\; : \; i\in I\}$
 is a family of independent standard Brownian motions.
The parameter $\beta>0$ is introduced only for consistency 
with our applications. 
 Let $\E_\bx$ denote the
expectation with respect to this path measure. 
With a slight abuse of notations, we will use 
 $\P^\om$ and $\E^\om$ to denote the probability and expectation
with respect to the path measure of the solution to \eqref{SDEgen}
with initial condition $\bx$ distributed by $\om$. 
We assume that $\P^\om ( \bx(t)\in J^\cK) = 1$, i.e. the
Hamiltonian confines the process to remain in the interval $J$. \nc
The corresponding invariant measure is $\rd\om = Z_\om^{-1} e^{-\beta\cH(\bx)}\rd\bx$
 with  generator $\cL^\om = - \frac{1}{2} \Delta + \frac{\beta}{2}  \nabla\cH \cdot\nabla$ and 
Dirichlet form
$$  
     D^\om (f) : =  \frac{1}{2} \nc \int |\nabla f|^2 \rd\om = - \int f\cL^\om f \rd \om.
$$
For any fixed path
$\bx(\cdot):=\{\bx(s)\; : \; s\ge 0\}$ we
define the operator ($\cK\times \cK$ matrix)
\be\label{cA}
  \cA(s) := \wt\cA (\bx(s)),
\ee
where $\wt\cA: =\beta\cH''$ and we assume that the Hessian
matrix is positive definite, $\cH''(\bx)\ge c >0$.

\begin{proposition} \label{prop:repp} Assume that the Hessian matrix is positive definite
\be\label{hessbound}
   \inf_\bx \cH''(\bx)\ge \tau^{-1} 
\ee
with some constant $\tau>0$.
Then for any  functions 
 $F, G \in C^1(J^\cK) \cap L^2(\rd\om)$ 
and   any time $T > 0$
 we have
\begin{align}\label{reppgeneral}
\E^\om \big[ F(\bx)\, G(\bx)\big]   - \E^\om  \big[ F (\bx(0))  G(\bx (T))\big]
=  \frac{1}{2}  \int_0^{T}   \rd S\int\omega (\rd \bx)
    \sum_{a,b =1}^\cK \pt_b F(\bx)
    \E_\bx \big[ \pt_a  G (\bx(S))   v^b_a(S, \bx(\cdot))\big].
\end{align}
 Here for any $S>0$ and for any path $\{ \bx(s)\in   J^\cK \; : \; s\in [0,S]\}$, we
define  $\bv^b(t)= \bv^b(t, \bx(\cdot))$ as the solution to the equation
\be\label{veq2}
   \partial_t \bv^{b} (t) =  - \cA(t) \bv^b (t), \quad t\in [0, S], \qquad v_a^b(0)=\delta_{ba}.
 \ee
The dependence of $\bv^b$ on the path $\bx(\cdot)$ is present via the dependence
$\cA(t) =  \wt \cA( \bx (t))$. In other words, $v^b_a(t)$ is the fundamental solution
of the heat semigroup $\pt_s+ \cA(s)$.

Furthermore, for the correlation function we have
\begin{align}\label{corrrep}
\langle F; \, G \rangle_\om & =   \frac{1}{2} \int_0^{\infty}   \rd S\int\omega (\rd \bx)
    \sum_{a,b =1}^\cK \pt_b F(\bx)
    \E_\bx \big[ \pt_a  G (\bx(S))   v^b_a(S, \bx(\cdot))\big] \\
  & =   \frac{1}{2} \int_0^{A\tau\log \cK}   \rd S\int\omega (\rd \bx)
    \sum_{a,b =1}^\cK \pt_b F(\bx)
    \E_\bx \big[ \pt_a  G (\bx(S))   v^b_a(S, \bx(\cdot))\big] + O(\cK^{-cA}) \label{corrrep2} 
\end{align}
for any constant $A>0$.
\end{proposition}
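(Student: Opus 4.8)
The plan is to prove this via the Helffer--Sj\"ostrand (random walk) representation, i.e. by combining the fundamental theorem of calculus in time with reversibility of the dynamics and then commuting the spatial gradient through the semigroup. Write $u(S,\bx):=\E_\bx[G(\bx(S))]$ for the solution of the backward equation $\pt_S u=\cL^\om u$, $u(0,\cdot)=G$. Since $\bx(0)$ is $\om$-distributed and $\om$ is stationary, $\E^\om[F(\bx(0))G(\bx(T))]=\int F\,u(T,\cdot)\,\rd\om$ and $\E^\om[F(\bx)G(\bx)]=\int F\,u(0,\cdot)\,\rd\om$, so
\[
 \E^\om\big[F(\bx)G(\bx)\big]-\E^\om\big[F(\bx(0))G(\bx(T))\big]
 =-\int_0^T\Big(\int F\,\cL^\om u(S,\cdot)\,\rd\om\Big)\rd S .
\]
By reversibility and polarization of $D^\om(f)=\tfrac12\int|\nabla f|^2\rd\om=-\int f\,\cL^\om f\,\rd\om$ one has $-\int F\,\cL^\om u(S,\cdot)\,\rd\om=\tfrac12\int\nabla F\cdot\nabla u(S,\cdot)\,\rd\om$, hence the right-hand side equals $\tfrac12\int_0^T\rd S\int\om(\rd\bx)\sum_b\pt_b F(\bx)\,\pt_b u(S,\bx)$.

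The second ingredient is the flow-derivative identity $\pt_b u(S,\bx)=\sum_a\E_\bx\big[\pt_a G(\bx(S))\,v^b_a(S,\bx(\cdot))\big]$. It follows by differentiating the stochastic flow $\bx\mapsto\bx(S)$ in its initial condition with the driving Brownian motion held fixed: writing $\Phi_S$ for this flow, $\pt_b u(S,\bx)=\E\big[\nabla G(\Phi_S\bx)\cdot\pt_b(\Phi_S\bx)\big]$, and the tangent vector $\pt_b(\Phi_S\bx)$ — whose $a$-th component is $\pt(\Phi_S\bx)_a/\pt x_b$ — solves the variational equation of \eqref{SDEgen} along the realized path with initial value $\delta_{ba}$, which is exactly \eqref{veq2}; so that tangent vector is $\bv^b(S,\bx(\cdot))$. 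Inserting this into the previous display and applying Fubini yields the exact identity \eqref{reppgeneral}, valid for every finite $T$.

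To obtain \eqref{corrrep} I let $T\to\infty$. The hypothesis $\cH''\ge\tau^{-1}$ gives, by the Bakry--\'Emery criterion, a Poincar\'e inequality for $\om$ with constant of order $\tau$, hence $\|u(T,\cdot)-\E^\om G\|_{L^2(\om)}\le e^{-cT/\tau}\|G-\E^\om G\|_{L^2(\om)}$; thus $\int F\,u(T,\cdot)\,\rd\om\to\E^\om F\,\E^\om G$ and the left side of \eqref{reppgeneral} tends to $\langle F;G\rangle_\om$. Moreover $\cA(S)=\beta\cH''(\bx(S))\ge\beta\tau^{-1}$ forces $\tfrac{\rd}{\rd S}|\bv^b(S)|^2=-2\langle\bv^b,\cA(S)\bv^b\rangle\le-2\beta\tau^{-1}|\bv^b(S)|^2$, i.e. $|\bv^b(S)|\le e^{-\beta S/\tau}$; so the $S$-integrand in \eqref{reppgeneral} decays exponentially, which legitimizes both the limit $T\to\infty$ and the truncation at $S=A\tau\log\cK$ with error $O(\cK^{-cA})$, giving \eqref{corrrep2}.

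The step that needs real care — and the main obstacle — is the rigorous justification of the flow-derivative identity of the second paragraph, together with the interchange $\pt_b\E_\bx[\,\cdot\,]=\E_\bx[\pt_b\,\cdot\,]$ and the Fubini in $S$. In the applications the drift of \eqref{SDEgen} is singular both in the interior (the $(x_i-x_j)^{-1}$ terms of the logarithmic interaction) and at the endpoints of $J$ (the $\log|x-y_j|$ terms entering $V_\by$), so classical stochastic-flow theory does not apply directly. One removes this by localizing at the stopping time $\tau_\e$, the first time two coordinates come within $\e$ of one another or a coordinate reaches $\pt J$: on $\{S\le\tau_\e\}$ the coefficients are $C^\infty$ with bounded derivatives, so the flow is differentiable there and the interchanges are classical; one then sends $\e\to0$ using that $\tau_\e\uparrow\infty$ almost surely — exactly where $\beta\ge1$ (well-posedness of the Dyson-type dynamics, absence of collisions, and the confinement $\P^\om(\bx(t)\in J^\cK)=1$) enters — and the deterministic bound $|\bv^b(S)|\le1$ as dominating function. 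This is the confined, continuum analogue of Proposition~2.2 of \cite{DGI}, whose lattice proof can otherwise be followed line by line.
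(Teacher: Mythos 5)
Your argument is correct and reaches the same representation, but by a genuinely different route for the key identity, so a comparison is in order. Both proofs share the first step: integrating the time derivative and polarizing the Dirichlet form to obtain $\tfrac12\int_0^T\!\int\nabla F\cdot\nabla u(S,\cdot)\,\rd\om\,\rd S$ with $u(S,\cdot)=e^{S\cL^\om}G$. You then identify $\pt_b u(S,\bx)$ by \emph{differentiating the stochastic flow} in its initial condition: the tangent vector $\pt_b\Phi_S\bx$ solves the linearized SDE $\pt_S\bw=-\cA(S)\bw$, $\bw(0)=e_b$, which is \eqref{veq2}. The paper instead never differentiates the flow. It takes the gradient of the Kolmogorov equation $\pt_t u=\cL^\om u$, computes the commutator $[\nabla,\cL^\om]=-\wt\cA$, obtains the vector-valued PDE $\pt_t(\nabla u)=\cL^\om(\nabla u)-\wt\cA\,(\nabla u)$, and then verifies a time-dependent Feynman--Kac formula whose integral kernel is the time-ordered exponential $\cU(t;\bx(\cdot))$ along each realized path. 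Since $\cA$ is symmetric, $\cU$ is the transpose of the flow Jacobian $J$, so the two representations coincide entry-by-entry ($v^b_a=\cU_{ba}=J_{ab}$); the final formulas are identical. The practical difference, which you rightly flag, is that your route needs pathwise differentiability of $\bx\mapsto\Phi_S\bx$, which is delicate because the drift of \eqref{SDEgen} is singular at collisions and at $\pt J$; your localization at the stopping time $\tau_\epsilon$, plus non-collision for $\beta\ge1$, is the correct repair. The paper's Feynman--Kac route sidesteps this: it only needs the time-ordered exponential to make sense pathwise (a deterministic ODE along the path, no pathwise differentiation in the initial condition) and the differentiability of $u(t,\cdot)$, which comes from parabolic smoothing. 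Your estimate $\|\bv^b(S)\|\le e^{-\beta S/\tau}$ from $\cA\ge\beta\tau^{-1}$ is a clean extra ingredient for \eqref{corrrep2}; the paper gets the same cutoff more directly by applying \eqref{reppgeneral} at $T=A\tau\log\cK$ and using the spectral gap bound $|\E^\om[F(\bx(0))G(\bx(T))]-\E^\om F\,\E^\om G|\le e^{-cT/\tau}\|F\|_{2}\|G\|_{2}$. Both are fine; yours requires slightly more but gives pathwise decay that is useful later.
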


{\it Proof.} This proposition in the lattice setting was already proved in  \cite{DGI, GOS, NS};  we give here a  proof in 
the continuous  setting. 
Let $G(t, \bx)$ be the solution to the equation $\pt_t G  = \cL^\om G$ with initial condition
$G(0, \bx):= G(\bx)$. 
 By integrating the time derivative,
  we have
\begin{align}\label{QQF}
\E^\om F(\bx)\, G(\bx)   - \E^\om  F (\bx(0))  G(\bx (T))  & 
= -  \int_0^T \rd S \frac \rd { \rd S}  \E^\om  \big [ \, F e^{S \cL^\om} G  \, \big ]  \\
& =  - \int_0^T \rd S \; \E^\om \big [ \,  F  \cL^\om e^{S \cL^\om} G \, \big ]    \\
& =   \frac{1}{2}  \int_0^T \rd S \;  
  \E ^\om \langle \nabla F (\bx), \nabla G(S, \bx)\rangle.
\end{align} 
where $\langle, \rangle$ denotes the scalar product in $\R^\cK$.

Taking the gradient of the equation $\pt_t G  = \cL^\om G$  and computing the commutator
$[\nabla, \cL^\om]$  yields the equation
\be\label{nablaQeq}
   \pt_t \nabla G(t, \bx) = \cL^\om [ \nabla G(t, \bx)] - \wt \cA(\bx) [\nabla G(t, \bx)]
\ee
for the $\bx$-gradient of $G$. Setting $\bu (t, \bx) : = \nabla G(t, \bx)$ for brevity, we have the    equation
\be\label{ge}
\partial_t \bu (t,  \bx)  = \cL^\om \bu (t,  \bx)  -  \wt \cA ( \bx ) \bu(t, \bx) 
\ee
with initial condition $\bu(0, \bx) = \bu_0(\bx): = \nabla G(\bx)$. 

Notice that $\wt \cA$ is a matrix and $\cL^\om$ acts on the vector
 $\bu$ as a diagonal operator in  the index space, i.e., $[ \cL^\om \bu (t,  \bx)]_i 
= \cL^\om [ \bu (t,  \bx)_i ]$. The equation \eqref{ge} can be solved by solving an
 equations \eqref{111} over the indices with coefficients that depend on the 
path generated by the operator $\cL^\om$ and then by taking expectation over the paths starting at $\bx$.  
To obtain such a representation, we start with the time-dependent
Feynman-Kac formula:
\be\label{ut}
\bu(\sigma,  \bx)  = \E^{ \bx} \Big[ \wt{\mbox{Exp}} \Big( -\int_0^\sigma  
  \wt \cA ( \bx (s)  )  \rd s \Big) \bu_0 ( \bx(\sigma) )\Big] , \qquad \sigma >0,
\ee
where 
\be\label{tildeexp}
\wt{\mbox{Exp}} \Big( -\int_0^\sigma   { \wt \cA} ( \bx (s)  )  \rd s\Big)
 := 1  -   \int_0^\sigma   \wt \cA ( \bx (s_1)  ) \rd s_1 +   \int_{0\le s_1 < s_2 \le \sigma }
   {\wt \cA} ( \bx (s_1)  ) \wt  \cA ( \bx (s_2)  ) \rd s_1 \rd s_2 
+ \ldots  
\ee
 is the time-ordered exponential. 
To prove that \eqref{ut} indeed satisfies \eqref{ge}, we notice
from the definition \eqref{tildeexp} that 
\begin{align}\label{ut1}
\bu(\sigma,  \bx)  & =   \E_{ \bx} \,  \wt{\mbox{Exp}}\Big( -\int_0^\sigma   {\wt \cA} ( \bx (s)  )
  \rd s \Big) 
 \bu_0 ( \bx(\sigma) )  \\
& = \E_\bx \bu_0 ( \bx(\sigma) )   -  \int_0^\sigma   \E_\bx {\wt \cA} ( \bx (s_1)  )
 \E_{\bx(s_1)}  \wt{\mbox{Exp}}\Big(\int_{s_1}^\sigma   {\wt \cA} ( \bx (s)  )  \rd s\Big)  
\bu_0 ( \bx(\sigma) ) \rd s_1. \nonumber
\end{align}
Using that the process is stationary in time, we have 
\begin{align*}
\bu(\sigma,  \bx)   & = \E_\bx \bu_0 ( \bx(\sigma) )  - 
 \int_0^\sigma  \E_\bx {\wt \cA} ( \bx (s_1)  ) \bu(\sigma-s_1,  \bx(s_1)) \rd s_1
\\
&
= \E_\bx \bu_0 ( \bx(\sigma) )   -  \int_0^\sigma  \E_\bx {\wt \cA} ( \bx (\sigma-s_1)  )
 \bu(s_1,  \bx(\sigma-s_1)) \rd s_1
\\
& = e^{\sigma \cL} \bu_0 ( \bx )   - 
 \int_0^\sigma  [e^{(\sigma-s_1) \cL}  {\wt \cA} ( \cdot )  \bu(s_1,  \cdot ) ] (\bx) \rd s_1.
\end{align*}
Differentiating  this equation in $\sigma$  we obtain that $\bu$ defined in \eqref{ut}
indeed satisfies \eqref{ge}.

For any fixed path $\{ \bx(s)\; : \; s>0\}$, the time-ordered exponential in \eqref{ut}
$$
   \cU(t)= \cU (t; \bx(\cdot)) :=  \wt{\mbox{Exp}}\Big( -\int_0^t   {\wt \cA} ( \bx (s)  )
  \rd s \Big) 
$$
satisfies the matrix evolution equation
$$
   \pt_t \cU(t) = - \cU(t) \cA(t), \qquad \cU(0)=I,
$$
which can be seen directly from \eqref{tildeexp}.
Let $\bv^b(t)$ be the transpose of  the $b$-th row of the matrix $\cU(t)$, then the equation 
for the column vector $\bv^b(t)$ reads
\be\label{111}
  \pt_t \bv^b(t) = - \cA(t) \bv^b(t), \qquad \bv^b_a(0) =\delta_{ab}.
\ee
Thus taking the $b$-th component of \eqref{ut} we have
$$
 u_b(\sigma, \bx)=   \partial_b G(\sigma,\bx) =  \E^{ \bx} \Big[ \cU (\sigma)\nabla G ( \bx(\sigma) )\Big]_b 
 = \sum_a \E_\bx\big[ \partial_a G(\bx(\sigma)) v_a^b(\sigma)\big],
$$
and plugging this into \eqref{QQF}, we obtain \eqref{reppgeneral}
by using that  $\E^\om[\cdot ] = \int \E_\bx[\cdot]\om(\rd \bx)$. 

Formula \eqref{corrrep2} follows directly from \eqref{reppgeneral} and from the fact that 
 $\cH''\ge \tau^{-1}$ implies a  spectral gap of
order $\tau$, in particular,
$$
 \Big|  \E^\om  \big[ F (\bx(0))  G(\bx (T))\big]- \E^\om[F] \, \E^\om[G]\Big|
  \le e^{-cT/\tau} \| F\|_{L^2(\om)} \|G\|_{L^2(\om)}.
$$
Finally,  \eqref{corrrep} directly follows from this, by taking the $T\to\infty$ limit.
\qed

\bigskip

Now we apply our general formula to 
 the gap correlation function
on the left hand side of \eqref{eq:cor}.  For brevity of the formulas, we consider
only the single gap case, $n=1$; the general case is  a straightforward extension. 
 The gap index $p\in I$, $p\ne K$, is fixed, later we will impose further conditions
on $p$ to separate it from the boundary. The index set is $I= \llbracket -K, K\rrbracket$,
the Hamiltonian 
in \eqref{SDEgen} is given by $\cH_{\by, \wt\by}^r$  and \eqref{SDEgen} takes the
form of \eqref{SDE}.    It is well known  \cite{AGZ} that due to the logarithmic
interaction in the Hamiltonian, $\beta\ge 1$ implies that
the process $\bx(t) = (x_{-K}(t), \ldots , x_K(t))$ preserves the
initial ordering, i.e., $x_{-K}(t)\le \ldots \le x_K(t)$ and
$x_i(t)\in J$ for every $i\in I$. 
The matrix $\wt\cA$ is given by
 $ \wt \cA :=  \wt \cB + \wt \cW$  where $\wt \cB$ and $\wt \cW$ are the
following $\bx$-dependent matrices acting on vectors $\bv\in \bR^\cK$:
\be\label{Bdef}
[\wt \cB ( \bx )  \bv]_j = - \sum_k \wt B_{jk}( \bx ) (v_k- v_j), \quad 
 \wt B _{jk}(\bx) = \frac \beta { (x_j-x_k)^2}
\ge 0
\ee
and 
\be\label{W}
[\wt\cW(\bx) \bv]_j : = \wt W_j(\bx)v_j
\ee
with the function
\be
\wt W_j(\bx): =  \frac{\beta}{2} \Bigg\{ \sum_{|k|\ge K+1}  \Big [   \frac {1- r }  { (x_j-y_k)^2}  
+  \frac { r}  { (x_j-\wt y_k)^2} \Big ] +\frac{1-r}{N} V''\big(\frac{x_j}{N}\big)
  +\frac{r}{N} \wt V''\big(\frac{x_j}{N}\big) \Bigg\}.
\ee
Here $r\in [0,1]$ is a fixed parameter which we will omit from the notation
of $\wt\cW$.

For any fixed path $\bx(\cdot )$,
define the following time-dependent operators (matrices)  on $\bR^\cK$ 
\be\label{61}
\cA(s) := {\wt \cA} (\bx(s) ), \quad  \cB(s) : = \wt  \cB (\bx(s) ), \quad  
\cW(s) :=  \wt \cW(\bx( s)),
\ee
 where $\cW$ is a multiplication operator with 
the $j$-th diagonal $W_j(s)= \wt W_j(x_j(s))$ depending only the
$j$-th component of the process $\bx(s)$.  Clearly $\cA(s) = \cB(s) + \cW(s)$. 
We also define 
   the associated (time dependent) quadratic forms which we denote by 
the corresponding lower case letters, in particular
\begin{align} \non
 \fb(s)[\bu,\bv]& := \sum_{i\in I} u_i [\cB(s)\bv]_i
  = \frac{1}{2}\sum_{k,j \in I} B_{jk}(s)    (u_k-u_j)(v_k-v_j) 
\\ \non
 \fw(s)[\bu,\bv]&: = \sum_{i\in I} u_i [\cW(s)\bv]_i = \sum_i u_i W_i(s)v_i 
\\\label{Adef}
   \fa(s) [\bu,\bv]
&: = \fb(s)[\bu,\bv]+ \fw(s)[\bu,\bv].
\end{align}

\newcommand{\bg}{{\bf g}}
 
With these notations we can apply  Proposition~\ref{prop:repp}
to our case and we get:
\begin{corollary}\label{cor:repp}
Let $h_0$ be given by \eqref{h0def}, let  $O=O_N:\R\to\R$ be an observable for $n=1$, see \eqref{obs},   and assume that 
$\by, \wt \by  \in \cR_{L=0, K}( \xi^2  \delta/2,\al) $, 
in particular $\cA(s)$ given  in \eqref{61} satisfies
$\cA(s)\ge \tau^{-1}$ with $\tau =CK$ by \eqref{convexc}.
Then with a large constant $C_1$ and for any  $p \in I$, $-K \le p \le  K-1 $,  we have  
\begin{align}\label{repp}
\langle h_0; & O(x_{p}-x_{p+1}) \rangle_{\om }
 \\
 &  = \frac{1}{2}\nc\int_0^{C_1K\log K}   \rd \sigma \int  
    \sum_{b\in I}  \pt_b h_0(\bx)  
\E_\bx  \Big[ O'(x_p-x_{p+1})  \big(v_p^b (\sigma )  - v^b_{p+1} (\sigma)\big) \Big]  
 \omega (\rd \bx) + O\Big( \| O'\|_\infty K^{-2}\Big),
 \nonumber
\end{align}
where $\bv^b(s)= \bv^b(s, \bx(\cdot))$ solves \eqref{veq2} with $\cA(s)$ given in \eqref{61}.
\end{corollary}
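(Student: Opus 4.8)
The plan is to obtain \eqref{repp} as a direct specialization of the general representation formula \eqref{reppgeneral} from Proposition~\ref{prop:repp}, applied to the local log-gas $\om = \om_{\by,\wt\by}^r$ with Hamiltonian $\cH_{\by,\wt\by}^r$, followed by a tail cutoff in the time variable. First I would set $\cH = \cH_{\by,\wt\by}^r$, $F(\bx) = h_0(\bx)$ and $G(\bx) = O(x_p - x_{p+1})$. To invoke Proposition~\ref{prop:repp} I need to verify its hypothesis \eqref{hessbound}, i.e. that the Hessian is uniformly positive definite; this is exactly the content of the convexity bound \eqref{convexc}, which gives $\cH'' \ge cK^{-1}$, so $\tau = CK$ is an admissible choice (here one uses $\by,\wt\by \in \cR_{L=0,K}(\xi^2\delta/2,\al)$ to guarantee $d(x)\le CK$ in \eqref{Vbysecresc}). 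I should also note the integrability hypothesis $F,G \in C^1 \cap L^2(\om)$: $O$ is bounded with bounded derivative and compact support, so $G$ is fine, and $h_0$ has at most logarithmic growth near the boundary of $J$ by \eqref{ini1'}, which together with the level repulsion / rigidity estimates of Lemma~\ref{lm:inter} (and its Corollary~\ref{cor:mom}) gives the required square integrability.

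With the hypotheses in place, \eqref{corrrep2} from Proposition~\ref{prop:repp} already gives exactly the desired identity with the error $O(\cK^{-cA})$ absorbed into $O(\|O'\|_\infty K^{-2})$ upon choosing the constant $A$ (hence $C_1$) large enough, using $\cK = 2K+1$. The only bookkeeping is to compute the quantities appearing abstractly in \eqref{reppgeneral} in the present situation: the sum $\sum_{a,b} \pt_b F(\bx)\,\E_\bx[\pt_a G(\bx(S))\,v_a^b(S)]$ collapses because $G(\bx) = O(x_p - x_{p+1})$ depends only on the single gap, so $\pt_a G(\bx(S)) = O'(x_p(S) - x_{p+1}(S))(\delta_{a,p} - \delta_{a,p+1})$; summing over $a$ produces precisely the combination $v_p^b(S) - v_{p+1}^b(S)$ that appears in \eqref{repp}. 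Renaming $S \mapsto \sigma$, this is the stated formula. The matrix $\cA(s) = \beta\cH_{\by,\wt\by}^r{}''(\bx(s)) = \cB(s) + \cW(s)$ is the one already recorded in \eqref{Bdef}--\eqref{61}, so \eqref{veq2} is the same as the equation \eqref{veq} governing $\bv^b$, and no further identification is needed.

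There is really no serious obstacle here; the work was all done in Proposition~\ref{prop:repp}. The only point requiring a little care — and the step I would be most careful about — is the justification of the tail truncation: one must check that $\E^\om[h_0(\bx(0))\,O(x_p(T)-x_{p+1}(T))] - \E^\om[h_0]\,\E^\om[O]$ decays like $e^{-cT/\tau}\|h_0\|_{L^2(\om)}\|O\|_{L^2(\om)}$, which is exactly the spectral gap estimate inside the proof of Proposition~\ref{prop:repp}, together with a bound $\|h_0\|_{L^2(\om)} \le K^{C}$ obtained from \eqref{ini1'} and the moment bounds in Corollary~\ref{cor:mom}; since $\tau = CK$, choosing $T = C_1 K\log K$ with $C_1$ large makes this term smaller than $K^{-2}\|O'\|_\infty$ (one may also simply bound $\|O\|_{L^2(\om)} \le \|O\|_\infty \le C\|O'\|_\infty O_{\mbox{\footnotesize supp}}$ using compact support of $O$). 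This completes the proof. \qed
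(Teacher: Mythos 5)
Your proposal is correct and follows essentially the same route as the paper: invoke Proposition~\ref{prop:repp} with $F=h_0$, $G=O(x_p-x_{p+1})$, verify the spectral-gap hypothesis $\cA\ge\tau^{-1}$ via the convexity bound \eqref{convexc}, collapse the $a$-sum using $\partial_a G = O'(x_p-x_{p+1})(\delta_{a,p}-\delta_{a,p+1})$, and absorb the $O(\cK^{-cA})$ tail error from \eqref{corrrep2} into $O(\|O'\|_\infty K^{-2})$ by taking $C_1$ large.

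One point the paper flags that you pass over: $h_0$ is \emph{not} smooth on $\bar J^\cK$ --- its derivative $\partial_b h_0$ blows up logarithmically as $x_b$ approaches the endpoints of $J$. The paper therefore first applies the representation formula to a cutoff version of $h_0$ and then removes the cutoff, using that $h_0\in L^2(\om)$ together with the bound $\E^\om|\partial_b h_0| \le CK^{(C_3+1)\xi}$, which follows from the level-repulsion moment estimate \eqref{expinv} and the rigidity of $\by,\wt\by$. You invoke level repulsion only for the $L^2$-integrability of $h_0$ itself, but the estimate on $\E^\om|\partial_b h_0|$ is what is actually needed to justify the limiting step and to make the (implicit) sum over $b$ on the right-hand side of \eqref{repp} well-defined. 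This is a small omission rather than a wrong turn; the overall argument is sound.
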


{\it Proof.} If $h_0$ were a smooth function, then \eqref{repp} directly followed from
 \eqref{corrrep2}. The general case is a simple cutoff argument using that 
$h_0\in L^2(\rd\om)$ and
\begin{align}
   \E^\om |\pt_b h_0|  & \le \E^\om\big[ |(V_\by)'(x_p)| + 
   |(V_{\wt \by})'(x_p)|\big] \\
& \le \sum_{j\not\in I} \E^\om \Big[ \frac{1}{|y_j-x_p|}
  +  \frac{1}{|\wt y_j-x_p|}\Big] +C \nonumber \\
&\le   CK^{(C_3+1)\xi}.
 \nonumber
\end{align}
Here we used  \eqref{expinv}  and  that $\by, \wt\by\in \cR_{L=0,K}= \cR_{L=0,K}(\xi^2\delta/2, \al)$
 are regular on scale $K^{\xi^2}\le K^\xi$, so 
the summation is effectively restricted to $K^\xi$ terms. \qed

\medskip

 The representation \eqref{repp} expresses the
correlation function in terms of the discrete spatial derivative
of the solution to \eqref{veq2}.
To estimate  $v_p^b (\sigma, \bx(\cdot) )  -  v_{p+1}^b  (\sigma,  \bx(\cdot))$
in \eqref{repp}, 
we will now study the H\"older continuity of the solution $\bv^b (s, \bx(\cdot) )$ to \eqref{veq2}
at time $s=\sigma$ and at the spatial point $p$. 
For any fixed $\si$ we will do it for each fixed
path $\bx(\cdot)$, with the exception of a set of ``bad'' paths that will have a  small probability. 

Notice that if all points $x_i$  were approximately regularly spaced in the interval $J$,
then the operator $\cB$ has a kernel $B_{ij}\sim (i-j)^{-2}$, i.e. it 
is essentially a discrete version of the operator $|p|=\sqrt{-\Delta}$. 
H\"older continuity will thus be the consequence of the De Giorgi-Nash-Moser bound
for the parabolic equation \eqref{veq2}. However, we need to control
the coefficients in this equation, which  depend on the random walk $\bx(\cdot )$.

For the De Giorgi-Nash-Moser theory we need both upper and lower bounds on
the kernel $B_{ij}$. The rigidity bound  \eqref{rigi} guarantees 
a lower bound on $B_{ij}$, up to a factor $K^{-C_2\xi^2}\ge K^{-\xi}$.  The level repulsion estimate
implies  certain upper bounds  on $B_{ij}$, but only in
an average sense. In the next section we define
the good set of paths that satisfy both requirements.

\subsection{Sets of good paths}

{F}rom now on we assume  the conditions of Theorem~\ref{cor}.
In particular we are given some $\xi>0$  and we assume that the
boundary conditions satisfy
$\by, \wt \by  \in \cR_{L=0,K}=\cR_{L=0, K}( \xi^2  \delta/2,\al) $
and \eqref{Exm} with this $\xi$. 
We define the following ``good sets": 
\be\label{K1}
\cG :
=  \Big\{  \sup_{0 \le s \le C_1K\log K} \;\sup_{ |j| \le K } |x_j(s)-\alpha_j| \le K^\xip  \Big\}, 
 \ee
 where  
$$
  \xip: =  (C_2+1)   \xi^{ 2 },
$$ 
with $C_2$ being the constant in  \eqref{rigi}
and $\alpha_j$ is given by \eqref{aldefnew}.
We recall the definition of the event $\wt\cQ_{\si,Z}$ 
for any  $Z\in I$ 
 and $\si\in \cT=[0, C_1K\log K]$ from \eqref{wtQdef}.

\begin{lemma}\label{lem:83}
There exists a positive constant $\theta$,  depending on $\xip= (C_2+1)   \xi^{ 2 } $, 
  such that    
\be\label{K11}
\P^\om ( \cG^c ) \le  C e^{ - K^{\theta }}.
\ee
Moreover, there is a constant  $C_4$,  depending on the constant 
$C_2$ in \eqref{rigi} and on 
 $C_3$ in 
\eqref{level11}, \eqref{secondlevel11} 
such that  for any $\xi$ and $\rho$ small enough, we have 
\be \label{K22}
   \P^\om ( \wt \cQ^c_{\si,Z}   ) \le  C K^{ C_4 \xi-  \rho}  
\ee
 for each fixed  $Z\in I$   and fixed
 $\si\in \cT$. 
\end{lemma}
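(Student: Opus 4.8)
The plan is to prove the two estimates separately, since they rely on different inputs. For \eqref{K11}, the idea is that $\cG$ asks for a uniform-in-time rigidity bound for the $\om$-dynamics, and this follows from the equilibrium rigidity bound \eqref{rigi} of Lemma~\ref{lm:inter} together with a union bound over a sufficiently fine discretization of the time interval $\cT=[0,C_1K\log K]$. First I would fix a mesh of times $s_\ell = \ell\, \delta_t$, $\ell=0,1,\dots$, with $\delta_t$ polynomially small in $K$ (say $\delta_t = K^{-C}$), so that there are only polynomially many mesh points in $\cT$. At each fixed $s_\ell$ the law of $\bx(s_\ell)$ is $\om$ (the dynamics \eqref{SDE} is reversible and we start in equilibrium), so \eqref{rigi} gives $\P^\om(|x_j(s_\ell)-\alpha_j|\ge CK^{C_2\xi^2})\le Ce^{-K^{\theta_3}}$ for each $j\in I$; a union bound over $j$ and $\ell$ costs only a polynomial factor and is absorbed into the stretched-exponential bound, at the price of shrinking $\theta_3$ slightly to some $\theta>0$. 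To pass from the mesh to all $s\in\cT$ I would use a crude a priori modulus-of-continuity estimate for the paths $x_j(s)$: on a polynomially short time interval the Brownian increments move by at most $K^{-c}$ with overwhelming probability, and the drift term in \eqref{SDE} is bounded polynomially in $K$ on the event that rigidity holds at the mesh points (the logarithmic interaction is controlled because neighbouring particles are separated, and the external potential derivative $V_{\by,\wt\by}'$ is bounded as in \eqref{Vby1resc} away from the very edge). This gives $\sup_{s\in\cT}\sup_j|x_j(s)-\alpha_j|\le CK^{C_2\xi^2}+K^{-c}\le K^{\xi'}$ with $\xi'=(C_2+1)\xi^2$, as required, off an event of probability $\le Ce^{-K^\theta}$.

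For \eqref{K22}, recall from \eqref{wtQdef} that $\wt\cQ_{\si,Z}=\bigcap_{\tau\in\Xi}\wh\cQ_{\si+\tau,Z}$ with $\wh\cQ_{\si',Z}=\cQ_{\si',Z}\cap\cQ_{\si',-K}\cap\cQ_{\si',K}$ and $|\Xi|\le(\log K)^C$, so by a union bound it suffices to estimate $\P^\om(\cQ_{\si',Z}^c)$ for a single time $\si'\in\cT$ and a single center $Z\in I$. The event $\cQ_{\si',Z}^c$ says that for some $s\in\cT$ and some $1\le M\le K$ the normalized space-time average $\frac{1}{1+|s-\si'|}\big|\int_s^{\si'}\frac1M\sum_{|i-Z|\le M}|x_i(a)-x_{i+1}(a)|^{-2}\,\rd a\big|$ exceeds $K^\rho$. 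The key is that by stationarity (again, equilibrium initial condition) and Fubini, $\E^\om\big[\frac1M\sum_{|i-Z|\le M}|x_i(a)-x_{i+1}(a)|^{-2}\big]=\frac1M\sum_{|i-Z|\le M}\E^\om|x_i-x_{i+1}|^{-2}$ is independent of $a$, and Corollary~\ref{cor:mom} (which applies since $\beta\ge1$ so that $2<\beta+1$ is \emph{not} available — wait, we need $p=2<\beta+1$, i.e. $\beta>1$; for $\beta=1$ one uses instead the strong level repulsion \eqref{level11} directly with a logarithmic-scale decomposition of the event $\{x_i-x_{i+1}\le s\}$ to get a bound $\E^\om|x_i-x_{i+1}|^{-2}\lesssim K^{C_3\xi}\log K$) gives this expectation $\le K^{C_3\xi}$ uniformly in $i$. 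Hence the time-integrated, $(1+|s-\si'|)$-normalized quantity has $\om$-expectation $\le CK^{C_3\xi}$, and by Markov's inequality the probability it exceeds $K^\rho$ is $\le CK^{C_3\xi-\rho}$. The supremum over $s\in\cT$ and $1\le M\le K$ is handled by a discretization argument as in the first part — discretize $s$ on a polynomially fine mesh and $M$ on a dyadic scale (only $O(\log K)$ values), use the a priori path regularity from the good set $\cG$ to control the oscillation between mesh points, and absorb the union-bound factor into the exponent; this produces a constant $C_4$ depending on $C_2$ (through the regularity input) and $C_3$, giving $\P^\om(\cQ_{\si',Z}^c)\le CK^{C_4\xi-\rho}$, and the same for the two boundary centers $\pm K$ using the boundary level repulsion estimates \eqref{k52}, \eqref{l20} (equivalently \eqref{level11}) which apply at the edge gaps $x_{-K}-y_{-K-1}$, $y_{K+1}-x_K$.

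The main obstacle I expect is the $\beta=1$ borderline case in the second part: the naive bound $\E^\om|x_i-x_{i+1}|^{-2}<\infty$ requires $\beta>1$, so at $\beta=1$ the inverse second moment is only logarithmically divergent and must be extracted carefully from the \emph{strong} level repulsion bound \eqref{level11} (which needs $s\ge\exp(-K^\theta)$), truncating the singularity at scale $\exp(-K^\theta)$ and checking that the contribution of the ultra-small-gap region is itself controlled on the good set — this is exactly the place the authors flag as critical at $\beta=1$. A secondary, more technical obstacle is making the time-discretization rigorous: one must show the integrand $\frac1M\sum_{|i-Z|\le M}|x_i(a)-x_{i+1}(a)|^{-2}$ does not spike between mesh points, which again uses that on $\cG$ the gaps cannot be too small for more than an exponentially short time, closing the argument self-consistently with the bound \eqref{K11} already established.
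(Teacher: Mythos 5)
Your high-level architecture is correct (discretization plus union bound for \eqref{K11}, stationarity plus Markov for \eqref{K22}), but there are two genuine gaps, one in each half.

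For \eqref{K11}, the step that fails is the claim that ``the drift term in \eqref{SDE} is bounded polynomially in $K$ on the event that rigidity holds at the mesh points.'' Rigidity bounds $|x_j(s_\ell)-\alpha_j|$ but gives no lower bound on consecutive gaps: even when every $x_j$ is within $K^{\xi'}$ of $\alpha_j$, two adjacent particles can be arbitrarily close, and the drift $\sum_{j\ne i}(x_i-x_j)^{-1}$ is then as large as you like. So your a-priori modulus-of-continuity estimate over polynomially short windows $\delta_t\sim K^{-C}$ has no uniform bound on the drift contribution to feed into it. The paper instead controls the drift on an interval $[s,t]$ through the \emph{time-integrated} quantity $\int_s^t\sum_{j\ne i}|x_j(a)-x_i(a)|^{-1}\rd a$, whose $3/2$-moment is bounded by $CK^{3+C_3\xi}|t-s|^{3/2}$ via \eqref{expinv} and stationarity, and applies Markov. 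This only yields a polynomial tail, so to upgrade the final answer to the stretched-exponential form $Ce^{-K^\theta}$ the time mesh must be taken \emph{sub-exponentially} small, $q=\exp(-K^{\theta_3/2})$; a polynomial mesh would leave a polynomially small error term that cannot be absorbed into $e^{-K^\theta}$.

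For \eqref{K22}, the $\beta>1$ case in your proposal matches the paper. But your suggested fix at $\beta=1$, namely that a dyadic decomposition of $\{x_i-x_{i+1}\le s\}$ using the strong level-repulsion bound \eqref{level11} gives $\E^\om|x_i-x_{i+1}|^{-2}\lesssim K^{C_3\xi}\log K$, cannot be right: for $\beta=1$ the density of the gap vanishes linearly at coincidence, so $\E^\om|x_i-x_{i+1}|^{-2}$ diverges logarithmically and is actually infinite --- the strong level-repulsion bound only applies for $s\ge\exp(-K^\theta)$, and below that scale the weak bound \eqref{level} contributes an integral $\int_0 u^{-1}\rd u$. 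The paper handles $\beta=1$ by genuinely changing the model: it replaces $\log$ by a $C^2$ regularization $\log_\e$ with $\e=K^{-C'}$, replaces $\om$ by the regularized measure $\om^\e$, shows $\om$ and $\om^\e$ are close in relative entropy (\eqref{Sest}), re-derives rigidity and level repulsion for $\om^\e$, and then bounds $\E^{\om^\e}\log_\e''(x_{i+1}-x_i)\le CK^{C\xi}|\log\e|$ --- it is the regularized second derivative, not $|x_i-x_{i+1}|^{-2}$ itself, that has a finite expectation. The rest of the proof of Theorem~\ref{thm:local} then runs with $\om^\e$ in place of $\om$. Finally, a smaller point: the paper removes the supremum over $(s,M)$ not by a mesh-plus-path-regularity argument but by the purely algebraic monotonicity observation \eqref{zetain}, which reduces the supremum to dyadic $(s,M)$ at no probabilistic cost; a regularity-based mesh argument here would run into the same ``integrand can spike'' difficulty as in part one.
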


\begin{proof} 
From the stochastic differential equation of the dynamics  \eqref{SDE} we have 
\begin{align}\label{stoccont}
| x_i (t) -x_i(s) |  \le &  C|t-s| +
 \int_s^t \Big[ \sum_{j \in I\atop j \not = i} \frac 1 { |x_j(a) - x_i(a) |} 
  + \sum_{j\in I^c}   \frac 1 { |y_j - x_i(a) |} \Big]
  \rd a +  |B_i(t)-B_i(s)|.
\end{align}

Using \eqref{expinv} and that $\bx(\cdot)$ is invariant under $\om$, we have the bound
\be
\E^\om \Big[ \int_s^t \sum_{j \not = i}  \frac 1 { |x_j(a) - x_i(a) |}\Big]^{3/2} \le
 CK^{3}|t-s|^{3/2} \max_{i\in I} \E^\om   \frac 1 { |x_i - x_{i+1} |^{3/2}} \le  CK^{3+C_3\xi}|t-s|^{3/2}.
\ee
This implies for any fixed $s<t\le C_1K\log K$  and for any $R>0$  that 
\be\label{PR}
\P^\om \left [  \int_s^t \sum_{j \not = i}  \frac 1 { |x_j(a) - x_i(a) |} \ge  R \right ] \le 
 CK^{3+C_3\xi}|t-s|^{3/2} R^{-3/2}.
\ee
  A similar bound holds for the second summation in \eqref{stoccont}; the summation over large $j$
can be performed by using that $\by$ is regular, $\by\in \cR_{L=0, K}$. 

Set a parameter $q\le cR$ and 
 choose a discrete set of increasing times   $\{ s_k \; : \; k\le C_1K\log K/q\}$
such that 
\be\label{times}
0=s_0<s_1\le s_2\le \ldots \le C_1K\log K, \qquad \mbox{and}\quad |s_k - s_{k+1} | \le q.
\ee
{F}rom standard large deviation bounds on the Brownian motion 
increment $B_i(t)-B_i(s)$
and from \eqref{stoccont}, we have the stochastic  continuity estimate
$$
\P^\om \left (  \sup_{ s, t\in [s_{k}, s_{k+1}], |i| \le K } 
|x_i(s)- x_i(t) | \ge R  \right ) \le K e^{- C R^2/q } + CK^4 q^{3/2} R^{-3/2}
$$
for any fixed $k$. 
Taking sup over $k$, and overestimating $C_1K\log K\le K^2$, we have 
\be\label{supxx}
\P^\om \left (  \sup_{ 0\le s, t\le C_1K\log K, |t-s|\le q , |i| \le K } 
|x_i(s)- x_i(t) | \ge R  \right ) \le K^3 q^{-1} e^{- C R^2/q } + CK^6 q^{1/2} R^{-3/2}
\ee
for any  positive $q$ and $R$ with  $q\le cR$.

 {F}rom the rigidity bound \eqref{rigi} we know that for some $\theta_3> 0$
and for any  fixed $k$ we have, 
\be
 \P^\om \nc \Big\{   |x_j(s_k)-\alpha_j| \ge C K^{C_2\xi^2} \Big\} \le C e^{-K^{\theta_3}}, \quad j\in I.
\ee
 Choosing $R = K^{\xip}/2$ and  $q = \exp{(-K^{\theta_3/2})}$, and using 
that  $C K^{C_2\xi^2}\le K^{\xi'}/2$ with the choice of $\xi'$, 
 we have  
\be\label{K110}
\P^\om ( \cG^c ) \le C e^{-K^{\theta_3}} K^3  q^{-1}  + K^3  q^{-1} e^{- C R^2/q }
 + CK^6 q^{1/2} R^{-3/2} \le   C \exp{ ( - K^{\theta_3 /3  })}, 
\ee
 for sufficiently large $K$,
and this proves \eqref{K11}   with $\theta =\theta_3/3$.

\medskip 
  We will   now  prove \eqref{K22}. The number of intersections in
 the definition of $\wt Q_{\si, Z}$ is only a $(\log K)$-power,
so it will be sufficient to prove \eqref{K22} for one set $\cQ^c$.
 We will consider only  the set $\cQ^c_{\si,Z} $ 
and only for  $Z=0$ and $\si=0$. 
The modification needed for 
the general case is only notational.
We start the proof by noting that for $s>0$  
\begin{align}\label{zetain}
  \frac{1}{1+ s'} \int_0^{s'} \rd a \frac{1}{M'} \sum_{i=-M'}^{M'} 
& \frac  1  {  |x_i(a)-x_{i+1}(a)|^{2}}  \\
\le & \;
C   \frac{1}{1+ s} \int_0^s \rd a \frac{1}{M} \sum_{i=-M}^{M} 
 \frac  1  {  |x_i(a)-x_{i+1}(a)|^{2}} \nonumber
\end{align}
holds for any $s'\in [s/2, s]$ and  $M'\in [M/2, M]$. 
 Hence it is enough to estimate the probability
\be
 \P^\om\Big\{  \frac{1}{1+ s} \int_0^{s} \rd a \frac{1}{M} \sum_{i=-M}^{M} 
 \frac  1  {  |x_i(a)-x_{i+1}(a)|^{2}}  \ge K^\rho \Big\}
\label{FIX}
\ee
for fixed dyadic points  $ (s,M)  =\{ (2^{-p_1} K^2, 2^{-p_2} K) \}$ in space-time 
for each integer $p_1,p_2 \le C \log K$. 
Since the cardinality of the set of these dyadic 
points is just $C(\log K)^2$, it suffices to estimate
\eqref{FIX}  only for a fixed $s, M$. 

The proof is different for $\beta=1$ and $\beta>1$. 
In the latter case,
 from \eqref{expinv} we see that the random variable in \eqref{FIX}
has expectation $CK^{C_3\xi}$.  Thus the probability in \eqref{FIX} is bounded by
 $CK^{C_3\xi -\varrho}$,   so 
\eqref{K22} holds in this case with $C_4$ slightly larger than  $C_3+1$ 
to accommodate the  $\log K$ factors.

In the case $\beta=1$  the random variable in \eqref{FIX} has
a logarithmically divergent expectation. To prove \eqref{K22} for $\beta=1$, 
we need to regularize the interaction
on a  very small scale  of order $K^{-C}$ with a large constant $C$.
This regularization is a minor technical detail which does not affect 
other parts of this paper.
 We now explain how it is introduced, but for simplicity 
we will not carry it in the notation  in the subsequent sections.

For any $\by,\tby \in \cR_{L,K}$ satisfying \eqref{J=J}
 and for $\epsilon>0$, we
define the extension $ \om^\epsilon:= \om_{\by,\tby}^{r, \epsilon}$  of  the measure $\om=\om_{\by, \tby}^r$
(see \eqref{omd})
 from the simplex  $J^\cK\cap \Xi^{(\cK)}$ 
 to $\R^\cK$ by replacing  the singular logarithm with a $C^2$-function. 
For
$\bx \in \R^{\cK}$ and $a:=|J|\sim K$ we set
\begin{align*}
   \cH_\epsilon (\f x) := \frac{1}{2}\sum_{i\in I} U^\epsilon(x_i) -  \sum_{i < j} \log_{a\epsilon} (x_j - x_i) \, \qquad U^\epsilon(x):= 
   U_{\by, \tby}^{r,\epsilon}(x) = (1-r)V_\by^\epsilon(x) + r \wt V^\epsilon_{\tby} (x)    
\end{align*}
\begin{align}\label{Vdel}
V_\by^\epsilon(x): = NV(x/N) -2\sum_{k< -K} \log_{a\epsilon} (x-y_k) -2\sum_{k> K} \log_{a\epsilon} (y_k-x), 
\end{align}
where we define 
\begin{align}\label{logd}
\log_\epsilon(x) := {\bf 1}(x \geq \epsilon) \log x + {\bf 1}(x < \epsilon) \Big\{ \log \epsilon + \frac{x - \epsilon}{\epsilon} -
\frac{1}{2 \epsilon^2} (x - \epsilon)^2\Big\}\,.
\end{align}
We remark that the same regularization for a different purpose was introduced in  Appendix A of \cite{EKYY2}.
It is easy to check that $\log_\epsilon(x) \in C^2(\R)$, is concave, and satisfies 
\begin{align*}
\lim_{\epsilon \to 0} \log_\epsilon(x) \;=\;
\begin{cases}
\log x &\text{if } x > 0
\\
-\infty &\text{if } x \leq 0\,.
\end{cases}
\end{align*}
Furthermore, we have the lower bound 
\begin{align}\label{secderlog}
\partial_x^2 \log_\epsilon(x) \;\ge\;
\begin{cases}
- \frac 1 {x^2} &\text{if } x > \epsilon 
\\
-\frac 1 {\epsilon^2}  &\text{if } x \leq \epsilon\,.
\end{cases}
\end{align}
We then  define
$$
    \om^\epsilon (\rd \bx) := Z_\epsilon^{-1} e^{-\beta \cH_\epsilon(\bx)}\rd\bx, \qquad \mbox{on} \quad \R^\cK, \quad
\mbox{where} \quad Z_\epsilon: = \int  e^{-\beta \cH_\epsilon(\bx)}\rd\bx.
$$
Notice that on the support of $\om^\epsilon$  the particles do not necessarily keep their natural order and they are not
confined to the interval $J$. 
 We recall that $\om_{\by,\tby}^{r=0}= \mu_\by$ and  $\om_{\by,\tby}^{r=1}= \wt\mu_{\tby}$
so these definitions also regularize the initial local measures in  Theorem~\ref{thm:local}.

In order to apply  the proof of  Theorem~\ref{thm:local} to $\om^\epsilon$, we need two facts.
First that  $\om$ and $\om^{\epsilon}$ are close in entropy sense, i.e.
\be\label{Sest}  
 S(\om_0 |\om_\epsilon) \le CK^{C}\epsilon^2
\ee
Using this entropy bound with $\epsilon = K^{-C'}$ for a sufficiently large $C'$,  we see that
the measures $\mu_\by$ and $\wt\mu_{\tby}$ can be replaced with their regularized versions  $\mu_\by^\e$, $\wt\mu_{\tby}^\e$
 both in the condition \eqref{Ex} and in the statement \eqref{univ}. We can now use the argument of  Section~\ref{sec:pflocal}
with the regularized measures.

The second fact is that rigidity and level repulsion estimates given in Lemma~\ref{lm:inter} also hold for 
the regularized measure
$\om^\e$. In fact, apart from the rigidity in the form of \eqref{rigi},
 we also need the following weaker level repulsion bound:
\be\label{gapdel}
\P^{\om^\epsilon} (x_{i+1}-x_i\le s) \le CK^{C\xi}s^2, \quad i\in [L-K-1, L+K], \qquad s\ge K^\xi\epsilon.
\ee
Using  \eqref{secderlog}, this bound easily implies
\be\label{gapexp}
\E^{\om^\epsilon} \log_\epsilon'' (x_{i+1}-x_i) \le  CK^{C\xi}|\log\epsilon|.
\ee
Thus the regularized version of the random variable in \eqref{FIX} has a finite
expectation and we obtain \eqref{K22} also for $\beta=1$.

 With these comments in mind, these two facts can be proved 
following 
   the same path 
as the corresponding results in Section~\ref{sec:profmu}. 
 The only slight complication
is that the particles are not ordered, but for $\epsilon = K^{-C'}$ the regularized potential 
strongly suppresses switching order. More precisely, 
we have 
\be\label{trunc11}
  \P^{\om^\epsilon}( x_{i+1}-x_i \le -Ma\epsilon) \le e^{-cM^2}
\ee
for any $M\ge K^3$.  This inequality follows from
the integral
$$
  \int_{-\infty}^{-Ma\epsilon}e^{\log_{a\epsilon} v}\rd v
 \le (a\epsilon)^2 \int_{-\infty}^{-M} e^{-cu^2}\rd u \le e^{-cM^2},
$$
since for $M\ge K^3$ all other integrands in the measure
$\om^\epsilon$ can be estimated trivially at the expense of a multiplicative error 
 $K^{CK^2}$ that is still negligible  when compared with the factor  $\exp (-cM^2)$.
The estimate \eqref{trunc11} allows us to restrict the analysis to
$x_{i+1}\ge x_i - K^{-C''}$ with some large $C''$. This condition
replaces the strict ordering $x_{i+1}\ge x_i$  that is  present  in  Section~\ref{sec:profmu}.
This replacement introduces irrelevant error factors that can be easily estimated. 
This completes the proof of Lemma~\ref{lem:83}.
\end{proof}
 In the rest of the paper  we will work with the regularized measure $\om^\epsilon$ but
for simplicity we will not
carry this regularization in the notation.

\nc

\subsection{Restrictions to the good paths}

\subsubsection{Restriction to the set $\cG$}

 Now we show that
the expectation \eqref{repp} can be restricted to the good set $\cG$
with a small error.  
We just estimate the complement as
\begin{align*}
\int  
    \sum_{b\in I}  |\pt_b h_0(\bx) | 
\E_\bx   \cG^c  & \Big[ |O'(x_p-x_{p+1})|\,  |v_p^b (\sigma )  - v^b_{p+1} (\sigma)| \Big]  
 \omega (\rd \bx) \\
 & \le  C \|O'\|_\infty \int \E^\om  \sum_b  |\pt_b h_0(\bx) | 
 \cG^c  \big [ |v_p^b (\sigma )|  + |v^b_{p+1} (\sigma)|\big].
\end{align*}
Since $\cA\ge 0$ as a $\cK\times\cK$ matrix,   the  equation \eqref{veq2} is contraction in $L^2$.
 Clearly $\cA$ is a contraction in $L^1$ as well, hence it is a contraction in any $L^q$,
$1\le q\le 2$, by interpolation.
By the H\"older inequality and the  $L^q$-contraction for some $1<q<2$,  we have
for each fixed $b\in I$ that 
\begin{align*}
   \E^\om  |\pt_b h_0(\bx) |    \cG^c  |v_p^b (\sigma)| & \le \big[   \E^\om    \cG^c\big]^{q/(q-1)}
   \big[\E^\om   |\pt_b h_0(\bx) |^q |v_p^b(\sigma )|^q \big]^{1/q} \\
& \le  \big[   \P^\om    \cG^c\big]^{q/(q-1)}
   \Big[\E^\om |\pt_b h_0(\bx) |^q \sum_{i\in I}  |v_i^p (0 )|^q \Big]^{1/q} \\
& \le CK^{C_3\xi}e^{- c K^{\theta_4}} \le  e^{- c K^{\theta_4}}
\end{align*}
with some $\theta_4>0$. Here  we used \eqref{K11} for the first factor.
The second factor was
 estimated by  \eqref{expinv}  (recall the definition of $h_0$ from \eqref{h0def}).
After the summation over $b$,   we get
\begin{align*}
  \E^\om  \sum_b  |\pt_b h_0(\bx) | 
  \cG^c  & \Big[ |O'(x_p-x_{p+1})|\,  |v_p^b (\sigma )  - v^b_{p+1} (\sigma)| \Big]  
\le Ce^{- c K^{\theta_4}} \| O'\|_\infty.
\end{align*}
Therefore,   under the conditions of  Corollary~\ref{cor:repp}, 
 and using the notation $\E^\om$ for the process,
we have 
\begin{align}\label{finrep}
\Big|  \langle h_0; &  O(x_{p}-x_{p+1}) \rangle_{\om }  \Big| \\
& \le 
 \frac{1}{2} \| O'\|_\infty\int_0^{C_1K\log K}     \sum_{b\in I}  \E^\om \Bigg[\cG
      |\pt_b h_0(\bx)|  
  \Big| \big(v_p^b (\sigma )  - v^b_{p+1} (\sigma)\big) \Big|\Bigg] \rd\sigma 
  + O\Big(\|O'\|_\infty K^{-2}\Big), \nonumber
\end{align}
where $\bv^b$ is the solution to   \eqref{veq2}, assuming that the constant 
$C_1$ in the upper limit of the integration is large enough.

\subsubsection{Restriction to the set $\wt \cQ$ and the decay estimates}

The complement of the set $\wt\cQ_{\si, Z}$ includes the  ``bad'' paths
for which the level repulsion estimate in an  average sense does not hold.
However, the probability of  $\wt\cQ_{\si, Z}^c$  is not very small,
it is only a small negative power of $K$, see \eqref{K22}. This estimate would not be sufficient
against the time integration of order $C_1K\log K$ in \eqref{finrep}; we will have to
use an $L^1-L^\infty$ decay property of \eqref{veq2} which we now derive.
Denote  the  $L^p$-norm  of a vector $\bu = \{ u_j \; : \; j\in I\}$ by 
\be\label{lpno}
\| \bu \|_p  =  \Big(\sum_{j\in I}  |u_j|^p \Big)^{1/p}.
\ee

\begin{proposition}\label{prop:heat}
Consider the evolution equation 
\be\label{ve}
\partial_s  \bu (s) =  - \cA(s)  \bu (s), \qquad \bu(s)\in \R^I=\R^\cK
\ee
and fix $\si>0$. 
Suppose that for
 some
constant $b$ we have 
\be\label{B}
  B_{jk}(s) \ge   \frac b   { (j-k)^2}, \quad 0 \le s \le \si, \quad j\ne k,
\ee  
and 
\be\label{W1}
W_j (s)  \ge  \frac b { d_j } ,   \qquad d_j := \big| |j|-K\big|+1,
\quad 0 \le s \le \si .
\ee
Then  for any $1\le p\le q\le \infty$  we have the decay estimate 
\be\label{decay}
\| \bu(s) \|_q  \le  ( sb)^{-(  \frac{1}{p}  - \frac 1 q)} 
    \| \bu(0) \|_{ p }, \qquad 0<s\le\si.
\ee
\end{proposition}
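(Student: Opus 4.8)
\emph{Strategy.} The estimate \eqref{decay} is a Nash-type $L^p\to L^q$ smoothing bound for the non-autonomous, symmetric, nonnegative evolution \eqref{ve}, and the plan is to prove it by the classical Nash argument adapted to the present discrete, nonlocal, finite-interval setting. The steps, in order, would be: (i) establish a Nash inequality bounding $\|\bu\|_2^4/\|\bu\|_1^2$ by $\fa(s)[\bu,\bu]/b$, uniformly for $s\in[0,\sigma]$; (ii) feed it into a Gronwall-type differential inequality for $F(s)=\|\bu(s)\|_2^2$, using the $\ell^1$-contraction of the flow, to obtain the endpoint $\|\bu(s)\|_2\le(cbs)^{-1/2}\|\bu(0)\|_1$; (iii) dualize (using symmetry of each $\cA(s)$) and compose over $[0,s/2]$, $[s/2,s]$ to upgrade this to $\|\bu(s)\|_\infty\le(cbs)^{-1}\|\bu(0)\|_1$; (iv) interpolate this endpoint bound against the $\ell^p$-contractions of the flow to recover \eqref{decay} in full generality.

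\emph{The Nash inequality.} The heart of the matter is
\[ \frac{\|\bu\|_2^4}{\|\bu\|_1^2}\ \le\ \frac{C}{b}\,\fa(s)[\bu,\bu],\qquad \bu\in\R^I,\quad s\in[0,\sigma], \]
where $\fa(s)=\fb(s)+\fw(s)$, $\fb(s)[\bu,\bu]=\tfrac12\sum_{j\ne k}B_{jk}(s)(u_j-u_k)^2$ and $\fw(s)[\bu,\bu]=\sum_j W_j(s)u_j^2$. Since $\fa(s)[|\bu|,|\bu|]\le\fa(s)[\bu,\bu]$ we may take $\bu\ge 0$. Extending $\bu$ by zero to $\Z$, the lower bound \eqref{B} gives $\fb(s)[\bu,\bu]\ge b\sum_{j,k\in I}(u_j-u_k)^2(j-k)^{-2}$, which differs from the full Gagliardo energy $[\bu]^2_{\dot H^{1/2}(\Z)}$ only by the boundary defect $b\sum_{j\in I}u_j^2\big(2\sum_{k\notin I}(j-k)^{-2}\big)$; because the near edge dominates the inner sum, the bracket is comparable to $d_j^{-1}$, so this defect is bounded by $C\fw(s)[\bu,\bu]$ thanks precisely to \eqref{W1} (this is exactly why (W1) carries the weight $d_j^{-1}$). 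Hence $\fa(s)[\bu,\bu]\ \gtrsim\ b\,[\bu]^2_{\dot H^{1/2}(\Z)}$, and the scale-invariant fractional Nash inequality on $\Z$ (the $d=s=1$ case) $[\bu]^2_{\dot H^{1/2}(\Z)}\gtrsim\|\bu\|_2^4\|\bu\|_1^{-2}$ closes the estimate; the residual low-frequency (near-constant) mode, which the $\dot H^{1/2}$-term alone cannot see on a finite set, is controlled directly by $W_j\ge b/d_j\ge b/\cK$ together with $\|\bu\|_1^2\le\cK\|\bu\|_2^2$.

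\emph{From Nash to the decay bound.} The flow \eqref{ve} is an $\ell^1$-, $\ell^2$- and $\ell^\infty$-contraction (the same elementary reasoning used earlier in the paper: $\cB(s)$ is a weighted graph Laplacian, so its semigroup is Markovian, and $\cW(s)\ge0$ only adds killing). With $F(s)=\|\bu(s)\|_2^2$ one has $F'(s)=-2\fa(s)[\bu(s),\bu(s)]$, so the Nash inequality and $\|\bu(s)\|_1\le\|\bu(0)\|_1$ give $F'(s)\le -\tfrac{2b}{C}F(s)^2\|\bu(0)\|_1^{-2}$, which integrates to $\|\bu(s)\|_2\le(2bs/C)^{-1/2}\|\bu(0)\|_1$ for $0<s\le\sigma$. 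Since each $\cA(s)$ is symmetric, the adjoint propagator solves a time-reversed version of \eqref{ve} whose coefficients still obey \eqref{B} and \eqref{W1} on $[0,\sigma]$, so the same bound dualizes to $\|U(s,r)\|_{2\to\infty}\lesssim(b(s-r))^{-1/2}$ for $0\le r\le s\le\sigma$, where $U$ denotes the propagator of \eqref{veq2}; composing over $[0,s/2]$ and $[s/2,s]$ yields $\|U(s,0)\|_{1\to\infty}\lesssim(bs)^{-1}$. Finally, repeated Riesz--Thorin interpolation between this endpoint bound and the contractions $\|U(s,0)\|_{p\to p}\le1$ (themselves obtained by interpolating the $\ell^1$- and $\ell^\infty$-contractions) gives $\|\bu(s)\|_q\le C(bs)^{-(1/p-1/q)}\|\bu(0)\|_p$ for all $1\le p\le q\le\infty$, which is \eqref{decay} up to the constant $C$, absorbed as in Nash's original argument.

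\emph{Main obstacle.} The only genuinely non-routine point is the Nash inequality on the finite discrete interval: on all of $\Z$ the estimate for $\fb$ alone already requires handling the zero mode, and on $I$ one must in addition absorb the boundary defect produced by restricting the Gagliardo kernel to $I\times I$. Both are dealt with by \eqref{W1}, whose $d_j^{-1}$ weight is calibrated to the size of the interaction of a site $j\in I$ with the sites outside $I$; verifying this comparison carefully, and that it is uniform in $s\in[0,\sigma]$, is where the real work lies. Everything downstream --- the differential inequality, the duality, the interpolation --- is standard Nash machinery.
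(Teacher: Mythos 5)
Your proposal matches the paper's proof almost step for step: extend $\bu$ by zero to $\ZZ$, use $\eqref{W1}$ to absorb the boundary defect of the restricted Gagliardo kernel, apply the discrete fractional Nash/Gagliardo–Nirenberg inequality (the paper's Appendix B with $p=4$, $s=1$, composed with Hölder, is exactly your Nash inequality), integrate the resulting differential inequality to get $L^1\to L^2$, dualize using symmetry and time-uniformity of the bounds to get $L^2\to L^\infty$, compose, and interpolate. The only cosmetic difference is that you add a remark about the low-frequency mode being controlled by $W_j\ge b/\cK$, which is unnecessary once one works on $\ZZ$ (a compactly supported nonzero function already has positive $\dot H^{1/2}(\ZZ)$ energy), but this does not affect correctness.
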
 

\begin{proof} 
We consider only the case $b = 1$, the  
 general case follows from scaling.  We  follow  the idea of Nash and 
start from the $L^2$-identity 
\be
\partial_s  \| \bu(s) \|_2^2 =   - 2 \fa(s)[\bu(s), \bu(s)]. 
\ee
For each $s$ we can extend $\bu(s):I\to \bR^{ \cK}$ to a function
 $\wt \bu(s): $  on $\ZZ$ by defining  $\wt u_j(s) = u_j(s)$ for $|j|\le K$
and $\wt u_j(s) = 0$ for $j > |K|$.
 Dropping the time argument, we have, by  the estimates \eqref{B} and  \eqref{W1} with $b=1$,
\be
 2 \fa (\bu, \bu) \ge  \sum_{i, j \in \ZZ} \frac { (\wt u_i- \wt u_j)^2 } { (i-j)^2}  
 \ge c  \| \wt \bu \|_4^4 \, \| \wt \bu \|_2^{-2},
\ee
with some positive constant, 
where, in the second step, we  used the  Gagliardo-Nirenberg inequality 
for the discrete operator $\sqrt{-\Delta}$,
 see \eqref{s} in the Appendix~\ref{sec:GN} with $p=4, s=1$.  Thus    we have 
\be\label{sob}
\fa[\bu, \bu] \ge c  \| \bu \|_4^4 \, \| \bu \|_2^{-2},  
\ee
and  the energy inequality 
\be
\partial_s  \| \bu \|_2^2  \le -c  \| \bu \|_4^4 \, \| \bu \|_2^{-2} 
\le  -c  \| \bu \|_2^4 \, \| \bu \|_1^{-2},
\ee
 using the H\"older estimate $\| \bu\|_2\le \|\bu\|_1^{1/3}\|\bu\|_4^{2/3}$. 
Integrating this inequality  from 0 to $s$ we get 
\be\label{uss}
\| \bu(s) \|_2 \le C  s^{-1/2}  \| \bu (0) \|_1,
\ee
 and similarly we also have $\| \bu(2s)\|_2\le  C  s^{-1/2}  \| \bu (s) \|_1$.
Since the previous proof  uses only the time independent lower bounds \eqref{B}, \eqref{W1},
 we can use  duality in the time interval 
$[s, 2s]$ to have
$$
\| \bu(2s) \|_\infty  \le C  s^{-1/2}  \| \bu (s) \|_2.
$$
Together with \eqref{uss}  we have 
\[
\| \bu (2s) \|_\infty  \le C s^{-1}   \| \bu (0) \|_1.
\]
By interpolation, we have thus proved \eqref{decay}.
\end{proof}

\medskip

\newcommand{\fs}{{\frak s}}
In the good set $\cG$ (see \eqref{K1}), the bounds \eqref{B} and \eqref{W1} hold with $b = cK^{-\xip}$. Hence 
from the decay estimate \eqref{decay},   for any fixed $\si, Z$,
we can  insert the other good set $\wt\cQ_{\si, Z}$ into  the
expectation in \eqref{finrep}. 
This is obvious since 
 the contribution of its complement is bounded by   
\begin{align}\label{cut}
  & \int_{0}^{C_1 K \log K}   \rd \sigma \,\sum_b  \E^\om  \wt\cQ^c_{\si, Z} \cG  |\pt_b h_0(\bx)|  
   \big(v_p^b (\sigma )  + v^b_{p+1} (\sigma)\big) 
 \non  \\
& \le CK^\xip  \int_{0}^{C_1 K \log K} \rd \sigma\;  \sigma^{-\frac{1}{1+\xi}}    \;
  \E^\om  \Big[ \cG \big( \sum_{b\in I} |\pt_b h_0 (\bx)|^{1+\xi}\big)^{\frac{1}{1+\xi}} 
\wt\cQ^c_{\si, Z} \Big]  \non \\
& \le CK^{2\xip}  \int_{0}^{C_1 K \log K} \rd \sigma\;  \sigma^{-\frac{1}{1+\xi}}    \;
  \E^\om  \Big[\big[1+d(x_K(\si))^{-1} + d(x_{-K}(\si))^{-1}\big] 
\wt\cQ^c_{\si, Z} \Big] \non\\
& \le CK^{2\xip}  \int_{0}^{C_1 K \log K} \rd \sigma\;  \sigma^{-\frac{1}{1+\xi}}    \;
  \Big[ \E^\om  \big[1+d(x_K(\si))^{-1} + d(x_{-K}(\si))^{-1}\big]^{3/2}\Big]^{2/3} 
 \Big[ \P^\om\big( \wt\cQ^c_{\si, Z}\big)
 \Big]^{\frac{1}{3}} \non\\\
&  \le CK^{2\xip} (C_1K\log K)^\xi K^{C_3\xi} K^{(C_4\xi-\rho)/3},
\end{align}
where in the first line we used a H\"older inequality with exponents
$1+\xi$ and its dual, then 
we used the decay estimate \eqref{decay} with $q=\infty$, $p=1+\xi$
in the second line. The purpose of taking a H\"older inequality
with a power slightly larger than one was to avoid the logarithmic singularity in
the $\rd\sigma$ integration 
at $\sigma\sim 0$.   In the third line
we  split the sum into two parts
and used the bound
\be\label{ini1}
  |\partial_b h_0(\bx) | \le   |(V_\by)'(x_j) - (\wt V_{\wt \by})'(x_j) |
 \leq \frac{K^{\xip}}{d(x_b)},
\ee 
that follows from
 \eqref{Vby1resc} (with $\xi$ replaced by $\xi^2$
since $\by,\wt\by \in \cR_{L,K}(\xi^2\delta/2, \al/2)$).
Recall that  $d(x)$  is the  distance to the boundary, see \eqref{ddef}. 
 For indices away from the boundary, $|b|\le K-CK^{\xi'}$, we have
 $|d(x_b)|\ge K^{-\xi'}\min\{ |b-K|, |b+K|\}$  on the set $\cG$
that guarantees the finiteness of the sum. For indices near the boundary
we just estimated every term with  the worst one, i.e.  with the term $b=\pm K$.  We used a H\"older inequality in the fourth line
of \eqref{cut} and computed the  expectation by using   
\eqref{expinv} in the last line.
Hence we have proved the following proposition:

\begin{proposition}
Suppose that 
\be\label{rhobound}
\rho \ge  12\xip + 6(C_4+C_3+1)\xi 
\ee
holds with $C_3$ and $C_4$ defined in \eqref{level11} and in \eqref{K22}, respectively. 
Then for 
 any   fixed $Z, p\in I$ with $p\ne K$, 
  we have 
\begin{align}\label{cut3}
\Big|  \langle h_0;  & O(x_{p}-x_{p+1}) \rangle_{\om }  \Big| \\
& \le \frac{1}{2} \|O'\|_\infty  \int_0^{C_1 K \log K}   
   \sum_{b\in I}  \E^\om \Big[  \wt\cQ_{\si,Z}  \cG
      |\pt_b h_0(\bx)|  
  \big| v_p^b (\sigma )  - v^b_{p+1} (\sigma) \big|\Big] \rd\sigma 
 +  O\Big(\|O'\|_\infty K^{-  \rho/6  }\Big) . \non
\end{align}
\end{proposition}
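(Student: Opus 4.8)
The plan is to run the argument essentially as a two-stage localization of the random walk representation. The starting point is Corollary~\ref{cor:repp}, i.e.\ formula \eqref{repp}, which already expresses $\langle h_0; O(x_p-x_{p+1})\rangle_{\om}$ as $\frac12\int_0^{C_1K\log K}\rd\sigma\sum_{b\in I}\E^\om[\pt_b h_0(\bx)\,O'(x_p-x_{p+1})(v_p^b(\sigma)-v^b_{p+1}(\sigma))]$ plus an $O(\|O'\|_\infty K^{-2})$ error. From here one inserts, successively, the indicator of the rigidity set $\cG$ and then the indicator of the small-gap control set $\wt\cQ_{\si,Z}$, and bounds the two error terms produced. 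The crude input about the matrix flow that is used throughout is that $\cA(s)\ge0$ as a $\cK\times\cK$ matrix, so \eqref{veq2} is an $L^2$- and $L^1$-contraction, hence an $L^q$-contraction for all $1\le q\le2$ by interpolation; this is what lets one control the $\bv^b$ factors in $L^q$ for $q$ slightly above $1$.

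For the restriction to $\cG$: on $\cG^c$ one bounds $|v_p^b(\sigma)|\le(\sum_i|v_i^b(\sigma)|^q)^{1/q}\le(\sum_i|v_i^b(0)|^q)^{1/q}=1$ by the $L^q$-contraction, and then Hölder with exponents $q/(q-1)$ and $q$ splits $\E^\om[\cG^c\,|\pt_b h_0|\,|v_p^b(\sigma)|]$ into $\P^\om(\cG^c)^{q/(q-1)}$ times $(\E^\om|\pt_b h_0|^q)^{1/q}$. The first factor is $\le Ce^{-cK^{\theta}}$ by \eqref{K11}; the second is $\lesssim K^{C_3\xi}$ since $\E^\om|\pt_b h_0|\lesssim K^{(C_3+1)\xi}$ by \eqref{expinv} and the regularity of $\by,\wt\by$. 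Summing the resulting $e^{-cK^{\theta_4}}$ over the $\cK$ indices $b$ keeps the contribution exponentially small, which is exactly the passage to \eqref{finrep}.

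The substance of the proof, and the step I expect to be the main obstacle, is the restriction to $\wt\cQ_{\si,Z}$, because $\P^\om(\wt\cQ_{\si,Z}^c)$ is only a small negative power of $K$ (namely $\le CK^{C_4\xi-\rho}$ by \eqref{K22}), which is far too weak against the time integration of length $\sim K\log K$. The remedy is the $L^1\to L^\infty$ decay estimate of Proposition~\ref{prop:heat}: on $\cG$ the lower bounds \eqref{B}, \eqref{W1} hold with $b=cK^{-\xip}$, so taking $q=\infty$, $p=1+\xi$ in \eqref{decay} gives $\|\bv^b(\sigma)\|_\infty\lesssim K^{\xip}\sigma^{-1/(1+\xi)}$ for each $b$ (using $\|\bv^b(0)\|_{1+\xi}=1$). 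Inserting this into the $\cG\cap\wt\cQ_{\si,Z}^c$ part of \eqref{finrep} and running the chain of Hölder inequalities displayed in \eqref{cut}: a Hölder in the $b$-sum with exponent $1+\xi$, then the decay estimate, then splitting $\sum_b|\pt_b h_0|^{1+\xi}$ via \eqref{ini1} into an interior part (convergent on $\cG$) and a boundary part dominated by the $b=\pm K$ terms $\sim d(x_{\pm K}(\sigma))^{-1}$, and finally a Hölder separating $[\E^\om(1+d(x_K(\sigma))^{-1}+d(x_{-K}(\sigma))^{-1})^{3/2}]^{2/3}\lesssim K^{C_3\xi}$ (from \eqref{expinv} / level repulsion) from $\P^\om(\wt\cQ_{\si,Z}^c)^{1/3}\le K^{(C_4\xi-\rho)/3}$. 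The reason for raising the exponent to $1+\xi$ rather than $1$ is that it makes $\int_0^{C_1K\log K}\sigma^{-1/(1+\xi)}\,\rd\sigma\lesssim(K\log K)^\xi$ finite instead of logarithmically divergent at $\sigma\to0$. Collecting powers one obtains a bound $\lesssim K^{2\xip}(K\log K)^\xi K^{C_3\xi}K^{(C_4\xi-\rho)/3}$, and the hypothesis $\rho\ge12\xip+6(C_4+C_3+1)\xi$ of \eqref{rhobound} is calibrated precisely so that, after absorbing the $\log K$ factors, this is $\le K^{-\rho/6}$; this is \eqref{cut3}.

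One point to record is that for $\beta=1$ the random variable averaged in \eqref{FIX} has logarithmically divergent $\om$-expectation, so the whole argument must be carried out for the regularized measure $\om^\epsilon$ (with $\log$ replaced by $\log_{a\epsilon}$, $\epsilon=K^{-C'}$) rather than $\om$ itself; by the entropy bound $S(\om_0|\om_\epsilon)\le CK^C\epsilon^2$ the statement and its hypotheses transfer back and forth, and by \eqref{gapdel} together with \eqref{trunc11} (both established in the proof of Lemma~\ref{lem:83}) the rigidity and level-repulsion inputs remain valid for $\om^\epsilon$, the loss of particle ordering being harmless since $\epsilon$ is polynomially small. With this understood no further modification is needed, and I expect the only genuinely delicate bookkeeping to be the matching of exponents in the final display of \eqref{cut}; everything else is an assembly of the estimates already in place in Corollary~\ref{cor:repp}, Proposition~\ref{prop:heat}, and Lemma~\ref{lem:83}.
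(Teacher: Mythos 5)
Your proposal is correct and follows essentially the same route as the paper: first passing to \eqref{finrep} by dropping $\cG^c$ via the $L^q$-contraction and the exponential smallness of $\P^\om(\cG^c)$, then inserting $\wt\cQ_{\si,Z}$ by combining the $L^{1+\xi}\to L^\infty$ decay of $\bv^b$ (which makes the $\rd\sigma$ integral converge near $\sigma=0$) with the chain of H\"older inequalities in \eqref{cut} that separates the level-repulsion moment $\E^\om[d(x_{\pm K})^{-1}]^{3/2}$ from $\P^\om(\wt\cQ^c_{\si,Z})^{1/3}$, the exponents finally matching under \eqref{rhobound}. Your remark on the $\beta=1$ regularization is also consistent with the paper's own treatment in the proof of Lemma~\ref{lem:83}.
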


\subsection{Short time cutoff and finite speed of propagation }

The H\"older continuity of the parabolic equation \eqref{veq2} emerges
only after a certain time, thus for the small $\sigma$ regime in
the integral \eqref{cut3} we need a different argument.
Since we are interested in the H\"older continuity around
the middle of the interval $I$  (note that $|p|\le K^{1-\xi^*} $ in
Theorem~\ref{cor}), and
the initial condition $\pt_b h_0$ is small if $b$ is in this region,
a finite speed of propagation estimate for  \eqref{veq2} will guarantee
that $v_p^b(\si)$ is small if $\si$ is not too large.

{F}rom now on, we fix $\sigma\le C_1K\log K$,  $|Z|\le K/2$  and 
 a path $\bx(\cdot)$, and assume that $\bx(\cdot) \in \cG\cap \wt\cQ_{\si, Z}$. 
In particular,  thanks to the definition of $\cG$ and 
the regularity of the  locations $\al_j$, the time dependent coefficients $B_{ij}(s)$ and $W_i(s)$ 
of the equation \eqref{veq2} satisfy \eqref{B} and \eqref{W1}
with $b= K^{-\xip}$.  

We split the summation in \eqref{cut3}. Fix a positive
constant $\theta_5>0$.
 The contribution of the indices $|b|\le  K^{1 - \theta_5 }$
to \eqref{cut3} is bounded by 
\begin{align} \label{inest}
 \int_0^{C_1 K \log K}  & 
  \E^\om  \wt\cQ_{\si,Z}  \cG  \sum_{|b|\le K^{1 - \theta_5 } }  |\pt_b h_0(\bx)|
  \Big[      v_p^b (\sigma )  + v^b_{p+1} (\sigma) \Big]
 \rd\sigma \\
& \le  C\int_0^{C_1 K \log K}  \E^\om \Bigg[ \wt\cQ_{\si,Z}  \cG 
 \Big[ \sum_{|b|\le K^{1 - \theta_5 } }  |\pt_b h_0(\bx)|\Big]
  \times \max_{|b|\le  K^{1 - \theta_5 }}
    \big|v_p^b (\sigma )\big| \Bigg]
 \rd\sigma \non\\ 
& \le  C K^{\xip- \theta_5}\int_0^{C_1 K \log K}  
 \E^\om \Bigg[ \wt\cQ_{\si,Z}  \cG  \max_{|b|\le  K^{1 - \theta_5 }}
    \big|v_p^b (\sigma )\big| \Bigg]
 \rd\sigma \non \\
& \le K^{\xip- \theta_5}  \int_0^{C_1 K \log K} \sigma^{-1}\;  \rd \sigma 
 \le K^{2\xip- \theta_5}, \non
\end{align}
where we neglected the $v_{p+1}^b$ term for simplicity since it can be estimated exactly
in the same way. From the second to the third line we used that
$$
  |\pt_b h_0(\bx)|\le \frac{K^{\xip}}{\min \{ |b-K|, |b+K|\}+1} \le CK^{\xip-1}, \qquad 
|b|\le  K^{1 - \theta_5 },
$$
holds on the set $\cG$ from \eqref{ini1} and from the rigidity bound provided by $\cG$. Arriving at the last line
of \eqref{inest}
we used the  $L^1\to L^\infty$ decay estimate \eqref{decay}
and we recall  that the singularity $\sigma\sim 0$ can be cutoff exactly as in \eqref{cut},
i.e. by considering a power slightly larger than 1 in the first line.
Note that the set $ \wt\cQ_{\si,Z}$ played no role in this argument.

Together with   \eqref{cut3}  and with the choice 
\be\label{t5}
\theta_5  >  \rho 
\ee
 and recalling $\rho\ge 4\xip$ from \eqref{rhobound},  
 we have 
\begin{align}\label{cut2}
\Big|  \langle h_0;  &  O(x_{p}-x_{p+1}) \rangle_{\om }  \Big|  \\
 &\le  \frac{1}{2} \| O'\|_\infty  \int_0^{C_1 K \log K} 
  \sum_{|b|> K^{1 - \theta_5 } } \E^\om \Big[ \wt\cQ_{\si, Z}  \cG  |\pt_b h_0(\bx)| 
  \big | v^b_p (\sigma )  -  v^b_{p+1} (\sigma) \big | \Big]
 \rd \sigma 
 + O\Big( \|O'\|_\infty K^{- \rho/6  }\Big). \non
\end{align}

The following lemma provides a finite speed   of
 propagation  estimate for the equation \eqref{veq2}
which will be used to control the short time regime in \eqref{cut2}.
This estimate is not optimal, but it is sufficient for our purpose. 
 The proof will be given in the next section.

\begin{lemma}\label{lem-finite}  [Finite Speed of Propagation Estimate] 
 Fix $b\in I$ 
 and $\sigma\le C_1K\log K$. Consider  $\bv^b(s)$,
 the solution to \eqref{veq2}
and  assume
 that the coefficients of $\cA$  satisfy
\be\label{g3}
   W_i(s) \ge \frac{K^{-\xip}}{d_i},   
    \qquad  B_{ij}(s)\ge \frac{K^{-\xip}}{|i-j|^2}, \qquad 0\le s\le\si,
\ee 
where $d_i: = \min\{ |i+K|, |i-K|\}+1$. 
Assume that  the bound 
\be\label{Kass0}
   \sup_{0 \le s \le \si}\sup_{0 \le M \le K} \frac{1}{ 1+ s} \int_0^s \frac{1}{M}
 \sum_{i\in I\, : \, |i-Z| \le M}\sum_{j\in I\, : \, |j-Z| \le M} 
 B_{ij}(s)
 \rd s \le CK^{\rho_1},
\ee
 is satisfied for some fixed  $Z$, $|Z|\le K/2 $.
Then for any $s>0$ we have the  estimate
\be\label{finite}
|v_p^b(s)| \le  \frac {C K^{  \rho_1  +  2 \xip + 1/2 } \sqrt  {s  +1 }  } {   |p- b |}.
\ee
\end{lemma}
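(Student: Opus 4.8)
\textbf{Proof proposal for Lemma~\ref{lem-finite}.}
The plan is a weighted $\ell^1$ (first–moment) estimate for the linear flow \eqref{veq2}, exploiting that $\cA(s)=\cB(s)+\cW(s)$ with $\cB(s)$ a nonnegative graph Laplacian (weights $B_{jk}\ge0$) and $\cW(s)\ge0$ diagonal: hence the evolution is positivity preserving and an $\ell^1$- and $\ell^\infty$-contraction, so $v^b_j(s)\ge0$, $\sum_j v^b_j(s)\le1$ and $\|\bv^b(s)\|_\infty\le1$ for all $s\ge0$. Since the lemma is used with $Z$ at (or very near) the target index $p$ while $b$ is far away (after Step~3 one has $|b|>K^{1-\theta_5}$ and $|p|\le K^{1-\xi^*}$, so $|p-b|\gtrsim K^{1-c}$), I would fix a ramp weight $\psi$ on $I$ that vanishes on the $b$-side of $\{|j-b|\le|p-b|/2\}$, increases $1$-Lipschitzly on an interval of width $\le|p-b|/2\le K$ placed on the $p$-side of $b$, and is frozen at the value $|p-b|/2$ beyond. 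Then $\psi_b=0$, $\psi_p\ge c\,|p-b|$, and $\supp\nabla\psi$ is at distance $\gtrsim|p-b|/2$ from $b$ and lies within distance $\le K$ of $Z$. Setting $M(s):=\sum_{j\in I}\psi_j\,v^b_j(s)$ one has $M(0)=\psi_b=0$ and $v^b_p(s)\le M(s)/\psi_p\le C\,M(s)/|p-b|$, so it suffices to bound $M(s)$.

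Differentiating and using $\psi,\cW,\bv^b\ge0$, then symmetrizing the $\cB$-term,
\[
\tfrac{d}{ds}M(s)\;=\;-\sum_{j\in I}\psi_j\,[\cA(s)\bv^b(s)]_j\;\le\;-\sum_{j\in I}\psi_j\,[\cB(s)\bv^b(s)]_j\;=\;\tfrac12\sum_{j,k\in I}B_{jk}(s)\,(\psi_j-\psi_k)\big(v^b_k(s)-v^b_j(s)\big),
\]
whence $\tfrac{d}{ds}M(s)\le\tfrac12\sum_{j,k}B_{jk}(s)\,|\psi_j-\psi_k|\,|v^b_j(s)-v^b_k(s)|$, and the summand is nonzero only if $j$ or $k$ lies within $K^{2\xip}$ of $\supp\nabla\psi$. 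I would split by $|j-k|>K^{2\xip}$ (long jumps) and $|j-k|\le K^{2\xip}$ (short jumps). For the long jumps the rigidity $|x_j-\alpha_j|\le K^{\xip}$ valid on $\cG$ gives $c\,|j-k|\le|x_j-x_k|$, hence the upper bound $B_{jk}(s)\le C|j-k|^{-2}$; combining this with the Lipschitz bound on $\psi$ and the geometric separation of $\supp\nabla\psi$ from the region carrying the bulk of the mass of $\bv^b$, a Schur-type estimate bounds the long-jump part of $\tfrac{d}{ds}M$ by $C\,K^{2\xip}\log K$ (a $\log K$ for the ``in-ramp'' rearrangement, and $\lesssim|p-b|^{-2}\cdot|p-b|^{2}$ for the ``ramp-to-$b$'' flow), so its time integral over $[0,\sigma]$ contributes $\le C\,K^{2\xip}\sigma\log K$ to $M(\sigma)$; after division by $\psi_p$ this is of the claimed form and uses nothing but $\cG$.

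The short jumps are the crux, and this is where $\wt\cQ_{\si,Z}$ (equivalently the averaged bound \eqref{Kass0}) enters. Here $|\psi_j-\psi_k|\le|j-k|\le K^{2\xip}$ and $j$ (say) lies within $K^{2\xip}$ of $\supp\nabla\psi\subset\{|i-Z|\le K\}$. Applying Cauchy--Schwarz first in the indices and then in time, the short-jump part of $M(\sigma)$ is at most
\[
C\,K^{2\xip}\Big(\int_0^\sigma\sum_{j,k\in I}B_{jk}(a)\big(v^b_j(a)-v^b_k(a)\big)^2\,da\Big)^{1/2}\Big(\int_0^\sigma\sum_{|i-Z|\le K,\,|i'-Z|\le K}B_{ii'}(a)\,da\Big)^{1/2},
\]
where the first factor is controlled by the energy dissipation identity $2\int_0^\sigma\fb(a)[\bv^b(a),\bv^b(a)]\,da\le\int_0^\sigma\big(-\tfrac{d}{da}\|\bv^b(a)\|_2^2\big)\,da\le\|\bv^b(0)\|_2^2=1$, and the second by \eqref{Kass0} with $M\sim|p-b|/2\le K$, giving $\int_0^\sigma\sum\sum B\le C\,K^{\rho_1}|p-b|(1+\sigma)$. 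Dividing by $\psi_p\sim|p-b|$ yields a short-jump contribution to $v^b_p(\sigma)$ of size $C\,K^{2\xip+\rho_1/2}\sqrt{1+\sigma}\,|p-b|^{-1/2}$; to promote the $|p-b|^{-1/2}$ to the stated $|p-b|^{-1}$ one feeds the $L^1\!\to\!L^\infty$ decay \eqref{decay} (which on $\cG$ gives $\|\bv^b(a)\|_\infty\le CK^{\xip}/a$) back into the $(v^b_j-v^b_k)^2$ factor and runs a short continuity/dyadic bootstrap in the distance to $b$. I expect this bootstrap against the only crudely controlled, singular near-diagonal rates $B_{i,i+1}$ to be the main obstacle: for $\beta=1$ these have divergent pointwise moments and are tamed only through the space-time average in \eqref{Kass0}, and only in a fixed window about $Z$, so the whole argument must be arranged so that no other singular coefficient is ever summed without a time integration. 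The resulting estimate \eqref{finite} is far from sharp---the true decay of \eqref{veq2} is $\sim\sigma/|p-b|^{2}$---but it is exactly what the short-time cutoff in Step~4 needs, and the integrability of $\tfrac{d}{ds}M$ at $s=0$ is harmless since $M(0)=0$ (alternatively, the $\ell^{1+\xi}$-trick of \eqref{cut} applies).
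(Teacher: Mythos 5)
Your proposal takes a genuinely different route from the paper's, and it does not close. The paper first splits $\cA=\cS+\cR$ into a short-range part (jumps $\le\ell$) and a long-range tail, treats the tail by Duhamel combined with the $L^1\!\to\!L^\infty$ decay \eqref{decay} (contribution $\lesssim\ell^{-1}K^{\xip}\log K$), and then runs a Davies--Gaffney type argument for $U_\cS$: the exponentially weighted energy $f(s)=\sum_j e^{|j-b|/\theta}r_j^2(s)$ obeys a multiplicative Gronwall bound $f(s)\le\exp\bigl[C\theta^{-2}\ell^2\int_0^s\sum_{|j-k|\le\ell}B_{kj}\bigr]f(0)$, and \eqref{Kass0} caps the exponent; choosing $\theta\sim\ell K^{(\rho_1+1)/2}\sqrt{s+1}$ then gives genuine exponential off-diagonal decay $r_j(s)\lesssim e^{-|j-b|/\theta}$, and $\ell$ is finally chosen to make $e^{|p-b|/\theta}\ge e^{K^{\xip}}$. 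It is precisely this exponential test function that buys the full factor $|p-b|^{-1}$.

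Your ramp-weighted first moment $M(s)=\sum_j\psi_j v_j^b(s)$ is a Caccioppoli-type device: it is linear in the test function and after Cauchy--Schwarz against $\bigl(\int\sum B\,(v_j-v_k)^2\bigr)^{1/2}\le1$ it can only ever return $\sqrt{\int\sum\sum B}\lesssim K^{\rho_1/2}\sqrt{|p-b|(1+\sigma)}$ for the ramp contribution, and after dividing by $\psi_p\sim|p-b|$ you are stuck at $|p-b|^{-1/2}$, as you observe. This is a real gap relative to the statement \eqref{finite}: the proposed fix---feeding $\|\bv^b(a)\|_\infty\lesssim K^{\xip}/a$ into $(v_j-v_k)^2$ and iterating dyadically in $|j-b|$---is only sketched, and it would have to cope simultaneously with the non-integrable $\sim1/a$ singularity of the sup bound at short times and with the singular near-diagonal rates $B_{i,i\pm1}$, neither of which is controlled pointwise. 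As written, the argument does not prove the lemma; to recover the $|p-b|^{-1}$ rate you essentially need to upgrade the linear weight $\psi_j$ to an exponential one and run the Gronwall inequality on a weighted $L^2$ (not $L^1$) quantity, which is what the paper's short-range step does. (For what it is worth, the intermediate bound $|p-b|^{-1/2}$ would still be numerically adequate in the specific application to \eqref{cut8}, where $|p-b|\ge cK^{1-\theta_5}$ and $\sigma\le K^{1/4}$ leave plenty of slack; but that is an observation about Section~7.5, not a proof of Lemma~\ref{lem-finite} as stated.)
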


\subsection{Proof of the Finite Speed of Propagation Estimate, Lemma \ref{lem-finite}} 

 Let $1\ll \ell \ll K $  be a parameter to be specified later.
 Split the time dependent operator $\cA=\cA(s)$ defined in \eqref{61} into a short range and
a long range part, $\cA= \cS + \cR$, with 
\be
(\cS \bu)_j :=  -  \sum_{k\; : \; |j-k| \le \ell } B_{jk} (u_k-u_j)  + W_j u_j  
\ee
and
\be
(\cR \bu)_j :=  - \sum_{ k\; : \; |j-k| > \ell } B_{jk} (u_k-u_j).
\ee
 Note that $\cS$ and $\cR$ are time dependent. 
Denote by $ U_\cS ( s_1,   s_2)$  
 the  semigroup associated with $\cS$ from time $s_1$ to time $s_2$,  i.e.
$$
    \partial_{s_2} U_\cS ( s_1,   s_2) = -\cS(s_2)  U_\cS ( s_1,   s_2)
$$
for any $s_1\le s_2$, and $U_\cS(s_1,s_1)=I$; 
 the notation $U_\cA(s_1,s_2)$ is analogous.  Then  by the Duhamel formula
 \be
\bv(s) = U_\cS (0, s) \bv_0 +  \int_0^s  U_\cA ( s', s) \cR(s')  U_\cS (0,  s')  \bv_0 \rd s'.
\ee
Notice that for $\ell \gg K^\xip$ and for $\bx(\cdot)$ in the good set $\cG$ (see \eqref{K1}),
 we have   
\be
\|\cR  \bu\|_1 =   \sum_{|j| \le K} \Big| \sum_{k:  |j-k | \ge \ell }   
\frac 1 { (x_j-x_k)^2} u_k\Big|   \le C \ell ^{-1} \| \bu\|_1,
\ee
or more generally, 
\be
\|\cR \bu\|_p  \le C\ell ^{-1} \| \bu\|_p,  \quad 1 \le p \le \infty.
\ee
Recall the decay estimate \eqref{decay}  for the semigroup $U_\cA$
that is applicable by \eqref{g3}. 
Hence we have,  for $s\ge 2$,
\begin{align*}
\int_0^s  \Lnorm  \infty {   U_\cA ( s',s ) \cR(s')  U_\cS (0,  s')  \bv_0 }  \rd s' 
\\ \le 
K^\xip \int_0^s  (s-s')^{-1}  & \Lnorm  1 {  \cR(s')  U_\cS (0,  s')  \bv_0 }  \rd s' \le  
K^\xip \ell^{-1}  (\log s)    \| \bv_0\|_1 ,
\end{align*}
 where we used that $U_\cS$ is a contraction on  $L^1$.  
 The non-integrable short time singularity for $s'$ very close to $s$, $|s-s'|\le K^{-C}$, can
be removed by  using the $L^p\to L^\infty$ bound \eqref{decay}
with some $p>1$,  invoking a similar argument in \eqref{cut}. 
 In this short time cutoff argument we used that  $U_\cS$ is  an $L^p$ contraction for 
 any $1\le p \le 2$ by  interpolation, and that
 the rate of the $L^p \to L^\infty$  decay of $U_\cA$ are given in \eqref{decay}.
\be\label{21}
\| \bv(s) - U_\cS (0, s) \bv_0 \|_\infty  \le  \ell^{-1} (\log s)     K^\xip 
 \le C\ell^{-1} (\log K)K^\xip,
\ee
where we have used  that $\bx(\cdot)$ is in  the good set $\cG$
 and that $s\le C_1K\log K$.

\newcommand{\br}{{\bf r}}

We now prove a cutoff estimate for  the short range dynamics.
Let $\br(s): = U_\cS (0,  s)  \bv_0$ 
 and define
\be
f(s)  =    \sum_j \phi_{  j}  r_j^2 (s), \quad \phi_j = e^{ | j - b |/\theta}
\ee
with some parameter  $\theta\ge \ell$  to be specified later.  Recall that $b$ is
the location of the initial condition, $\bv_0 = \delta_b$. 
In particular, $f(0)=1$. 

Differentiating $f$  and using $W_j \ge 0$, we have 
\begin{align*}
f'(s)  =   \partial_s  \sum_j  \phi_j   r_j^2 (s)  & \le  
 2  \sum_j  \phi_j   \sum_{k:  |j-k| \le \ell } r_j (s)  B_{kj}(s)(r_k-r_j)(s) 
\\ & 
= 
  \sum_{ |j-k| \le \ell } B_{kj} (s) (r_k-r_j)(s)  \left [ r_j (s)\phi_j  -  r_k (s) \phi_k    \right ]
\\ & 
= 
 \sum_{ |j-k| \le \ell } B_{kj} (s) (r_k-r_j)(s) \phi_j  \left [ r_j -  r_k   \right ] (s) 
\\ & 
\qquad +  
\sum_{ |j-k| \le \ell } B_{kj}(s) (r_k-r_j) (s) \left [ \phi_j -  \phi_k   \right ] r_k (s).
\end{align*}
 In the second term
we use Schwarz inequality and absorb the quadratic term in $r_k-r_j$ into the
first term that is negative.
Assuming  $\ell\le \theta$, we have
$\phi_k^{-2}  \left [ \phi_j -  \phi_k   \right ]^2 \le C\ell^2/\theta^2$
for $|j-k|\le\ell$. Thus
\begin{align*}
f'(s)  
\le   C \sum_{ |j-k| \le \ell } B_{kj} (s) \phi_k^{-1}  \left [ \phi_j -  \phi_k   \right ]^2 r_k^2 (s) 
\le C \theta^{-2} \ell^2  \left ( \sum_{k',  j:  |j-k'| \le \ell } B_{k' j}  (s)  \right )   \sum_k \phi_k  r_k^2 (s) .
\end{align*}
 {F}rom a Gromwall argument we have 
\be
f(s) \le  \exp \left [ C\theta^{-2} \ell^2  \int_0^s  \sum_{k,  j:  |j-k| \le \ell } B_{k j}  (s')  \rd s'   \right ] f(0).
\ee 
{F}rom the assumption \eqref{Kass0} with $M = K$  and
any  $Z$,   we can bound the integration in the exponent by
\be
\int_0^{s}   \sum_{k,  j:  |j-k| \le K } B_{k j}  (s')  \rd s'   \le  K^{ 1 +   \rho_1  } 
 (s+1). 
\ee 
Thus we have 
\be\label{8423}
 \sum_j e^{   | j - b | /\theta}  r_j^2 (s) = f(s)
 \le \exp{ \big[ \theta^{-2} \ell^2 K^{1 + \rho_1  }  (s+1) \big]} f(0)  \le C, 
\ee
provided that we choose  
\be\label{e1}
\theta  =  \ell K^{(\rho_1 + 1)/2 } \sqrt  {s+1}.
\ee
 In particular, this shows the following exponential finite speed of
propagation estimate for the short range dynamics
\be\label{shortexp}
r_j(s)  \le C\exp\Big( - \frac{|j-b|}{ \ell K^{(\rho_1 + 1)/2 } \sqrt  {s+1}} \Big).
\ee   
Now we choose 
$$
 \ell =   | p - b |  K^{  -\xip - (\rho_1 + 1)/2 } (s+1)^{-1/2}
$$
so that  $e^{   | p - b | /\theta} \ge \exp{( K^\xip)}$.
 Using this choice in \eqref{8423} and   \eqref{21} to estimate  $v_p^b(s)$, 
 we have thus proved that 
\be
|v_p^b(s)| \le      \ell^{-1}   (\log K)
  K^\xip  + C  e^{-   K^{-\xip } } \le \frac { K^{  2 \xip + (\rho_1+ 1)/2 }
 \sqrt  {s+1}  } {   |p- b |}.
\ee
This concludes the proof of  Lemma \ref{lem-finite}.

\subsection{Completing the proof of Theorem  \ref{cor}}\label{sec:proof81}

In this section we complete the
 proof of Theorem \ref{cor}  assuming 
a discrete version of the De Giorgi-Nash-Moser
H\"older regularity estimate for the solution \eqref{veq}
  (Theorem~\ref{holderg} below).

 Notice that on the  set $\cG\cap \wt\cQ_{\si, Z}$  the conditions of Lemma~\ref{lem-finite} are 
satisfied,  especially  \eqref{Kass0} with the choice
\be\label{rho1def}
\rho_1:= \rho+\xi'
\ee
follows from the definition \eqref{K2} 
since  for the summands with $|i-j|\ge K^{\xip}$ in \eqref{Kassnew3}
we can use $B_{ij} \le C|\al_i-\al_j|^{-2} \le
C|i-j|^{-2}$.  
 Thus we can  use \eqref{finite}  to estimate
the short time integration regime in \eqref{cut2}. 
 Setting 
\be\label{theta5}
\theta_5: = \min\Big\{ \frac{\xi^*}{2}, \frac{1}{100} \Big\},
\ee 
we obtain, for any  $|Z|\le 2K^{1-\xi^*}$ and 
 $|p|\le K^{1-\xi^*}$
\begin{align}\label{cut8}
 \int_0^{K^{1/4}}   \sum_{|b|>K^{1-\theta_5}}
\; & \E^\om  \Big[ \wt   \cQ_{\si, Z}   \cG  |\pt_b h_0 (\bx)| \big | v^b_p (\sigma )  
-  v^b_{p+1} (\sigma) \big | \Big]
 \rd \sigma  \\
 &  \le C
\int_0^{K^{1/4}} \; \E^\om \Big[ \wt   \cQ_{\si, Z}    \cG   
 \sum_{ |b| >  K^{1 - \theta_5 }} |\partial_b h_0( \bx)| v_p^b(\si)\Big]\rd\si \nonumber \\
&  \le  CK^{2\xip+\rho_1+1/2 + \frac{1}{4} +\frac{1}{8} - (1 - \theta_5)}  \E^\om \Big[ \wt   \cQ_{\si, Z}    \cG
 \sum_{ |b| >  K^{1 - \theta_5 }} |\partial_b h_0( \bx)| \Big] \nonumber \\
&\le  C K^{4\xip+\rho_1 +\theta_5 - \frac{1}{8}}   \E^\om \; \Big[ \wt   \cQ_{\si, Z}  \cG
 \Big( \frac{1}{d(x_K)} + \frac{1}{d(x_{-K})}  \Big) \Big] \nonumber \\
&\le  C K^{4\xip+\rho_1 + C_3\xi+ \theta_5 - \frac{1}{8} } \le K^{-\frac{1}{10}} \nonumber
\end{align}
provided that 
\be\label{allsmall}
  4\xip+\rho_1 + C_3\xi  \le \frac{1}{100}.
\ee
In the third line above we used \eqref{finite}
together with  $|p-b|\ge \frac{1}{2} K^{1 - \theta_5 }$. This latter bound
follows from $|b|>  K^{1 - \theta_5 }$ and  $|p|\le K^{1-\xi^*}$
and  from the choice $\theta_5 < \xi^*$. 
 In the fourth line we used \eqref{ini1} and that
on the set $\cG$ we have 
$$
 \sum_j \frac{1}{d(x_j)} \le (\log K) K^{\xi'}\Big[ \frac{1}{d(x_K)} + \frac{1}{d(x_{-K})}   \Big].
$$
Moreover, in  the last step we used  \eqref{expinv}.   This completes
the estimate for the small $\sigma$ regime. Notice that the set $\wt   \cQ_{\si, Z}$
did not play a role in this argument.

\medskip

After the short time cutoff \eqref{cut8}, we finally have to control the 
regime of large time and large  $b$-indices, i.e.
$$
\int_{K^{1/4}}^{C_1 K \log K} 
  \sum_{|b|> K^{1 - \theta_5 } } \E^\om \Big[ \wt\cQ_{\si, Z}  \cG  |\pt_b h_0(\bx)| 
  \big | v^b_p (\sigma )  -  v^b_{p+1} (\sigma) \big | \Big]
 \rd \sigma 
$$
from \eqref{cut2}. 
We will exploit the H\"older
regularity of the solution $\bv^b$ to \eqref{veq2}. 
We will assume that  the coefficients of $\cA$ in \eqref{veq2} satisfy
a certain regularity condition.  
\begin{definition}\label{def:strreg}
The equation 
\be\label{veq7}
   \pt_t \bv(t) = -\cA(t)\bv(t), \qquad \cA(t) = \cB(t) + \cW(t), \qquad t\in \cT
\ee
 is called {\bf regular} at the
space-time point $(Z, \si)\in I\times \cT$ with exponent $\rho$, if
\be\label{Kassnew3}
   \sup_{s\in \cT}\sup_{1 \le M \le K} 
\frac{1}{ 1+ |s-\si| }\Big|  \int_s^\si \frac{1}{M}
 \sum_{i\in I\, : \, |i-Z| \le M}\sum_{j\in I\, : \, |j-Z| \le M} 
 B_{ij}(u)
 \rd u \Big| \le K^{\rho}.
\ee
Furthermore, the equation is called {\bf strongly regular} at the
space-time point $(Z, \si)\in I\times \cT$ with exponent $\rho$ if it is 
regular at all points $\{ Z\}\times \{ \Xi + \si\}$, where we recall the definition of $\Xi$ 
from \eqref{Kassnew}:
$$
  \Xi = \big\{ - K\cdot 2^{-m}(1+2^{-k}) \; : \; 0\le k,m \le C\log K  \big\}.
$$
\end{definition}

Fix a $Z\in I$, $|Z|\le K/2$ and  a $\si\in \cT$.
 Recall that  on the set $\cG\cap \cQ_{\si,Z}$
the regularity  at $(p, \si)$ with exponent $\rho_1$ from \eqref{rho1def}
follows from \eqref{K2}.
Analogously, on the event $\cG\cap \wt\cQ_{\si,Z}$, the
strong regularity at  $(Z, \si)$ with a slightly increased exponent $\rho_1$ holds.

We formulate the partial H\"older regularity result for the equation \eqref{veq7}.
We collect the following facts on the coefficients $B_{ij}(s)$ and $W_i(s)$ 
that  follow from $\bx(\cdot )\in \cG$.
\be\label{g3new7}
     B_{ij}(s) \ge \frac{K^{-\xip}}{|i-j|^2}, \quad W_i(s)\ge \frac{K^{-\xip}}{d_i} \qquad
   \mbox{for any} \; s\in \cT, \;\; i,j \in I.
\ee
\be\label{g4}
W_i(s)   \le  \frac{K^{\xip}}{d_i}, \; \; \text {for any } \quad s\in \cT, \quad d_i \ge K^{C \xip}.
\ee  
and
\be\label{far1}
    \frac{1}{ C (i-j)^2} \le  B_{ij} (s) \le \frac{C}{(i-j)^2}
  \quad  \mbox{for any} \; s\in \cT, \;\;\ |i-j|\ge \wh CK^{\xip}.
\ee

\begin{theorem} 
 \label{holderg} There exists a universal constant $\fq>0$ with the following properties.
 Let  $\bv(t)= \bv^b(t)$ be a solution to \eqref{veq7}
for any choice of $b\in I$,
with initial condition  $v^b_j(0)=\delta_{jb}$.
Let $Z\in I$, $|Z|\le K/2$ and
$\si \in [K^{c_3}, C_1K\log K]$ be fixed, where $c_3>0$ is an arbitrary positive constant.
There exist positive constants $\xi_0$, $\rho_0$ (depending only on $c_3$) such that
if the coefficients of $\cA$
satisfy \eqref{g3new7}, \eqref{g4},
 \eqref{far1} with some $\xip\le \xi_0$ 
 and the equation is strongly  regular at the point $(Z,\si)$ with an exponent $\rho_1\le \rho_0$
then for any $\al\in [0,1/3]$ we have
\be\label{HC}
\sup_{ |j-Z| + |j'-Z| \le \si_1^{ 1- \al} } 
   | v_j (\si )  -  v_{j'}  (\si)  |  \le C K^{\xip} \sigma^{-1-  \frac{1}{2}\fq \al}, \qquad \si_1:= \min\{\si, K^{1-c_3}\},
\ee
where $\bv= \bv^b$ for any choice of $b$.
The constant $C$ in \eqref{HC}
depends only on  $c_3$.
\end{theorem}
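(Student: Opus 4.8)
The plan is to adapt the De Giorgi--Nash--Moser iteration to the discrete, finite-dimensional, nonlocal parabolic equation \eqref{veq7}, following the strategy of Caffarelli--Chan--Vasseur \cite{C}, but dealing with two essential complications: the coefficients $B_{ij}(s)$ are only bounded below pointwise (by $K^{-\xi'}|i-j|^{-2}$) and above only in the space-time averaged sense encoded in the strong regularity condition \eqref{Kassnew3}, and the conclusion is needed at a fixed final time $\si$ rather than on an interval. First I would reduce to a cleaner statement. The equation preserves $L^1$ and $L^\infty$ and is contractive there, and by Proposition~\ref{prop:heat} (the Nash-type $L^1\to L^\infty$ decay estimate) we already know $\|\bv(\si)\|_\infty\le CK^{\xi'}\si^{-1}$; so the content of \eqref{HC} is the \emph{extra} gain of $\si^{-\frac12\fq\al}$ for the discrete difference $v_j(\si)-v_{j'}(\si)$ when $|j-Z|,|j'-Z|\le\si_1^{1-\al}$. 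I would therefore formulate and prove first a baseline $L^\infty\to C^\fq$ oscillation-decay result (this is what Section~\ref{Caff} calls Theorem~\ref{thm:hold}): for a solution with $\|\bv\|_\infty\le 1$ on a parabolic cylinder around $(Z,\si)$, one has $\osc$ over a cylinder of spatial size $r$ bounded by $C r^\fq$ (times $K^{\xi'}$), and then obtain \eqref{HC} by applying it to the rescaled solution $\si\cdot\bv$ on the appropriate cylinder and tracking how the parabolic scaling $d=s=1$ converts a spatial size $\si_1^{1-\al}$ into the gain $\si^{-\frac12\fq\al}$.

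The core is the oscillation-decay step, which I would prove by the standard two-ingredient De Giorgi scheme. \textbf{Step A: $L^2\to L^\infty$ (local boundedness).} Run a Moser/De Giorgi iteration on truncations $(v-k)_+$: test the equation against $(v-k)_+$ on nested cylinders, use the Dirichlet-form lower bound $\fb(s)[\bu,\bu]\ge c K^{-\xi'}\|\bu\|_{\dot H^{1/2}}^2$ (from \eqref{g3new7} and the discrete Gagliardo--Nirenberg/Sobolev inequality \eqref{sob}) together with the nonlocal Caccioppoli inequality; the extra long-range tails $\sum_{|i-j|\ge\ell}B_{ij}(v_i-v_j)$ are controlled using \eqref{far1} exactly as in \cite{C}. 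The time-averaged upper bound \eqref{Kassnew3} enters precisely here, to control the ``energy input'' terms $\int\!\sum B_{ij}(u_i-u_j)^2$ coming from the cutoff in time; because \eqref{Kassnew3} only gives $K^{\rho_1}$ rather than $O(1)$, each iteration loses a factor $K^{\rho_1}$ or $K^{\xi'}$, but since the iteration has $O(\log K)$ or $O(1)$ many steps (or one can sum a geometric series of such losses), the total loss is absorbable into the prefactor $K^{\xi'}$ provided $\rho_1,\xi'$ are small enough — this is exactly why the smallness hypotheses $\rho_1\le\rho_0$, $\xi'\le\xi_0$ appear. \textbf{Step B: the measure-to-pointwise (``$\varepsilon$-lemma'') step.} This is the genuinely hard part: if $v\le 1$ on a large cylinder and $v\le 1/2$ on a fixed fraction of a sub-cylinder, then $v\le 1-\delta_0$ on a smaller cylinder. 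In \cite{C} this rests on an isoperimetric/De Giorgi lemma for nonlocal operators; here I would need the \emph{strong} regularity (control of the space-time maximal function of $B_{i,i+1}$ at the dyadically accumulating times $\Xi+\si$, see \eqref{Kassnew}) to guarantee that the energy estimates hold on all the intermediate dyadic scales simultaneously, not just at one. I expect this interplay — making the De Giorgi isoperimetric lemma work when the jump kernel can be locally much larger than $|i-j|^{-d-s}$ but is tame on average — to be the main obstacle, and it is the reason the hypothesis is phrased via the whole set $\Xi$ of time-points rather than a single cylinder.

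Given Steps A and B, oscillation decay follows by the usual argument: iterate $\osc_{r/2}v\le (1-\delta_0)\osc_r v + (\text{tail error})$ across geometrically shrinking cylinders, where the tail error is controlled by \eqref{far1} and the already-established $L^\infty$ bound; the tail is nonlocal so one must be slightly careful that $v$ is bounded on \emph{all} scales (not just the one under consideration), but that is supplied by Step~A applied once at the coarsest scale. Summing the geometric series gives $\osc_r v\le CK^{\xi'} r^\fq$ for a universal $\fq>0$. \textbf{Conclusion.} To deduce \eqref{HC}: apply the oscillation-decay estimate to $\tilde\bv := \si\,\bv^b(\cdot)$, which by Proposition~\ref{prop:heat} satisfies $\|\tilde\bv(s)\|_\infty\le CK^{\xi'}$ on a time-interval of length comparable to $\si_1$ before $\si$ (here we use $\si\ge K^{c_3}$ so there is room, and the cutoff $\si_1=\min\{\si,K^{1-c_3}\}$ keeps the relevant cylinder inside $I$ away from the boundary where \eqref{g4} fails); strong regularity at $(Z,\si)$ with exponent $\rho_1$ is exactly the hypothesis needed to run Steps A--B in the cylinder of spatial radius $\si_1^{1-\al}$ and time-radius $\si_1^{1-\al}$ around $(Z,\si)$ (note $\al\le1/3$ ensures this cylinder is nondegenerate). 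The oscillation bound then reads $|\tilde v_j(\si)-\tilde v_{j'}(\si)|\le CK^{\xi'}(\si_1^{1-\al})^\fq\cdot\si^{-\fq}$ — where the last factor comes from the scaling normalization tying spatial scale $\si$ to the ``unit'' cylinder — and after checking the exponent arithmetic $(1-\al)\fq-\fq = -\al\fq$ and absorbing numerical constants by replacing $\fq$ with $\fq/2$, we obtain $|v_j(\si)-v_{j'}(\si)|\le CK^{\xi'}\si^{-1-\frac12\fq\al}$, which is \eqref{HC}. All constants depend only on $c_3$ because $\rho_0,\xi_0,\fq$ are chosen depending only on $c_3$ and the dimension/scaling parameters $d=s=1$.
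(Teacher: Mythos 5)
Your proposal follows essentially the same route as the paper: reduce \eqref{HC} to the $L^\infty$-to-H\"older oscillation estimate Theorem~\ref{thm:hold} by first running the Nash $L^1\to L^\infty$ decay (Proposition~\ref{prop:heat}) over a prior time window of length $\tau$ — this combination is precisely Theorem~\ref{thm:holdl1} — and then split into the two cases $\si\le K^{1-c_3}$ and $\si>K^{1-c_3}$ with the appropriate choice of $\tau$ (which is where the spare factor $\tfrac12$ in the exponent $-1-\tfrac12\fq\al$ comes from), while proving Theorem~\ref{thm:hold} by a Caffarelli--Chan--Vasseur-style two-lemma De Giorgi iteration whose first (energy/local-boundedness) and second ($\varepsilon$-isoperimetric) lemmas use, respectively, the averaged upper bound \eqref{Kassnew3} and the dyadic time points in $\Xi$, which is exactly Lemmas~\ref{lm:energy} and~\ref{lm:2nd} in the paper.
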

 Theorem~\ref{holderg} follows directly from the slightly more general
Theorem~\ref{thm:holdl1} presented in  Section~\ref{Caff} and it
will be proved there.

\nc

 Armed with Theorem~\ref{holderg},  we now complete the proof of Theorem~\ref{cor}.
As we already remarked, the conditions of  Theorem~\ref{holderg} are satisfied
on the set $\wt   \cQ_{\si, Z} \cap  \cG$ with some $\rho_0, \xi_0$ small universal constants.
 For any $|p|\le K^{1-\xi^*}$ fixed, we choose $Z = p$ (in fact, we could
 choose any  $Z$ with   $|Z-p|\le C$).
Using   \eqref{ini1}, we have, 
 for the large time integration regime in \eqref{cut2}, 
\begin{align}\label{cut9}
 \int_{K^{1/4}}^{C_1K\log K}  \sum_{|b|>K^{1-\theta_5}}
\; & \E^\om  \Big[ \wt   \cQ_{\si, p}   \cG  |\pt_b h_0 (\bx)| \big | v^b_p (\sigma )  
-  v^b_{p+1} (\sigma) \big | \Big]
 \rd \sigma \\
& \le  CK^\xip \int_{K^{1/4}}^{C_1K\log K}  
\; \E^\om  \Bigg[ \wt   \cQ_{\si, p} \cG  \sum_{ |b| >  K^{1 - \theta_5 }} \frac{1}{d(x_b)} 
\big| v^b_p(\si) - v^b_{p+1}(\si)  \big |\Bigg]  \rd \sigma 
\nonumber \\
& \le  CK^{2\xip} \int_{K^{1/4}}^{C_1K\log K}  \;\si^{-1 - \frac{1}{6}\fq }  
\; \E^\om \Bigg[ \wt   \cQ_{\si, p}  \cG  \sum_{ |b| >  K^{1 - \theta_5 }}
 \frac{1}{d(x_b)}  \Bigg]\rd\sigma
\nonumber\\
& \le  CK^{ 3 \xip+\rho_1+ C_3\xi  -\frac{1}{24} \fq }.
\nonumber
\end{align} 
In the third line we used Theorem~\ref{holderg} with $c_3=1/4$ and $\al=1/3$. 
In the last line of \eqref{cut9} we used a similar argument as in the last step of \eqref{cut8}.

 Finally, from \eqref{cut2},  \eqref{cut8} and \eqref{cut9} and  $\rho_1=\rho+\xip$ we have 
\begin{align}\label{cut5}
\left |  \langle h_0;  O(x_{p}-x_{p+1}) \rangle_{\om }  \right
 | \le   C\|O'\|_\infty\Big( K^{4\xip+\rho+ C_3\xi -\frac{1}{24}\fq} 
 +O(K^{-\frac{1}{10}})+ O(K^{-\rho/6}) \Big).
\end{align}

 For a given $\xi^*>0$, recall that we defined
$\theta_5: = \min\{ \frac{1}{2}\xi^*, \frac{1}{100} \}$ and we now choose 
\be\label{defrho}
   \rho : = \min\Big\{ \frac{\fq}{100}, \frac{\theta_5}{2}\Big\}= 
 \min\Big\{ \frac{\fq}{100}, \frac{\xi^*}{4},
  \frac{1}{200} \Big\},
\ee
in particular \eqref{t5} is satisfied. Since $\fq>0$ is a universal constant,
 it is then clear that for any sufficiently small $\xi$
all conditions  in  \eqref{allsmall} and \eqref{rhobound}
on the  exponents $\xi$, $\xip= (C_2+1)\xi^2$ and $\rho_1=\rho+\xip$ 
can be simultaneously satisfied.
Thus  we can make the error term  in \eqref{cut5} 
smaller than  $K^{C\xi}K^{-\rho/6}$. 
With the choice of $\e=\rho/6$, where $\rho$ is from 
\eqref{defrho}, we proved Theorem \ref{cor}.  
\qed

\medskip

Although the choices of parameters seem to be complicated, 
the underlying mechanism is that there is  a universal positive 
 exponent  $\fq$ in \eqref{HC}.
This exponent provides an extra smallness factor in addition to
the natural size of $v_j(\sigma)$, which is $\sigma^{-1}$ from the
$L^1\to L^\infty$ decay. As \eqref{HC} indicates, this gain
comes from a H\"older regularity on the relevant scale.  
The parameters $\xi, \xi'$ and $\xi^\ast$ can be chosen 
arbitrarily small (without affecting the value of $\fq$). 
 These parameters govern the cutoff levels  in 
the regularization of the coefficients of $\cA$.
 There are other minor considerations due to an additional
cutoff for small time where we have to use a
finite speed estimate.  But the arguments for this part are of
 technical nature and most estimates are not optimized. We just worked out 
estimates sufficient for the purpose of proving Theorem \ref{cor}.
 The choices of exponents related to  the various cutoffs  do not have 
intrinsic meanings. 

As a guide to the reader, 
  our choice of parameters, roughly speaking, are given by the following rule:
 We first fix a small parameter  $\xi^*$. 
Then we choose the cutoff parameter $\theta_5$ to be slightly smaller than $\xi^*$,
\eqref{theta5}.   The exponent $\rho$ in \eqref{K2} has a lower bound 
by $\xi$ and $\xi'$ in \eqref{rhobound}. On the other hand, $\rho$ will affect the cutoff bound and so we have the condition $\rho < \theta_5$ 
(i.e., \eqref{t5}). So we choose $\rho \lesssim \xi^*$ and make $\xi, \xi'$ very small so that the lower bound requirement on $\rho$ is satisfied. 
Finally, if the parameter $\xi^* \le \fq/100$, 
we can use the gain from the H\"older continuity to compensate all the errors which 
depend only on $\xi, \xi', \xi^*$.

\section{A discrete  De Giorgi-Nash-Moser estimate}\label{Caff}

In this section we prove Theorem~\ref{holderg}, which is a H\"older regularity estimate
 for the parabolic evolution equation
\be
   \pt_s \bu(s) = -\cA(s) \bu(s).
\label{ve2}
\ee
where   $\cA(s) = \cB(s) + \cW(s)$ are symmetric matrices defined by 
\be
 [\cB(s)\bu]_j
  = - \sum_{k \not = j \in I} B_{jk}(s)    (u_k-u_j), \quad 
[\cW(s)\bu]_i = W_i(s) u_i
\ee
and $B_{ij}(s)\ge 0$.
Here  $I = \{ -K, -K+1, \ldots,  K \}  $ and  $\bu \in \bC^I$.
We will study this equation in a time interval $\cT\subset \bR$ of length $|\cT|=\si$
and we will assume that $\si\in [K^{c_3}, CK\log K]$. 
The reader can safely think of $\si= CK\log K$.  In the applications
we set $\cT=[0,\sigma]$, but we give some definitions more generally.
The reason is that traditionally in the regularity theory for
parabolic equations one sets the initial condition $\bu(-\sigma)$ at some 
negative time $-\sigma<0$ and one is interested in the regularity
of the solution $\bu(s)$ around $s=0$.
In this case $\cT$ starts at $-\si$, so in this section 
$\cT=[-\si, 0]$.
 This convention 
is widely used for parabolic equations and in particular in \cite{C}. Later on in our application, 
we will need to make an obvious  shift in time.

 We will now state a general
H\"older continuity result, Theorem \ref{thm:hold},   concerning 
the  deterministic equation \eqref{ve2} over the  finite set $I$
and on the time interval $\cT=[-\si,0]$. 
Theorem \ref{thm:hold} will be a local H\"older continuity result around an interior point $Z\in I$
separated away from the boundary. 
We recall the definition of strong regularity from Definition~\ref{def:strreg}.
The following conditions on $\cA$ will be needed that are characterized by two
parameters $\xi, \rho>0$.

\begin{itemize}
\item[${\bf (C1)}_\rho$] 
The equation \eqref{ve2} is strongly  regular with exponent $\rho$  at
the space-time point $(Z,0)$.
\item[${\bf (C2)}_\xi$] Denote by  $d_i=d_i^I: = \min\{ |  i + K+ 1|, |1+ K-i|\}$
the distance of $i$ to the boundary of $I$.
 For  some large constants $C, \wh C\ge 10$ , the following conditions are satisfied: 
\be\label{g3new}
B_{ij}(s)\ge \frac{K^{-\xi}}{|i-j|^2}, \qquad\mbox{for any} \;\;
  s\in \cT, \;\;  d_i \ge \frac{K}{C}, \;\; d_j\ge \frac{K}{C},
\ee  
\be\label{g5new}
W_i(s)   \le  \frac{K^{\xi}}{d_i}, \; \; \text { if } \quad d_i \ge K^{C \xi}, \quad  s\in \cT, 
\ee
\be\label{far1new}
    \frac{ {\bf 1}(\min\{ d_i,d_j\}\ge \frac{K}{C}) }{ C (i-j)^2} \le  B_{ij} (s) \le \frac{C}{(i-j)^2}, \quad 
\text{if $|i-j|\ge \wh CK^{\xi}$ 
 and  $s\in \cT$.  } 
\ee
\end{itemize}

\begin{theorem}   [Parabolic  regularity with singular coefficients] 
 \label{thm:hold}  
There exists a universal  constant $\fq>0$ such that the following holds.
 Consider the equation \eqref{ve2} on the time interval $\cT=[-\si,0]$
with some  $\si \in [K^{c_3}, K^{1-c_3}]$, where $c_3>0$ is a positive constant.  
Fix  $|Z| \le  K/2$ and $\al\in[0, 1/3]$.
 Suppose that  ${\bf (C1)}_\rho$ and ${\bf (C2)}_\xi$
hold with some exponents  $\xi, \rho$ small enough depending on $c_3$.
 Then  for the solution $\bu$  to \eqref{ve2} we have
\be\label{HC1}
\sup_{|j-Z|+|j'-Z| \le \si^{ 1- \al} } 
   | u_j (0)  -  u_{j'}  (0)  |  \le C \si^{-\fq \al}  \| \bu(-\si )\|_\infty  .
\ee
The constant $C$ in \eqref{HC1}
depends only on  $c_3$ and is uniform in $K$.
The result holds for any $K\ge K_0$, where $K_0$ depends on  $c_3$.
\end{theorem}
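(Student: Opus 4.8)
\textbf{Proof plan for Theorem~\ref{thm:hold}.}
The plan is to adapt the De Giorgi--Nash--Moser iteration as developed in the nonlocal parabolic setting of Caffarelli--Chan--Vasseur \cite{C}, replacing the uniform ellipticity bounds \eqref{Kxyt1} by the weaker package ${\bf (C1)}_\rho$ and ${\bf (C2)}_\xi$. First I would reduce to proving a single oscillation-decay step: there is a constant $\lambda<1$ and a scaling ratio $\kappa$ (both depending only on $c_3$) such that if $\bu$ solves \eqref{ve2} on a parabolic cylinder $Q_r(Z,0)=\{|j-Z|\le r\}\times[-r,0]$ (in the microscopic scaling the ``parabolic'' cylinder is non-isotropic only mildly because $d=s=1$, so time and space scale the same way), then
$$
  \osc_{Q_{\kappa r}(Z,0)} \bu \le \lambda\, \osc_{Q_r(Z,0)} \bu + (\text{lower-order error}).
$$
Iterating this from scale $r\sim\si$ down to scale $r\sim\si^{\al}$ (so $\log(\si^{1-\al})/\log(1/\kappa)$ steps) yields \eqref{HC1} with $\fq$ determined by $\lambda$ and $\kappa$; the extra error terms are controlled because, along the geometrically shrinking sequence of times, strong regularity ${\bf (C1)}_\rho$ precisely supplies the needed bound on $\int B_{ij}$ over each dyadic sub-cylinder (this is why strong, not just ordinary, regularity is assumed — it is used at the dyadic sequence $\Xi+\si$). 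The restriction $|Z|\le K/2$ and $\si\le K^{1-c_3}$ guarantees that all the cylinders we touch stay at distance $\ge K/C$ from $\partial I$, so \eqref{g3new} and \eqref{far1new} give genuine two-sided control of $B_{ij}$ on those cylinders, while \eqref{g5new} keeps the potential term $\cW$ subcritical there.

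The oscillation-decay step itself splits, as usual, into (a) an $L^2\to L^\infty$ parabolic De Giorgi estimate, and (b) a measure-to-pointwise improvement (the ``second De Giorgi lemma'') culminating in an isoperimetric/energy inequality that upgrades ``$\bu$ is below $1$ on a fixed fraction of a cylinder'' to ``$\bu\le 1-\theta$ on a smaller cylinder.'' For (a) I would run a Moser/De Giorgi truncation $\bu_k=(\bu-C_k)_+$ with levels $C_k\uparrow$, test \eqref{ve2} against $\bu_k$ integrated against a space cutoff, and use the nonlocal energy inequality: the Dirichlet form $\fb(s)[\bu_k,\bu_k]$ controls a discrete $\dot H^{1/2}$ seminorm, hence by the discrete Gagliardo--Nirenberg/Sobolev inequality (the $p=2^*$, $s=1$, $d=1$ case of the inequality used in the proof of Proposition~\ref{prop:heat}, cf. \eqref{s}) an $L^q$ norm with $q>2$, giving the gain needed to close the level iteration. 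The singular short-range part of $\cB$ — where $B_{ij}|i-j|^2$ is not bounded — is handled exactly as in \cite{C}: the ``bad'' short-range contribution is a commutator-type term that is absorbed because truncation only improves the quadratic form ($(\bu_k(i)-\bu_k(j))^2\le (\bu(i)-\bu(j))(\bu_k(i)-\bu_k(j))$ and $\bu_k$ has a sign), so no upper bound on the singular $B_{ij}$ is needed for the energy estimate; what one does need is the \emph{integrated} upper bound from ${\bf (C1)}_\rho$ when estimating the long-range tail coupling the cylinder to its exterior, and there the average bound $K^\rho$ costs only a harmless $K^{C\rho}$ factor, absorbable into the final exponent since $\rho$ is taken small.

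For (b), the crucial point is the nonlocal isoperimetric inequality of \cite{C} (the analogue of the De Giorgi isoperimetric lemma for $\sqrt{-\Delta}$): if $\bu\le 1$ on a cylinder, $\bu\le 0$ on a set of definite proportion, and $\bu\ge 1-\delta$ on another set of definite proportion, then the energy $\int \fb(s)[\bu,\bu]$ is bounded below by a positive constant times (a power of) the measure of the intermediate level set. This again uses only the \emph{lower} bound \eqref{g3new} on $B_{ij}$ on interior cylinders, so it survives the $K^{-\xi}$ loss. Combining with the energy \emph{upper} bound from the equation (controlled via ${\bf (C1)}_\rho$) forces the intermediate set to be small, and iterating in levels $\delta\to\delta/2$ gives the pointwise drop. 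The main obstacle — and the place where genuinely new work beyond \cite{C} is required — is precisely the bookkeeping of the $K^{-\xi}$ (lower bound), $K^{\xi}$ (potential and far-field $B$), and $K^\rho$ (integrated singular $B$) losses through the whole iteration: one must check that the number of iteration steps is at most $O_{c_3}(\log K)$ and that each step loses at most a fixed power of $K^{\xi}K^\rho$, so that the total loss is $K^{C(\xi+\rho)}$, which is then defeated by the universal positive gain $\fq$ in \eqref{HC1} provided $\xi,\rho$ are chosen small enough depending on $c_3$; the compact interval, the discrete lattice, and the extra diagonal term $\cW\ge 0$ are by comparison routine modifications (the boundary is never reached, $\cW\ge0$ only helps the energy estimates, and discreteness is handled by the discrete Sobolev inequality of Appendix~\ref{sec:GN}).
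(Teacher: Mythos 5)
Your outline captures the correct high-level strategy — De Giorgi--Nash--Moser iteration on dyadic parabolic cylinders, with a first (energy/$L^\infty$) lemma and a second (measure-to-pointwise) lemma, and a final bookkeeping that the accumulated $K^{\xi}, K^{\rho}$ losses are defeated by the universal gain $\fq$ provided $\xi,\rho$ are small. But there is a genuine gap at the heart of your step (a), and it is exactly where the paper has to do something new.

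You write that for the energy estimate ``no upper bound on the singular $B_{ij}$ is needed'' and that the singular short-range part is ``handled exactly as in \cite{C}.'' This is not correct. In the CCV energy estimate, after testing against the truncation $(\bu-\psi^\ell)_+$, the cross term $\fb[(\bu-\psi^\ell)_+,\psi^\ell]$ produces a contribution (in the paper this is the term $\Omega_4$ in \eqref{o4}) which scales like $\sum_i {\bf 1}(\cdot)\,[B_{i,i+1}+B_{i,i-1}]$, i.e.\ it \emph{does} require control of the short-range singular coefficients. There is no sign cancellation that kills it. The only control available is the time-averaged one from ${\bf (C1)}_\rho$ — and this is not harmless: because it is only an average-in-time bound, the first De Giorgi lemma cannot produce a uniform-in-time $L^\infty$ bound. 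The paper's Lemma~\ref{lm:energy} instead produces three statements: a weak $L^2$-in-space bound (off by $M^\chi$) uniform in time, a genuine $L^\infty$ gain only for a ``good'' set of times $\cG$ whose complement has measure $\sim M^{1/4}$, and a sharpened short-time $L^2$ bound. This necessitates threading a good/bad-time dichotomy through the entire multiscale iteration, which is what the ``staircase'' functions $\Lambda^{(n)}$ and $\Phi^{(n)}$ in Theorem~\ref{thm:iteration} encode: the cheaper estimate at good times, the degraded one (with $M_m^{\chi/2}$ factors) at bad times, and the control of how the bad sets accumulate across scales. Your plan has no mechanism for this, and without it the oscillation iteration does not close — you cannot compare $\osc$ on $Q_{\kappa r}$ to $\osc$ on $Q_r$ when the supremum bound on $Q_r$ is only known off a set of positive measure of times.

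A secondary gap: you invoke the discrete Gagliardo--Nirenberg inequality \eqref{s} as though the Dirichlet form $\fb$ controls the full discrete $\dot H^{1/2}$ seminorm, but by \eqref{g3new} and \eqref{far1new} the lower bound $B_{ij}\gtrsim |i-j|^{-2}$ with an $O(1)$ constant holds only for $|i-j|\ge \wh C K^\xi$; for nearer pairs it is weaker by $K^{-\xi}$. Applying \eqref{s} directly would therefore cost a factor $K^\xi$ at \emph{every} level iteration of the Moser step, which is far too much. The paper instead proves and uses Proposition~\ref{prop:newGNdiscr}, a modified Sobolev/Gagliardo--Nirenberg inequality in which the Dirichlet form with cutoff lower bounds controls $\|f\|_4^4$ up to an additive error $\frac{C}{ab^3}\|f\|_\infty^4$; this extra $L^\infty$ tail term is what lets the far-field lower bound \eqref{far1new} do the Sobolev work while the short-range weakness is absorbed, and it in turn requires the a priori $L^\infty$ inputs \eqref{vlinfty} and \eqref{vlinftygood}. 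Your plan would need this modified Sobolev inequality and the auxiliary $L^\infty$ hypotheses to make step (a) quantitatively close.
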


We remark that the upper bound $\si\le K^{1-c_3}$ is not an important condition, it is imposed only
for convenience to state \eqref{HC1} with a single scaling parameter. More generally,
for any $\si\ge K^{c_3}$ it holds that
\be\label{HC1uj}
\sup_{|j-Z|+|j'-Z| \le \si_1^{ 1- \al} } 
   | u_j (0)  -  u_{j'}  (0)  |  \le C \si_1^{-\fq \al}  \| \bu(-\si )\|_\infty  .
\ee
where $\si_1:= \min \{ \si, K^{1-c_3}\}$. If $\si \ge K^{1-c_3}$, then \eqref{HC1uj}
immediately follows by noticing that $\|u(-\si_1)\|_\infty \le\|u(-\si)\|_\infty$
and apply \eqref{HC1} with $\sigma_1=K^{1-c_3}$ instead of $\si$.

To understand why Theorem~\ref{thm:hold} is a H\"older regularity result, we rescale the
solution so that it runs up a time of order one. I.e, for a given $\si\ll 1$ we define
 the rescaled solution
$$
     U(T,X): = u_{[\si X]+Z}(T\si) , \qquad \si\gg 1,
$$
(where $[\; \cdot\;]$ denoted the integer part). Then the bound \eqref{HC1} says that
$$
    \sup_{|X|+|Y|\le\e} |U(0,X)-U(0,Y)|\le C \e^\fq \| U(-1, \cdot)\|_\infty, \qquad \e \in [\si^{-1/3},1].
$$
Thus, in the macroscopic coordinates $(T,X)$ the H\"older regularity for $U$ holds around $(0,0)$ from 
order one scales down to order $\sigma^{-1/3}$ scales. Note that H\"older regularity
holds only at one space-time point, since the strong regularity condition ${\bf (C1)}_\rho$  was
centered around a given space-time point $(Z,0)$ in miscroscopic coordinates.

Notice that by imposing the regularity condition
we only require  the time integration of  the singularity of $B_{ij}$  is  bounded. 
Thus we substantially weaken the standard assumption in parabolic regularity theory
on the supremum bound on  the ellipticty.

\medskip

Theorem~\ref{thm:hold} is a H\"older regularity result with $L^\infty$ initial data.
Combining it with the decay estimate Proposition~\ref{prop:heat}, we get  
a H\"older regularity result with $L^1$ initial data. However, for the application of 
the decay estimate, we need to strengthen the condition \eqref{g3new} to
\be\label{g3newglob}
    B_{ij}(s) \ge \frac{K^{-\xi}}{|i-j|^2}, \quad W_i(s)\ge \frac{K^{-\xi}}{d_i} \qquad
   \mbox{for any} \; s\in \cT, \;\; i,j \in I.
\ee
Let  ${\bf (C2)}^*_\xi$ be the condition idential to ${\bf (C2)}_\xi$
except that \eqref{g3new} is replaced with \eqref{g3newglob}.

\begin{theorem}  
 \label{thm:holdl1}  
There exists a universal  constant $\fq>0$ such that the following holds. 
 Consider the equation \eqref{ve2} on the time interval $\cT=[-\tau-\si,0]$
with some $\tau>0$ and
  $\si \in [K^{c_3}, K^{1-c_3}]$, where $c_3>0$ is a positive constant.  
Fix  $|Z| \le  K/2$ and $\al\in[0,1/3]$.
 Suppose that  ${\bf (C1)}_\rho$ and ${\bf (C2)}_\xi^*$
hold with some small exponents  $\xi, \rho$ depending on $c_3$.
 Then  for the solution $\bu$  to \eqref{ve2} we have
\be\label{HC2}
\sup_{|j-Z|+|j'-Z| \le \si^{ 1- \al} } 
   | u_j (0)  -  u_{j'}  (0)  |  \le C K^\xi \si^{-\fq \al} \tau^{-1}  \| \bu(-\tau-\si )\|_1 .
\ee
The constant $C$ in \eqref{HC1} 
depends only on  $c_3$ and is uniform in $K$.
The result holds for any $K\ge K_0$, where $K_0$ depends on $c_3$.
\end{theorem}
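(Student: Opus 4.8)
The plan is to deduce Theorem~\ref{thm:holdl1} from Theorem~\ref{thm:hold} by a standard ``split the time interval and feed an $L^\infty$ bound into the $L^\infty$-regularity theorem'' argument. First I would decompose the time interval $\cT=[-\tau-\si,0]$ as $[-\tau-\si,-\si]\cup[-\si,0]$. On the first piece I would use the decay estimate of Proposition~\ref{prop:heat}: the hypothesis ${\bf (C2)}^*_\xi$ (which includes the global lower bounds \eqref{g3newglob} $B_{ij}(s)\ge K^{-\xi}|i-j|^{-2}$ and $W_i(s)\ge K^{-\xi}/d_i$ for all $i,j\in I$ and $s\in\cT$) is precisely what is needed to apply \eqref{decay} with $b=cK^{-\xi}$, $p=1$, $q=\infty$ on the interval $[-\tau-\si,-\si]$. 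This gives
\be\label{l1feed}
  \|\bu(-\si)\|_\infty \;\le\; (\tau b)^{-1}\,\|\bu(-\tau-\si)\|_1 \;\le\; CK^{\xi}\,\tau^{-1}\,\|\bu(-\tau-\si)\|_1 .
\ee

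Next I would apply Theorem~\ref{thm:hold} to the solution $\bu$ restricted to the time interval $[-\si,0]$, treating $\bu(-\si)$ as the new $L^\infty$ initial datum. The hypotheses ${\bf (C1)}_\rho$ (strong regularity at $(Z,0)$ with exponent $\rho$) and ${\bf (C2)}_\xi$ are assumed (and ${\bf (C2)}_\xi$ follows from ${\bf (C2)}^*_\xi$ since the latter only strengthens \eqref{g3new} to \eqref{g3newglob}); moreover $\si\in[K^{c_3},K^{1-c_3}]$ as required, and $|Z|\le K/2$, $\al\in[0,1/3]$. Hence \eqref{HC1} gives
\be\label{holdpiece}
  \sup_{|j-Z|+|j'-Z|\le\si^{1-\al}} |u_j(0)-u_{j'}(0)| \;\le\; C\,\si^{-\fq\al}\,\|\bu(-\si)\|_\infty .
\ee
Combining \eqref{holdpiece} with \eqref{l1feed} yields exactly \eqref{HC2}:
\be
  \sup_{|j-Z|+|j'-Z|\le\si^{1-\al}} |u_j(0)-u_{j'}(0)| \;\le\; CK^{\xi}\,\si^{-\fq\al}\,\tau^{-1}\,\|\bu(-\tau-\si)\|_1 .
\ee

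The only genuine point to check is that Theorem~\ref{thm:hold} is being applied legitimately: the coefficient conditions ${\bf (C1)}_\rho$ and ${\bf (C2)}_\xi$ are stated on the time interval $\cT$ on which the equation is posed, and they need to hold on the shorter interval $[-\si,0]$ — but this is immediate since the assumptions in Theorem~\ref{thm:holdl1} are imposed on the full $\cT=[-\tau-\si,0]\supset[-\si,0]$, and the strong regularity condition is anchored at the fixed space-time point $(Z,0)$, which lies in the shorter interval. One should also verify that $\|\bu(-\si)\|_\infty$ is finite, which is automatic from \eqref{l1feed} given $\tau>0$; here the contraction property of \eqref{ve2} in every $L^q$ (used, e.g., to note $\|\bu(-\si_1)\|_\infty\le\|\bu(-\si)\|_\infty$ in the remark after Theorem~\ref{thm:hold}) also guarantees that nothing blows up. I do not expect any real obstacle here — the whole content is in Theorem~\ref{thm:hold} and Proposition~\ref{prop:heat}, and Theorem~\ref{thm:holdl1} is just their composition, with the $K^\xi$ and $\tau^{-1}$ factors tracking the $L^1\to L^\infty$ smoothing cost.
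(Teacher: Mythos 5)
Your proof is correct and follows essentially the same approach as the paper: decompose the time interval, apply Proposition~\ref{prop:heat} with $p=1$, $q=\infty$, $b=K^{-\xi}$ on $[-\tau-\si,-\si]$ to obtain the $L^1\to L^\infty$ bound $\|\bu(-\si)\|_\infty \le K^\xi\tau^{-1}\|\bu(-\tau-\si)\|_1$, and then apply Theorem~\ref{thm:hold} on $[-\si,0]$. The paper's own proof is exactly this two-line composition, so there is nothing to add.
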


{\it Proof.} We can apply Proposition~\ref{prop:heat} with $b=K^{-\xi}$,
$p=1$, $q=\infty$ on the time interval $[-\tau-\si, -\si]$.
 Then \eqref{decay} asserts that
$$   
    \| \bu(-\si )\|_\infty \le K^\xi \tau^{-1}  \| \bu(-\tau-\si )\|_1 
$$
and \eqref{HC2} follows from \eqref{HC1}. \qed

\medskip

{\it Proof of Theorem~\ref{holderg}.} To avoid confusion between the roles of $\si$,
in this proof we denote  the $\si$ in the statement of Theorem~\ref{holderg} by $\si'$.
 We will apply  Theorem~\ref{thm:holdl1} and we choose  $\si$ and $\tau$
such that $\si'=\si+\tau$. We also shift the time by $\si'$ so   that the initial
time is zero and the final time  $\si+\tau =\si'$.
The conditions ${\bf (C1)}_\rho$ and ${\bf (C2)}_\xi^*$
 directly follow from
 \eqref{g3new7}, \eqref{g4}, \eqref{far1} and from strong regularity at $(Z,\si')$ 
but $\varrho_1$ and $\xi'$ are replaced by $\varrho$ and $\xi$ for simplicity of notations.
Given $\si' \in [K^{c_3}, C_1K\log K]$, we consider two
cases. If $\si'\le K^{1-c_3}$, we apply Theorem~\ref{thm:holdl1} with $\si=\tau= \si'/2$.
Then $\| \bu(-\tau-\si)\|_1$ becomes $\|\bv^b\|_1=1$ on the right hand side of \eqref{HC2}
and  \eqref{HC} follows. If  $\si'\ge K^{1-c_3}$, then we  
apply Theorem~\ref{thm:holdl1} with 
$\si = \frac{1}{2}  K^{1-c_3}$ and  $\tau : = \si' -\si$.
In this case $\tau$ is comparable with $\si'$ and $\si' \le \si^{3/2}$
and  \eqref{HC}  again follows. \qed

\medskip

 The rest of this section is devoted to the proof of Theorem \ref{thm:hold}. 
 Our strategy  follows the approach of  \cite{C}; the multiscale iteration scheme and 
the key  cutoff functions   (\ref{psi}, \ref{psit}) are also  the same as in \cite{C}. 
The main new feature of our argument is  the derivation 
of the local energy   estimate, 
 Lemma~\ref{lm:energy}, for parabolic equation with singular coefficients satisfying 
 ${\bf (C1)}_\rho$ and ${\bf (C2)}_\xi$.
The proof of Lemma~\ref{lm:energy}  will  proceed  in two steps.  
We first use the condition  ${\bf (C1)}_\rho$
and the argument of the energy estimate
in \cite{C}  to provide a bound in $L^\infty_t (L^2(\ZZ)) $ on the solution to
 \eqref{ve2} (part (i) of Lemma~\ref{lm:energy}). Along this proof
we also prove an energy dissipation estimate which can be translated 
into the statement that the energy is small for most of the times.
Using a new Sobolev type inequality (Proposition~\ref{prop:newGNdiscr})
designed to deal with weak ellipticity 
we can improve the bound in $L^\infty_t (L^2(\ZZ)) $ to an $L^\infty$ estimate in
space for most of the times to obtain part (ii) of  Lemma~\ref{lm:energy}. 
Finally, we run the argument again to improve   the  $L^\infty_t (L^2(\ZZ)) $  estimate 
for short times (part (iii) of  Lemma~\ref{lm:energy}) that is needed to close the iteration scheme.
Besides this proof,  the derivation of  the second De Giorgi estimate (Lemma~\ref{lm:2nd})
is also adjusted to the weaker condition ${\bf (C1)}_\rho$.

 We warn the reader that the notations of various constants in this
section  will follow \cite{C} as much as possible for the sake of easy comparison with the paper 
\cite{C}.  The conventions of these constants  will differ  from the ones in the previous sections, and,
 in particular, we will restate all conditions.

\nc

\subsection{H\"older regularity}

 Define 
 for any set $S$ and any real function $f$ the oscillation  $\mbox{Osc}_S f := \sup_S f -\inf_S f$.

\begin{theorem}\label{thm:caff}   
There exists a universal positive constant $\fq$ with the
following property.
For any two thresholds $1<\vartheta_1<\vartheta_0$
there exist two positive constants $\xi, \rho$, depending
only on $\vartheta_1$ and 
$\vartheta_0$ such that the following hold:

Set  $ \cM: = 2^{-\tau_0} K$ where $\tau_0\in \bN$ is chosen such that
$\vartheta:=\log K/\log \cM \in [\vartheta_1, \vartheta_0]$.
Suppose that \eqref{ve2} satisfies  ${\bf (C1)}_\rho$  and ${\bf (C2)}_\xi$ 
with some  $Z\in \llbracket - K/2, K/2\rrbracket$.
Suppose $\bu$ is a solution to \eqref{ve2} in the time interval $\cT=[-3 \cM, 0]$.
 Assume that  $u$ satisfies 
\be\label{supbound}
   \sup_{t\in [-3 \cM, 0]}\max_i |u_i(t)|\le \ell
\ee
for some $\ell$.  Then 
for any $\al\in[0, 1/3] $ 
 there is a set  $\cG\subset [-\cM^{1-\al},0]$ such that
 \be
 \mbox{Osc}_{Q^{(\alpha)*}} (u) \le  4\ell  \cM^{-  \fq \alpha }, \qquad 
Q^{(\alpha)*}: =   \cG \times
\llbracket Z- 3\cM^{1-\alpha}, Z+3\cM^{1-\alpha} \rrbracket,
\label{holder}
\ee 
with
$$
   |[-\cM^{1-\al}, 0]\setminus \cG|\le \cM^{1/4},
$$
 i.e. the oscillation of the solution on scale $\cM^{1-\alpha}$ 
(and away from the edges of the configuration space)
 is smaller than $ 4\ell \cM^{- \fq \alpha}$
for most of the times.
Moreover, 
\be
 \mbox{Osc}_{\bar Q^{(\alpha)}} (u) \le  C \ell  \cM^{-  \fq \alpha }, \qquad 
\bar Q^{(\alpha)}: =  [- \cM^{1/2}, 0]\times
\llbracket Z- 3\cM^{1-\alpha}, Z+3\cM^{1-\alpha} \rrbracket,
\label{sureholder}
\ee 
i.e. the oscillation is controlled for all times near 0.

These results hold for any $K\ge K_0$ sufficiently large, where the threshold $K_0$ 
as well as the constant $C$ in \eqref{sureholder} depend 
only on the parameters $\vartheta_0$, $\vartheta_1$.
\end{theorem}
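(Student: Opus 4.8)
The plan is to follow the De Giorgi--Nash--Moser iteration scheme as adapted by Caffarelli--Chan--Vasseur \cite{C}, but with the crucial modification that the ellipticity upper bound is replaced by the integrated (regularity) condition ${\bf (C1)}_\rho$. The backbone of the argument is a multiscale oscillation decay: one shows that passing from scale $\cM$ to a fixed fraction of $\cM$ reduces the oscillation of the solution by a universal multiplicative factor $1-c_0<1$, and then iterates this $\sim \al\vartheta^{-1}\log(1/\cM)$ times to produce the H\"older bound \eqref{holder}. The iteration step itself is obtained by combining two De Giorgi-type lemmas: (1) an $L^2\to L^\infty$ local boundedness estimate (the ``first De Giorgi lemma''), saying that if $u\le 1$ and the bulk of the mass of $u_+$ in a space-time box is small, then $u\le 1/2$ on a smaller box; and (2) an ``intermediate value'' or ``second De Giorgi lemma'', saying that if $u\le 1$ on a box and $u$ is not too close to $1$ on a substantial portion of it, then the measure of the set where $u$ is close to $1$ on a smaller box is strictly less than full. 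Isoperimetric/De Giorgi oscillation reasoning glues (1) and (2) together.

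First I would establish the \emph{local energy (Cacciopoli) estimate}, which the excerpt already flags as the main new ingredient (Lemma~\ref{lm:energy}). Here one multiplies \eqref{ve2} by $\psi^2 u_+$ for a suitable spatial cutoff $\psi$ supported near $Z$, sums over $I$, and integrates in time. The quadratic-form structure $\fb(s)[\bu,\bu]=\tfrac12\sum B_{ij}(s)(u_i-u_j)^2\ge 0$ and $\fw(s)\ge0$ give the needed sign; the error terms involve $\sum_{i,j}B_{ij}(s)(\psi_i-\psi_j)^2 u_j^2$, and for well-separated indices $|i-j|\gtrsim K^\xi$ one uses the \emph{upper} bound \eqref{far1new} on $B_{ij}$, while for close indices one uses precisely the integrated bound ${\bf (C1)}_\rho$ so that $\int \sum_{|i-j|\le M}B_{ij}\,ds\lesssim K^\rho$. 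This yields an $L^\infty_t L^2_x$ bound on $\psi u_+$ plus an energy-dissipation bound $\int \fb(s)[\psi u_+,\psi u_+]\,ds$, which in turn says the Dirichlet energy is small for most times. To upgrade the $L^2_x$ control to $L^\infty_x$ control for most times (parts (ii)--(iii) of Lemma~\ref{lm:energy}) I would invoke the discrete Gagliardo--Nirenberg/Sobolev inequality for $\sqrt{-\Delta}$ on $\ZZ$ (cf.\ the Appendix on GN inequalities and Proposition~\ref{prop:heat}), in the weak-ellipticity form needed here.

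Next I would run the first De Giorgi lemma: iterate the energy estimate over a sequence of shrinking truncation levels $u\wedge(1+2^{-k})$ and shrinking boxes, so that an algebraic recursion of the form $a_{k+1}\le C^k a_k^{1+\eta}$ forces $a_k\to0$ provided the initial smallness $a_0$ is below a threshold depending only on $\vartheta_0,\vartheta_1$. The key point—and where the bookkeeping is delicate—is that all the constants $C$, $\eta$, and the smallness threshold must be controlled \emph{uniformly in $K$}, which is why the strong regularity is imposed not at a single time but along the dyadic sequence $\Xi+\si$ (so that every box in the truncation hierarchy sees a good bound); this is the role of the set $\wt\cQ_{\si,Z}$ and the need for ${\bf (C1)}_\rho$ to hold at all points of $\{Z\}\times\{\Xi+\si\}$. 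The second De Giorgi lemma is proved by a clean contradiction/compactness-free argument (the discrete setting makes ``compactness'' automatic): if the level sets of $u$ near $1$ and near $0$ both have large measure on a box, an isoperimetric inequality on $\ZZ$ forces a definite amount of Dirichlet energy, contradicting the energy-dissipation bound for most times unless the intermediate set has substantial measure. Combining, one gets oscillation decay by a fixed factor per scale, hence \eqref{holder}; the statement \eqref{sureholder} for \emph{all} times near $0$ (not just most) then follows by one more application of the $L^\infty$ propagation estimate backward from a good time in $\cG$ to time $0$, using that on the short interval $[-\cM^{1/2},0]$ the $L^\infty$ norm cannot grow.

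The main obstacle, as the excerpt itself emphasizes, is the \emph{local energy estimate with singular coefficients}: standard De Giorgi theory crucially uses a pointwise-in-time upper bound $K(t,x,y)\le C|x-y|^{-d-s}$, which here genuinely fails—at $\beta=1$ the minimal gap is $\sim K^{-1/2}$ so some $B_{i,i+1}(t)$ can be as large as $K$. The resolution is that one never needs the coefficient bound pointwise in $t$, only its \emph{time average over the relevant window}, and that is exactly what ${\bf (C1)}_\rho$ furnishes; the delicate part is arranging the truncation/cutoff hierarchy so that only time-integrated quantities of the singular coefficients ever enter, and propagating the uniform-in-$K$ constants through the iteration. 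A secondary technical point is the treatment of the boundary terms near $\pm K$ via the diagonal confinement $W_i\ge K^{-\xi}/d_i$, ensuring the cutoff $\psi$ can be taken to vanish away from $Z$ without generating uncontrolled flux from the edges.
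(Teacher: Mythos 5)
Your proposal correctly identifies the overall architecture (De Giorgi--Nash--Moser iteration in the spirit of Caffarelli--Chan--Vasseur, two De Giorgi lemmas, nonlinear recursion, the role of the integrated regularity condition ${\bf (C1)}_\rho$ and the dyadic times in $\Xi$), and your identification of the main obstacle is on point. However, there is a genuine structural gap that your proposal does not address, and without it the iteration as you have described it does not close.

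The issue is this: because the coefficient bound only holds in a time-averaged sense, the first De Giorgi lemma in this setting does \emph{not} deliver an $L^\infty$ bound on the solution for all times in the next box. What Lemma~\ref{lm:energy} actually yields is (i) an $L^\infty_t L^2_x$ bound that is off from the optimal one by a factor $M^\chi$; (ii) an $L^\infty_x$ bound only on a set of ``good'' times whose complement has measure at most $O(M^{1/4})$; and (iii) a short-time refinement of the $L^\infty_t L^2_x$ bound. On bad times the $L^\infty$ control deteriorates by the factor $M^{\chi/2}$. Your write-up treats the DG iteration as if the $L^\infty$ control produced at each scale is available for all times; it is not. The mechanism the paper uses to cope with this is the ``staircase'' Theorem~\ref{thm:iteration}: one carries \emph{two} control functions, a good-time staircase $\Lambda^{(n)}$ (time-independent, governing the eventual H\"older exponent) and a bad-time staircase $\Phi^{(n)}(t)$ (with a $\sqrt{(|t|+\cM^{1/2})/M_n}\,M_n^{\chi/2}$ enhancement), and proves by a coupled induction that $(ST)_n$ implies $(OSC)_n$ and that $(ST)_n$+$(OSC)_n$ imply $(ST)_{n+1}$. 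This double bookkeeping, and in particular the delicate verification that the weakened bound on bad times feeds back into the assumptions \eqref{vlinfty}, \eqref{vlinftygood}, \eqref{23c} of the De Giorgi lemmas without destroying the recursion, is the genuinely new ingredient relative to \cite{C} and is absent from your proposal. Relatedly, your account of \eqref{sureholder} as ``one more application of $L^\infty$ propagation backward from a good time'' is not how the paper obtains it: the control for all $|t|\le \cM^{1/2}$ is read off from the function $\Phi^{(n)}(t)$ in the staircase estimate (the enhancement factor is $\lesssim 1$ in that regime), so it must be carried through all scales, not applied once at the end. A smaller point: the paper's local energy estimate is run on the barrier-truncated function $(v-\psi^\ell)_+$ with $\psi$ a barrier profile (not compactly supported), rather than by multiplying the equation by $\psi^2 u_+$ with a compactly supported cutoff; in a nonlocal setting the latter does not localize the bilinear form cleanly, which is why a barrier is used.
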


We remark that the  constant $\fq$ plays the role of
the H\"older exponent and it depends only on  
 $\e_0$ from Lemma \ref{lm:energy}. This will be explained after
Lemma~\ref{lm:dircalc} below.

\medskip

{\it Proof of Theorem~\ref{thm:hold}.}
With  Theorem~\ref{thm:caff},
we now complete the proof of Theorem~\ref{thm:hold}.
Given $\si\in [K^{c_3}, K^{1-c_3}]$, define $\cM: =2^{-\tau_0} K$
with some $\tau_0\in \bN$ such that $\si/6\le \cM \le \si/3$.
  Choosing
$$
   \vartheta_1: = 1+\frac{1}{2}c_3,\quad  \vartheta_0:= \frac{2}{c_3},
$$
clearly $\vartheta =\log K/\log\cM \in [\vartheta_1, \vartheta_0]$. Then 
\eqref{HC1} follows from \eqref{sureholder} at time $t=0$
using that $\si^{1-\al}\le 3\cM^{1-\al}$. 
\qed

\medskip

The proof of Theorem~\ref{thm:caff} will be a multiscale argument. 
On each scale $n=0,1,2\ldots, n_{max}$ we define
a space-time scale $M_n := \nu^n \cM$ and a size-scale $\ell_n: = \zeta^n \ell$
with some $\nu, \zeta<1$ scaling parameters to be chosen later.
The initial scales are
$M_0 =\cM$ and $\ell_0=\ell$.
For notational convenience we assume
that $\nu$ is of the form $\nu = 2^{-j_0}$ for some $j_0>0$ integer.
We assume that $\nu\le \zeta^{10}/10$,
and eventually $\zeta$ will be very close 1, while $\nu$ will be very close to 0.
The corresponding space-time box on scale $n$ is given by
$$
    Q_n := [-M_n , 0]\times [Z-M_n, Z+M_n].
$$
We will sometimes use an enlarged box
$$
   \wh Q_n : = [-3M_n , 0]\times [Z-\wh M_n, Z+ \wh M_n], \qquad 
\wh M_n: = 
L M_n
$$
with some  large parameter $L$
that will always be chosen such that 
$\nu\le \frac{1}{2L}$ and
thus $\wh Q_n\subset Q_{n-1}$.
We stress that the scaling parameters $\nu,\zeta, L$
 will be absolute constants, independent of any
parameters in the setup of Theorem~\ref{thm:caff}.

The smallest scale is given by the relation $M_{n_{max}}\sim \cM^{1-\al}$, i.e. 
$n_{max}= \al\frac{\log \cM}{|\log\nu|}$. In particular, since $\al\le 1/3$, all scales arise in 
the proofs will be between $\cM^{2/3}$ and $\cM$:
\be\label{between}
   \cM^{2/3} \le M_n \le \cM=M_0, \qquad \forall n=0,1,\ldots n_{max}.
\ee 
The following statement is the main technical result that will immediately imply
Theorem~\ref{thm:caff}. In the application we will need only the second part
of this technical theorem, but its formulation is taylored to its proof that will
be an iterative argument from larger to smaller scales.  

There will be several exponents in this theorem, but the really important one
is $\chi$, see explanation around \eqref{c2def} later. The exponents $\xi$ and $\rho$
can be chosen arbitrarily small and the reader can safely neglect them at first reading.

\begin{theorem}[Staircase estimate]\label{thm:iteration} 
There exist positive parameters $\nu, \zeta, L$, satisfying
$$
   \nu< \min\{ \zeta^{10}/10, 1/(2L)\}
$$
with the following property.

For any two  thresholds $1<\vartheta_1<\vartheta_0$
there exist  three positive constants $\chi$, $\xi$, and $\rho$ depending 
only on $\vartheta_1$ and 
$\vartheta_0$ (given explicitly in \eqref{chichoice} and \eqref{xiro}
later) such that under the setup and conditions of
Theorem~\ref{thm:caff},  for any scale $n=0,1,2, \ldots n_{max}$
there  exists a descreasing sequence of  sets $\cG_n\subset  [-3M_n, 0]$
 of ``good'' times, $\cG_n\subset \cG_{n-1}\subset \ldots$, with
\be\label{cGmeas}
    \big| \cG_n^c\big|\le C\sum_{m=0}^{n-1} M_m^{1/4}, \qquad \cG^c_n : = [-3M_n, 0]\setminus \cG_n, 
\ee
such that
we have the following two estimates:
\begin{itemize}
\item[i)]  [Staircase estimate]
Define the constant $\bar u_n$ by 
\be\label{35}
   \sup_{Q_n^*} | u - \bar u_n| =\frac{1}{2}\mbox{Osc}_{Q_n^*} (u), 
\ee
where 
$$
Q_n^*: =  \big([-M_n , 0]\cap \cG_n\big)\times [Z-M_n, Z+M_n]
$$
 and 
for any $m<n$ define
\be\label{Smn}
  S_{m,n}: =  \sum_{j=m}^{n-1} |\bar u_j-\bar u_{j+1}|.
\ee
Then
\be\label{coll}
   |u_i(t)-\bar u_n|\le \Psi_i^{(n)}(t)  
\quad \forall t\in [-3M_n, 0], \quad \forall
i   \qquad \qquad  {\bf (ST)}_n
\ee
where $\Psi^{(n)}$ is a function on $[-3M_n, 0]\times I$ defined by 
$$
    \Psi_i^{(n)}(t): = \Lambda_i^{(n)} \cdot {\bf 1}(t\in  \cG_n) + 
     \Phi_i^{(n)}(t) \cdot {\bf 1}(t\in \cG_n^c)
$$
with 
$$
   \Lambda_i^{(n)}:=  {\bf 1}( \wh M_0 
\le |i-Z| ) \cdot \ell_0 + \sum_{m=0}^{n-1} {\bf 1}( \wh M_{m+1} 
\le |i-Z|\le \wh M_{m})\cdot
 \Big[\ell_m + S_{m,n}\Big]
$$
$$
  + {\bf 1}(|i-Z|\le \wh M_{n})\cdot
 \ell_n
$$
and
\begin{align}\label{defPhi} 
   \Phi_i^{(n)}(t):= &\; C_\Phi\cdot {\bf 1}( \wh M_0 
\le |i-Z| ) \cdot \ell_0 \\  \nonumber & 
+ C_\Phi\sum_{m=0}^{n-1} {\bf 1}( \wh M_{m+1} 
\le |i-Z|\le \wh M_{m})\cdot \Big[ 
 \ell_m \Big(1+\sqrt{\frac{|t|+  \cM^{1/2}  }{M_m}} M_m^{\chi/2} \Big) + S_{m,n}\Big] \\
\nonumber &  + C_\Phi\cdot {\bf 1}(|i-Z|\le \wh M_{n})\cdot
 \ell_n \Big(1+\sqrt{\frac{|t|+  \cM^{1/2} }{M_n}} M_n^{\chi/2} \Big)
\end{align} 
with some fixed constant $C_\Phi$.
The subscript $\Phi$ in $C_\Phi$ indicates that this specific constant controls the
functions $\Phi^{(n)}$.

\item[ii)]  [Oscillation estimate]  For the good times we have 
\be\label{oscreduce}
   \frac{1}{2} \mbox{Osc}_{Q_{n+1}^*} (u) \le \zeta\ell_n =\ell_{n+1},  \qquad \qquad  {\bf (OSC)}_n
\ee
 i.e. \eqref{oscreduce} asserts that in
the smaller box  $Q_{n+1}^*\subset Q_{n}^*$ the oscillation is reduced from $\ell_n$ to $\ell_{n+1}$.
\end{itemize}
All statements hold for any $K\ge K_0$ sufficiently large, where 
the threshold $K_0$ as well as the constant $C_\Phi$ depend
only on the universal constants  $\nu$, $\zeta$, $L$ and on  the parameters
$\vartheta_0$, $\vartheta_1$,
 $\xi$, $\rho$.
\end{theorem}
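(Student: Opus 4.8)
The plan is to prove Theorem~\ref{thm:iteration} by downward induction on the scale $n$, following the multiscale De Giorgi iteration of \cite{C} but with the local energy estimate replaced by Lemma~\ref{lm:energy} (which is adapted to the weak ellipticity condition ${\bf (C1)}_\rho$). The base case $n=0$ is essentially the hypothesis \eqref{supbound}: on the coarsest box the statement ${\bf (ST)}_0$ with $\bar u_0$ defined by \eqref{35} holds with $\Psi^{(0)}$ reducing to $C_\Phi\ell_0$ (with $\cG_0 = [-3M_0,0]$), since $|u|\le\ell=\ell_0$ everywhere; the only content is to fix the universal constants $\nu,\zeta,L$ and the exponent $\chi$ so that the iteration can close. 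I would choose $\chi$ first (as in \eqref{chichoice}) from the energy-estimate exponent $\e_0$ of Lemma~\ref{lm:energy}, then pick $\zeta$ close to $1$ and $\nu$ very small so that $\nu\le\zeta^{10}/10$ and $\nu\le 1/(2L)$; $\xi,\rho$ are then taken small enough (as in \eqref{xiro}) in terms of $\vartheta_1,\vartheta_0$ and $\chi$ so that all the $K^{C\xi}$ and $K^{C\rho}$ factors generated along the way are absorbed by gains of the form $M_n^{-c}$.

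\textbf{The inductive step.} Assume ${\bf (ST)}_n$ and the sets $\cG_0\supset\cdots\supset\cG_n$ are constructed. To produce ${\bf (OSC)}_n$ and then ${\bf (ST)}_{n+1}$: first replace $u$ by $(u-\bar u_n)/\ell_n$ so that on $Q_n^*$ the normalized solution is bounded by $1$ in absolute value; by the De Giorgi dichotomy, on the smaller box $Q_{n+1}$ the normalized solution spends a definite fraction of (good) time either mostly above $-1/2$ or mostly below $1/2$ on the ``bulk'' of the spatial slice. Here the long-range tails of $\cB$ are the nuisance: the contribution from indices $|i-Z|\ge \wh M_m$ is controlled by the staircase bound $\Lambda_i^{(n)}$ in ${\bf (ST)}_n$ together with the decay $B_{ij}\le C|i-j|^{-2}$ from \eqref{far1new}, so it contributes only a convergent tail that is small by the choice of $L$. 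Then I apply the first and second De Giorgi lemmas (the energy estimate Lemma~\ref{lm:energy} in the form that gives $L^\infty_t L^2$ and $L^\infty$-in-space control on the good time set, and the second De Giorgi estimate Lemma~\ref{lm:2nd}, both valid under ${\bf (C1)}_\rho$) to upgrade ``mostly above $-1/2$'' to ``$\ge -1/2+c_0$ on all of $Q_{n+1}^*$ for good times'', for a universal $c_0>0$. Choosing $\zeta := 1-c_0/2$ then gives $\frac12\mathrm{Osc}_{Q_{n+1}^*}(u)\le\zeta\ell_n=\ell_{n+1}$, which is ${\bf (OSC)}_n$. The new good set $\cG_{n+1}\subset\cG_n$ is obtained by removing from $[-3M_{n+1},0]$ the (measure $\le M_n^{1/4}$) bad time set on which the $L^\infty$-in-space improvement from the energy estimate fails; this yields \eqref{cGmeas}. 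Finally ${\bf (ST)}_{n+1}$ is assembled: on $|i-Z|\le\wh M_{n+1}$ one has the new bound $\ell_{n+1}$ (good times) or $C_\Phi\ell_{n+1}(1+\sqrt{(|t|+\cM^{1/2})/M_{n+1}}\,M_{n+1}^{\chi/2})$ (bad times, from the $L^\infty_tL^2\to L^\infty$ part of Lemma~\ref{lm:energy}, part (iii), which handles short times), while for the annulus $\wh M_{n+2}\le|i-Z|\le\wh M_{n+1}$ the bound is the previous $\Psi^{(n)}$ value increased by $|\bar u_n-\bar u_{n+1}|$, updating $S_{m,n}\to S_{m,n+1}$; the shifts $\bar u_j - \bar u_{j+1}$ telescope as in the definition of $S_{m,n}$ and stay summable because $|\bar u_n-\bar u_{n+1}|\le C\ell_n$ with $\ell_n$ geometrically decaying.

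\textbf{The main obstacle.} The hard part will be closing the iteration uniformly in $K$ despite the singular coefficients: the condition ${\bf (C1)}_\rho$ only bounds the \emph{time-integrated} singularity of $B_{ij}$ near the center, not its supremum, so the energy (Caccioppoli) estimate must be run with the cutoff functions $\psi,\wt\psi$ of \cite{C} in a way that converts the $\int B$ bound into an $L^\infty_tL^2$ bound — this is exactly Lemma~\ref{lm:energy}, and the delicate point is that its constants degrade by powers $M_n^{\rho}$, $M_n^{\xi}$ at each of the $n_{max}\asymp\al\log\cM/|\log\nu|$ steps. Since $n_{max}$ can be as large as $\asymp\log K$, one must ensure the per-step gain $\zeta<1$ in oscillation strictly dominates these losses; this forces the quantitative relations \eqref{chichoice}, \eqref{xiro} between $\chi$, $\xi$, $\rho$ and $\vartheta_0,\vartheta_1$, and uses the crucial fact (noted after Theorem~\ref{thm:iteration}) that $\fq$ — hence the decay rate $\cM^{-\fq\al}$ — is a \emph{universal} positive constant not affected by how small $\xi,\rho$ are. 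A secondary technical nuisance is the Sobolev-type inequality needed to pass from $L^2$ to $L^\infty$ in space under weak ellipticity; here one invokes Proposition~\ref{prop:newGNdiscr} (the new Gagliardo--Nirenberg estimate designed for this) together with the lower bound \eqref{g3new} on the bulk of $B_{ij}$, and the good-time set $\cG_{n+1}$ absorbs the exceptional times where the bound is weaker.
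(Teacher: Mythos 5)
Your proposal is correct and follows essentially the same two-step induction as the paper (Propositions~\ref{prop:oscc} and \ref{prop:stst}): use Lemma~\ref{lm:2nd} iterated through the rescaled family $v^{(n,k)}$ to isolate a level where the superlevel set is small, then apply Lemma~\ref{lm:energy} (together with Proposition~\ref{prop:newGNdiscr}) to obtain the oscillation drop $\zeta$ on good times, and assemble $\Psi^{(n+1)}$ from $\Psi^{(n)}$ plus the telescoping shifts $|\bar u_n-\bar u_{n+1}|$ and the short-time $L^2$ estimate \eqref{l2prop}. Two minor imprecisions — the annulus in your description of ${\bf (ST)}_{n+1}$ should read $\wh M_{n+1}\le|i-Z|\le\wh M_n$ rather than $\wh M_{n+2}\le|i-Z|\le\wh M_{n+1}$, and the ``universal $c_0$'' in your $\zeta=1-c_0/2$ is really $\lambda^{2(k_0+1)}$ arising from the iterated application of Lemma~\ref{lm:2nd} rather than a single dichotomy step — but neither affects the validity of the argument.
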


Here the time-independent profile $\Lambda^{(n)}$ is the ``good'' staircase
function, representing the control for most of the times (``good times'').
The function  $i\to \Lambda^{(n)}_i$  is a stepfunction that monotonically increases in $|i-Z|$
at a rate approximately
$$
   \Lambda^{(n)}_i \sim \ell_n \Big(\frac{|i-Z|}{M_n}\Big)^\fq, \qquad  |i-Z|\gg M_n,
$$
where
\be\label{c2def}
 \fq=\frac{|\log \zeta|}{|\log \nu|}
\ee
is a small positive exponent. Note that this exponent is the same as the
final H\"older exponent in Theorems~\ref{thm:caff} and \ref{thm:hold}.

For the ``bad times'' (the complement of good times), a larger control described by $\Phi^{(n)}(t)$
holds. This weaker control is time dependent and deteriorates with larger $|t|$.
 The exponent $\chi$ in the definition of $\Phi$, see \eqref{defPhi}, will be essentially equal to $\fq$
(modulo some upper cutoff, see \eqref{chichoice} later). 
The factor $M_n^{\chi/2}$ on scale $n$ expresses how much the estimate deteriorates
for  ``bad times''  compared with the estimate at ``good times''.

The bound \eqref{coll} for good times $t\in\cG_n$ with the control function $\Lambda^{(n)}$ directly
follows from \eqref{oscreduce} and \eqref{supbound}.
The new information in \eqref{coll} is the weaker
estimate expressed by $\Phi^{(n)}$ that holds for all times.
Note that $\Lambda^{(n)}_i\le  \Phi_i^{(n)}(t)$, i.e.
the bound 
\be\label{alltime}
|u_i(t)-\bar u_n|\le \Phi^{(n)}_i(t), \qquad \forall t\in [-M_n,0], \;
 \forall i
\ee follows from \eqref{coll}. 
We also remark that
\eqref{oscreduce} implies $|\bar u_n - \bar u_{n+1}|\le \ell_n$
and thus  
\be\label{ubarbound}
 S_{m,n}=\sum_{j=m}^{n-1} |\bar u_j-\bar u_{j+1}| \le \sum_{j=m}^{n-1}\ell_j \le \frac{\ell_m}{1-\zeta},
\ee
gives an estimate for the effect $S_{m,n}$ of the shifts
 in the definition of $\Lambda^{(n)}$ and $\Phi^{(n)}$.
Moreover, the uniform bound \eqref{supbound} shows that for any $n$
\be\label{ubartriv}
|\bar u_n|\le \ell_0=\ell.
\ee

\medskip

{\it Proof of Theorem~\ref{thm:caff}.}  Without loss of generality
we can assume that  $\cM^{-\al}\le \nu^2$, otherwise
$\cM^{-\fq\al} \ge \zeta^2\ge 2/3$, so \eqref{holder} immediately follows
from \eqref{supbound}. For $\cM^{-\al}\le \nu^2$,
the estimate \eqref{holder} directly
follows from \eqref{oscreduce}, by
choosing $n\ge 1$
 such that $M_{n+2}\le 3\cM^{1-\alpha}\le M_{n+1}$, i.e.  $\nu^{n+2}\le 
3\cM^{-\alpha}\le \nu^{n+1}$.
 Then $\ell_{n+1} = \ell \zeta^{n+1} \le  2\cM^{-\fq\al}$
with $\fq$ defined in \eqref{c2def}. The set $\cG$ in Theorem~\ref{thm:caff}
will be just $\cG_{n+1}\cap [-  \cM^{1-\alpha},0]$.
The proof of \eqref{sureholder} follows from \eqref{coll}
noting that for $|t|\le \cM^{1/2}\le M_n^{3/4}$ (see \eqref{between}) the terms
$$
   \sqrt{\frac{|t|+  \cM^{1/2} }{M_m}} M_m^{\chi/2} \ll 1, \qquad m=0,1,2\ldots n,
$$
are all negligible and simply we have
$$
   \Phi_i^{(n)}(t) \le C_\Phi\Lambda_i^{(n)}, \qquad  |t|\le \cM^{1/2}.
$$
Thus  \eqref{sureholder} follows exactly as  \eqref{holder}.
This completes the proof. \qed
\nc
\bigskip

For the rest of the section we will prove Theorem~\ref{thm:iteration}.
We will iteratively check the main estimates, {\bf $(ST)_n$}  and {\bf $(OSC)_n$}
from scale to scale. For $n=0$, the bound {\bf $(ST)_0$} is given by \eqref{supbound}.
In Section~\ref{sec:firststep} we  prove for any $n$ that {\bf $(ST)_n$} implies {\bf $(OSC)_n$}.
In Section~\ref{sec:secondstep} we prove that  {\bf $(ST)_n$} and
  {\bf $(OSC)_n$} imply {\bf $(ST)_{n+1}$}. From these two statements
it will follow that {\bf $(ST)_n$}  and {\bf $(OSC)_n$} hold for any $n$.
Sections~\ref{sec:firstdg} and \ref{sec:seconddg} contain the proof of two independent results
(Lemma~\ref{lm:energy} and \ref{lm:2nd})
formulated on a fixed scale, that are used in  Section~\ref{sec:firststep}.
These are the generalizations of the first and second De Giorgi lemmas in \cite{C},
adjusted to our situation where no supremum bound is available on the
coefficients $B_{ij}(s)$, we have control only in a certain average sense.

\subsection{Proof of  {\bf $(ST)_n$}  $\Longrightarrow$ {\bf $(OSC)_n$}}\label{sec:firststep}

For any real number $a$, we will use the notation 
 $a_+ = \max (a, 0)\ge 0$ and $a_- = \min (a, 0)\le 0$, in particular $a=a_++a_-$. 
Fix a large  integer number $M$ 
and a center $Z\in I$
with $d_Z\ge K/2$ (recall that $d_i$  was defined above \eqref{g3new};
it is the distance of $i$ to the boundary).
For any $\ell>0$ and $\lambda\in (0,1/10)$ define the functions 
\be\label{psi}
  \psi_i =\psi^{(M,Z,\ell)}_i:
 = \ell\Big( \Big|\frac{i-Z}{M}\Big|^{1/2}-1\Big)_+,
\ee
\be\label{psit}
  \wt \psi_i = \wt \psi^{(M,Z,\ell,\lambda)}_i : = 
\ell\Big[ \Big(   \Big|\frac{i-Z}{M}\Big| - \lambda^{-4}\Big)^{1/4}  - 1\Big]_+.
\ee
Notice that $\psi_i=0$ if $|i-Z|\le M$ and  $\wt\psi_i =0$ if $|i-Z|\le M\lambda^{-4}$.
Here $\ell$ will  play the role of the typical size of $u-\psi$.
One could scale out $\ell$ completely, but we keep it in.
We also define the scaled versions of these functions for  any $n\ge 0$:
$$
   \psi_i^{(n)} : = \psi^{(M_n,Z,\ell_n)}_i, \qquad \wt\psi_i^{(n)}: = \wt\psi_i^{(M_n, Z, \ell_n, \lambda)}.
$$

\begin{proposition}\label{prop:oscc}
Suppose that for some $n\ge 0$ we know  {\bf $(ST)_j$} for any $j=0,1,\ldots, n$. Then
 {\bf $(OSC)_n$} holds. Furthermore, we have
\be\label{l2prop}
    \sum_i \big( u_i(t)- \bar u_n -\ell_n- \psi_i^{(n)}\big)_+^2
  \le C 
\Big(\frac{ |t|+  \cM^{1/2} }{M_n}\Big)M_n^\chi \ell_n^2, \qquad t\in [-M_n, 0].
\ee 
\end{proposition}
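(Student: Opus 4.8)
\textbf{Proof plan for Proposition~\ref{prop:oscc}.}
The plan is to run the De~Giorgi oscillation-reduction argument on scale $n$, using the
previously established \textbf{(ST)}$_j$ estimates as the input hypothesis, together with the
two new De~Giorgi lemmas (Lemma~\ref{lm:energy} and Lemma~\ref{lm:2nd}) that are valid for
parabolic equations whose elliptic coefficients are only controlled in the averaged sense
of condition ${\bf (C1)}_\rho$. First I would observe that the function
$w_i(t):=u_i(t)-\bar u_n-\ell_n$ solves the same equation \eqref{ve2} up to the constant
shift (the constant is annihilated by $\cA(t)$ since $\cA(t)$ maps constants to the
boundary term, which is harmless away from the boundary because of \eqref{g5new}), and by
\textbf{(ST)}$_n$ it satisfies $w_i(t)\le\psi_i^{(n)}$ outside the box of radius
$\wh M_n$ around $Z$ and $w_i(t)\le 0$ for good times inside that box; in particular the
positive part $(w_i(t)-\psi_i^{(n)})_+$ is supported, for good times, near the box and is
everywhere bounded by $2\ell_0$. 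This is exactly the set-up of the local energy estimate
(part (i) of Lemma~\ref{lm:energy}) applied with the comparison function $\psi^{(n)}$,
which yields the $L^\infty_t(L^2)$ bound \eqref{l2prop}: the factor
$M_n^\chi$ comes from the averaged ellipticity control $K^\rho$ after rescaling to scale
$M_n$ (here $\chi$ is chosen as in \eqref{chichoice} to absorb $\rho$ after this
rescaling), and the term $\cM^{1/2}$ in the numerator comes from the contribution of the
``bad times'' that are cut out on coarser scales, whose total measure is at most
$\sum_{m<n}M_m^{1/4}\le C\cM^{1/4}$ by \eqref{cGmeas}, and which feed into the energy
inequality through the Duhamel term as in the short-time part (iii) of
Lemma~\ref{lm:energy}.

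Next I would upgrade this $L^2$ control to an $L^\infty$-in-space control for most of the
times in $[-M_{n+1},0]$: the energy dissipation estimate that accompanies
Lemma~\ref{lm:energy}(i) shows that the Dirichlet energy of $(w-\psi^{(n)})_+$ is integrable
in time, hence small outside a small-measure set of bad times; combining this with the new
Sobolev-type inequality (Proposition~\ref{prop:newGNdiscr}) designed for the weakly elliptic
setting, one obtains, for times $t$ in a set of good times $\cG_{n+1}\subset\cG_n$ with
$|\cG_{n+1}^c\cap[-3M_{n+1},0]|\le CM_n^{1/4}$, the pointwise bound
$w_i(t)\le \tfrac{1}{10}\ell_n$ say (the precise fraction being dictated by the choice of
$\zeta$). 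Feeding this intermediate $L^\infty$ bound into the second De~Giorgi lemma
(Lemma~\ref{lm:2nd}) — which propagates a level-set measure decrease through intermediate
levels down to a pointwise improvement — one concludes that on the smaller box $Q_{n+1}^*$
(over good times) the solution satisfies $w_i(t)\le(1-c)\ell_n$ for a universal
$c\in(0,1)$; applying the same argument to $-u$ gives the matching lower bound, and
therefore $\tfrac12\mathrm{Osc}_{Q_{n+1}^*}(u)\le(1-c)\ell_n$. Choosing
$\zeta:=1-c$ then gives exactly \textbf{(OSC)}$_n$, i.e.\ \eqref{oscreduce}.

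The main obstacle is the local energy estimate itself, Lemma~\ref{lm:energy}: because we do
\emph{not} have a uniform upper bound on $B_{ij}(s)$ (only the averaged bound
${\bf (C1)}_\rho$ and the lower bounds \eqref{g3new}, \eqref{g3newglob}), the usual
Caccioppoli/energy computation from \cite{C} must be reorganized so that every place where
one would bound $\int B_{ij}(s)\,ds$ uniformly is instead handled by the space-time
maximal-function bound $K^\rho$ — and crucially the cutoff functions $\psi^{(n)}$,
$\wt\psi^{(n)}$ in \eqref{psi}, \eqref{psit} must be chosen (as in \cite{C}) with a square-root,
resp.\ fourth-root, profile so that the commutator terms $B_{ij}(s)(\psi_i-\psi_j)^2$ are
controlled by the \emph{long-range} tail of $B$, where \eqref{far1new} \emph{does} give a
clean $(i-j)^{-2}$ bound, rather than by the singular short-range part. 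Handling the
boundary/diagonal-term contributions $\cW(s)$ via \eqref{g5new}, and controlling the
short-time non-integrable singularity in the Duhamel iteration via the $L^p\!\to\!L^\infty$
decay \eqref{decay} of Proposition~\ref{prop:heat} (exactly as in the finite-speed argument,
cf.\ \eqref{21}), are the remaining technical points; none of these is conceptually new once
Lemma~\ref{lm:energy} is in place, but keeping the exponents $\xi$, $\rho$, $\chi$ mutually
consistent (so that the losses $K^{C\xi}$, $M_n^\chi$ never overwhelm the geometric gain
$\zeta^n$) requires care, and this bookkeeping is deferred to the proofs of
Lemmas~\ref{lm:energy} and~\ref{lm:2nd} in the subsequent subsections.
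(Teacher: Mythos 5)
Your proposal correctly identifies the two De~Giorgi lemmas as the engine of the proof, but the logical structure you describe inverts the paper's argument and, more importantly, omits the single device that actually produces the oscillation decrease: the $\lambda$-rescaling $v^{(n,k)}_i(t)=\ell_n+\lambda^{-2k}\bigl(u_i(t)-\bar u_n-\ell_n\bigr)$, $k=0,1,\ldots,k_0$, defined in \eqref{vdeff}. You propose to apply Lemma~\ref{lm:energy} directly to $w=u-\bar u_n-\ell_n$. The hypothesis \eqref{defell1/2} is indeed verifiable for $w$ (good times contribute nothing since $w_i(t)\le\psi^{(n)}_i$ there, and the bad-time contribution is $O(M_n^{-1/2+7\chi/4}\ell_n^2)$), but the conclusion of part (ii) then reads $w_i(t)\le\ell_n/2+\psi^{(n)}_i$ on $\cG$, i.e.\ $u_i(t)-\bar u_n\le\tfrac32\ell_n$ on $|i-Z|\le M_n$, which is \emph{strictly weaker} than what $(\mathrm{ST})_n$ already gives ($u-\bar u_n\le\ell_n$ for good times). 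No improvement of the oscillation can come out of that. In the paper the gain $\zeta<1$ is produced only because Lemma~\ref{lm:energy}(ii) is applied to the rescaled $v^{(n,k+1)}$, whose $\ell_n/2$ bound pulls back under the $\lambda^{-2(k+1)}$-magnification to $u_i(t)-\bar u_n\le\ell_n\bigl(1-\tfrac12\lambda^{2(k+1)}\bigr)\le\zeta\ell_n$; the constant $\zeta$ in \eqref{zetabound} is determined precisely by this identity, not chosen freely at the end as you suggest.

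The role you assign to Lemma~\ref{lm:2nd} is also not what that lemma does. You describe it as ``propagat[ing] a level-set measure decrease through intermediate levels down to a pointwise improvement,'' i.e.\ as a step \emph{after} the energy estimate that sharpens a pointwise bound. Lemma~\ref{lm:2nd} is an intermediate-value measure lemma with no pointwise conclusion at all; in the paper it is invoked \emph{before} Lemma~\ref{lm:energy}, iterated $k_0$ times along the $\lambda$-rescaled family $v^{(n,k)}$, in order to locate an index $k<k_0$ such that the level set $\{\,i:v^{(n,k+1)}_i(t)>0\,\}$ has small space-time density $\le\delta=\e_0^2/100$ (estimate \eqref{lm1cond}). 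It is exactly this smallness that makes the $\e_0$-hypothesis \eqref{defell1/2} of Lemma~\ref{lm:energy} checkable for the magnified function $v^{(n,k+1)}$, which otherwise can be of size $\ell_n$ on a set of size $\sim M_n$. So the correct order is: first Lemma~\ref{lm:2nd} (iterated in $k$) to secure \eqref{lm1cond}, then Lemma~\ref{lm:energy} applied to $v^{(n,k+1)}$, whose part (ii) gives $(\mathrm{OSC})_n$ and whose part (iii) gives \eqref{l2prop} after translating back through \eqref{vdeff} (with the factor $\lambda^{-2(k+1)}\le\lambda^{-2k_0}$ absorbed into the constant $C$). Your plan needs this reorganization and, above all, the missing rescaling to produce any actual decrease of the oscillation.
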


{\it Proof of Proposition~\ref{prop:oscc}.} 
With a small constant $\lambda\in (10L^{-1/4} ,1)$ and a large integer $k_0$,
to be specified later, define the rescaled
and shifted functions
\be\label{vdeff}
   v^{(n,k)}_i(t) := \ell_n + \lambda^{-2k}\Big( [u_i(t) - \bar u_n] -\ell_n\Big), \qquad k =0,1,2, \ldots k_0.
\ee
In particular, from 
 {\bf $(ST)_n$} we have
\be\label{400}
   v^{(n,k)}_i(t) \le \ell_n + \lambda^{-2k}\Big( \Psi_i^{(n)}(t) -\ell_n\Big), \qquad t\in [-3M_n, 0].
\ee

We will show that  with an appropriate choice $k=k(n)$, 
$v= v^{(n,k+1)}$ satisfies a better upper bound than \eqref{400}
which then translates into a decrease in the oscillation of $u$
on scale $n$. The improved upper bound on $v$
will follow from applying two basic lemmas
 from parabolic regularity theory, 
traditionally called the first and the second De Giorgi lemmas.
The second De Giorgi lemma asserts that going from a larger to a smaller space-time
regime, the maximum of $v_i(t)$ decreases in an average sense.
The first De Giorgi lemma enhances this statement to a supremum bound for $v_i(t)$
that is strictly below the maximum of $v_i(t)$ on a larger space-time regime. 
This is equivalent to the reduction of the oscillation of $v$.

In the next section we first state these two basic lemmas, then we continue
the proof of Proposition~\ref{prop:oscc}. The proofs of the De Giorgi lemmas
are deferred to Sections~\ref{sec:firstdg} and \ref{sec:seconddg}.

\subsubsection{Statement of the generalized De Giorgi lemmas}

Both results will be formulated on a fixed space-time scale $M$ and with a fixed size-scale $\ell$.
 We fix a center $Z\in I$ with
$|Z|\le K/2$.  Recall the definition of $\psi=\psi^{(M,Z,\ell)}$ from \eqref{psi}.
The first De Giorgi lemma is a local dissipation estimate:

\begin{lemma}\label{lm:energy} There exists a small positive 
universal constant $\e_0$ with the following properties.
Consider the parabolic equation \eqref{ve2} on the time interval $\cT= [-\si , 0]$
with some $\si\in [K^{c_3}, K^{1-c_3}]$ and
let $\bu$ be a solution. Define $\bv : = \bu - \bar u$ with some constant shift $\bar u\in\bR$.
Fix small positive constants $\kappa, \xi, \rho,\chi$
and a large constant $\vartheta_0$ such that
\be\label{kappacond1}
 10 \vartheta_0(\xi + \rho)\le \kappa\le \frac{1}{1000},
\quad  \kappa+10 \vartheta_0(\xi + \rho)\le \chi\le \frac{1}{1000}
\ee
holds. 
Let $M$ be defined by $M: = K^{1/\vartheta}$ with some
 $\vartheta \in [ 1+2\kappa, \vartheta_0]$. 
 We assume that the matrix elements of  $\cA=\cB+\cW$
satisfy \eqref{g3new}, \eqref{g5new}, \eqref{far1new}
with exponent $\xi$  and that \eqref{ve2}
is regular with exponent $\rho$ at the space-time points
$(Z,t)$, $t\in \Xi_0$, where
\be\label{Tau0}
   \Xi_0: = \{ -M\cdot 2^{-m} (1+ 2^{-k}) \; : \; 0\le m,  k \le  C\log M\} .
\ee 
Assume 
\begin{align}\label{barvbound}
|\bar u| & \le C\ell K^{1-\xi}M^{-1},
\\
\label{defell1/2}
  \Big[
 \frac{1}{M^2}\int_{-2M}^0 \rd t \sum_i  (v_i(t)-\psi_i)_+^2
 \Big]^{1/2}  & \le\e_0\ell,  \qquad \psi_i =\psi_i^{(M,Z,\ell)}, 
\\
\label{numbercontrol}
 \sup_{t\in [-2M, 0]} \sup
\{ |i-Z| \; : \; v_i(t) > \psi_i \} & \le M^{1+\kappa},
\\ \label{vlinfty}
  \sup_{t\in [-2M, 0]} \max\big\{ v_i (t) \; : \; |i-Z|  \le M^{1+\kappa}\big\}& \le C\ell M^{\chi/2},
\end{align}
and there exists a set $\cG^* \subset [-2M,0]$ with $| [-2M, 0]\setminus \cG^*|\le CM^{1/4}$ such that
\be\label{vlinftygood}
  \sup_{t\in [-2M, 0]\cap \cG^*} \max\big\{ v_i (t) \; : \; |i-Z|\le M^{1+\kappa}\big\}\le C\ell M^{\chi/10}.
\ee
Then, for any sufficiently large  $K\ge K_0(\vartheta_0)$, we have
the following statements:

\begin{itemize}
\item[i)] We have
\be
   \sup_{t\in [-M,0]} \sum_i \Big(v_i(t)-\psi_i -\frac{\ell}{3}\Big)_+^2 \le CM^\chi \ell^2.
\label{1step}
\ee

\item[ii)] There exists a set $\cG\subset [-M, 0]$ of ``good'' times such
that
\be
   \sup_{t\in \cG}  v_i(t)\le \frac{\ell}{2} + \psi_i, \quad \forall i, 
 \qquad \mbox{and} \quad \big| [-M, 0]\setminus
 \cG \big| \le C M^{1/4}.
\label{goodtimes}
\ee

\item[iii)] For any $\wt M$ with $ M^{2\chi} \ll \wt M \le\frac{1}{2} M$ we have
\be
   \sup_{t\in [-\wt M,0]} \sum_i \Big(v_i(t)-\psi_i -\frac{2\ell}{5}\Big)_+^2 
\le C \Big(\frac{\wt M}{M}\Big)M^\chi \ell^2.
\label{11step}
\ee
\end{itemize}
These results hold for any $K\ge K_0$, where the threshold $K_0$ and
the constants in \eqref{1step}--\eqref{11step}
 may depend on $\chi,\kappa, \xi, \rho, \vartheta_0$
 and on the constants $C$ and $\wh C$ in \eqref{g3new}, \eqref{g5new}, \eqref{far1new}.
\end{lemma}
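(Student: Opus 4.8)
\textbf{Plan of proof for Lemma~\ref{lm:energy}.}
The plan is to adapt the classical first De~Giorgi lemma (the local energy/Caccioppoli estimate combined with a De~Giorgi iteration) to the present discrete setting where the only control on the ellipticity of $\cB$ is the averaged regularity bound ${\bf (C1)}_\rho$, not a pointwise sup bound. I would work throughout with truncated level sets of $\bv$, and crucially the cutoff $\psi=\psi^{(M,Z,\ell)}$ from \eqref{psi} plays the dual role of localizing in space and absorbing the long-range tails of $\cB$. The three parts are iterative refinements of the same $L^\infty_t L^2_x$ control, so they should be proved in the order (i), (ii), (iii).

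\emph{Step 1: local energy inequality.} Fix a cutoff level $c\ge 0$ and set $w:=(v-\psi-c)_+$. I would compute $\partial_t \tfrac12\sum_i \eta_i^2 w_i^2$ for a suitable time-and-space cutoff $\eta$, pairing against $\eta^2 w$ in \eqref{ve2}. The $\cW$-term has a favorable sign where $\bar u$ has the right sign and can otherwise be controlled using \eqref{g5new}, \eqref{barvbound} on the support of $w$ (which by \eqref{numbercontrol} lies in $|i-Z|\le M^{1+\kappa}$, so $d_i\gg M^{1+\kappa}$ and $W_i\le K^\xi/d_i$ is small after integration by ${\bf (C1)}_\rho$). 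The $\cB$-term splits into a short-range part ($|i-j|\le \wh C K^\xi$, handled by the ellipticity lower bound \eqref{g3new} producing a good Dirichlet-energy term $K^{-\xi}\sum_{|i-j|\le\cdot}(w_i-w_j)^2/|i-j|^2$) and a long-range part. For the long-range part one uses the key algebraic identity that $(v_i-\psi_i-c)_+ - (v_j-\psi_j-c)_+$ is controlled by $|v_i-v_j| \chi_{\{w_i>0\}\cup\{w_j>0\}} $ plus $|\psi_i-\psi_j|$; the $\psi$-increments are what make the tail $\sum_{|i-j|>\wh CK^\xi} B_{ij}$ summable after using \eqref{far1new} ($B_{ij}\le C/(i-j)^2$) and the square-root growth of $\psi$. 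This is the main obstacle: one must show the long-range error is bounded by $K^\rho$ times the $L^2$ mass of $w$, and this is precisely where ${\bf (C1)}_\rho$ with the dyadic family $\Xi_0$ (rather than a single time) is needed, since one must control time-averages of $B_{ij}$ at all the dyadic scales appearing in the iteration.

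\emph{Step 2: De~Giorgi iteration for (i).} With the local energy inequality in hand, and using the discrete Gagliardo--Nirenberg/Sobolev inequality for $\sqrt{-\Delta}$ (as in Proposition~\ref{prop:heat} and Appendix~\ref{sec:GN}) to upgrade the Dirichlet energy to an $L^p$ gain with $p>2$, I would set up the standard nonlinear recursion for the quantities $A_k:=\sup_t\sum_i \eta_i^2 (v_i-\psi_i-c_k)_+^2$ at levels $c_k = \tfrac{\ell}{3}(1-2^{-k})$ and time-slabs shrinking from $[-2M,0]$ to $[-M,0]$. The hypothesis \eqref{defell1/2} provides the smallness $A_0\le \e_0^2\ell^2 M^2$ (appropriately normalized) needed to start the recursion, and \eqref{vlinfty} gives the crude $L^\infty$ bound $C\ell M^{\chi/2}$ that makes all the intermediate quantities finite; the $M^\chi$ on the right of \eqref{1step} is the price of this crude bound interacting with the $K^\rho\le M^{\vartheta_0\rho}$ error from Step~1, which is why the condition \eqref{kappacond1} is imposed (it guarantees $\kappa+10\vartheta_0(\xi+\rho)\le\chi$). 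Choosing $\e_0$ small (universal) makes the recursion converge and yields (i).

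\emph{Step 3: parts (ii) and (iii).} For (ii), (i) says $\sum_i(v_i-\psi_i-\ell/3)_+^2\le CM^\chi\ell^2$ uniformly in $t\in[-M,0]$. I would first use the improved space Sobolev inequality tailored to weak ellipticity (Proposition~\ref{prop:newGNdiscr}, referenced in the section overview) to convert this $L^\infty_t L^2_x$ bound into an $L^\infty_{t,x}$ bound $v_i(t)\le \ell/2+\psi_i$ valid for all $t$ outside a small exceptional time set, then estimate the measure of the exceptional set by Chebyshev against $\int_{-M}^0 \text{(energy dissipation)}\,dt$, which the energy inequality bounds by $CM^{1+\chi}\ell^2$ divided by the deficit $\ell^2$; optimizing gives $|[-M,0]\setminus\cG|\le CM^{1/4}$, also using \eqref{vlinftygood} to know $v$ is already much smaller on the previously-good set $\cG^*$. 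For (iii), I would rerun the energy inequality and the De~Giorgi iteration but on the short time interval $[-\wt M,0]$ with initial data at $t=-M$ controlled by (i); the only change is that the ``mass injected'' over a time-window of length $\wt M$ is a factor $\wt M/M$ smaller, producing the factor $(\wt M/M)$ in \eqref{11step}, while the condition $M^{2\chi}\ll\wt M$ ensures the error terms (which scale like $M^\chi$) remain subdominant. Throughout, all constants depend only on $\chi,\kappa,\xi,\rho,\vartheta_0$ and the structural constants $C,\wh C$, and $\e_0$ and $\fq$ stay universal.
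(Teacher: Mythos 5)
Your overall roadmap matches the paper's proof closely: the energy inequality for $(v-\psi-c)_+$, splitting $\cB$ by interaction range, a De Giorgi iteration on shrinking levels $\ell_k=\frac{\ell}{3}(1-2^{-k})$ and time slabs $T_k=-M(1+2^{-k})$, the discrete Gagliardo--Nirenberg inequality (Proposition~\ref{prop:newGNdiscr}) to close the nonlinear recursion, the initial smallness from \eqref{defell1/2}, Chebyshev on the dissipation for part~(ii), and rerunning the iteration on $[-\wt M,0]$ for part~(iii) with the $M^{2\chi}\ll\wt M$ condition coming from comparing the starting value of the recursion with $\wt M/M$. Two corrections are worth making. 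First, you attribute the need for ${\bf (C1)}_\rho$ and the dyadic family $\Xi_0$ to the \emph{long-range} cross term; in fact the long-range regime $|i-j|\ge \wh C K^\xi$ is the easy one, since there \eqref{far1new} gives the pointwise upper bound $B_{ij}\le C|i-j|^{-2}$ and, combined with the sublinear growth of $\psi$, the sum is controlled deterministically in $t$. The genuine obstacle is the \emph{short-range} piece $|i-j|\le \wh C K^\xi$, where no pointwise upper bound on $B_{ij}$ exists; the argument there reduces to $\sum_i {\bf 1}(v_i>\psi^\ell_i)[B_{i,i+1}+B_{i,i-1}]$, whose time integral on each slab $[T_{k-1},T_k]$ is exactly what ${\bf (C1)}_\rho$ at the dyadic times $\Xi_0$ controls. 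Second, the good-set bound \eqref{vlinftygood} on $\cG^*$ is used already inside the recursion for part~(i) (not only for part~(ii)): after applying Gagliardo--Nirenberg one must bound the $\max_i$ term, and splitting $[-T_{k-1},0]$ into $\cG^*$ (where $M^{\chi/10}$ applies) and its small complement (where only the crude $M^{\chi/2}$ from \eqref{vlinfty} applies, but multiplied by $|\cG^{*c}|\le CM^{1/4}$) is what keeps the aggregated error at $M^{-1+\chi}K^{-\rho}$. The extra spatial cutoff $\eta$ you introduce is not needed; the localization is already furnished by $\psi$ and \eqref{numbercontrol}, and carrying $\eta$ would merely add commutator terms to re-estimate.
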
 

For the orientation of the reader we mention how the various  exponents will
be chosen in the application. The important exponents are $\kappa$ and $\chi$;
they will be related by $\kappa =3\chi/4$, see \eqref{kappach} later
(actually, the really important relation is that $\kappa<\chi$).
The exponents $\xi, \rho$ will be chosen much smaller; the reader may neglect them
at first reading.

Notice that  \eqref{1step} is off from the optimal bound by a factor $M^\chi$.
However, \eqref{goodtimes} shows that for most of the times,
this factor is not present, while \eqref{11step} shows that
this factor is reduced if the time interval is shorter.  We remark that precise
coefficients of $\ell$
in the additive shifts appearing in \eqref{1step}--\eqref{11step}
are not important; instead of  $\frac{1}{2} > \frac{2}{5} > \frac{1}{3}$ 
essentially any three numbers between 0 and 1 with  the same ordering could have been chosen.


\medskip

The second De Giorgi lemma is a local descrease of oscillation on a single scale.
As before, we are given three parameters, $M, Z, \ell$.
Define a new function $F$ by
\be\label{Fdef}
   F_i = F_i^{(M,Z,\ell)}: = \ell \cdot \max \Big\{ -1, \min\Big(0, \Big|\frac{i-Z}{ M}\Big|^2 
- 81  \Big)\Big\}
\ee
for any $M, Z, \ell$.
Notice that $  - \ell \le F \le 0$, furthermore $F_i = 0$ if $|i-Z| \ge   9  M$
 and $F_i=-\ell$  if
$|i-Z| \le 8 M$.  We also introduce  a new parameter $\lambda\in (0, 1/10)$.  
 Recalling the definition of $\wt\psi$ from \eqref{psit}, 
we also define three cutoffs, all depending on all four parameters, $M,Z,\ell,\lambda$
\begin{align*}
   \varphi^{(0)}_i & := \ell + \wt\psi_i + F_i
\\
   \varphi^{(1)}_i & := \ell + \wt\psi_i +  \lambda F_i
\\
   \varphi^{(2)}_i & := \ell + \wt\psi_i + \lambda^2 F_i.
\end{align*}
Notice  that 
\be\label{alleq}
   \varphi^{(0)}_i \le  \varphi^{(1)}_i \le  \varphi^{(2)}_i \le \ell + \wt\psi_i, 
\ee
and  when  $ |i-Z|\ge 9M$ all inequalities become equalities. 
 Notice that $\varphi^{(0)}_i = 0$ if $ |i - Z | \le  8M$.

\begin{lemma}\label{lm:2nd} 

Consider the parabolic equation \eqref{ve2} on the time interval $\cT= [-\si , 0]$
with some $\si\in [K^{c_3}, K^{1-c_3}]$ and
let $\bu$ be a solution. Define $\bv : = \bu - \bar u$ with some constant shift $\bar u\in\bR$.
Fix small positive constants $\kappa_1, \kappa_2$,  $\xi, \rho$ 
and a large constant $\vartheta_0$ such that 
\be\label{kappacond}
\kappa_1 + \kappa_2 +10 \vartheta_0(\xi + \rho) \le  \frac{1}{1000}, 
\ee
Let $M$ be defined by $M: = K^{1/\vartheta}$ with some
  $\vartheta \in [ 1+2\kappa_1, \vartheta_0]$.  
 We assume that the matrix elements of  $\cA=\cB+\cW$
satisfy \eqref{g3new}, \eqref{g5new}, \eqref{far1new}
with exponent $\xi$  and that \eqref{ve2}
is regular with exponent $\rho$ at the space-time points
$(Z,t)$, $t\in \Xi_0$, where $\Xi_0$ was given in \eqref{Tau0}.

For any $\delta>0$ and $\mu>0$  there exist  $\gamma>0$
and $\lambda\in (0,1/8)$ such that  whenever 
\be\label{barvbound1}
|\bar u| \le C\lambda \ell K^{1-\xi}M^{-1},
\ee
and the  shifted solution $\bv(t)=\bu(t)-\bar u$ 
 satisfies the following five properties;
\begin{align}\label{1c}
  \exists \cG \subset [-3M, 0],  \quad \big| [-3M, 0] \setminus \cG\big|\le CM^{1/4}, 
\quad \mbox{s.t.} \quad    v_i(t) & \le  \ell   + \wt\psi_i, \qquad t\in \cG, \quad \forall i,
\\ \label{11c}
    \sup_{t\in [-3M, 0]} \max\Big\{ |i-Z| \; : \;  v_i(t)  > \ell+ \wt\psi_i
 \Big\} &\le M^{1+\kappa_1},
\\ \label{23c}
    \sup_{t\in [-3M, 0]} \sup \big\{  v_i(t) \; : \;|i-Z|\le  M^{1+\kappa_1} \big\} & \le \ell M^{\kappa_2},
\\ \label{2c}
  \frac{1}{M^2} \int_{-3M}^{-2M}  {\bf 1}(t\in \cG)\cdot 
 \#\Big\{ |i-Z|\le M \; : \; v_i(t)<\varphi^{(0)}_i\Big\} \rd  t & \ge \mu, 
\\ \label{opt1}
   \frac{1}{M^2} \int_{-2M}^{0} {\bf 1}(t\in \cG)\cdot  \#\Big\{ i \; : \; v_i(t)>\varphi^{(2)}_i\Big\} \rd t  
 & \ge  \delta,  
\end{align}
then 
\be\label{opt2}
  \frac{1}{M^2} \int_{-3M}^{0}  {\bf 1}(t\in \cG)\cdot 
\#\Big\{ i\; : \;  \varphi_i^{(0)}< v_i(t)<\varphi^{(2)}_i\Big\}\rd t \ge \gamma .
\ee
This conclusion holds for any $K\ge K_0$ where the threshold $K_0$ depends on 
all parameters $\vartheta_0$, $\kappa_1$, $\kappa_2$, $\xi$, $\rho$, $\delta$, $\mu$ and the 
constants in the conditions \eqref{g3new}, \eqref{g5new}, \eqref{far1new}.

We remark that the choices of $\gamma$ and $\lambda$ are explicit, one may choose
\be\label{gamlamchoice}
  \gamma: = c\delta^3, \quad \lambda := c\delta^6\mu
\ee
 with a small absolute constant $c$. 
\end{lemma}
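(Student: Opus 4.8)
The plan is to follow the strategy of the ``second De Giorgi lemma'' of \cite{C}, which is a quantitative intermediate–value statement: a local energy (Caccioppoli) estimate is combined with a nonlocal isoperimetric inequality to force the transition layer $\{\varphi^{(0)}_i<v_i<\varphi^{(2)}_i\}$ to occupy a definite fraction of space–time. I would argue by contradiction. Introduce the truncated, shifted function $\bar w_i(t):=\min\big\{(v_i(t)-\varphi^{(1)}_i)_+,\ \varphi^{(2)}_i-\varphi^{(1)}_i\big\}$, which vanishes on $\{v\le\varphi^{(1)}\}$, equals its height $h_i:=\varphi^{(2)}_i-\varphi^{(1)}_i\asymp\lambda\ell$ on $\{v\ge\varphi^{(2)}\}\cap\{|i-Z|<8M\}$, and — crucially — by \eqref{11c} together with the fact that $\varphi^{(1)}_i=\ell+\wt\psi_i$ for $|i-Z|\ge 9M$, is supported in $\{|i-Z|\le M^{1+\kappa_1}\}$, a window far smaller than $K$. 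Consequently the zero set of $\bar w(t)$ always has cardinality $\gtrsim K$, so in every isoperimetric estimate the ``$=0$'' side is harmless and only the competition between $\{\bar w(t)=h\}$ and the transition set $\{0<\bar w(t)<h\}$ is relevant. Since $\varphi^{(2)}_i\le\ell+\wt\psi_i$ with equality for $|i-Z|\ge 9M$, the good–time hypothesis \eqref{1c} shows that for $t\in\cG$ one has $\{v_i(t)>\varphi^{(2)}_i\}\subset\{|i-Z|<9M\}$; hence \eqref{opt1} is genuinely a lower bound $\ge\delta M^2$ on the $\cG$–restricted space–time measure of $\{\bar w=h\}$ localized near $Z$, while the negation of \eqref{opt2} we assume is that the $\cG$–restricted space–time measure of $\{0<\bar w<h\}$ is $<\gamma M^2$.

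Next I would establish the local energy estimate. Multiplying $\partial_t v=-(\cB+\cW)v$ by $\bar w$ and summing in $i$, the $\cW$–term has a favorable sign: $\varphi^{(1)}\ge(1-\lambda)\ell\ge 0$, so $v_i\bar w_i\ge 0$ and $W_i\ge 0$ wherever $\bar w_i>0$, and it can simply be discarded. The $\cB$–term produces $-\fb(t)[\bar w,\bar w]$ modulo (a) the standard commutator error $\fb(t)[\varphi^{(1)},\varphi^{(1)}]$, which is finite because $\varphi^{(1)}$ is Lipschitz on the relevant window (the Lipschitz bound kills the diagonal singularity of $B_{ij}$, and the far part of $B_{ij}$ obeys the upper bound in \eqref{far1new}); and (b) the ``annulus'' cross terms from pairs $(i,j)$ with $i$ in $\operatorname{supp}\bar w$ and $9M\le|j-Z|\le M^{1+\kappa_1}$, which are controlled using the upper bounds \eqref{far1new}, \eqref{g5new} and the a priori size bounds \eqref{11c}--\eqref{23c}, at the cost of a factor $M^{\mathcal O(\kappa_1+\kappa_2)}$. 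Integrating in $t$ from a starting time $t_0\in[-3M,-2M]\cap\cG$ chosen via \eqref{2c} — so that $\bar w(t_0)$ vanishes on $\gtrsim\mu M$ indices near $Z$, which bounds $\|\bar w(t_0)\|_2^2$ — and using \eqref{barvbound1} to ensure that the long–range part of $\cB$ acting on the prescribed far–field profile injects only negligible energy, one arrives at $\int_{t_0}^{0}{\bf 1}(t\in\cG)\,\fb(t)[\bar w(t),\bar w(t)]\,dt\le C\lambda^2\ell^2 M^{1+\mathcal O(\kappa_1+\kappa_2+\xi+\rho)}$. The only real departure from \cite{C} is that $B_{ij}(s)$ carries no pointwise–in–time $L^\infty$ bound; the potentially dangerous time–integrated contributions of $\sum_{i,j}B_{ij}(s)$ over balls centered at $Z$ are absorbed precisely by the regularity condition ${\bf (C1)}_\rho$ at the dyadic times $\Xi_0$ (this is exactly why the hypotheses ask for regularity at all of $\{Z\}\times\Xi_0$ and not at a single time).

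Finally I would invoke the nonlocal isoperimetric inequality. Using the lower bound $B_{ij}(s)\ge K^{-\xi}|i-j|^{-2}$ near the center from \eqref{g3new} — the analogue of the uniform ellipticity lower bound in \cite{C}, only weakened by $K^{-\xi}$ — the one–dimensional discrete isoperimetric inequality for the $\sqrt{-\Delta}$–type form yields, at each fixed $t$, a lower bound of the schematic form $\fb(t)[\bar w(t),\bar w(t)]\gtrsim K^{-\xi}h^2\,\Theta\big(|A_0(t)|,|A_2(t)|,|A_1(t)|\big)$, where $A_0,A_2,A_1$ are the zero, saturation and transition sets of $\bar w(t)$; since $|A_0(t)|\gtrsim K$ this reduces to a quantity that is large precisely when $|A_1(t)|$ is small relative to $|A_2(t)|$. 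A pigeonhole argument applied to \eqref{opt1} produces a set of good times of measure $\gtrsim\delta M$ on which $|A_2(t)|\gtrsim\delta M$; on those times, if $|A_1(t)|$ were smaller than a threshold of order $\gamma M$, the isoperimetric lower bound on $\fb(t)$, once integrated, would contradict the energy upper bound of the previous paragraph (here one uses that the energy exponent $\mathcal O(\kappa_1+\kappa_2+\xi+\rho)$ stays far below any fixed positive power by \eqref{kappacond}, and that $h\asymp\lambda\ell$). Tracing constants through this contradiction yields exactly the thresholds $\gamma\sim c\delta^3$ and $\lambda\sim c\delta^6\mu$ of \eqref{gamlamchoice}: the power $\delta^3$ arises from feeding the $\delta M$ time–measure together with the $\delta M$ spatial lower bound on $|A_2|$ into the quadratic isoperimetric gain, and $\lambda$ must be taken this small so that the level height $h\asymp\lambda\ell$ prevents the $M^{\mathcal O(\kappa)}$ losses in the energy estimate from swamping the isoperimetric gain. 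The step I expect to be the main obstacle is precisely the energy estimate of the second paragraph — arranging the singular, only time–averaged coefficients $B_{ij}$ to interact correctly with the truncation and with the far–field cross terms, so that ${\bf (C1)}_\rho$ and the upper bounds in ${\bf (C2)}_\xi$ genuinely substitute for the pointwise ellipticity bounds of the classical argument.
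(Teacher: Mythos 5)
Your proposal correctly identifies the De Giorgi/\cite{C} framework and the three key ingredients: a Caccioppoli-type energy estimate for the local log-gas coefficients, the regularity condition ${\bf (C1)}_\rho$ at the dyadic times $\Xi_0$ to tame the non-pointwise ellipticity, and the use of the lower bound on $B_{ij}$ to force dissipation when both high and low sets are populated. The paper's proof, however, is organized differently and the difference matters: rather than a contradiction based on an isoperimetric inequality applied to the truncated function $\bar w$, the paper works directly with the energy $H(t)=\sum_i(v_i(t)-\varphi^{(1)}_i)_+^2$, keeps the ``good'' cross term $\fb[(v-\varphi^{(1)})_+,(v-\varphi^{(1)})_-]\ge 0$ explicitly in the energy inequality \eqref{goodterm}, and exploits the \emph{continuity of $H$ in time} (Lemma~\ref{lemma:D}) to find an interval $D$ of times where $H$ is neither small nor large. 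On $D\cap\cG$, the upper bound on $H$ constrains $\#\{v_i\ge\varphi^{(2)}_i\}$ (\eqref{extreme}), and the good cross term plus the pointwise lower bound $B_{ij}\ge\bar c M^{-2}$ (from \eqref{far1new}, not \eqref{g3new}) shows that the exceptional set $\cF$, where the low set near $Z$ is large, has small measure; on the rest of $D\cap\cG$ the transition set is automatically of order $M$ by a simple counting argument.

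The genuine gap in your plan is precisely this connecting step. The hypothesis \eqref{2c} (many sites with $v<\varphi^{(0)}$) lives in $[-3M,-2M]$, while \eqref{opt1} (many sites with $v>\varphi^{(2)}$) lives in $[-2M,0]$; there is no guarantee that a single time carries both a large local zero set $A_0\cap\{|i-Z|\le 8M\}$ and a large saturation set $A_2$. Your contradiction step, as written, reads off a large $|A_2(t)|$ from \eqref{opt1} and then argues that the isoperimetric lower bound forces $\fb(t)[\bar w,\bar w]$ to be large whenever $|A_1(t)|<\gamma M$. But if $A_2(t)$ fills essentially all of $\{|i-Z|\le 8M\}$ --- a scenario not excluded by the hypotheses --- then $A_0$ near $Z$ is empty, and the only interaction between $A_2$ and the far-away zero set of $\bar w$ contributes only $\sim h^2$ per unit time (after the $\log M$ cancels in the $|i-j|^{-2}$ tail), which integrates to $\lesssim\delta\lambda^2\ell^2 M$, not contradicting the energy bound $\lesssim\lambda^2\ell^2 M$. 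Your observation ``$|A_0(t)|\gtrsim K$'' does not save this case, because what the isoperimetric mechanism needs is for $A_0$ and $A_2$ to be close on scale $M$. The missing idea is precisely the intermediate-value argument on $H(t)$: since $H$ is continuous in $t$, is small at some $T_1$ produced from \eqref{2c} and the good cross term (Section~\ref{sec:slice}), and is large at some $T_0$ from \eqref{opt1}, there is a time interval $D$ of definite length where $H$ is intermediate, and it is only on $D$ that both the high and low sets can be simultaneously controlled. Without this (or an equivalent device), the isoperimetric-inequality-plus-contradiction route does not close.
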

This lemma asserts that whenever the a substantial part of the
function $v$ increases from $\varphi^{(0)}$ to $\varphi^{(2)}$ 
in time of order $M$, then there is a time interval of order
 $M$ so that a substantial part of $v$ lies between  $\varphi^{(0)}$ and $\varphi^{(2)}$.


\subsubsection{Verifying  the assumptions of  Lemma~\ref{lm:2nd}}

We will apply
Lemma~\ref{lm:2nd}  to the function $v= v^{(n,k)}$ given in \eqref{vdeff}
with the choice $M=M_n$, $\ell=\ell_n$. The following lemma collects
the necessary information on $v= v^{(n,k)}$ to verify 
 the assumptions in  Lemma~\ref{lm:2nd}. The complicated relations 
among the parameters, listed in \eqref{guarant} and \eqref{zetabound} below,
can be simultaneously satisfied; their appropriate choice will be
given in Section~\ref{sec:parchoice}.

\begin{lemma}\label{lm:dircalc}  
Assume that  {\bf $(ST)_n$} holds, see \eqref{coll}. 
Suppose that in addition to the previous relations $\nu<\min\{ \zeta^{10}/10, 1/(2L)\}$
and $\lambda\ge 10L^{-1/4}$ among the parameters,  the following further 
 relations also hold:
\be\label{guarant}
10\le (1-\zeta)\lambda^{2k_0} \zeta L^{1/4}, \quad 
 \chi +10\vartheta_0(\xi+\rho)\le \frac{1}{1000}, \quad
 100\vartheta_0(\xi+\rho)\le
\chi\le \frac{|\log \zeta|}{|\log\nu|}, 
\ee
\be\label{zetabound}  
\vartheta \in [1+2\chi, \vartheta_0], \qquad
 1 - \frac{1}{2}\lambda^{2(k_0+1)} \le \zeta < 1 .
\ee  
Then for any $ v^{(n,k)}_i(t)$ with $k\le k_0$,  
defined in \eqref{vdeff} and satisfying \eqref{400}, we have the
following three bounds:
\begin{align}\label{uppb}
 \sup_{t\in \cG_n}\sup_{k\le k_0}  v^{(n,k)}_i(t) & \le \ell_n + \wt\psi_i^{(n)},  \; 
\\
 \label{11ccheck}
   \sup_{k\le k_0} \sup_{t\in [-3M_n, 0]} \max\Big\{ |i-Z| \; : \;  v_i^{(n,k)}(t)
 > \ell_n+\wt\psi_i^{(n)}  \Big\} & \le M_n^{1+3\chi/4},  
\\
 \label{23ccheck}
   \sup_{k\le k_0} \sup_{t\in [-3M_n, 0]} \sup \big\{  v_i^{(n,k)}(t) \; : \;|i-Z|\le  
M_n^{1+3\chi/4} \big\} & \le C\ell_n M_n^{\chi/2}.
\end{align}
For the shift in \eqref{vdeff} we have the bound
\be \label{shiftcheck}
 \big| \ell_n -\lambda^{-2k}(\bar u_n +\ell_n)\big| \le C\lambda\ell_nK^{1-\xi}
 M_n^{-1}.
\ee
The constants $C$ may depend on all parameters in \eqref{guarant}, \eqref{zetabound}.
\end{lemma}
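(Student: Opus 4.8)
\textbf{Plan of proof of Lemma~\ref{lm:dircalc}.}
The strategy is entirely elementary: all four claimed bounds are obtained by inserting the staircase estimate $\mathbf{(ST)}_n$, i.e.\ \eqref{coll} with the explicit forms of $\Lambda^{(n)}$ and $\Phi^{(n)}$, into the definition \eqref{vdeff} of $v^{(n,k)}$ and carefully tracking the numerical constants. Since $v^{(n,k)}_i(t)=\ell_n+\lambda^{-2k}\big([u_i(t)-\bar u_n]-\ell_n\big)$, the assumption \eqref{400} gives
$v^{(n,k)}_i(t)-\ell_n \le \lambda^{-2k}\big(\Psi^{(n)}_i(t)-\ell_n\big)$,
and the whole proof consists of comparing $\lambda^{-2k}(\Psi^{(n)}_i(t)-\ell_n)$ with $\wt\psi^{(n)}_i$ (for \eqref{uppb}--\eqref{11ccheck}) or with $C\ell_n M_n^{\chi/2}$ (for \eqref{23ccheck}). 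The key structural fact is that $\Psi^{(n)}_i(t)$ is a step function in $|i-Z|$ that only starts growing beyond $\widehat M_0=LM_0$, so it vanishes for $|i-Z|\le \widehat M_n = L M_n$, while $\wt\psi^{(n)}_i$ vanishes all the way up to $M_n\lambda^{-4}$. The condition $\lambda\ge 10L^{-1/4}$ ensures $M_n\lambda^{-4}\le M_n L^{4}/10^4$, so there is a genuine overlap region and the comparison can be done scale by scale over $m=0,1,\dots,n-1$.

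For \eqref{uppb}, on good times $t\in\cG_n$ we use $\Psi^{(n)}_i=\Lambda^{(n)}_i$, whose value in the annulus $\widehat M_{m+1}\le|i-Z|\le\widehat M_m$ is $\ell_m+S_{m,n}$. Using $\ell_m=\zeta^m\ell$, the geometric bound \eqref{ubarbound} $S_{m,n}\le \ell_m/(1-\zeta)$, and the relation $\widehat M_m = L\nu^m\cM$, one checks that $\lambda^{-2k_0}(\Lambda^{(n)}_i-\ell_n)$ stays below $\wt\psi^{(n)}_i = \ell_n\big[(|i-Z|/M_n-\lambda^{-4})^{1/4}-1\big]_+$ for all $i$; the worst case is the inner edge of each annulus, and the first inequality in \eqref{guarant}, $10\le (1-\zeta)\lambda^{2k_0}\zeta L^{1/4}$, is exactly what makes the quartic growth of $\wt\psi^{(n)}$ dominate the amplified staircase there. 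The uniform bound $\sup_{t,i}|u_i(t)|\le\ell$ \eqref{supbound} plus $|\bar u_n|\le\ell$ \eqref{ubartriv} handle the region $|i-Z|\le\widehat M_n$ where $\Lambda^{(n)}_i=\ell_n$ and $\wt\psi^{(n)}_i=0$ might clash — but there $v^{(n,k)}_i\le\ell_n$ by \eqref{oscreduce}-type control, so the bound is immediate.

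For \eqref{11ccheck} and \eqref{23ccheck} one uses instead the bad-time profile $\Phi^{(n)}$. The set where $v^{(n,k)}_i(t)>\ell_n+\wt\psi^{(n)}_i$ is contained in $\{|i-Z|\le \text{const}\cdot M_n^{1+\chi/2}\}$ because beyond $M_n^{1+3\chi/4}$ the quartic growth of $\wt\psi^{(n)}$ beats even the $\Phi^{(n)}$ terms (which grow like $(|i-Z|/M_m)^{\fq/2}$ times the $\sqrt{|t|/M_m}M_m^{\chi/2}$ factor, bounded by $\sqrt{3M_n/M_m}\,M_m^{\chi/2}$ on $[-3M_n,0]$), using $\chi\le|\log\zeta|/|\log\nu|=\fq$ from \eqref{guarant} and $\vartheta\ge 1+2\chi$ from \eqref{zetabound} to control cross-scale factors; this is where the main bookkeeping lives. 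For \eqref{23ccheck}, in the ball $|i-Z|\le M_n^{1+3\chi/4}$ we have $\Phi^{(n)}_i(t)\le C_\Phi\ell_n(1+\sqrt{3M_n/M_n}\,M_n^{\chi/2})\le C\ell_n M_n^{\chi/2}$ plus the telescoping shifts $S_{m,n}\le C\ell_n$, and amplifying by $\lambda^{-2k_0}$ (an absolute constant depending only on $\lambda,k_0$) only changes the constant $C$. Finally \eqref{shiftcheck}: the left side is $|\ell_n-\lambda^{-2k}(\bar u_n+\ell_n)|\le \lambda^{-2k_0}(|\bar u_n|+2\ell_n)\le C\ell_n$; this is made smaller than $C\lambda\ell_n K^{1-\xi}M_n^{-1}$ simply because $K^{1-\xi}M_n^{-1}=M_n^{\vartheta-1}K^{-\xi}\ge M_n^{2\chi}K^{-\xi}\ge 1$ for $K$ large by $\vartheta\ge 1+2\chi$ and $\xi$ small, so the target is $\ge c\lambda\ell_n$ which still dominates $C\ell_n$ once we recall $\lambda$ is a fixed constant — wait, this needs $\lambda K^{1-\xi}M_n^{-1}\gtrsim 1$, which holds since $K^{1-\xi}M_n^{-1}\to\infty$. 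The only real obstacle is the second and third displays, \eqref{11ccheck}--\eqref{23ccheck}: getting the exponent $3\chi/4$ (rather than something larger) requires the quartic $1/4$-power in the definition \eqref{psit} of $\wt\psi$ to beat the $\Phi$-growth with room to spare, and verifying this means solving the inequality $(|i-Z|/M_n)^{1/4}\gtrsim \sqrt{3M_n/M_m}\,M_m^{\chi/2}\cdot(|i-Z|/M_m)^{\fq/2}$ across all $m\le n$, which forces the constraints $\chi\le\fq$ and $\vartheta\le\vartheta_0$ and is the heart of why the parameter relations in \eqref{guarant}--\eqref{zetabound} take the form they do.
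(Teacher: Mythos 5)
Your overall strategy is the same as the paper's: insert $\mathbf{(ST)}_n$ into \eqref{vdeff} and reduce each claim to a scale-by-scale comparison of the amplified staircase against $\wt\psi^{(n)}$ (resp.\ against $C\ell_n M_n^{\chi/2}$), identifying the inner edge of each annulus as the worst case and seeing the first condition in \eqref{guarant} as the mechanism that makes $\wt\psi$ dominate. That matches \eqref{tochh}--\eqref{iff}--\eqref{phim} of the paper.

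However, your sketch of \eqref{shiftcheck} has a genuine gap. You write $\lambda^{-2k_0}(|\bar u_n|+2\ell_n)\le C\ell_n$, which implicitly uses $|\bar u_n|\lesssim\ell_n$; but the only a priori bound is $|\bar u_n|\le\ell_0$ from \eqref{ubartriv}, and $\ell_0/\ell_n=\zeta^{-n}$ grows with $n$. So your intermediate bound is really $\lesssim\ell_0$, not $\lesssim\ell_n$, and the comparison with the target $C\lambda\ell_n K^{1-\xi}M_n^{-1}$ needs one more step: $\ell_0/\ell_n=\zeta^{-n}\le\nu^{-n}=M_0/M_n$ (since $\nu\le\zeta^{10}/10<\zeta$), together with $M_0=\cM=K^{1/\vartheta}\le K^{1-\xi}$ (from $\vartheta\ge1+2\chi\ge1+2\xi$). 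Those two facts are exactly what the paper invokes to turn $\ell_0$ into $\ell_n K^{1-\xi}/M_n$ and close the estimate. As written, your argument asserts a bound that simply isn't true when $n$ is large, even though the final conclusion survives once these ratios are tracked.

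Two smaller inaccuracies worth noting: the factor you call $(|i-Z|/M_m)^{\fq/2}$ in the ``heart'' inequality does not appear in the paper's version (which compares $(M_n/M_m)^{1/2-\chi}M_m^{\chi/2}$ against $(M_m/M_n)^{1/4}$, using $\ell_m/\ell_n\le(M_m/M_n)^\chi$ rather than a pointwise power of $|i-Z|$); and the implication of $\lambda\ge10L^{-1/4}$ is $\lambda^{-4}\le L/10^4$, not $L^4/10^4$ — the point being that $\wt\psi$ is already nonzero well inside the annulus $|i-Z|\sim\wh M_{m+1}= L\nu^{m+1}\cM$, so the quartic growth has room to act.
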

We remark that the factor $3/4$ in the exponent in \eqref{11ccheck} can be improved to $2/3+\e'$ 
for any $\e'>0$, but what is really important for the proof is that it is 
{\it strictly smaller than 1}, since this will translate into the crucial $\kappa <\chi$
condition in \eqref{vlinfty}.

\medskip

{\it Proof of Lemma~\ref{lm:dircalc}.}  
 All four estimates follow by 
direct calculations from the definition of $\Psi^{(n)}(t)$ and from
the relations \eqref{guarant}, \eqref{zetabound} among the parameters. Based upon \eqref{400}, 
the estimate \eqref{uppb} amounts to checking
\be\label{tochh}
   \Lambda_i^{(n)} \le \ell_n + \lambda^{2k_0} 
\ell_n\Big[ \Big(   \Big|\frac{i-Z}{M_n}\Big| - 
\lambda^{-4}\Big)^{1/4}  - 1\Big]_+.
\ee
For  $|i-Z|\le \wh M_n$ we immediately have $ \Lambda_i^{(n)} =\ell_n$
and thus \eqref{tochh} holds.
For $\wh M_{m+1}\le |i-Z|\le \wh M_m$ (with some $m\le n-1$)
we can use \eqref{ubarbound}, 
to have that $\Lambda_i^{(n)}\le 2(1-\zeta)^{-1} \ell_m$.
The right hand side of \eqref{tochh} is larger than
$$
    \ell_n + \lambda^{2k_0} 
\ell_n\Big[ \Big(   \Big|\frac{\wh M_{m+1}}{M_n}\Big| - 
\lambda^{-4}\Big)^{1/4}  - 1\Big]_+.
$$ 
which is larger than $\ell_n(1 + \frac{1}{2} \lambda^{2k_0} L^{1/4} \nu^{(m-n)/4})$.
Now \eqref{tochh}  follows from the first inequality in \eqref{guarant} 
and from $\nu\le \zeta^{10}/10$.

For the proof of \eqref{11ccheck}, starting from \eqref{400}, it is sufficient to check that
\be\label{tochh21}
  \Phi_i^{(n)}(t) \le \ell_n +  \lambda^{2k_0} 
\ell_n\Big[ \Big(   \Big|\frac{i-Z}{M_n}\Big| - 
\lambda^{-4}\Big)^{1/4}  - 1\Big]_+
\ee
for any $|i-Z|\ge \frac{1}{2}M_n^{1+3\chi/4}$ and $t\in [-3M_n,0]$.
 On the left hand side
we can use the largest time $|t|=3M_n\ge \cM^{1/2}$.
Considering the regime
$\wh M_m \le |i-Z|\le \wh M_{m-1}$ with $M_m = M_n^{1+\beta}$ for some $0<\beta< \frac{1}{2}$, we see that 
$$
  \mbox{l.h.s. of \eqref{tochh21}}\le  2\ell_m \Big(\frac{M_n}{M_m}\Big)^{1/2}M_m^{\chi/2} , \qquad 
    \mbox{r.h.s. of \eqref{tochh21}} \ge \frac{1}{2} \lambda^{2k_0} \ell_n \Big(\frac{M_m}{M_n}\Big)^{1/4},
$$
Using  $\chi\le \frac{|\log \zeta|}{|\log\nu|}$ from \eqref{guarant}, we have
$$
   \frac{\ell_m}{\ell_n} \le \Big(\frac{M_m}{M_n}\Big)^\chi
$$
therefore \eqref{tochh21} holds if
\be\label{iff}
  \Big(\frac{M_n}{M_m}\Big)^{\frac{1}{2}-\chi}M_m^{\chi/2}
 \le \frac{1}{4}\lambda^{2k_0}   \Big(\frac{M_m}{M_n}\Big)^{1/4}.
\ee
Recalling that $M_m = M_n^{1+\beta}$, we see that for small $\chi$ \eqref{iff} is satisfied
if $\beta > \frac{2\chi}{3-6\chi}$ (and $M_n$ is sufficiently large
depending on all constants $\lambda, \nu, L, k_0, \nu, \zeta$).
This is guaranteed if $\beta \ge 3\chi/4$ since we assumed
 $\chi\le 1/1000$. This proves \eqref{11ccheck}.

For the proof of \eqref{23ccheck} we notice that 
\be\label{phim}
   \max\Big\{ \Phi_i^{(n)}(t)\;: \;  |i-Z|\le M_n^{1+3\chi/4} \Big\} \le C_\Phi\big(\ell_m + M_n^{\chi/2}\ell_n\big)
  \le C  M_n^{\chi/2}\ell_n
\ee
for any $t\in [-3M_n,0]$, where $m<n$ is defined by $\wh M_{m+1} \le M_n^{1+3\chi/4} \le \wh M_m$.
The first inequality in \eqref{phim} follows from \eqref{defPhi}; the second one is a consequence of
\be\label{ellM}
  \frac{\ell_m}{\ell_n} = \Big( \frac{M_m}{M_n}\Big)^{\frac{|\log \zeta|}{|\log \nu|}} \le
   \Big( \frac{M_m}{M_n}\Big)^{\frac{1}{10}}
   \le M_n^{\chi/10}
\ee
by $|\log \zeta|\le\frac{1}{10}|\log \nu|$. Then  \eqref{23ccheck} 
 directly follows from \eqref{400} and \eqref{phim}.

Finally, \eqref{shiftcheck}   follows from the facts that
$|\bar u_n|\le \ell=\ell_0$, $K^{1-\xi}\ge \cM=M_0$ (using $\vartheta\ge 1+2\xi$)
and that $\ell_0/\ell_n \le M_0/M_n$.
This completes the proof of Lemma~\ref{lm:dircalc}. \qed

\subsubsection{Completing the proof of Proposition~\ref{prop:oscc}}

We now continue the proof of Proposition~\ref{prop:oscc}. Set 
$$
    F^{(n)}_i: = F_i^{(M_n, Z, \ell_n)},
$$
where $F$ is given in \eqref{Fdef}
and  we define further cutoff functions:
$$
   \varphi^{(0),(n)}_i: = \ell_n + \wt\psi_i^{(n)} + F_i^{(n)}
$$
$$
   \varphi^{(1),(n)}_i: = \ell_n + \wt\psi_i^{(n)} +  \lambda F_i^{(n)}
$$
$$
   \varphi^{(2), (n)}_i: = \ell_n + \wt\psi_i^{(n)} + \lambda^2 F_i^{(n)}.
$$
Throughout this section $n$ is fixed, so we will often omit this
from the notation. In particular $\ell=\ell_n$, $M=M_n$, $\bar u=\bar u^{(n)}$,
 $v^{(k)}= v^{(n,k)}$, $F=F^{(n)}$, $\wt\psi= \wt\psi^{(n)}$, 
$\varphi^{(a)}_i=\varphi^{(a),(n)}_i$ for $a=0,1,2$, $\cG=\cG_n$
etc. At the end of the proof we will add back the superscripts.

From the definitions of these cutoff functions, we have
\be\label{alleq1}
   \varphi^{(0)}_i \le  \varphi^{(1)}_i \le  \varphi^{(2)}_i \le \ell + \wt\psi_i, 
\ee
and  when  $ |i-Z|\ge 9M$ all inequalities become equalities. 
 Notice that $\varphi^{(0)}_i = 0$ if $ |i - Z | \le 8 M$.

Choose a small constant $\mu\in (0, 1/10)$, say 
\be\label{muchoice}
\mu:=\frac{1}{100}.
\ee
Without loss of generality, we can assume
\be
\frac{1}{M^2} \int_{-3M}^{-2M}
 \#\Big\{ i\; : \; |i-Z|\le M, \; u_i(t)-\bar u<\varphi^{(0)}_i\Big\} \rd t \ge \mu
\label{441}
\ee
(otherwise we can take $-u$).

Notice that for any $|i-Z|\le M$ and $t\in \cG$ the sequence $v^{(k)}_i(t)$
is decreasing in $k$,  in particular $v^{(k)}_i(t) \le \ell$. This follows
from  \eqref{400} and that $\Psi_i^{(n)}(t)\le \ell_n$ in this regime.  
From \eqref{441} therefore we have
\be
\frac{1}{M^2} \int_{-3M}^{-2M} {\bf 1}(t\in \cG)\cdot 
 \#\Big\{ i\; : \;  |i-Z|\le M, \; v_i^{(k)}(t)<\varphi^{(0)}_i\Big\}  \rd t \ge \mu,
\label{442}
\ee
since the set of $i$ indices in \eqref{442}  is increasing in $k$ for any  $t\in \cG$
and $v^{(0)}= u-\bar u$.

Assuming that the parameters satisfy \eqref{guarant} and \eqref{zetabound},
we can now use
the conclusions \eqref{uppb}--\eqref{shiftcheck} in Lemma~\ref{lm:dircalc}.  These bounds together with  \eqref{442} 
allow us to apply   Lemma~\ref{lm:2nd} to $v^{(k)}=v^{(n,k)}$ 
with the choice 
\be\label{deltachoice}
\kappa_1 := \frac{3}{4}\chi, \quad \kappa_2 := \frac{1}{2}\chi, \quad \delta:=\frac{\e_0^2}{100}, 
\ee
 where $\e_0>0$ is
a universal constant which was determined 
 in Lemma~\ref{lm:energy}.
 Notice that with these choices \eqref{kappacond} follows from \eqref{guarant} and
 $\vartheta\in [1+2\kappa_1, \vartheta_0]$
follows from $ \vartheta\in [1+2\chi, \vartheta_0]$.
 Thus the application of Lemma~\ref{lm:2nd}   yields that 
there exist a $\lambda$  (introduced explicitly in the
construction of the cutoffs $\varphi^{(a)}$ and used also in \eqref{psit} and \eqref{400})  and a 
 $\gamma>0$ (see \eqref{gamlamchoice} for their explicit values) such that if
\be 
\frac{1}{M^2} \int_{-2M}^{0} {\bf 1}(t\in \cG)\cdot 
 \#\Big\{ i\; : \;  \; v_i^{(k)}(t)>\varphi^{(2)}_i\Big\} \rd t > \delta
\label{443}
\ee
then 
\be
\frac{1}{M^2} \int_{-3M}^{0} {\bf 1}(t\in \cG)\cdot 
 \#\Big\{ i\; : \;  \; \varphi^{(0)}_i <v_i^{(k)}(t)< \varphi^{(2)}_i\Big\} \rd t \ge\gamma.
\label{444}
\ee
Therefore
\begin{align}
\label{91}
   \frac{1}{M^2} \int_{-3M}^{0} {\bf 1}(t\in \cG)\cdot  &
 \#\Big\{ i\; : \;  \; v_i^{(k)}(t)> \varphi^{(2)}_i\Big\} \rd t 
\\  \nonumber
&  \le  \frac{1}{M^2} \int_{-3M}^{0} {\bf 1}(t\in \cG)\cdot 
 \#\Big\{ i\; : \;  \; v_i^{(k)}(t)> \varphi^{(0)}_i\Big\} \rd t -\gamma .
\end{align} 
Notice that, by \eqref{uppb} and $F_i = 0$ if $|i-Z| \ge 9M$, 
for any $k\le k_0$ the inequality $v_i^{(k)}(t)> \varphi^{(0)}_i$
(for $t\in \cG$) can hold
only if $ |i-Z| \le 9 M$.  Assuming $|i-Z| \le 9M$, $t\in \cG$ and 
$v_i^{(k)}(t)> \varphi^{(0)}_i$, we have 
\be
\frac 1 { \lambda^2}  (v_i^{(k-1)}(t) - \ell) + \ell  = v_i^{(k)}(t)> \varphi^{(0)}_i.
\ee
Since $|i-Z| \le 9 M \le  \lambda^{-4} M$, we have,  together with \eqref{alleq1}
and that $\wt\psi_i=0$ in this regime,  that 
\be
v_i^{(k-1)}(t) \ge   \lambda^2 ( \wt \psi_i + F_i) +  \ell  \ge \varphi^{(2)}_i.
\ee
Therefore, we can bound the last integral in \eqref{91} by  
\begin{align} 
\frac{1}{M^2} \int_{-3M}^{0} & {\bf 1}(t\in \cG)\cdot 
 \#\Big\{ i\; : \;  \; v_i^{(k)}(t)> \varphi^{(0)}_i\Big\} \rd t   \nonumber \\
&  \le  \frac{1}{M^2} \int_{-3M}^{0} {\bf 1}(t\in \cG)\cdot 
 \#\Big\{ i\; : \;  |i-Z| \le 9 M,   \; v_i^{(k-1)}(t)> \varphi^{(2)}_i\Big\} \rd t.
 \label{93}
\end{align} 
We have thus proved that 
\begin{align} 
   \frac{1}{M^2} \int_{-3M}^{0} & {\bf 1}(t\in \cG)\cdot 
 \#\Big\{ i\; : \;  \; v_i^{(k)}(t)> \varphi^{(2)}_i\Big\} \rd t 
\nonumber \\
 & \le  \frac{1}{M^2} \int_{-3M}^{0} {\bf 1}(t\in \cG)\cdot 
 \#\Big\{ i\; : \;  |i-Z| \le 9 M,   \; v_i^{(k-1)}(t)> \varphi^{(2)}_i\Big\} \rd t- \gamma.
 \label{94}
\end{align} 
Iterating this estimate  $k$ times, we  get
\begin{align*}
   \frac{1}{M^2} \int_{-3M}^{0} {\bf 1}(t\in \cG) & \cdot 
 \#\Big\{ i\; : \;  \; v_i^{(k)}(t)> \varphi^{(2)}_i\Big\} \rd t 
\\ & \le  \frac{1}{M^2} \int_{-3M}^{0} {\bf 1}(t\in \cG)\cdot 
 \#\Big\{ i\; : \; |i-Z| \le 9 M,   \; v_i^{(0)}(t)> \varphi^{(2)}_i\Big\} \rd t- k\gamma,
\end{align*}
which becomes negative if   $k\gamma \ge 100$. 
Setting 
\be\label{k0choice}
k_0:= \frac{100}{\gamma},
\ee
thus
 there is a  $k< k_0$   such that \eqref{443} is violated,  i.e., 
\be
\frac{1}{M^2} \int_{-2M}^{0} {\bf 1}(t\in \cG)\cdot 
 \#\Big\{ i\; : \;  \; v_i^{(k)}(t)>\varphi^{(2)}_i\Big\} \rd t \le \delta.
\label{445}
\ee
From now on let $k=k(n)$ denote the smallest index so that \eqref{445} holds
(recall that the underlying
$n$ dependence was omitted from the notation in most of this section). 
Furthermore, since 
$\varphi^{(0)}_i=0$  for $|i-Z|\le 8M$, we have 
\begin{align}\label{lm1cond}
 \frac{1}{M^2} &\int_{-2M}^{0} {\bf 1}(t\in \cG)\cdot
 \#\Big\{ i\; : \;  |i-Z|\le 8M, \; v_i^{(k+1)}(t)>0\Big\} \rd t \\ \nonumber
&   =   \frac{1}{M^2} \int_{-2M}^{0} {\bf 1}(t\in \cG)\cdot
 \#\Big\{ i\; : \;  |i-Z|\le 8M, \; v_i^{(k+1)}(t)>\varphi^{(0)}_i\Big\} \rd t 
\\ \nonumber
&  \le  \frac{1}{M^2} \int_{-2M}^{0} {\bf 1}(t\in \cG)\cdot
 \#\Big\{ i\; : \;   \; v_i^{(k)}(t)>\varphi^{(2)}_i\Big\} \rd t \le \delta = \frac{\e_0^2}{100},
\end{align}
where we have used \eqref{93} in the last inequality.

Armed with \eqref{lm1cond}, our goal is to apply Lemma~\ref{lm:energy}
 with $M=M_n$ to  $v=v^{(n,k(n)+1)}$  with the value $k=k(n)$ determined after \eqref{445}. Clearly $v$ 
 is of the form
\be\label{vu}
    v= \lambda^{-2k-2}u + \big[ \ell_n -\lambda^{-2k-2}(\bar u_n + \ell_n)\big],
\ee
i.e. it is a solution to \eqref{ve2} (namely $ \lambda^{-2k-2}u$) shifted by
$ \big[ \lambda_n -\lambda^{-2k-2}(\bar u_n + \ell_n)\big]$. The value $\kappa$ in
 Lemma~\ref{lm:energy}  will be set to
\be\label{kappach}
  \kappa :=\frac{3}{4}\chi
\ee
and the set $\cG^*$ in  Lemma~\ref{lm:energy} will be chosen as $\cG^*:=\cG_{n}$
(for $n=0$ we set $\cG^* = [-3M_0, 0]$, i.e. at the zeroth step of the iteration
every time is ``good'', see \eqref{supbound}).
The choice $\kappa=3\chi/4$ together 
with the constraints on $\chi$ in \eqref{guarant} guarantee that the 
relations in \eqref{kappacond1} hold.
We need to check
 five conditions \eqref{barvbound}, \eqref{defell1/2},  \eqref{numbercontrol}, \eqref{vlinfty}
and \eqref{vlinftygood}. The sixth condition, the regularity at $(Z, t)$ for $t\in \Xi_0$
follows automatically from ${\bf (C1)_\rho}$ 
 since $M=M_n =\cM\nu^n = 2^{-\tau_0} \nu^n K$ with an integer $\tau_0$ and $\nu$ itself is a negative
power of $2$, thus $\Xi_0\subset \Xi$, see \eqref{Kassnew}. 
The first condition \eqref{barvbound} for the shift in \eqref{vu} was verified in \eqref{shiftcheck}.

For the second condition  \eqref{defell1/2}, 
with the notation $\cG^c: = [-3M, 0]\setminus \cG$ we write 
\begin{align}
\frac{1}{M^2}\int_{-2M}^0  \sum_i & (v_i^{(k+1)}(t)-\psi_i)_+^2\rd t \label{splitg}
\\ & \le \frac{1}{M^2}\int_{-2M}^0  {\bf 1}(t\in \cG)\cdot  \sum_i  (v_i^{(k+1)}(t)-\psi_i)_+^2\rd t
  + \frac{|\cG^c|}{M^2} \sup_{t\in [-2M, 0]}  \sum_i  (v_i^{(k+1)}(t)-\psi_i)_+^2.
\nonumber
\end{align}
In the first term we use that
$$
   v_i^{(n,k+1)}(t) \le \ell_n+\wt \psi_i^{(n)}, \qquad t\in \cG_n
$$
from \eqref{uppb} (we reintroduced the superscript $n$). Since $\ell_n+\wt \psi_i^{(n)} \le \psi_i^{(n)}$ 
if $|i-Z|\ge 8M$, we see that the summation in the first term on the right hand side of \eqref{splitg}
is restricted to $|i-Z|\le 8M$,
and for these $i$'s we have $v_i^{(n,k+1)}(t)\le \ell_n$ since $\wt\psi_i^{(n)}=0$.  
We can therefore apply   \eqref{lm1cond} and we get
\be\label{59}
\frac{1}{M_n^2}\int_{-2M_n}^0  \sum_i  (v_i^{(n,k+1)}(t)-\psi^{(n)}_i)_+^2\rd t
  \le 4 \delta\ell_n^2+
 \frac{|\cG_n^c|}{M_n^2} \sup_{t\in [-2M_n, 0]}  \sum_i  (v_i^{(n,k+1)}(t)-\psi^{(n)}_i)_+^2.
\ee
To estimate the second term,
we use \eqref{400} and $\Psi^{(n)}\le \Phi^{(n)}$
to note that 
\be\label{58}
  \psi_i^{(n)} \le v_i^{(n,k+1)}(t) \quad \Longrightarrow \quad  \psi_i^{(n)} \le 
\ell_n + \lambda^{-2k-2}\Big( \Phi_i^{(n)}(t) -\ell_n\Big), \qquad t\in [-3M_n,0].
\ee
Suppose first that $|i-Z|\ge M_n^{1+3\chi/4}$. 
In this case \eqref{tochh21} holds, thus \eqref{58} would imply
$$
   \psi_n^{(n)} = \ell_n \Big( \Big|\frac{i-Z}{M_n}\Big|^{1/2}-1\Big)_+ \le  \lambda^{-2k_0} \ell_n + 
\ell_n\Big[ \Big(   \Big|\frac{i-Z}{M_n}\Big| - 
\lambda^{-4}\Big)^{1/4}  - 1\Big]_+,
$$
but this is impossible for $|i-Z|\ge M_n^{1+3\chi/4}$
if $M_n$ is large enough. In particular, 
this verifies \eqref{numbercontrol}. 
We therefore conclude that the summation in the second term in the right
hand side of \eqref{59} is restricted to $|i-Z|\le M^{1+3\chi/4}$.
For these values we have 
\be\label{vit}
   v_i^{(n,k+1)}(t) \le \ell_n + \lambda^{-2k}\Big( \Phi_i^{(n)}(t) -\ell_n\Big) \le C\lambda^{-2k} \ell_n M_n^{\chi/2}
\ee
(the first inequality is from \eqref{400}, the second is from \eqref{phim}).
This verifies \eqref{vlinfty}, recalling the choice of $\kappa=3\chi/4$.

Inserting these information into \eqref{59}, we have
$$
   4 \delta\ell_n^2+ \frac{|\cG^c_n|}{M^2_n} \sup_{t\in [-2M_n, 0]}  \sum_i  (v_i^{(n, k+1)}(t)-\psi^{(n)}_i)_+^2 
\le  4 \delta\ell_n^2+ C\lambda^{-2k} M_n^{-\frac{1}{2} + 2\chi}\ell_n^2 \le \e_0^2 \ell^2_n,
$$
where we used $|\cG_n^c|\le CM_0^{1/4} \le M_n^{1/2}$ from \eqref{cGmeas} and \eqref{between}.
 In the last step we used the choice $\delta= \e_0^2/100$.
This verifies \eqref{defell1/2}.

Finally,  we verify \eqref{vlinftygood} with the previously mentioned choice $\cG^*:=\cG_{n}$.
Let $i$ such that $|i-Z|\le M_n^{1+3\chi/4}$ and $t\in \cG_n$. Then  from \eqref{400} we have
$$
    v^{(n,k+1)}_i(t) =\lambda^{-2k-2} \Lambda^{(n)}_i +   \ell_n (1-\lambda^{-2k-2})
  \le C\ell_m\le CM_n^{\chi/10}\ell_n
$$
where $m$ is chosen such that $\wh M_{m+1}\le  M_n^{1+3\chi/4}\le\wh M_{m}$ and in the
last step we used \eqref{ellM}.

Thus we can apply  Lemma~\ref{lm:energy} to $v=v^{(n,k+1)}$ and from \eqref{goodtimes} we get the 
existence of a set of times, denoted by $\cG_n'\subset [-M_n, 0] $, such that
$$
      \sup_{ t\in  \cG_{n}'} v^{(n,k+1)}_i(t) \le \frac{\ell_n}{2}, \qquad \forall \; |i-Z|\le M_n
$$
and
$$
     \big| [ -M_{n}, 0]\setminus \cG_n']\big| \le CM_{n}^{1/4}.
$$
Defining $\cG_{n+1}: = \cG_n \cap \cG_n'\cap[-3M_{n+1}, 0]$ and using  $\wh M_{n+1}\le M_n$,
we obtain that
\be\label{vestfin}
     \sup_{t\in \cG_{n+1}}
   v^{(n,k+1)}_i(t) \le \frac{\ell_n}{2}, \qquad \; |i-Z|\le \wh M_{n+1}
\ee
and
\be\label{gn+1}
     \big| \cG_{n+1}^c\big| \le CM_n^{1/4}+ |\cG_n^c|\le C\sum_{m=0}^n M_m^{1/4}
\ee
where we used the measure of $\cG_n^c$ from \eqref{cGmeas}.

Recalling the definition \eqref{vdeff}, from \eqref{vestfin} we have
$$
   u_i(t) - \bar u_n \le \ell_n \big(1 - \frac{1}{2}\lambda^{2(k+1)}\big) 
 \le \ell_n \big(1 - \frac{1}{2}\lambda^{2(k_0+1)}\big) \le \ell_n \zeta =\ell_{n+1}
  \qquad \mbox{$i\in \wh Q_{n+1}$ and $t\in \cG_{n+1}$},
$$
where we recall that $k\le k_0$ and \eqref{zetabound}.
Repeating the argument for $-u$ instead of $u$, we obtain a similar lower bound
on $ u_i(t) - \bar u_n$. Since $Q_{n+1}^*\subset \cG_{n+1}\times [Z-\wh M_{n+1}, Z+\wh M_{n+1}]$,
 and this proves \eqref{oscreduce} for $n$, i.e. {\bf $(OSC)_n$}. 

The application of  Lemma~\ref{lm:energy} also yields (see \eqref{11step}) 
 that
  for  $t  \in [-M_n, 0]$ we have
\be
    \sum_i \Big(v_i^{(n,k+1)}(t)-(\frac{2}{5}\ell_n+\psi_i^{(n)})\Big)_+^2 \le C 
\Big(\frac{ |t|+  \cM^{1/2} }{M_n}\Big)M_n^\chi \ell_n^2,
\ee 
which implies, by \eqref{vdeff} and an elementary algebra, 
the second statement in Proposition~\ref{prop:oscc} (the constant
$C$ in \eqref{l2prop} includes a factor $\lambda^{-2k}\le \lambda^{-2k_0}$).

\subsubsection{Summary of the choice of the parameters}\label{sec:parchoice}

Finally we present a possible choice of the parameters that were used
in  the proof of Proposition~\ref{prop:oscc}. Especially, we need to satisfy 
the complicated relations \eqref{guarant}, \eqref{zetabound}.

 Lemma~\ref{lm:energy}
gives an absolute constant $\e_0$. Then we choose $\delta = \e_0^2/100$,  $\gamma = c\delta^3$,
$\lambda = c\delta^6\mu$ (with a small constant $c$),
 $k_0=100/\gamma$
and $\mu=1/100$. These choices can be found in \eqref{deltachoice}, \eqref{gamlamchoice},
\eqref{k0choice} and \eqref{muchoice}, respectively.

Having $\lambda, k_0$ determined, we define
$$
   \zeta: =1 - \frac{1}{2}\lambda^{2(k_0+1)}, \quad L := \lambda^{-16(k_0+1)}, 
\quad \nu=:2\lambda^{16(k_0+1)}.
$$
 If needed, reduce $\lambda$ so that
$|\log \zeta|/|\log \nu|\le 1/10$.
Note that  five numbers, $\lambda, k_0$, $\zeta, \nu, L$ 
are absolute positive constants (meaning that
they do not depend on any input parameters in Theorem~\ref{thm:caff}).
In particular, they determine
the absolute constant $\fq$ \eqref{c2def}, which is the final H\"older exponent.

Next we set
\be\label{chichoice}
  \chi:= \min\Big\{  \frac{|\log \zeta|}{|\log\nu|}, \frac{\vartheta_1-1}{2},
\frac{1}{2000} \Big\}
\ee
and then choose  the exponents $\xi, \rho$ as
\be\label{xiro}
  \xi := \rho:=  \frac{\chi}{200\vartheta_0}.
\ee
Finally, $\cM= M_0$ (or, equivalently $K_0$) has to be sufficiently large
depending on all these exponents.

It is easy to check that this choice of the parameters satisfies
all the relations  that were used in the proof  of Proposition~\ref{prop:oscc}.
This completes the proof of  Proposition~\ref{prop:oscc}.
 \qed

\subsection{Proof of  {\bf $(ST)_n$} +  {\bf $(OSC)_n$} $\Longrightarrow$ {\bf $(ST)_{n+1}$}}
\label{sec:secondstep}

\begin{proposition}\label{prop:stst}
Suppose that for some $n$ integer   {\bf $(ST)_n$}  and  {\bf $(OSC)_n$} hold. Then 
 {\bf $(ST)_{n+1}$} also holds. 
\end{proposition}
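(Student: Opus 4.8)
\textbf{Plan for the proof of Proposition~\ref{prop:stst}.}
The statement to prove is an implication at the level of a single iteration step: given the full staircase bound $\mathbf{(ST)}_n$ (i.e. \eqref{coll} with the profile $\Psi^{(n)}$) together with the oscillation reduction $\mathbf{(OSC)}_n$ (i.e. \eqref{oscreduce}), we must produce the constant $\bar u_{n+1}$, the good set $\cG_{n+1}$, and verify $\mathbf{(ST)}_{n+1}$, namely $|u_i(t)-\bar u_{n+1}|\le \Psi_i^{(n+1)}(t)$ for all $t\in[-3M_{n+1},0]$ and all $i$. The key point is that most of the ingredients have already been assembled: $\bar u_{n+1}$ is defined by \eqref{35} at scale $n+1$, the good set $\cG_{n+1}=\cG_n\cap\cG_n'\cap[-3M_{n+1},0]$ was already constructed in the proof of Proposition~\ref{prop:oscc} (with the measure bound \eqref{gn+1}, which is exactly \eqref{cGmeas} at level $n+1$), and $\mathbf{(OSC)}_n$ gives the control $|u_i(t)-\bar u_n|\le \ell_n=\ell_{n+1}/\zeta$ on $Q_{n+1}^*$, hence $|\bar u_n-\bar u_{n+1}|\le\ell_n$ and the bound \eqref{l2prop} from Proposition~\ref{prop:oscc}. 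So the real content is a bookkeeping argument: splice the already-known estimate near the center (from $\mathbf{(OSC)}_n$ and \eqref{l2prop}) with the already-known estimate far from the center (from $\mathbf{(ST)}_n$, re-centered from $\bar u_n$ to $\bar u_{n+1}$).

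First I would fix the value $k=k(n)$ determined after \eqref{445} and recall from \eqref{vestfin} that $u_i(t)-\bar u_n\le \ell_{n+1}$ for $|i-Z|\le\wh M_{n+1}$ and $t\in\cG_{n+1}$, together with the analogous lower bound from running the argument for $-u$; this immediately gives the central piece $|u_i(t)-\bar u_n|\le\ell_{n+1}$ in that regime, and combined with $|\bar u_n-\bar u_{n+1}|\le\ell_n$ absorbed into the shifts $S_{m,n+1}$, it matches the $|i-Z|\le\wh M_{n+1}$ term of $\Lambda^{(n+1)}$. Next, for $\wh M_{n+1}\le|i-Z|$, i.e.\ in the ``staircase tail'', I would start from $\mathbf{(ST)}_n$: $|u_i(t)-\bar u_n|\le\Psi_i^{(n)}(t)$, and simply rewrite this in terms of $\bar u_{n+1}$ using $|\bar u_n-\bar u_{n+1}|\le\ell_n=\ell_n+S_{n,n+1}-S_{n,n+1}$; the point is that $\Psi^{(n+1)}$ is defined precisely so that its tail profile is $\Psi^{(n)}$ plus the extra shift $S_{m,n}\to S_{m,n+1}$, so this step is essentially a telescoping identity $S_{m,n+1}=S_{m,n}+|\bar u_n-\bar u_{n+1}|$ combined with the new step at $|i-Z|\in[\wh M_{n+1},\wh M_n]$ which inherits the value $\ell_n+S_{n,n+1}$ from $\mathbf{(ST)}_n$ restricted to $|i-Z|\le\wh M_n$. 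One has to be slightly careful to separate the good-time part (controlled by $\Lambda^{(n+1)}$) from the bad-time part (controlled by $\Phi^{(n+1)}$): on $t\in\cG_{n+1}\subset\cG_n$ one uses the $\Lambda$-branch of $\Psi^{(n)}$; on $t\in\cG_{n+1}^c$ one splits further into $t\in\cG_n^c$ (use the $\Phi^{(n)}$-branch of $\Psi^{(n)}$ directly, and check $\Phi^{(n)}(t)+\ell_n\le\Phi^{(n+1)}(t)$ in the relevant index ranges, which is a direct inspection of \eqref{defPhi} using $M_{n+1}=\nu M_n$, $\ell_{n+1}=\zeta\ell_n$ and the sublinearity of $\sqrt{(|t|+\cM^{1/2})/M}\,M^{\chi/2}$ in the scale) versus $t\in\cG_n\setminus\cG_{n+1}=\cG_n\cap(\cG_n')^c$.

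The one genuinely new estimate needed is the bound on $|u_i(t)-\bar u_{n+1}|$ for the bad times $t\in\cG_n\setminus\cG_{n+1}$ and $|i-Z|\le\wh M_{n+1}$, since on these times $\mathbf{(OSC)}_n$'s pointwise conclusion \eqref{vestfin} does not apply. Here I would invoke \eqref{l2prop} from Proposition~\ref{prop:oscc}, which holds for all $t\in[-M_n,0]$ (not just good times): it says $\sum_i(u_i(t)-\bar u_n-\ell_n-\psi_i^{(n)})_+^2\le C((|t|+\cM^{1/2})/M_n)M_n^\chi\ell_n^2$, so in particular for $|i-Z|\le\wh M_{n+1}\le M_n$, where $\psi_i^{(n)}=0$, we get $u_i(t)-\bar u_n\le\ell_n+C\sqrt{(|t|+\cM^{1/2})/M_n}\,M_n^{\chi/2}\ell_n$, and symmetrically from below; re-centering to $\bar u_{n+1}$ costs another $\ell_n$. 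This is exactly the $|i-Z|\le\wh M_{n+1}$ branch of $\Phi^{(n+1)}(t)$ (with $m=n$ there, recalling $M_n^{\chi/2}\le M_n^{\chi/2}$ and $\ell_n\le\ell_{n+1}/\zeta$, and the constant $C$ gets absorbed into $C_\Phi$). The main obstacle is therefore not any hard analysis — that was all done in Proposition~\ref{prop:oscc} and the De~Giorgi lemmas — but purely the careful matching of the piecewise definitions of $\Lambda^{(n+1)}$ and $\Phi^{(n+1)}$ against the re-centered, re-scaled version of $\mathbf{(ST)}_n$ plus \eqref{l2prop}; one must check that $C_\Phi$ can be chosen once and for all (independently of $n$), which works because at each step the new contributions come with the geometric factor $\zeta<1$ (in $\ell$) and $\nu<1$ (in $M$), so the $C_\Phi$-prefactor is not amplified along the iteration, and because the number of ``steps'' in the staircase only grows by one, each carrying a shift $|\bar u_j-\bar u_{j+1}|\le\ell_j$ whose sum is geometrically bounded by \eqref{ubarbound}.
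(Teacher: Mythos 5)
Your plan follows the same bookkeeping argument as the paper's own proof: split into good/bad times and central/tail index ranges, use $\mathbf{(OSC)}_n$ for the central piece at good times, the triangle inequality with the telescoping $S_{m,n}\to S_{m,n+1}$ from $\mathbf{(ST)}_n$ for the staircase tail, and the $L^2$ bound \eqref{l2prop} for the central piece at bad times. One small slip: in the good-time, central case $|i-Z|\le\wh M_{n+1}$ you should \emph{not} combine $|u_i(t)-\bar u_n|\le\ell_{n+1}$ with the triangle inequality via $|\bar u_n-\bar u_{n+1}|\le\ell_n$ ``absorbed into $S_{m,n+1}$'' --- there is no shift term in $\Lambda^{(n+1)}_i=\ell_{n+1}$ on that branch, and the triangle inequality overshoots; instead use the definition \eqref{35} of $\bar u_{n+1}$ as the midpoint of the range of $u$ on $Q^*_{n+1}$ together with $\mathbf{(OSC)}_n$, which gives $|u_i(t)-\bar u_{n+1}|\le\frac{1}{2}\mbox{Osc}_{Q^*_{n+1}}(u)\le\ell_{n+1}$ directly, as the paper does.
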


{\it Proof.} 
For $t\in \cG_{n+1} \subset \cG_n$ we have
$$
   |u_i(t) -\bar u_{n+1}| \le \ell_{n+1}, \quad |i-Z|\le M_n
$$
by {\bf $(OSC)_n$}. Since $\wh M_{n+1}\le M_n$ (as $\nu\le\frac{1}{2L}$), 
we immediate get  $|u_i(t) -\bar u_{n+1}|\le \Lambda_i^{(n+1)}$ for
$ |i-Z|\le \wh M_{n+1}$. For $\wh M_{n+1} \le |i-Z|\le \wh M_1$ we just use
$$
    |u_i(t) -\bar u_{n+1}| \le  |u_i(t) -\bar u_{n}| + |\bar u_{n+1}- \bar u_n| 
  \le \Lambda_i^{(n)} +  |\bar u_{n+1}- \bar u_n|  \le  \Lambda_i^{(n+1)}
$$
where the last estimate is from the definition of $\Lambda$.
For $|i-Z|\ge \wh M_1$ we have the trivial bound $\ell_0$. 

Now we need to check the case $t\in [-3M_{n+1}, 0]\setminus \cG_{n+1}$. For  $\wh M_{n+1}\le |i-Z|\le \wh M_{1}$, 
from {\bf $(ST)_n$}
we have
$$
   |u_i(t) -\bar u_{n+1}| \le |u_i(t) -\bar u_{n}| + |\bar u_{n+1}- \bar u_n| 
  \le  \Phi^{(n)}_i (t) +|\bar u_{n+1}- \bar u_n| \le  \Phi^{(n+1)}_i (t),
$$
where the last  inequality is just from the definition of $\Phi$.
Finally, if $|i-Z|\le \wh M_{n+1} (\le M_n)$, we use  \eqref{l2prop}
$$
   |u_i(t) -\bar u_{n+1}|\le  |u_i(t)-\bar u_n| +| \bar u_n -\bar u_{n+1}|
  \le 2\ell_n + \ell_n  \sqrt{C\frac{|t|+  \cM^{1/2} }{M_{n}}} M_{n}^{\chi/2} 
$$
since in this regime $\psi_i^{(n)}=0$. The constant $C$ is from \eqref{l2prop}. The right hand side is bounded by
$$
 C_\Phi
 \ell_{n+1} \Big(1+\sqrt{\frac{|t|+  \cM^{1/2} }{M_{n+1}}} M_{n+1}^{\chi/2} \Big),
$$
by using that $\ell_n/\ell_{n+1}=\zeta^{-1} \le \nu^{-1/10} = (M_n/M_{n+1})^{1/10}$
and choosing $C_\Phi$ large enough. This completes the proof of Proposition~\ref{prop:stst}. \qed

\subsection{Proof of  Lemma~\ref{lm:energy} (first De Giorgi lemma)}\label{sec:firstdg}

 Assume for notational simplicity that $Z=0$ and we set
$$
   \psi_i^\ell: = \psi_i+\ell.
$$
Using that $\bv$ solves the equation
\be\label{ve2uj}
  \pt_s v_i(s) = -\big[\cA(s)\bv(s)\big]_i - W_i(s)\bar u,
\ee
by direct computation
we have
\be\label{heat}
  \pt_t  \frac{1}{2}
 \sum_i [v_i-\psi^\ell_i]_+^2 = - \sum_{ij} (v_i-\psi^\ell_i)_+  B_{ij} (v_i-v_j)
 - \sum_i (v_i-\psi^\ell_i)_+ W_i (v_i+\bar u). 
\ee
 Recall that $B_{ij}$ depends on time, but we will omit this from the notation.
Since $W_i  \ge 0$, the last term can be bounded by 
\begin{align*}
-\sum_i (v_i-\psi^\ell_i)_+ W_i (v_i+\bar u) &  \le -\sum_i (v_i-\psi^\ell_i)_+ W_i (v_i-\psi^\ell_i)_+
  -\bar u\sum_i (v_i-\psi^\ell_i)_+ W_i \\
&  \le -\fw[ (v-\psi^\ell)_+, (v-\psi^\ell)_+] + |\bar u|\sum_i (v_i-\psi^\ell_i)_+ W_i .
\end{align*}
 In the first term on the right hand side of \eqref{heat} we can symmetrize and then 
add and subtract  $\psi^\ell$ to $v$ we get
\begin{align*}
  - \sum_{ij} (v_i-\psi^\ell_i)_+  B_{ij} (v_i-v_j) = & - \fb[ ( v-\psi^\ell)_+, v] \\
  = & - \fb[( v-\psi^\ell)_+, (v-\psi^\ell)_+] - \fb[( v-\psi^\ell)_+, (v-\psi^\ell)_-] - 
 \fb[ (v-\psi^\ell)_+, \psi^\ell].
\end{align*}
Since $B_{ij}\ge 0$ and $  [  a_+ - b_+ ] [  a_- - b_- ] \ge 0$ for any real numbers $a, b$,
for the cross-term we have $ \fb[( v-\psi^\ell)_+, (v-\psi^\ell)_-]\ge 0$. 
Thus the last equation is bounded by 
\be\label{der10}
  \le  - \fb[( v-\psi^\ell)_+, (v-\psi^\ell)_+]  - \fb[ (v-\psi^\ell)_+, \psi^\ell].
\ee
Using the definition of $\fa$ \eqref{Adef}, we have thus proved that 
\be\label{enee}
  \pt_t \frac{1}{2}\sum_i [v_i-\psi^\ell_i]_+^2 \le - \fa [( v-\psi^\ell)_+, ( v-\psi^\ell)_+]- 
\fb[ (v-\psi^\ell)_+, \psi^\ell]  + |\bar u|\sum_i (v_i-\psi^\ell_i)_+ W_i .
\ee

Decompose  the first error term into 
$$
   \fb[(v-\psi^\ell)_+, \psi^\ell] = \Omega_1 +\Omega_2 +\Omega_3, 
$$
$$
  \Omega_1:=\frac{1}{2}\sum_{|i-j|\ge M} B_{ij}[\psi^\ell_i-\psi^\ell_j]
 \big( (v_i-\psi^\ell_i)_+ - (v_j-\psi^\ell_j)_+\big)\cdot {\bf 1}(  \max\{ d_i^I, d_j^I\}\ge K/3)
$$
and $\Omega_2$ and $\Omega_3$ are defined in the same way except that the summation 
is restricted to $\wh C K^\xi\le |i-j|\le M$ for $\Omega_2$ and $|i-j|\le \wh C K^\xi$ for $\Omega_3$,
 where $\wh C$ is the constant from   \eqref{far1new}. 
Notice that we inserted the characteristic function  ${\bf 1}(  \max\{ d_i^I, d_j^I\}\ge K/3)$
for free,  since \eqref{numbercontrol} together with $|Z|\le K/2$ and $M^{1+\kappa}\ll K$
(from $\vartheta\ge 1+2\kappa$) guarantees that
$(v_i-\psi^\ell_i)_+=0$ unless $d_i^I\ge K/3$. Thus
 the summation over $i,j$ can be restricted to index pairs, where
at least one of them is far away from the boundary. 
Recall from \eqref{far1new} that in the regime $ |i-j|\ge M$ 
  we have  $   B_{ij} \le C|i-j|^{-2}$ since $M\ge \wh C K^\xi$.  Moreover, 
we have 
\be\label{psibb}
  |\psi^\ell_i-\psi^\ell_j|
 \le \ell  M^{-1/2}|i-j|^{1/2}. 
\ee 
Altogether we have 
$$
  |\Omega_1|\le \ell M^{-1/2}\sum_{|i-j|\ge M} \frac{1}{|i-j|^{3/2}}
  \big[ (v_i-\psi^\ell_i)_+ + (v_j-\psi^\ell_j)_+\big]\le \frac{\ell}{M}\sum_i (v_i-\psi^\ell_i)_+.
$$

For $\Omega_2$,  by symmetry of $B_{ij}$, we can rewrite it as  
\begin{align*}
   -\Omega_2: & =-
 \sum_{\wh CK^\xi\le |i-j|\le M, \psi^\ell_i \le \psi^\ell_j  }  B_{ij} [\psi^\ell_i-\psi^\ell_j]
 \big( (v_i-\psi^\ell_i)_+ - (v_j-\psi^\ell_j)_+\big)  \\
 & \le  - \sum_{\wh C K^\xi\le |i-j|\le M, \psi^\ell_i \le \psi^\ell_j  }  B_{ij} [ \psi^\ell_i-\psi^\ell_j ] \;
 \big [ (v_i-\psi^\ell_i)_+ - (v_j-\psi^\ell_j)_+\big ] \cdot 
{\bf 1}( v_i-\psi^\ell_i>0) \\
  &  \le   \frac{1}{4} \sum_{\wh C K^\xi\le |i-j|\le M} B_{ij}
 \big[ (v_i-\psi^\ell_i)_+ - (v_j-\psi^\ell_j)_+\big]^2 \\
&  \quad   +  4 \sum_{\wh C K^\xi \le |i-j|\le M} B_{ij} |\psi^\ell_i-\psi^\ell_j|^2
\cdot {\bf 1}( v_i-\psi^\ell_i>0).
\end{align*}
\nc 
The first term is bounded by $\frac{1}{2}\fb[ (v-\psi^\ell)_+, (v-\psi^\ell)_+]$ and can be absorbed in
the first term on the r.h.s. of \eqref{der10}. 
By the simple estimate $|\psi_i^\ell -\psi_j^\ell | \le C\ell |i-j|/M$ 
and \eqref{far1new}, the second term is bounded by
\be\label{2}
  4\sum_{\wh C K^\xi\le |i-j|\le M} B_{ij} |\psi^\ell_i-\psi^\ell_j|^2
 \cdot {\bf 1}( v_i-\psi^\ell_i>0) \le C \ell^2M^{-1} \sum_i {\bf 1}( v_i-\psi^\ell_i>0),
\ee
where we again used that the summation over $i$ is restricted to $d_i^I\ge K/3$.
Thus 
\begin{align*}
  -\Omega_2\le & \frac{1}{2}\fb[ (v-\psi^\ell)_+, (v-\psi^\ell)_+]
 + C \ell^2M^{-1} \sum_i {\bf 1}( v_i-\psi^\ell_i>0) \\
\le &   \frac{1}{2}\fa[ (v-\psi^\ell)_+, (v-\psi^\ell)_+]
 + C \ell^2M^{-1} \sum_i {\bf 1}( v_i-\psi^\ell_i>0)
\end{align*}
 using that $\fb \le \fa$.

A similar estimate is performed for $\Omega_3$, but in the corresponding last term we use
that 
$$
 |\psi^\ell_i-\psi^\ell_j|\le C K^{\xi} (\ell/M) 
$$
for $|i-j|\le \wh C K^\xi$. 
Thus we have 
\begin{align}
   -\Omega_3 & \le 
  \sum_{ |i-j|\le \wh C K^\xi} B_{ij}   |\psi^\ell_i-\psi^\ell_j|^2
 \cdot {\bf 1}( v_i-\psi^\ell_i>0) \nonumber \\
 & \le C K^{2\xi}(\ell/M)^2 \sum_{ |i-j|\le \wh C K^\xi}   {\bf 1}( v_i-\psi^\ell_i>0)  B_{ij}  \nonumber \\
& 
  \le C  \frac{K^{3\xi}\ell^2}{M^2} \sum_{i}   {\bf 1}( v_i-\psi^\ell_i>0) [ B_{i, i+1}+ B_{i, i-1}]
\label{tosep}
\end{align}
Here we just overestimated sums by $\wh C K^\xi$.
The conclusion of the energy estimate is 
\begin{align}
 \pt_t \frac{1}{2}\sum_i [v_i-\psi^\ell_i]_+^2 
  \le &  -  \frac{1}{2}\fa [( v-\psi^\ell)_+, (v-\psi^\ell)_+]  + |\bar u| \sum_i (v_i-\psi^\ell_i)_+ W_i \nonumber \\
& 
+ \frac{C\ell}{M}\sum_i (v_i-\psi^\ell_i)_+ 
+ \frac{C\ell^2}{M}\sum_i {\bf 1}( v_i-\psi^\ell_i>0)  + \Omega_4 ,
\label{ener}
\end{align}
\be\label{o4}
\Omega_4: = 
\frac{CK^{3\xi}\ell^2}{M^2} \sum_{i}   {\bf 1}( v_i-\psi^\ell_i>0) [ B_{i, i+1}+ B_{i, i-1}] .
\ee
Due to \eqref{numbercontrol}, we can assume that the summations in \eqref{ener} over $i$ are 
restricted to $ |i | \le M^{1 + \kappa}$.  In this regime we have $d_i\ge cK$  thanks to $M^{1+\kappa}\le K/2$,
therefore  $W_i\le CK^{-1+\xi}$ by \eqref{g5new}. Using the
bound \eqref{barvbound}, we see that the error term $ |\bar u| \sum_i (v_i-\psi^\ell_i)_+ W_i$ can
be absorbed into the first error term in line \eqref{ener}.

Let $T_k := -M(1+ 2^{-k})$, $\ell_k:= \frac{\ell}{3}(1-2^{-k}) \nearrow \frac{ \ell}{3}$
 where $k=1,2,\ldots C\log M$.
We claim that
\be\label{Om4}
\int_{\tau}^t  \, \Omega_4 \rd s \le\frac{ C K^{3\xi}\ell^2}{  M^{1-\kappa}}   \int_{\tau}^t \rd s 
\frac{1}{M^{1+\kappa}} \sum_{ |i| \le M^{1 + \kappa} } 
  [ B_{i, i+1}+ B_{i, i-1}] (s) \le  C[ (t - \tau) +1]  K^{3\xi + \rho}\ell^2 M^{ \kappa-1}
\ee
for any integer $k\le C\log M$ and for any pairs
$(t,\tau) \in [T_k, 0]\times  [T_{k-1}, T_{k-1}+ 2^{-k-1}M]$.
The estimate \eqref{Om4} holds because 
$$
  \int^t_\tau \Big[ \ldots \Big] \rd s \le \int^t_{T_{k-1}}  \Big[ \ldots \Big] \rd s
 \le 8 \big| t -\tau \big|+1,
$$
where we used that the point $(T_{k-1}, Z=0)$ is regular, see \eqref{Tau0}.

Define
\be\label{defUk}
   U_k = \sup_{t\in [T_k, 0]} \frac{1}{M\ell_k^2}\sum_i (v_i-\psi^{\ell_k}_i)_+^2 (t) 
  + \frac{1}{M\ell_k^2}\int_{T_k}^0 \fa [( v-\psi^{\ell_k})_+, (v-\psi^{\ell_k})_+](s)\rd s.
\ee
Integrating \eqref{ener} from $\tau$ to $t$ with $\tau \in  [T_{k-1}, T_{k-1}+ 2^{-k-1}M]=  [T_{k-1}, T_{k}- 2^{-k-1}M] $
and $t\in [T_k, 0]$,
 we have from \eqref{Om4}
\begin{align}
   \sum_i [v_i-\psi^{\ell_k}_i]_+^2(t) & + \int_\tau^t 
\fa[( v-\psi^{\ell_k})_+, (v-\psi^{\ell_k})_+](s) \rd s   \nonumber \\
 \le  &  \sum_i [v_i-\psi^{\ell_k}_i]_+^2(\tau) 
+ C \int_\tau^t 
\Bigg [\frac{\ell_k}{M} \sum_i  (v_i-\psi^{\ell_k}_i)_+ (s)
+  \frac{\ell^2_k}{M}\sum_i {\bf 1}( v_i-\psi^{\ell_k}_i>0)(s)\Bigg]\rd s \nonumber \\
& +  C  [(t - \tau) +1 ]  K^{3\xi + \rho}  \ell^2 M^{ \kappa-1}  .
\label{e5}
\end{align}
Taking the average over $\tau\in [T_{k-1}, T_{k-1}+ 2^{-k-1}M]$
and using that in this regime $  2^{-k-1}M \le t- \tau \le M $,  we have
\begin{align*}
&  \sum_i [v_i-\psi^{\ell_k}_i]_+^2(t)  + \int_{T_{k}}^t 
\fa [( v-\psi^{\ell_k})_+, (v-\psi^{\ell_k})_+](s) \rd s    \\
 \le &  \; C\frac{2^{k+1}}{M}\int_{T_{k-1}}^{T_k- 2^{-k-1}M} \sum_i [v_i-\psi^{\ell_k}_i]_+^2(s) \rd  s \\
& + C \int_{T_{k-1}}^t 
\Bigg [\frac{\ell_k}{M}\sum_i (v_i-\psi^{\ell_k}_i)_+ (s)
+  \frac{\ell^2_k}{M}\sum_i {\bf 1}( v_i-\psi^{\ell_k}_i>0)(s)\Bigg]\rd s  
+ C    K^{3\xi + \rho}  \ell^2 M^{ \kappa}.
\end{align*}  
Dividing through by $M\ell_k^2$ and taking the  supremum
over $t\in [T_k, 0]$, for $k \ge 1$ we have
\begin{align}\label{averag}
    U_k \le  & C\frac{2^{k+1}}{M^2}  \int_{T_{k-1}}^0 \sum_i \Big[ \frac{1}{\ell_k^2}[v_i-\psi^{\ell_k}_i]_+^2 + \frac{1}{\ell_k }  (v_i-\psi^{\ell_k}_i)_+ 
  + {\bf 1}( v_i-\psi^{\ell_k}_i>0) \Big](s)\rd s
\nonumber \\
&  + M^{ \kappa }   \frac { C K^{3\xi + \rho}} M. 
\end{align}

The first three integrands have the same scaling dimensions as $v^2/\ell^2$.
One key idea is to estimate them 
in terms of the $L^4$-norm of $v$ and then using the Sobolev inequality. 
It is elementary to check  these three integrands can be bounded by the
$L^4$-norm of  $(v-\psi^{\ell_k})_+$,  by using  that
if $v_i\ge \psi^{\ell_k}_i$, then $v_i-\psi^{\ell_{k-1}}_i\ge \ell_k-\ell_{k-1} = 2^{-k}\frac{\ell}{3}
\ge 2^{-(k+2)}\ell$: 
\begin{align}\label{norms}
    \sum_i  (v_i-\psi^{\ell_k}_i)_+ \le  &  \sum_i (v_i-\psi^{\ell_k}_i)_+ 
\cdot {\bf 1}( v_i-\psi^{\ell_{k-1}}_i>2^{-(k+2)}\ell)
\\ \non  \le & (2^{k+1})^3\ell^{-3}_k \sum_i  (v_i-\psi^{\ell_{k-1}}_i)_+^4, 
\\ \non 
  \sum_i {\bf 1}( v_i-\psi^{\ell_k}_i>0) \le &   (2^{k+2})^4\ell_k^{-4} \sum_i  (v_i-\psi^{\ell_{k-1}}_i)_+^4, 
\\  \non
\sum_i   [v_i-\psi^{\ell_k}_i]_+^2  \le &   (2^{k+2})^2\ell_k^{-2} \sum_i  (v_i-\psi^{\ell_{k-1}}_i)_+^4.
\end{align} 
We now use the local version of
 Proposition~\ref{prop:newGNdiscr} from Appendix~\ref{sec:GN}; we first verify its conditions.
Set  
\be\label{cI}
\cI:= \llbracket - 2K/3, 2K/3\rrbracket, \qquad 
\wh\cI:= \llbracket - 3K/4, 3K/4\rrbracket.
\ee
 Clearly  
 $f_i:=  (v_i-\psi^{\ell_{k-1}}_i)_+$ is supported in $\cI$; this follows from  $|Z|\le K/2$,
\eqref{numbercontrol} and that $M^{1+\kappa}\le M^{(\vartheta+1)/2}\ll M^\vartheta=K$.
By the lower bounds  on $B_{ij}(s)$ in \eqref{g3new} and  \eqref{far1new}
 (with $C\ge 4$ in \eqref{far1new} to guarantee that the lower bound holds for any $i,j\in \wh\cI$) 
the conditions \eqref{displ1loc}, \eqref{displ2loc} hold with the choice $b=K^{-\xi}$, $a= \wh C^{-1} K^{-\xi}$
and $r=C$, where $C$ and $\wh C$ are constants from \eqref{far1new}.  

From \eqref{GNdiscrloc} we then have 
\begin{align}\nonumber
  \sum_i  (v_i-\psi^{\ell_{k-1}}_i)_+^4 \le \; & C \Big[ \sum_i  (v_i-\psi^{\ell_{k-1}}_i)_+^2 \Big]
 \Big[  \fa[ (v-\psi^{\ell_{k-1}})_+,  (v-\psi^{\ell_{k-1}})_+] + 
\frac{1}{K} \sum_i  (v_i-\psi^{\ell_{k-1}}_i)_+^2 \Big]
\\ & + CK^{4\xi}  \max_i (v_i-\psi^{\ell_{k-1}}_i)_+^4
\label{GNappl}
\end{align}
(we omitted the time variable $s\in \cT$).
The last term can be estimated by using \eqref{numbercontrol} and \eqref{vlinfty} as
$$
   \max_i (v_i(t)-\psi^{\ell_{k-1}}_i)_+ \le   \max\big\{ v_i(t) \; : \; 
|i-Z|\le M^{1+\kappa}\big\} \le  C\ell M^{\chi/2}
$$
for any $t\in [-2M, 0]$. For $t\in \cG^*$ we have the stronger bound from
\eqref{vlinftygood}
$$
   \max_i (v_i(t)-\psi^{\ell_{k-1}}_i)_+ \le   \max\big\{ v_i(t) \; : \; 
|i-Z|\le M^{1+\kappa}\big\} \le  C\ell M^{\chi/8}, \qquad t\in \cG^*.
$$
Inserting these estimates, \eqref{norms} and \eqref{GNappl}  into \eqref{averag}, 
splitting the time integration into $\cG^*$ and its complement, we have proved that for $k \ge 2$  
\begin{align}
  U_k
  \le &  C (2^{k+2})^5   \frac{1}{M^2  \ell_k^{4} }
\int_{T_{k-1}}^0 \rd s
 \Big[ \sum_i  (v_i-\psi^{\ell_{k-1}}_i)_+^2(s)\Big]
\\
 & \qquad \qquad\qquad \qquad \times \Big[ 
  \fa[ (v-\psi^{\ell_{k-1}})_+,  (v-\psi^{\ell_{k-1}})_+](s) + 
 \frac{1}{K} \sum_i  (v_i-\psi^{\ell_{k-1}}_i)_+^2(s)\Big]  \nonumber \\
+  &  \frac{1}{M}\Big[  CM^{ \kappa }   K^{3\xi + \rho} +
  32^k M^{\chi/2} K^{4\xi}   +  32^k M^{2\chi-1} K^{4\xi}  \big| [-T_{k-1}, 0]\setminus \cG^*\big| \Big]
 \nonumber \\
 \le & \; 32^k \big[  C_1  U_{k-1}^2 +  M^{-1 +  \chi}K^{-\rho}\big] , 
\label{nl}
\end{align}
 recalling that $\big| [-T_{k-1}, 0]\setminus \cG^*\big| \le CM^{1/4}$, 
$K=M^\vartheta\le M^{\vartheta_0}$ and $\chi\ge \kappa + 10(\xi + \rho) \vartheta_0$.  
We also used that $|T_k| \le K$.

For $k=1$,  we estimate the integrands in \eqref{averag} by
$L^2$-norms.   We have the following general estimates for any $\ell' <\ell''$ 
\begin{align}\label{l2est}
  \sum_i  (v_i-\psi^{\ell''}_i)_+ \le & \sum_i (v_i-\psi^{\ell'}_i)_+ \cdot {\bf 1}( v_i-\psi^{\ell'}_i> \ell''-\ell' )
  \le  \frac{1}{\ell''-\ell'} \sum_i  (v_i-\psi^{\ell'}_i)_+^2\\ \nonumber
  \sum_i {\bf 1}( v_i-\psi^{\ell''}_i>0) \le &  \frac{1}{(\ell''-\ell')^2} \sum_i  (v_i-\psi^{\ell'}_i)_+^2.
\end{align}
We use \eqref{l2est} with $\ell'' =\ell_1$ and $\ell'=0$ in 
 \eqref{averag}, this implies  that  
$$
  U_1 \le  \frac{C}{\ell_1^2M^2}\int_{-2M}^0\sum_i \rd s  (v_i-\psi_i)_+^2(s) + CM^{ - 1 + \chi}  . 
$$
Without loss of generality, we assume that $C_1 \ge 2$,
where $C_1$ is the constant in \eqref{nl}.
Now choose the universal constant $\e_0$ in \eqref{defell1/2} so small and $M$ big enough so  that 
this last inequality implies
\be\label{Uk1}
  U_1\le \frac{1}{ 32^6 C_1}.
\ee
Choose $k_*$ such that $32^{k_*+2}C_1 = K^{\rho}$, i.e. $k^*$ is of 
order  $\rho \log K\ge\rho \log M$.
Then from \eqref{nl} for any $k\le k_*$ we have the recursive inequality
$$
   B_k \le B_{k-1}^2 + M^{-1+\chi}, \quad \mbox{with} \quad B_k := 32^{k+2} C_1 U_k.
$$
By a simple induction, this recursion implies 
$$    
B_{k+1} \le (2B_1)^{2^k-1} + 2 M^{-1+\chi}.
$$
Together with the initial estimate \eqref{Uk1} we obtain
 that $B_{k+1} \le 4M^{-1+\chi}$ for any integer $k$ with $100\log \log M\le k\le k_*$, in particular we can
apply it to $k'=100\log \log M $ and obtain
$U_{k'} \le C   M^{-1+\chi}$. 
Notice that $U_k$ is decreasing in $k$ as it 
 can be seen from the monotonicity in the definition
of $U_k$ \eqref{defUk} and from the fact that $T_k$ and $\ell_k$ increase.  Thus
\be
\label{Ukb}
   U_{k} \le C   M^{-1+\chi}
\ee
for any $k\ge 100\log \log M$.
Taking $k\to\infty$, we  find from the $L^2$-norm term in $U_k$ 
that \eqref{1step}  in Lemma~\ref{lm:energy} holds.

For the proof of  \eqref{goodtimes}, we notice that the estimate \eqref{Ukb} 
together with the monotonicity
also implies that
$$
 \frac{1}{M\ell^2}\int_{-M}^0 \fa [( v-\psi^{\ell/3})_+, (v-\psi^{\ell/3})_+](s)\rd s \le CM^{-1 + \chi}
$$
from the dissipation term in the definition of $U_k$. 

Set
$$
   \cG: =\Big\{ t\in [-M, 0]\; : \;  \fa [( v-\psi^{\ell/3})_+, (v-\psi^{\ell/3})_+](t) \le M^{\chi-1/4}\ell^2 \Big\}
$$
then clearly 
$$
   \big| [-M, 0]\setminus \cG\big|\le CM^{1/4}.
$$

We now use a Sobolev inequality  \eqref{s} from  Appendix~\ref{sec:GN}, with the choice of
$p=4$, $s=1$ and $f_i:=  (v_i-\psi^{\ell/3})_+$. We recall the definitions of $\cI$ and $\wh\cI$
from \eqref{cI}
and that  $f_i=  (v_i-\psi^{\ell/3})_+$ is supported in $\cI$ by \eqref{numbercontrol}.
Thus 
$$
  \sum_i f_i^4 \le C \sum_i f_i^2 \Big[ \sum_{i\ne j\in \wh \cI} \frac{|f_i-f_j|^2}{|i-j|^2}
  + 2\sum_{i\in \cI} |f_i|^2 \sum_{j\not\in \wh\cI}  \frac{1}{|i-j|^2} \Big]
 \le CK^{2\xi} \| f\|^2 \fa [f,f] +\frac{C}{K}\|f\|_2^4,
$$
where we used the lower bound on $B_{ij}$ in
\eqref{g3new}. Thus
\be\label{1sob}
 \sum_i  (v_i-\psi^{\ell/3})_+^4 \le C K^{2\xi} \sum_i  (v_i-\psi^{\ell/3}_i)_+^2
  \fa[ (v-\psi^{\ell/3})_+,  (v-\psi^{\ell/3})_+] + \frac{C}{K} \Big[\sum_i  (v_i-\psi^{\ell/3}_i)_+^2\Big]^2.
\ee
This 
 implies that for any $t\in \cG$ and any $i$
\begin{align}\label{sobappl}
    (v_i(t) - \psi^{\ell/3}_i)_+ & \le \| (v(t)-\psi^{\ell/3})_+\|_4 
\\ \nonumber 
& \le CK^{\xi/2}\Big( \sum_i  (v_i(t)
 - \psi^{\ell/3}_i)_+^2\Big)^{1/4} \big( M^{\chi-1/4}\ell^2\big)^{1/4} +  \frac{C}{K^{1/4}}
  \Big[\sum_i  (v_i(t)-\psi^{\ell/3}_i)_+^2\Big]^{1/2}
\\ \nonumber
&
  \le C M^{-1/20} \ell,
\end{align}
where we used \eqref{1step} in the last step and the fact that $\chi\ge 10\xi\vartheta_0$
together with \eqref{kappacond1}. 
  This proves  \eqref{goodtimes}.

For the proof of \eqref{11step}, we first notice that it is sufficient
to consider the case when $\wt M$ is of the form
$\wt M= 2^{-m} M$, $m=1,2\ldots C\log M$. 
We now repeat the proof of \eqref{1step}  but with $\ell_k, k \ge 1$,  replaced by 
\be\label{lt}
\wh \ell_k= {\frac{2 \ell}{5}}(1-2^{-k-2}) 
\ee
in the definition of $\psi^{\ell_k}$ and working in the time interval of scale $\wt M$.

Set $\wh T_k:= -\wt M(1+ 2^{-k})$. Define
$$
  \wh U_k = \sup_{t\in [\wh T_k, 0]} \frac{1}{M\wh\ell_k^2}\sum_i (v_i-\psi^{\wh \ell_k}_i)_+^2 (t) 
  + \frac{1}{M\wh \ell_k^2}\int_{\wh T_k}^0 \fa [( v-\psi^{\wh\ell_k})_+, (v-\psi^{\wh \ell_k})_+](s)\rd s.
$$
The previous proof is unchanged up to \eqref{Om4}, the integral of
$$
  \wh \Omega_4(s) : = \frac{CK^{3\xi}\wh\ell^2}{M^2} \sum_{i} 
  {\bf 1}( v_i(s)-\psi^{\wh \ell}_i>0) [ B_{i, i+1}(s)+ B_{i, i-1}(s)]
$$
is still estimated by (cf. \eqref{Om4})
$$
  \int_\tau^t\wh\Om_4(s)  \rd s 
  \le C[(t-\tau) +1]K^{3\xi+\rho}\wh\ell^2 M^{-1+\kappa}\le C[(t-\tau)+1]\wh\ell^2 K^{3\xi+\rho} M^{-1+\kappa}
$$
for $\tau \in [\wh T_{k-1}, \wh T_{k-1}+ 2^{-k-1}\wt M]=  [\wh T_{k-1}, \wh
T_{k}- 2^{-k-1}\wt M] $ and $t\in [\wh T_k, 0]$. Here we used \eqref{kappacond1}.

Similarly to \eqref{e5}, we integrate \eqref{ener} (with $\wh\ell$ replacing $\ell$)
from $\tau$ to $t$ 
\begin{align}
   \sum_i [v_i-\psi^{\wh\ell_k}_i]_+^2(t) & + \int_\tau^t 
\fa[( v-\psi^{\wh\ell_k})_+, (v-\psi^{\wh\ell_k})_+](s) \rd s   \nonumber \\
 \le  &  \sum_i [v_i-\psi^{\wh\ell_k}_i]_+^2(\tau) 
+ C \int_\tau^t 
\Bigg [\frac{\ell_k}{M} \sum_i  (v_i-\psi^{\wh\ell_k}_i)_+ (s)
+  \frac{\wh\ell^2_k}{M}\sum_i {\bf 1}( v_i-\psi^{\wh\ell_k}_i>0)(s)\Bigg]\rd s \nonumber \\
& +  C  [(t - \tau) +1]   \ell^2 K^{3\xi+\rho} M^{-1+\kappa}.
\label{e51}
\end{align}
Taking the average over $\tau\in [\wh T_{k-1}, \wh T_{k-1}+ 2^{-k-1}\wt M]=  [\wh T_{k-1}, \wh
T_{k}- 2^{-k-1}\wt M] $
and using that in this regime $  2^{-k-1}\wt M \le t-\tau \le \wt M$,  we have
\begin{align*}
&  \sum_i [v_i-\psi^{\wh\ell_k}_i]_+^2(t)  + \int_{\wh T_{k}}^t 
\fa [( v-\psi^{\wh\ell_k})_+, (v-\psi^{\wh\ell_k})_+](s) \rd s    \\
 \le &  \; C\frac{2^{k+1}}{\wt M}\int_{\wh T_{k-1}}^{\wh T_k- 2^{-k-1}\wt M} 
\sum_i [v_i-\psi^{\wh\ell_k}_i]_+^2(s) \rd  s \\
& + C \int_{\wh T_{k-1}}^t 
\Bigg [\frac{\wh \ell_k}{M}\sum_i (v_i-\psi^{\wh \ell_k}_i)_+ (s)
+  \frac{\wh \ell^2_k}{M}\sum_i {\bf 1}( v_i-\psi^{\wh \ell_k}_i>0)(s)\Bigg]\rd s  
+ C   \ell^2 \wt M K^{3\xi+\rho} M^{-1+\kappa}.
\end{align*}  
Dividing through by $M\wh \ell_k^2$ and taking supremum
over $t\in [\wh T_k, 0]$, for $k \ge 1$ and using $\wt M\le M$, we have,
as in \eqref{averag},
\begin{align}\label{averag1}
    \wh U_k \le  & C\frac{2^{k+1}}{M\wt M}  \int_{\wh T_{k-1}}^0 \sum_i 
\Big[ \frac{1}{\wh \ell_k^2}[v_i-\psi^{\wh \ell_k}_i]_+^2 + \frac{1}{\wh \ell_k } 
 (v_i-\psi^{\wh \ell_k}_i)_+ 
  + {\bf 1}( v_i-\psi^{\wh \ell_k}_i>0) \Big](s)\rd s
\nonumber \\
&  +C   \wt M K^{3\xi+\rho} M^{-2+ \kappa} .
\end{align}

Using the bounds \eqref{norms} and Proposition~\ref{prop:newGNdiscr} as
in \eqref{GNappl}--\eqref{nl}, 
instead of \eqref{nl} we get
\begin{align}
  \wh U_k
  \le &  \frac{ (2^{k+2})^5}{M \wt M  \wh \ell_k^{4} }
\int_{\wh T_{k-1}}^0 \rd s
 \Big[ \sum_i  (v_i-\psi^{\wh \ell_{k-1}}_i)_+^2(s)\Big]
  \fa[ (v-\psi^{\wh \ell_{k-1}})_+,  (v-\psi^{\wh \ell_{k-1}})_+](s) \nonumber \\
 &  
 +C  \wt M M^{-2+ \chi} K^{-\rho} 
 \nonumber \\
 \le &  32^k \Big[ C_1 \frac{M}{\wt M} \wh U_{k-1}^2 + C \frac{\wt M}{M}M^{-1 + \chi} K^{-\rho}\Big] , \quad k \ge 2. 
\label{nl3}
\end{align} 
Similarly to the proof of \eqref{Ukb}, this new recurrence inequality  has the solution
\be
  \wh U_{k} \le  C \wt MM^{-2 + \chi} 
\label{newU}
\ee
for any sufficiently large $k$, as long as the recursion can be started, i.e.
if we knew
\be\label{u1}
  \wh U_1 \ll \frac{\wt M}{M}.
\ee
For $k=1$ the estimate \eqref{averag1}  together with \eqref{l2est}
(with $\wh\ell_1$ replacing $\ell_1$) becomes
\begin{align*}
    \wh U_1 \le & \frac{C}{M\wt M}  \int_{-2\wt M }^0 \sum_i 
\Big[ \frac{1}{\wh \ell_1^2}[v_i-\psi^{\wh \ell_1}_i]_+^2 + \frac{1}{\wh \ell_1 } 
 (v_i-\psi^{\wh \ell_1}_i)_+ 
  + {\bf 1}( v_i-\psi^{\wh \ell_1}_i>0) \Big](s)\rd s
  +C   \wt M M^{-2+ \chi} \\ 
\le & \frac{C}{M\wt M}  \int_{-2\wt M }^0 \sum_i 
 \frac{1}{\ell^2}[v_i(s)-\psi^{\ell/3}_i]_+^2 
\rd s
  +C   \wt M M^{-2+ \chi} \\
\le & \frac{CM^\chi}{M}  +C   \wt M M^{-2+ \chi} .
\end{align*}
In the second step we used \eqref{l2est} with $\ell'' = \wh\ell_1$ and $\ell'=\ell/3$
noting that $\wh\ell_1 = \frac{7}{20}\ell > \frac{1}{3}\ell$. 
In the last step we used \eqref{1step} and $2\wt M\le M$. 
Thus \eqref{u1} is satisfied if $\wt M\gg M^\chi$.

Finally, taking $k\to\infty$ in \eqref{newU} implies \eqref{11step}.
This completes the proof of Lemma~\ref{lm:energy}.

\subsection{Proof of Lemma~\ref{lm:2nd} (second De Giorgi lemma)}\label{sec:seconddg}

 Set $Z=0$ for simplicity. Since the statement is stronger if $\mu$ and $\delta$ are reduced,
we can assume that they are small positive numbers, e.g. 
we can assume $\mu,\delta<1/8$. 
We are looking for a sufficiently small $\lambda$ so that there will be a positive $\gamma$
with the stated properties.
 The key ingredient of the proof is an energy inequality \eqref{goodterm} including a new dissipation term 
which was dropped in the proof of Lemma~\ref{lm:energy}.
 Most of this section  closely follows the argument in \cite{C}; the
main change 
  is that we need split time integrations into ``good'' and ``bad'' times.
The argument \cite{C} applies to the good times. The bad times
have a small measure, so their contribution is negligible.

\subsubsection{Dissipation with the good term}

Let $-3M\le T_1< T_2 <0$.  For any $t\in [-3M, 0]$, define
\be\label{thetade}
\theta_i(t): = {\bf 1}( |i| \le 9 M) \cdot {\bf 1}(t\in \cG)
 + {\bf 1}( |i| \le M^{1+\kappa_1}) \cdot {\bf 1}(t\not\in \cG),
\ee
\nc
We use the calculation \eqref{der10}--\eqref{enee} (with cutoff $\varphi^{(1)}$ instead of $\psi^\ell$)
 but we keep the ``good'' $\fb[( v-\varphi^{(1)})_+, (v-\varphi^{(1)})_-]\ge 0$  term
that was estimated trivially in \eqref{der10} and we drop the (positive) potential term in $\fa$. We have
\begin{align}\label{diss2}
   \frac{1}{2} \sum_i [v_i(t)& -\varphi^{(1)}_i]_+^2\Bigg|_{t=T_1}^{T_2}
 + \int_{T_1}^{T_2}  \fb  [( v(t)-\varphi^{(1)})_+, (v(t)-\varphi^{(1)})_+]\rd t
\\ \nonumber
   \le &   -\int_{T_1}^{T_2}  \fb [( v(t)-\varphi^{(1)})_+, (v(t)-\varphi^{(1)})_-]\rd t
 -\int_{T_1}^{T_2}  \fb [ (v(t)-\varphi^{(1)})_+ \theta,\varphi^{(1)} ]\rd t 
\\ \nonumber
 & + |\bar u|\int_{T_1}^{T_2} \sum_i  (v_i(t)-\varphi_i^{(1)})_+ W_i \theta_i\rd t  .
\end{align}
Notice that we inserted the characteristic function  $\theta_i(t)$
using the fact that  \eqref{1c} and \eqref{alleq}
imply $v_i(t) \le \varphi^{(1)}_i$ for $|i|\ge 9M$, $t\in \cG$, and
$v_i(t) \le \varphi^{(1)}_i=\wt\psi_i$ for $|i|\ge M^{1+\kappa_1}$ and for all $t\in [-3M,0]$ \nc
 i.e. $v_i-\varphi^{(1)}= (v_i-\varphi^{(1)})\theta_i$ for any time. 
Moreover, $v_i(t) - \varphi_i^{(1)}\le \lambda\ell$ for $t\in \cG$ and $|i|\le 9M$.

The last error term in \eqref{diss2} is estimated trivially; in the regime $|i|\le M^{1+\kappa_1}$ we have
 $W_i\le CK^{-1+\xi}$
 and then from \eqref{23c}, $|\cG|\le CM^{1/4}$ 
and \eqref{barvbound1} we have
\be\label{potterm}
 |\bar u|\int_{T_1}^{T_2} \sum_i  (v_i(t)-\varphi_i^{(1)})_+ W_i \theta_i\rd t 
\le \lambda^2 \ell^2  (T_2-T_1) +
   C\lambda\ell^2  M^{\kappa_1+\kappa_2}   |\cG^c| \le C\lambda^2 \ell^2 \big[ (T_2-T_1) + 
\lambda^{-1}M^{1/2}\big]
\ee
after splitting the integration regime into ``good''times $\cG$ and ``bad'' times $\cG^c:= [-3M, 0]\setminus \cG$.
We also used \eqref{kappacond}.

The other error term in \eqref{diss2}
 will be estimated by a Schwarz inequality,
here we use the identity
$$
  \fb (f\theta, g) = \sum_{ij} ( f_i\theta_i - f_j\theta_j) B_{ij} (g_i-g_j)
  = \sum_{ij}( f_i\theta_i - f_j\theta_j) (\theta_i + \theta_j - \theta_i\theta_j)  B_{ij}   (g_i-g_j)
$$
for any functions $f$ and $g$, so
$$
 | \fb (f\theta, g)| \le \frac{1}{2} \sum_{ij}( f_i\theta_i - f_j\theta_j)^2  B_{ij}  + 
2\sum_{ij} \theta_i B_{ij}  (g_i-g_j)^2
$$
i.e.
$$
  \big|  \fb [ (v(t)-\varphi^{(1)})_+ \theta,\varphi^{(1)} ]\big|\le
  \frac{1}{2}  \fb [ (v(t)-\varphi^{(1)})_+\theta, (v(t)-\varphi^{(1)})_+\theta]+ 2 
   \sum_{ij} \theta_i B_{ij} (\varphi^{(1)}_i- \varphi^{(1)}_j)^2.
$$
The first term will be absorbed in the  quadratic term in the
left of \eqref{diss2}. By definition of $\varphi^{(1)}$, for the second term we have to control
\be\label{C1}
 \int_{T_1}^{T_2}\Bigg[
 \lambda^2 \sum_{i,j} (F_i-F_j)^2 B_{ij} + \sum_{i,j} (\wt\psi_i-\wt\psi_j)^2 B_{ij} \theta_i\Bigg] (t) \rd t.
\ee
Since $|F_i-F_j|\le C\ell M^{-1} |i-j|$ and $F_i-F_j$ is  supported on $|i|, |j|\le 9 M$,
by  splitting the summation to the regime $|i-j|\le K^\xi$  and its complement, 
we can bound the first term by 
$$  
  \int_{T_1}^{T_2}
 \lambda^2 \sum_{i,j} (F_i-F_j)^2 B_{ij}(t)  \rd t\le 
\lambda^2 \ell^2 M^{-2} \int_{T_1}^{T_2} \sum_{|i|, |j|\le 9M} |i-j|^2 B_{ij} (t)
$$
$$
  \le \lambda^2 \ell^2 M^{-2} K^{3\xi} \int_{T_1}^{T_2} \sum_{|i| \le 9M}  B_{i, i+1}(t)\rd t
  +C \lambda^2 \ell^2 M^{-2} \int_{T_1}^{T_2} \sum_{|i|,|j|\le 9M\atop |i-j|\ge K^\xi} \frac{|i-j|^2}{|i-j|^2}
$$
$$
  \le \lambda^2 \ell^2 M^{-2} K^{3\xi} \int_{-3M}^{0} \sum_{|i| \le 9M}  B_{i, i+1}(t)\rd t
  +C \lambda^2 \ell^2  (T_2-T_1),
$$
where we have used   $B_{i, j} \le B_{i, i+1}$ in the first regime and 
the upper bound in \eqref{far1new}
in the other regime. 
  By the regularity at $(Z,0)=(0,0)$  we can bound the last line by 
$$
   C\lambda^2 \ell^2  K^{3\xi + \rho }
 + C \lambda^2 \ell^2 (T_2-T_1) \le C \lambda^2 \ell^2  \big[ (T_2-T_1) 
 +   M^{1/2} \big] 
$$
(we also used \eqref{kappacond} and $K\le M^{\vartheta_0}$).

For  the second term in \eqref{C1}  and for $t\in \cG$ 
 we use that $\wt\psi_i\theta_i(t)=0$ and the supports of
$\theta_i$ and $\wt \psi_j$ are separated by a distance of order $M\gg K^\xi$. Thus  we
can use the upper bound in \eqref{far1new} to estimate the kernel:
$$
 \int_{T_1}^{T_2} {\bf 1}(t\in \cG) \sum_{i,j} (\wt\psi_i-\wt\psi_j)^2 B_{ij} (t) \theta_i(t)\rd t
 \le C\int_{T_1}^{T_2} \sum_{|i|\le 9M}\sum_{|j|\ge M\lambda^{-4}}\frac{\wt\psi_j^2}{|i-j|^2} \rd t
$$
\be\label{onG}
 \le CM(T_2-T_1) \sum_{|j|\ge M\lambda^{-4}}\frac{\wt\psi_j^2}{|j|^2}  \le C\ell^2 \lambda^2(T_2-T_1), 
\ee
where we have used  $\wt \psi_j \sim \ell (j/M)^{1/4}$ for large $j$. 
For times $t\not\in \cG$, we use
$$
    (\wt\psi_i-\wt\psi_j)^2 \le \frac{C\ell^2}{M^{1/2}} \frac{(i-j)^2}{ |i|^{3/2}+|j|^{3/2}}
$$
to get
\begin{align}\label{notonG}
 \int_{T_1}^{T_2} & {\bf 1}(t\not\in \cG) \sum_{i,j} (\wt\psi_i-\wt\psi_j)^2 B_{ij} (t) \theta_i(t)\rd t 
\\ \nonumber
 \le & \; \int_{T_1}^{T_2}{\bf 1}(t\not\in \cG)  \frac{C\ell^2}{M^{1/2}}
 \sum_{|i|\le M^{1+\kappa_1}}\sum_{|j|\ge M\lambda^{-4}} \frac{1}{ |i|^{3/2}+|j|^{3/2}}  
 \rd t
\\ \nonumber
&  + \int_{T_1}^{T_2}{\bf 1}(t\not\in \cG)  \frac{C\ell^2}{M^{1/2}}
 \sum_{|i|\le M^{1+\kappa_1}}\sum_{|j|\ge M\lambda^{-4}} B_{ij}(t)  \frac{ |i-j|^2\cdot {\bf 1}(|i-j|\le K^\xi)}{ |i|^{3/2}+|j|^{3/2}}  
 \rd t
 \\ \nonumber
 \le & \;  CM^{1+\kappa_1}|\cG^c|  \frac{\ell^2}{M^{1/2}} \lambda^2
 M^{-1/2} +  CK^{2\xi} \frac{\ell^2}{M^{1/2}} \int_{-3M}^{0}
 \sum_{|i|\le M^{1+\kappa_1}}\sum_{|j|\ge M\lambda^{-4}} B_{ij}(t)  \frac{ {\bf 1}(|i-j|\le K^\xi)}{ |i|^{3/2}+|j|^{3/2}}  
 \rd t
\\ \nonumber
 \le &
  C\lambda^2\ell^2  M^{\kappa_1+1/4} +  CK^{3\xi} \frac{\ell^2}{M^{1/2}} \frac{1}{ (M\lambda^{-4})^{3/2}} \int_{-3M}^{0}
 \sum_{|i|\le M^{1+\kappa_1}} B_{i,i+1}(t)  \rd t \\ \nonumber
\le &   C\lambda^2\ell^2  M^{1/2}. 
\end{align}
Here we first separated the summations over $i,j$ into $|i-j| \ge K^\xi$ and its complement.  Then
in the first regime we used the upper bound in \eqref{far1new}
and that the measure of the bad time is small, i.e.,    \eqref{1c},  
 to estimate the time integral;
in the second regime we used regularity at $(Z,0)$ and the fact that $K^\xi\ll M^{1/10}$
by \eqref{kappacond}.  
Inserting the error estimates \eqref{potterm}, \eqref{onG} and \eqref{notonG} into \eqref{diss2}, we have
\be\label{diss3}
   \frac{1}{2} \sum_i [v_i(t)-\varphi^{(1)}_i]_+^2\Bigg|_{t=T_1}^{T_2}
 + \frac{1}{2}\int_{T_1}^{T_2}  \fb [( v(t)-\varphi^{(1)})_+, (v(t)-\varphi^{(1)})_+]\rd t
\ee 
$$
  \le    -\int_{T_1}^{T_2}  \fb [( v(t)-\varphi^{(1)})_+, (v(t)-\varphi^{(1)})_{-}]\rd t
  + C\ell^2 \lambda^2 \big[ (T_2-T_1)+ M^{1/2}\big]. 
$$
Define
$$
  H(t) = \sum_i ( v_i(t)-\varphi^{(1)}_i)_+^2. 
$$
We  have
\be\label{goodterm}
   H(T_2)+   \int_{T_1}^{T_2}  \fb [( v(t)-\varphi^{(1)})_+, (v(t)-\varphi^{(1)})_{-}]\rd t
 \le H(T_1) +  C\ell^2 \lambda^2  \big[ (T_2-T_1)+ M^{1/2}\big]
\ee
for    any $ -3M\le T_1< T_2<0$. Notice that $\fb (f_+, f_-) \ge 0$ for any function $f$. 
Since $| v_i(t)-\varphi^{(1)}_i|\le \lambda\ell \theta_i$ for all $t\in \cG$, we also have 
\be\label{Hupp}
   H(t)\le C\lambda^2 \ell^2M, \qquad t\in \cG.
\ee

\subsubsection{Time slices when the good term helps}\label{sec:slice}

Let $\Sigma\subset \cG$ be the set of times that $v(T)$ is substantially below $\varphi^{(0)}$, i.e., 
$$
  \Sigma : =\Bigg\{ T\in (-3M, - 2M)\cap\cG\; : \; \#\Big\{ |i|\le M \; : \;  v_i(T)\le \varphi^{(0)}_i\Big\} \ge \frac{1}{4}
\mu M \Bigg\} .
$$
We have from \eqref{1c} and \eqref{2c} that
\be\label{sigmabound}
   |\Sigma|\ge  \frac{1}{4}  M\mu -CM^{1/4} \ge  \frac{1}{5}  M\mu.
\ee
By  \eqref{goodterm} (applied to $T_1=\min\Sigma$, $T_2=-2M$) and \eqref{Hupp}
(applied to $t=T_1$),  we have 
\begin{align}
  C\lambda^2\ell^2 M & \ge  \int_{\Sigma}  \fb [( v(t)-\varphi^{(1)})_+, (v(t)-\varphi^{(1)})_-]\rd t
\nonumber\\
 & \ge -  \int_{\Sigma}  \sum_{ij} ( v_i(t)-\varphi^{(1)}_i)_+ B_{ij}(t)  (v_j(t)-\varphi^{(1)}_j)_{-} \rd t
\\
\label{c2}
 & \ge - cM^{-2}  \int_{\Sigma}  \sum_{ij} ( v_i(t)-\varphi^{(1)}_i)_+  (v_j(t)-\varphi^{(1)}_j)_{-} \rd t,
\end{align}
where we have used that for 
$v_i(t)- \varphi^{(1)}_i$ is supported on $|i|\le 9M$ (for $t\in\cG$) and 
\be\label{Klower}
B_{ij}(t) \ge \bar cM^{-2},\qquad |i|, |j|\le 9M,
\ee
 with some positive constant $\bar c$ (this follows from the lower bound in
\eqref{far1new},  where $|i|\le 9M$  and $ M  \le K/10$ guarantee that
$d_i\ge K/C$ holds, and $K^\xi\ll M$ guarantees that \eqref{far1new} can
be used for the extreme points $i=-9M$, $j=9M$ and finally we used monotonicity 
$B_{ij}\ge B_{-9M, 9M}$ for any $|i|, |j|\le 9M$). 
For  $t\in \Sigma$  the number of $j$'s with $|j|\le  M$ such
that $ v_j(t)\le \varphi^{(0)}_j$ is at least $\frac{1}{5}\mu M$;   for such $j$'s we
have 
$$
 -(v_j(t)-\varphi^{(1)}_j)_{-} \ge\varphi^{(1)}_j- \varphi^{(0)}_j \ge (1-\lambda)\ell\ge\frac{\ell}{2}.
$$
Thus we can bound  \eqref{c2}  by 
$$
  \ge c\ell M^{-1} \frac{\mu}{10}  \int_{\Sigma}  \sum_{i} ( v_i(t)-\varphi^{(1)}_i)_+  \rd t
  \ge cM^{-1} \frac{\mu}{10\lambda}  \int_{\Sigma}  \sum_{i} ( v_i(t)-\varphi^{(1)}_i)_+^2  \rd t,
$$
where  we have used that $( v_i(t)-\varphi^{(1)}_i)_+ \le\lambda\ell$ for $t\in \cG$.

Altogether we have proved
$$
   \int_{\Sigma}  \sum_{i} ( v_i(t)-\varphi^{(1)}_i)_+^2  \rd t \le C\lambda^3\mu^{-1}\ell^2 M^2 
 \le \lambda^{3-\frac{1}{8}}\ell^2 M^2
$$
if $\lambda$ is sufficiently small (depending on $\mu$).
Thus there exists a subset $\Theta \subset \Sigma$ such that
$$
   |\Theta |\le \lambda^{1/8}M,
$$
and we have
$$
 \sum_{i} ( v_i(t)-\varphi^{(1)}_i)_+^2  
 \le \lambda^{3-\frac{1}{4}}\ell^2 M, \qquad \forall t\in \Sigma\setminus \Theta.
$$
Choosing $\lambda$ small  and recalling \eqref{sigmabound} we see that 
\be\label{56}
 \sum_{i} ( v_i(t)-\varphi^{(1)}_i)_+^2 
 \le \lambda^{3-\frac{1}{4}}\ell^2 M
\ee
holds on a set of times $t$'s in $\Sigma\subset [-3M, -2M]\cap \cG$ of measure at least  $M\mu /8$.
In particular this set of times is non-empty.

\subsubsection{Finding the intermediate set}

Since \eqref{opt1} is  satisfied, there is a $T_0\in (-2M, 0)\cap \cG$ such that
\be\label{57}
    \#\Big\{ i \; : \; (v_i(T_0)-\varphi^{(2)}_i)_+>0 \Big\} \ge \frac{1}{2}M\delta - CM^{1/4},
\ee
and choose a $T_1\in \Sigma$ (then $T_1< T_0$) such that
\be\label{5811}
 H(T_1)=\sum_{i} ( v_i(T_1)-\varphi^{(1)}_i)_+^2 
 \le \lambda^{3-\frac{1}{4}}\ell^2 M
\ee
(such $T_1$ exists by the conclusion of the previous section, \eqref{56}).

We also have
$$
  H(T_0)=  \sum_{i} ( v_i(T_0)-\varphi^{(1)}_i)_+^2  \ge\sum_{i} ( \varphi^{(2)}_i(T_0)-\varphi^{(1)}_i)_+^2 
   \cdot {\bf 1}\big(( v_i(T_0)-\varphi^{(2)}_i)_+>0\big)
$$
\be\label{61uj}
  \ge \sum_{i} \ell^2( \lambda-\lambda^2)^2 F_i^2
  \cdot {\bf 1}\big(( v_i(T_0)-\varphi^{(2)}_i)_+>0\big) \ge C_F \frac{\lambda^2}{4}\ell^2 \delta^3M
\ee
with some positive constant $C_F$. This follows from \eqref{57}; notice first that
the set in \eqref{57} must lie in $[-9M, 9M]$ (see \eqref{alleq} and \eqref{1c}), and
 even if  the whole set \eqref{57} is near the ``corner'' (i.e. close to $i\sim \pm 9M$),
still the sum of these $F_i$'s is of order $\delta^3 M$ since $F_i$ is linear near
the endpoints $i=\pm 9M$.

Choose now $\lambda$ small enough  (depending on the fixed $\delta$) s.t.
$$
   \lambda^{3-\frac{1}{4}}\ell^2 M \le \frac{1}{16}  C_F \lambda^2\ell^2 \delta^3M.
$$
Since $H(T)$ is continuous and it goes from 
a small value $H(T_1)\le \frac{1}{16}  C_F \lambda^2\ell^2 \delta^3M $ to 
a large value $H(T_0)\ge \frac{1}{4} C_F \lambda^2\ell^2 \delta^3M $,
the set of intermediate times
$$
   D: = \Big\{ t\in (T_1, T_0)\; : \;  \frac{1}{16}  C_F \lambda^2\ell^2 \delta^3M < H(t)
   < \frac{1}{4} C_F \lambda^2\ell^2 \delta^3M \Big\}
$$
is non-empty.

\begin{lemma}\label{lemma:D} The set  $D$  contains an interval of size at least $c\delta^3M$
with some positive constant $c>0$. Moreover, for any $t\in D\cap \cG\nc$, we have
\be\label{extreme}
   \#\big\{ i \; : \; \varphi^{(2)}_i \le v_i(t) \big\}\le \frac{1}{2}\delta M.
\ee
\end{lemma}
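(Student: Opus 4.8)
The plan is to prove Lemma~\ref{lemma:D} following the corresponding step in \cite{C}, but carefully separating the contributions of good and bad times. First I would establish the interval bound. The key input is the dissipation inequality \eqref{goodterm}: since the good term $\fb[(v(t)-\varphi^{(1)})_+, (v(t)-\varphi^{(1)})_-]$ is nonnegative, \eqref{goodterm} yields that $H$ is almost monotone, more precisely $H(T_2) \le H(T_1) + C\ell^2\lambda^2[(T_2-T_1) + M^{1/2}]$ for any $-3M\le T_1 < T_2 < 0$. To control $H$ from below as it decreases, I would also need a differential inequality of the form $H'(t) \ge -C\ell^2\lambda^2 - (\text{something controllable})$, which again comes from \eqref{diss2}/\eqref{goodterm} applied on a short time slice. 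The point is that, since $H$ travels from $H(T_1)\le \frac{1}{16}C_F\lambda^2\ell^2\delta^3 M$ at time $T_1$ to $H(T_0)\ge \frac{1}{4}C_F\lambda^2\ell^2\delta^3 M$ at time $T_0$ (these are \eqref{5811} and \eqref{61uj}), and the time-derivative of $H$ on the good times is bounded (the bad times contribute a measure at most $CM^{1/4}\ll \delta^3 M$ and on them one uses the crude a priori bound \eqref{23c} to see $|H|\le C\ell^2 M^{2+2\kappa_2}$, so even a jump there is harmless once integrated against the small measure), the set $D$ where $H$ is in the middle third of this range must contain an interval of length comparable to $\delta^3 M$. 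I would make the $\lambda$-dependence explicit so it is consistent with the choice $\lambda = c\delta^6\mu$ in \eqref{gamlamchoice}.

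For \eqref{extreme}, the idea is that a $t\in D$ with too many indices at the high level $v_i(t)\ge \varphi^{(2)}_i$ would force $H(t)$ to exceed the upper threshold $\frac14 C_F\lambda^2\ell^2\delta^3 M$, contradicting $t\in D$. Concretely, if $\#\{i : v_i(t)\ge \varphi^{(2)}_i\} > \frac12\delta M$, then, exactly as in the derivation of \eqref{61uj} (using that $\varphi^{(2)}_i-\varphi^{(1)}_i = \ell(\lambda^2-\lambda)F_i$, that the relevant indices lie in $[-9M,9M]$ by \eqref{alleq} and \eqref{1c}, and that $F_i$ is linear near $\pm 9M$), one gets $H(t)\ge \sum_i(\varphi^{(2)}_i-\varphi^{(1)}_i)_+^2\,{\bf 1}(v_i(t)\ge\varphi^{(2)}_i) \ge c\lambda^2\ell^2\delta^3 M$ for an appropriate constant; choosing the thresholds in the definition of $D$ with the right numerical constants makes this incompatible with $t\in D$. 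Here the hypothesis $t\in \cG$ is used to discard the small set of bad times and to apply the good-time bound $|v_i-\varphi^{(1)}_i|\le \lambda\ell\theta_i$.

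I expect the main obstacle to be the bookkeeping of the bad-time contributions throughout the interval argument: in \cite{C} the function $H$ is genuinely (almost) monotone because all the estimates hold for every time, whereas here \eqref{goodterm} and the pointwise bound \eqref{Hupp} on $H$ are only available on $\cG$, and on $\cG^c$ one has merely the weak a priori control coming from \eqref{11c} and \eqref{23c}. So the delicate point is to verify that $|\cG^c|\le CM^{1/4}$ is indeed negligible against the scale $\delta^3 M$ of the interval we are trying to produce — which is true because $\delta$ is a fixed positive constant while $M^{1/4}\ll M$ — and to organize the argument so that $D$ is shown to contain a good sub-interval, i.e. an interval of length $\ge c\delta^3 M$ that moreover has large intersection with $\cG$ (which again follows since $\cG^c$ is tiny). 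Once these measure-theoretic points are pinned down, the rest is the same continuity-of-$H$ plus energy-inequality argument as in \cite{C}, with all constants tracked so as to be consistent with \eqref{gamlamchoice}.
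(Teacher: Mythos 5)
Your overall strategy for both claims matches the paper's: you use continuity of $H$ together with the energy bound \eqref{goodterm}, the endpoint values provided by \eqref{5811} and \eqref{61uj}, the fact that $|\cG^c|\le CM^{1/4}\ll \delta^3M$, and, for \eqref{extreme}, the contradiction argument obtained by running \eqref{61uj} with $T_0$ replaced by $\tau$. The second part of your proposal is essentially the paper's proof of \eqref{extreme}.

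The first part, however, has a genuine gap as written: you assert you would ``need a differential inequality of the form $H'(t)\ge -C\ell^2\lambda^2 - (\text{something controllable})$, which again comes from \eqref{diss2}/\eqref{goodterm}.'' No such lower bound on $H'$ is available from those displays. Inequality \eqref{goodterm} only controls the \emph{increase} of $H$: the dissipation term $\int \fb[(v-\varphi^{(1)})_+,(v-\varphi^{(1)})_+]$ appears on the left-hand side with the good sign, so $H$ may decrease arbitrarily fast whenever $\fb$ is large, and nothing in \eqref{diss2}--\eqref{Hupp} bounds $\fb$ from above pointwise in time. The paper avoids needing any such two-sided Lipschitz control by a maximality trick: it picks $T'\in D$ to be the \emph{largest} time with $H(T')=\frac18 C_F\lambda^2\ell^2\delta^3 M$; since $H(T_0)\ge \frac14 C_F\lambda^2\ell^2\delta^3 M$ by \eqref{61uj}, the intermediate value theorem then forces $H(t)>\frac18 C_F\lambda^2\ell^2\delta^3 M$ for all $t\in(T',T_0]$, which gives the lower threshold on $[T',T'']$ for free, while \eqref{goodterm} alone keeps $H$ below the upper threshold on $[T',T'+c\delta^3 M]$. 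Replacing your unavailable lower bound on $H'$ with this one-sided bound plus the choice of $T'$ closes the argument; the rest of your bookkeeping about $|\cG^c|$ being negligible is correct.
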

\begin{proof}
By continuity,
there is
an intermediate time $T' \in D\subset [T_1, T_0]$ such that 
$H( T')= \frac{1}{8}  C_F \lambda^2\ell^2 \delta^3M $.  We can assume
that $T'$ is the largest such time, i.e. 
\be\label{Hlower}
H(t)> \frac{1}{8}  C_F \lambda^2\ell^2 \delta^3M \qquad\mbox{for any}\quad
 t\in  [T', T_0]\cap D.
\ee

Let $T'' = T' + c\delta^3M$ with a small $c>0$.
We claim that  $[T', T'']\subset D$.
For any $t\in [T', T'']$ we can use  \eqref{goodterm}:
\be\label{Hup}
   H(t) \le H(T') +C\ell^2\lambda^2 
\big[ (t-T') + M^{1/2}\big]\nc 
   \le \frac{1}{8}  C_F \lambda^2\ell^2 \delta^3M+  C c \ell^2\lambda^2  \delta^3M 
   < \frac{1}{4} C_F \lambda^2\ell^2 \delta^3M
\ee
if $c$ is sufficiently small.
This means that as $t$ runs through $[T', T'']$, $H(t)$ has not reached
$\frac{1}{4} C_F \lambda^2\ell^2 \delta^3M$, in particular $[T', T'']\subset (T_1, T_0)$
since $H(T_0)$ is already above this threshold. Combining then \eqref{Hup}
with \eqref{Hlower}, we get $[T', T'']\subset D$. This  proves the first statement
of the lemma.

For the second statement, we argue by contradiction. Suppose we have
$ \#\big\{ i \; : \; \varphi^{(2)}_i \le v_i(\tau) \big\}> \frac{1}{2}\delta M$
for some $\tau\in D \cap \cG\nc$. Going through the estimate \eqref{61uj} but  $T_0$ replaced with $\tau$, we would get
$H(\tau)\ge  C_F \frac{\lambda^2}{4}\ell^2 \delta^3M$, but this contradicts to $\tau\in D$.
This completes the proof of the lemma.
\qed

Define the exceptional set $\cF\subset D \cap\cG\nc$ of times where $v$ is  below
$\varphi^{(0)}$, i.e.
$$
   \cF: = \Big\{ t\in D \cap \cG\nc\; : \; \#\big\{ |j|\le 8M \; : \; v_j(t)-\varphi^{(0)}_j\le 0\big\}
 \ge \mu M  \Big\}.
$$
This set is very small, since from \eqref{Hupp} (applied to $t_{max}:=\sup\cF\in \bar \cG$) we have
\begin{align*}
  C\lambda^2 \ell^2M & \ge  - \int_{-3M}^{t_{max}} \sum_{ij}(v_i(t)-\varphi^{(1)}_i)_+  B_{ij}(t)
  (v_j(t)-\varphi^{(1)}_j)_{-} \rd t
\\ &
  \ge -  \int_\cF \sum_{|i|, |j|\le 9M}(v_i(t)-\varphi^{(1)}_i)_+ B_{ij}(t)
  (v_j(t)-\varphi^{(1)}_j)_{-} \rd t
\\ &
  \ge -  \bar cM^{-2}\int_\cF \sum_{|i|, |j|\le 9M}(v_i(t)-\varphi^{(1)}_i)_+ 
  (v_j(t)-\varphi^{(1)}_j)_{-} \rd t
\\ &
   \ge \frac{\bar c}{2M}\ell\mu\int_\cF \sum_{|i|\le 9M}(v_i(t)-\varphi^{(1)}_i)_+  \rd t,
\end{align*}
where we restricted the time integration to $\cF$ in the first step,
then we used \eqref{Klower} in the second step. In the third step
we used that whenever $v_j(t)-\varphi^{(0)}_j\le 0$ (see  the definition of $\cF$),
then $ - (v_j(t)-\varphi^{(1)}_j)_{-}\ge \ell(1-\lambda)\ge \frac{\ell}{2}$.

 By \eqref{1c}, $(v_i(t)-\varphi^{(1)}_i)_+ \le\ell\lambda $ and $(v_i(t)-\varphi^{(1)}_i)_+ = 0$  if $|i|\ge 9M$
and $t\in \cG$. \nc Hence we can continue
the above estimate
$$
   C\lambda^2 \ell^2M \ge \frac{\bar c \mu}{2M\lambda}
\int_\cF \sum_{i}(v_i(t)-\varphi^{(1)}_i)_+^2  \rd t  
=\frac{\bar c \mu}{2M\lambda}
\int_\cF H(t)  \rd t \ge \frac{\bar c C_F}{32}\lambda\ell^2 \delta^3\mu |\cF|.
$$
Here we used that $\cF\subset D$ and that in $D$ we have a lower bound on $H(t)$.
The conclusion is that
$$
   |\cF|\le \frac{C\lambda}{\delta^3\mu}M
$$
with some fixed constant $C>0$.  Using that $|D|\ge c\delta^3M$ from Lemma \ref{lemma:D}
and the smallness of $|\cG^c|$, we thus have
$$
  |\cF|\le \frac{|D\cap \cG|}{2}, \qquad |D\cap \cG|\ge \frac{1}{2}c\delta^3M
$$
 if $\lambda$ is sufficiently small, like
\be\label{lambdasmall}
 \lambda\le c \delta^6\mu.
\ee

This means that $|D\setminus\cF|\ge \frac{c}{2}\delta^3M$. Now we claim that
for $t\in (D \cap \cG \nc)\setminus \cF$ we have
\be\label{Adef1}
  A(t): = \# \Big\{ i \; : \; \varphi^{(0)}_i< v_i(t)<\varphi^{(2)}_i\Big\}\ge\frac{ M}{2}.
\ee
This is because 
 $t\not\in\cF$ guarantees that the lower bound $ \varphi^{(0)}_i\le v_i(t)$
is violated not more than $\mu M \le M/4$ times among the indices  $|i|\le 8M$.
By \eqref{extreme}, the upper bound $v_i(t)\le \varphi^{(2)}_i$ is violated
not more than $\frac{1}{2}\delta M\le M/4$ times.

Finally, integrating \eqref{Adef1} gives
$$
  \int_{-3M}^{0}
 \#\Big\{ i\; : \;  \varphi_i^{(0)}< v_i(t)<\varphi^{(2)}_i\Big\} \rd t
 = \int_{-3M}^{0} A(t)\rd t \ge \frac{M}{2}|(D\cap \cG)\nc\setminus\cF| \ge c\delta^3M^2
$$
with some small $c>0$,
which implies \eqref{opt2} with 
\be\label{gammachoice}
\gamma:= c\delta^3.
\ee
 This proves Lemma~\ref{lm:2nd}.
\end{proof}

\appendix

\section{Proof of Lemma~\ref{lm:goody}}\label{sec:goody}

First we show that
on the set $ \cR_{L, K}$, the
length of the interval $J=J_\by=(y_{L-K-1}, y_{L+K+1})$ satisfies \eqref{Jlength}.
We first write
\be\label{length}
    |J| = |y_{L+K+1}-y_{L-K-1}| = |\gamma_{L+K+1}-\gamma_{L-K-1}| +  O(N^{-1+\xi\delta/2}).
\ee
Then we use the Taylor expansion
$$
   \varrho(x) = \varrho(\bar y) + O(x-\bar y)
$$
around the midpoint $\bar y$ of $J$. Here we used that
 $\varrho\in C^1$ away from the edge. Thus
 from \eqref{defgammagen}
\be\label{cck}
  \cK+1 = N\int_{\gamma_{L-K-1}}^{\gamma_{L+K+1}}\varrho = N\int_{y_{L-K-1}}^{y_{L+K+1}} \varrho + 
 O(N^{\xi\delta/2})
  = N|J|\varrho(\bar y) + O(N|J|^2) +  O(N^{\xi\delta/2}), 
\ee
since the contribution of the second order term in the Taylor expansion
is of order $N|J|^2$. Expressing $|J|$ from this equation and using \eqref{K},
we arrive at  \eqref{Jlength}.

\medskip

Now we prove \eqref{Vby1}.
We set
$$
   U(x): = V(x)-\frac{2}{N} \sum_{j\; : \; |j-L|\ge K+K^\xi}
 \log|x-\gamma_j|.
$$
The potential $U$ is similar to $V_\by$, but the interactions
with the external points near the edges of $J$ ($y_j$'s with $|j-L|< K+K^\xi$)
have been removed and the external points $y_j$ away from the edges
have been replaced by their classical value $\gamma_j$.
In proving \eqref{Vby1}, we  will first compare $V_\by$ with an auxiliary potential $U$
and then we compute $U'$.

First we estimate the difference $V_\by'(x)-U'(x)$.
We  fix $x\in J$, and for definiteness, we assume
that $d(x)=x-y_{L-K-1}$, i.e. $x$ is closer to the lower endpoint of $J$;
the other case is analogous. We get   (explanations will be given after the equation) 
\begin{align}
  |V_\by'(x) -  U'(x)| \le & 
 \frac{1}{N} \sum_{K< |j-L|< K+K^\xi}
   \frac{1}{|x-y_j|} +
 \frac{1}{N} \sum_{ |j-L|\ge K+K^\xi}
\frac{|y_j-\gamma_j|}{|x-y_j||x-\gamma_j|} \non \\
  \le &   \frac{CK^\xi}{Nd(x)} + 
\frac{N^{-1+\delta\xi/2}}{d(x)} \frac{1}{N}
\Big[ \sum_{j=\al N/2}^{L-K-K^\xi} + \sum_{j=L+K+K^\xi}^{N(1-\al/2)} \Big]  \frac{1}{|x-\gamma_j|}\non \\
& + \frac{CN^{-4/15+\e}}{N}\Big[ \sum_{j=N^{3/5+\e}}^{\al N/2}1 + \sum_{N(1-\al/2)}^{N-N^{3/5+\e}} 1\Big]  
 +\frac{C}{N} \Big[ \sum_{j=1}^{N^{3/5+\e}} 1 + \sum_{j=N-N^{3/5+\e}}^N 1\Big] \non \\
 \le &  \frac{CK^\xi}{Nd(x)}  + \frac{CN^{-1+\delta\xi/2}\log N}{d(x)} + CN^{-4/15+\e} \non \\
  \le & \frac{CK^{\xi}}{Nd(x)}. \label{VU}
\end{align}
Here for the first bulk sum, $j\in \llbracket N\al/2, L-K-K^\xi\rrbracket$, we used
 $|y_j-\gamma_j|\le N^{-1+\xi\delta/2}$ from
 the definition of $ \cR_{L, K}$ and the fact that for $j\le L-K-K^{\xi}$ we have
\begin{align*}
    x-\gamma_j \ge & \; y_{L-K-1}- \gamma_j\\ 
 \ge & \; 
\gamma_{L-K-1}-\gamma_j -  |y_{L-K-1}-\gamma_{L-K-1}| \\
  \ge & \; cN^{-1}(L-K-1-j) - CN^{-1+\xi\delta/2} \\ \ge &\;  c'N^{-1} (L-K-1-j)
\end{align*}
with some positive constants $c, c'$. This estimate allows one to 
sum up $|x-\gamma_j|^{-1}$ at the expense of a $\log N$ factor.
Similar estimate holds  for $j\ge L+K + K^{\xi}$.
In the intermediate sum, $j\in \llbracket N^{3/5+\e}, N\al/2\rrbracket$, we
used $|y_j-\gamma_j|\le CN^{-4/15+\e}$ and that $|x-y_j|$ and $|x-\gamma_j|$ are bounded
from below by a positive constant since 
$$
  x - y_j\ge y_{L-K-1}-y_j\ge y_{\al N} -y_j \ge \gamma_{N\al} - \gamma_{N\al/2} + O(N^{-1+\xi\delta/2})
 \ge c
$$
and similarly for $x-\gamma_j$. Finally, very near the edge, e.g. for $j\le N^{3/5+\e}$,
we just estimated $|y_j-\gamma_j|$ by a constant. This explains \eqref{VU}.

Now we estimate $U'(x)$. 
We use the fact that the equilibrium measure $\varrho=\varrho_V$ satisfies
the identity
$$
  \frac{1}{2}V'(x) = \int \frac{\varrho(y)}{x-y}\rd y
$$
from the Euler-Lagrange equation of \eqref{varprin}, see \cite{APS, BPS}. Thus
$$
   \frac{1}{2}|U'(x)| \le  |\Omega_1| +|\Omega_2|+|\Omega_3|
$$
with
\begin{align*}
    \Omega_1:= & \int_{\gamma_{L-K-K^\xi}}^{\gamma_{L+K+K^\xi}} \frac{\varrho(y)}{x-y}\rd y,  \\
  \Omega_2:= &\int_A^{\gamma_{L-K-K^\xi}} \frac{\varrho(y)}{x-y}\rd y-
 \frac{1}{N} \sum_{j=1}^{L-K-K^\xi} \frac{1}{x-\gamma_j}, \\
  \Omega_3:= &\int_{\gamma_{L+K+K^\xi}}^B \frac{\varrho(y)}{x-y}\rd y-
 \frac{1}{N}\sum_{j= L+K+K^\xi}^{N} \frac{1}{x-\gamma_j}, 
\end{align*}
 where $[A, B]$ is the support of the density $\rho$.

To estimate  $\Omega_1$, we use Taylor expansion 
$$
   \varrho(y) = \varrho(x) +  O\big(|x-y|\big).
$$
For definiteness we again assume that $d(x) = x-y_{L-K-1}$, and use
that on $\cR_{L,K}$ we have 
$$
\gamma_{L-K-K^\xi}\le y_{L-K-1} \le x 
 \le y_{L+K+1}\le \gamma_{L+K+K^\xi}.
$$
We thus obtain
\begin{align}
  \Omega_1 = & \int_{ \gamma_{L-K-K^\xi} }^{\gamma_{L+K+K^\xi}} 
\frac{\varrho(x) + O\big(|x-y|\big)}{x-y}\rd y  \nonumber \\
   = & \varrho(x)\log \frac{  \gamma_{L+K+K^\xi}-x}{x-\gamma_{L-K-K^\xi}}
   + O(K/N) \nonumber  \\
  = & \varrho(\bar y) \log \frac{d_+(x)}{d_-(x)} + O(KN^{-1+\xi}). \label{41}
\end{align}
In the first step above we computed the leading term of the integral, while
the other term was estimated trivially using 
that the integration length is $\gamma_{L+K+K^\xi}- \gamma_{L-K-K^\xi}  = O(K/N)$.
In the second step we used that $\varrho\in C^1$ away the edge, i.e.
$ \varrho(x) = \varrho(\bar y) + O(K/N)$. To estimate the logarithm, we used
\begin{align*}
 \gamma_{L+K+K^\xi}-x =& (\gamma_{L+K+K^\xi} - \gamma_{L+K+1}) + ( \gamma_{L+K+1} - y_{L+K+1})
+(y_{L+K+1} - x) \\
= & \varrho(\bar y)N^{-1}K^\xi + O(N^{-1+\xi\delta/2}) +  (y_{L+K+1} - x) \\
= & d_+(x)   + O(N^{-1+\xi\delta/2})
\end{align*}
and the similar relation
$$
  x-\gamma_{L-K-K^\xi} =  d_-(x)   + O(N^{-1+\xi\delta/2}).
$$
 Notice that the error term  in \eqref{41} is smaller than the target estimate $K^{\xi}/(Nd(x))$ since
$d(x)\le K/N\ll K^{-1+\xi}N^{-\xi}$.

Now we estimate the $\Omega_2$ term;   $\Omega_3$ can be treated analogously.
We can write (with the convention $\gamma_0=A$)
\begin{align*}
   |\Omega_2 |= & \Big|\sum_{j=1}^{L-K-K^\xi} \int_{\gamma_{j-1}}^{\gamma_{j}}
\varrho(y) \Big[ \frac{1}{x-y} - \frac{1}{x-\gamma_j}\Big]\rd y\Big|\\
  \le &C\sum_{j=1}^{L-K-K^\xi} (\gamma_j-\gamma_{j-1})
 \int_{\gamma_{j-1}}^{\gamma_{j}} \frac{\varrho(y)}{ (x-y)^2}\rd y \\
\le & CN^{-1} \int_{A+\kappa}^{\gamma_{L-K-K^\xi}} \frac{\rd y}{(x-y)^2}
 + CN^{-2/3} \int_A^{A+\kappa}  \frac{\rd y}{(x-y)^2} \\
 \le & \frac{CN^{-1}}{d(x)}.
\end{align*} 
In the first step we used that
$$
   \int_{\gamma_{j-1}}^{\gamma_{j}}
\varrho(y) = \frac{1}{N}
$$
from \eqref{defgammagen}. In the second step we used that $\gamma_j-\gamma_{j-1}= O_\kappa(N^{-1})$ in
the bulk, i.e. for $\gamma_j\ge A+\kappa$, 
 and $\max_j(\gamma_j-\gamma_{j-1})= O(N^{-2/3})$ (the order $N^{-2/3}$  comes from the fact that the density 
 $\rho$  vanishes as a square root at the endpoints).  
 The parameter $\kappa=\kappa(\al)$ is chosen
such that $A+2\kappa \le y_{L-K-1}$ which can be achieved since $L\ge \al N$
and $y_{L-K-1}$ is close to $\gamma_{L-K-1}$.  
In the very last step we absorbed the $N^{-2/3}$ error
term into $(N d(x))^{-1}\ge K^{-1}\gg N^{-2/3}$.

\medskip

Finally we prove \eqref{Vbysec}. 
Since $|y_j-\gamma_j|\le K^{\xi}/N$, it follows that $|x-y_j|\sim |x-\gamma_j|$ for $|x-\gamma_j|\ge K^\xi/N$. 
Thus we have 
$$
   V_\by''(x) = V''(x) + \frac{2}{N}\sum_{j\not\in I} \frac{1}{(x-y_j)^2}
  \ge \inf V'' + \frac{c}{N}\sum_{j\not\in I} \frac{1}{(x-\gamma_j)^2}
  \ge  \inf V'' + \frac{c}{d(x)},
$$
with some positive constant $c$ (depending only on $\al$).  In estimating the summation, we used
that  the sequence $\gamma_k$ is  regularly  spaced with gaps  of  order $1/N$.
This completes the proof of Lemma~\ref{lm:goody}. \qed

\section{Discrete Gagliardo-Nirenberg inequalities}\label{sec:GN}

Recall the  integral formula for  quadratic form of  the operator $ (-\Delta)^{s/2}$ in $\R$
for any $s\in (0,2)$:
\be
\int_\R \phi(x) \, ( (-\Delta)^{s/2} \, \phi) (x) \rd x  =   
C(s)\int_\R \int_\R \frac { |\phi(x) - \phi(y)|^2} { |x-y|^{1+s}} \rd x \rd y,
\ee 
where $C(s)$ is an explicit positive constant, $C(1)= (2\pi)^{-1}$ and $\phi \in H^{s/4}(\bR)$.
We have   the following Gagliardo-Nirenberg type inequality in the critical case 
 (see (1.4) of \cite{OO} with the choice of parameters $n=1, p=4$) 
\be\label{p}
 \| \phi \|_4^4 \le C \| \phi \|_2^2   \int_\R \phi(x) \, ( \sqrt{-\Delta} \, \phi) (x) \rd x, \qquad
\phi :\R\to \R.
\ee 
We first give a slight generalization of this inequality:

\begin{proposition}  Let $p\in (2, \infty)$ and $s\in (1-\frac{2}{p},2)$. Then we have
\be\label{oo2}
  \| \phi\|_p \le C_{p,s} \|\phi\|_2^{1-\frac{p-2}{sp}}
 \Big[ \int_\R \phi(x) \, ( (-\Delta)^{s/2} \, \phi) (x)
 \rd x\Big]^{\frac{p-2}{2sp}}
\ee
with some constant $C_{p,s}$ with $\|\cdot \|_p =\|\cdot \|_{L^p(\bR)}$.
\end{proposition}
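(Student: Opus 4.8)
The statement to prove is a Gagliardo--Nirenberg type inequality
\be\label{oo2plan}
  \| \phi\|_p \le C_{p,s} \|\phi\|_2^{1-\frac{p-2}{sp}}
 \Big[ \int_\R \phi(x) \, ( (-\Delta)^{s/2} \, \phi) (x)
 \rd x\Big]^{\frac{p-2}{2sp}},
\ee
valid for $p\in(2,\infty)$ and $s\in(1-\tfrac 2p,2)$, generalizing the critical case $p=4$, $s=1$ of \cite{OO}. The plan is to reduce \eqref{oo2plan} to a standard Sobolev embedding for the homogeneous fractional space $\dot H^{s/2}$ combined with interpolation, rather than re-deriving anything from scratch.

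First I would set $E(\phi):=\int_\R \phi(x)\,((-\Delta)^{s/2}\phi)(x)\,\rd x = C(s)\iint |\phi(x)-\phi(y)|^2|x-y|^{-1-s}\,\rd x\,\rd y$, which up to the constant $C(s)$ is the square of the homogeneous Sobolev seminorm $\|\phi\|_{\dot H^{s/2}}^2$, i.e. $E(\phi)\sim \||\xi|^{s/2}\widehat\phi\|_2^2$ by Plancherel. The scaling-invariant exponent here is key: for $d=1$ and $0<s/2<1/2$, the Sobolev embedding $\dot H^{s/2}(\R)\hookrightarrow L^{q}(\R)$ holds exactly for $q = \frac{2}{1-s}$, which requires $s<1$. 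Thus I would split into two regimes. If $p\le \frac{2}{1-s}$ (the subcritical or critical range, automatically including all $p$ when $s\ge 1$ via a limiting argument), I interpolate: write $\frac1p = \frac{1-\theta}{2} + \frac{\theta}{q}$ with $q=\frac{2}{1-s}$, solve for $\theta$ and check $\theta = \frac{p-2}{sp}\cdot\frac{s}{1}$... more precisely one computes $\theta$ so that the exponent on $\|\phi\|_2$ works out to $1-\frac{p-2}{sp}$ and the exponent on $E(\phi)^{1/2}=\|\phi\|_{\dot H^{s/2}}$ works out to $\frac{p-2}{sp}$. The condition $s>1-\frac 2p$ is precisely what guarantees $p< \frac{2}{1-s}$ when $s<1$ (so the interpolation is non-degenerate), and when $s\ge 1$ every $p<\infty$ is allowed, consistent with the hypothesis. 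The interpolation inequality $\|\phi\|_p \le \|\phi\|_2^{1-\theta}\|\phi\|_q^{\theta}$ combined with the Sobolev bound $\|\phi\|_q\le C\|\phi\|_{\dot H^{s/2}}$ then yields \eqref{oo2plan} after substituting $\|\phi\|_{\dot H^{s/2}}^2 = C(s)^{-1}E(\phi)$.

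For the case $s\ge 1$ (where $\dot H^{s/2}$ with $s/2\ge 1/2$ does not embed into a single $L^q$ and the pure Sobolev step above fails at the endpoint), I would instead argue directly: use that $\dot H^{s/2}\cap L^2 = H^{s/2}$ embeds into $L^q$ for every $q\in[2,\infty)$ with $\|\phi\|_q \le C_q \|\phi\|_2^{1-s/(q\cdot\ldots)}\|\phi\|_{\dot H^{s/2}}^{\ldots}$ — this is itself a classical Gagliardo--Nirenberg estimate (e.g. via Littlewood--Paley: bound low frequencies by $\|\phi\|_2$ and high frequencies by the seminorm, optimize over the cutoff). Matching exponents by scaling gives exactly the powers in \eqref{oo2plan}. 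Alternatively, and perhaps most cleanly, I would avoid casework entirely by invoking the known inequality \eqref{p} for $p=4,s=1$ together with a frequency-localization/iteration argument: any $p\in(2,\infty)$ and $s\in(1-2/p,2)$ can be reached from $(4,1)$ by bootstrapping through Hölder interpolation and the semigroup bound $\|e^{-t(-\Delta)^{s/2}}\|_{L^2\to L^\infty}\lesssim t^{-1/(2s)}$. The cleanest writeup uses the Littlewood--Paley approach directly, so that is what I would present.

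The main obstacle — really the only subtle point — is bookkeeping the exponents so that the scaling $\phi(x)\mapsto \phi(\mu x)$ is respected and the hypothesis $s>1-\tfrac 2p$ is used exactly where the embedding would otherwise break; getting $\theta=\frac{p-2}{sp}$ correct (equivalently checking $1-\theta = 1-\frac{p-2}{sp}$ and $\theta/2$ on the seminorm-squared gives $\frac{p-2}{2sp}$) is a one-line computation but must be done carefully. There is no deep analytic difficulty beyond citing the standard $d=1$ fractional Sobolev/Gagliardo--Nirenberg machinery; the generalization over \cite{OO} is essentially an interpolation exercise. I would also remark that in the discrete setting used elsewhere in the paper (Appendix~\ref{sec:GN}, inequality \eqref{s}) the same proof works verbatim with $\R$ replaced by $\ZZ$, since the discrete operator $\sqrt{-\Delta}$ has the analogous kernel representation and the discrete Sobolev embedding holds with the same exponents for functions of finite support.
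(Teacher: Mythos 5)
Your primary route is correct but genuinely different from the paper's. You propose to deduce \eqref{oo2}, when $s<1$, from the Sobolev embedding $\dot{H}^{s/2}(\R)\hookrightarrow L^{2/(1-s)}(\R)$ combined with interpolation between $L^2$ and $L^{2/(1-s)}$; you correctly observe this breaks down for $s\ge 1$ and propose a Littlewood--Paley / frequency-cutoff optimization there. The paper instead follows Theorem~2 of \cite{OO} directly and uniformly in $s$: Hausdorff--Young gives $\|\phi\|_p\le C_p\|\wh\phi\|_q$ with $q=p/(p-1)<2$; then one applies H\"older on the Fourier side after multiplying and dividing by the smooth weight $(\lambda+|\xi|)^{\alpha/q}$, and optimizes over $\lambda>0$, with $s=2\alpha/q$ at the end. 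The integrability condition $\alpha>1-q/2$ for $\|(\lambda+|\xi|)^{-\alpha/q}\|_{2q/(2-q)}$ is exactly the hypothesis $s>1-2/p$, so no case distinction is needed and the $s\ge 1$ regime requires no separate handling. Your ``cleanest'' alternative (a sharp cutoff at frequency $\Lambda$, low frequencies bounded by $\|\phi\|_2$, high frequencies by the $\dot{H}^{s/2}$ seminorm, then optimize in $\Lambda$) is essentially a discretized form of the paper's smooth-weight argument, so you did have the right mechanism in reserve. Your exponent bookkeeping ($\theta=(p-2)/(sp)$ from $1/p=(1-\theta)/2+\theta(1-s)/2$) is correct. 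The upshot: your interpolation route works but is less economical than the paper's single-shot Fourier-side optimization, which treats all $s\in(1-2/p,2)$ in one stroke.
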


{\it Proof.} We follow the proof of Theorem 2 in \cite{OO}. Setting $q = p/(p-1)$
and using Hausdorff-Young and H\"older  inequalities for any $\lambda>0$ and $\al>1-\frac{q}{2}$
\begin{align}\label{HY}
    \| \phi\|_p & \le C_{p}\|\wh \phi\|_q \le C_p \big\|\wh \phi(\xi)(\lambda+ |\xi|)^{\al/q}\big\|_2
\big\| (\lambda+ |\xi|)^{-\al/q}\|_{2q/(2-q)}
\\ \nonumber
  & \le C_{p,\al}\big( \lambda^{\al/q} \|\phi\|_2 + \langle \phi, (-\Delta)^{\al/q} \phi)^{1/2}\rangle \lambda^{\frac{1-\al}{q} -\frac{1}{2}}
\\ \nonumber
&\le C_{p,\al} \|\phi\|_2^{1- \frac{2-q}{2\al}}   \langle\phi, (-\Delta)^{\al/q} \phi)^{\frac{2-q}{4\al}},
\end{align}
where in the last step we chose $\lambda =  (\phi, |p|^{2\al/q} \rangle^{q/2\al} \|\phi\|^{-q/\al}$.
We used $\langle \cdot , \cdot \rangle$ to denote the inner product in $L^2(\bR)$.  
Setting $s= 2\al/q$, we obtain \eqref{oo2}. \qed

Now we derive the discrete version of this inequality.

\begin{proposition}\label{prop:GNd}  Let $p\in (2, \infty)$ and $s\in (1-\frac{2}{p},2)$. 
Then there exists a positive constant $C_{p,s}$ such that 
\be\label{s}
\| f \|_p  \le C_{p,s} \| f \|_2^{1-\frac{p-2}{sp}} 
\Bigg[\sum_{i \not = j  \in \ZZ}   \frac { |f_i - f_j|^2} { |i-j|^{1+s}}  \Bigg]^{\frac{p-2}{2sp}}
\ee 
holds for any function $f: \ZZ \to \R$, where $\|f\|_p = \|f \|_{L^p(\ZZ)} =\big( \sum_i |f_i|^p\big)^{1/p}$.
\end{proposition}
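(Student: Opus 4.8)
The plan is to deduce the discrete inequality \eqref{s} from its continuum counterpart \eqref{oo2} by a standard interpolation/embedding argument, realizing a function on $\ZZ$ as a function on $\bR$ whose continuum Gagliardo--Nirenberg norm is comparable to the discrete one. First I would associate to $f:\ZZ\to\bR$ a piecewise constant (or piecewise linear) extension $\phi:\bR\to\bR$, e.g. $\phi(x):=f_{\lfloor x+1/2\rfloor}$, so that $\|\phi\|_{L^p(\bR)}= \|f\|_{L^p(\ZZ)}$ for every $p$, in particular the left side and the $L^2$ factor on the right side match exactly. The only thing to check is that the homogeneous fractional Sobolev seminorm of $\phi$ is controlled by the discrete Dirichlet form appearing in \eqref{s}:
\be\label{GNcomp}
\int_\bR\int_\bR \frac{|\phi(x)-\phi(y)|^2}{|x-y|^{1+s}}\,\rd x\,\rd y \le C_s \sum_{i\ne j\in\ZZ}\frac{|f_i-f_j|^2}{|i-j|^{1+s}}.
\ee
Given \eqref{GNcomp}, inserting $\phi$ into \eqref{oo2} and using the integral representation of the quadratic form of $(-\Delta)^{s/2}$ immediately yields \eqref{s}.

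The estimate \eqref{GNcomp} is the routine but slightly technical heart of the argument. I would split the double integral over the region $|x-y|\le 1$ and $|x-y|\ge 1$. In the far region $|x-y|\ge 1$, both $x$ and $y$ lie (up to the endpoints) in unit cells centered at integers $i$ and $j$ with $|i-j|\ge 1$, one has $|\phi(x)-\phi(y)|=|f_i-f_j|$, $|x-y|\asymp |i-j|$ whenever $|i-j|\ge 2$, and integrating over the two unit cells produces exactly a constant times $|f_i-f_j|^2|i-j|^{-1-s}$ after summation; the borderline case $|i-j|=1$ contributes a multiple of $\sum_i|f_{i+1}-f_i|^2$, which is dominated by the right side of \eqref{s}. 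In the near region $|x-y|\le 1$ the increment $\phi(x)-\phi(y)$ is nonzero only when $x,y$ straddle a half-integer, i.e. lie in adjacent cells $i,i+1$; bounding $\int\!\!\int_{|x-y|\le 1}|x-y|^{-1-s}$ over such a configuration by a finite constant (this uses $s<2$, so the singularity $|x-y|^{-1-s}$ with $1+s<3$ is integrable against the $2$-dimensional Lebesgue measure near the diagonal after the straddling constraint removes one dimension; more precisely one integrates $\int_0^1 r^{-1-s}\,r\,\rd r\cdot(\text{const})<\infty$ for $s<1$, and for $s\in[1,2)$ one uses instead that the piecewise \emph{linear} interpolation has $|\phi(x)-\phi(y)|\le |f_{i+1}-f_i|\,|x-y|$ so the integrand is $\le |f_{i+1}-f_i|^2|x-y|^{1-s}$, integrable since $1-s>-1$). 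Summing over $i$ again gives a constant times $\sum_i|f_{i+1}-f_i|^2$, controlled by the right side of \eqref{s}. To be safe about the whole range $s\in(1-\tfrac2p,2)$ I would simply use the piecewise linear extension throughout, which makes both the near-diagonal integrability and the comparison of increments uniform.

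The main obstacle—though it is a mild one—is handling the full range $s\in(1-\frac2p,2)$ uniformly: for $s<1$ the piecewise constant extension already lies in $H^{s/2}$ and the near-diagonal term is harmless, but for $s\ge1$ one must use the Lipschitz (piecewise linear) extension to keep $\int\!\!\int_{|x-y|\le1}|\phi(x)-\phi(y)|^2|x-y|^{-1-s}$ finite, and then separately verify that this extension still satisfies $\|\phi\|_{L^p(\bR)}\asymp\|f\|_{L^p(\ZZ)}$ and $\|\phi\|_{L^2(\bR)}\asymp\|f\|_{L^2(\ZZ)}$ with constants depending only on $p$ (these are elementary since a piecewise linear function on a cell has $L^p$ norm comparable to the average of its nodal values). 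With these comparisons and \eqref{GNcomp} in hand, \eqref{s} follows from \eqref{oo2} with $C_{p,s}$ absorbing all the comparison constants. In the application in the main text only the case $s=1$, $p=4$ is used, where \eqref{p} can be cited directly and the piecewise linear device is exactly what is needed; I would state and prove the general case since it costs nothing extra.
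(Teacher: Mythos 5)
Your proposal is correct and takes essentially the same route as the paper: both pass to the piecewise linear interpolation $\phi$ of $f$, verify $\|\phi\|_{L^p(\bR)}\asymp\|f\|_{L^p(\ZZ)}$ and the seminorm comparison $\int\!\!\int|\phi(x)-\phi(y)|^2|x-y|^{-1-s}\rd x\,\rd y\lesssim\sum_{i\ne j}|f_i-f_j|^2|i-j|^{-1-s}$, and then invoke the continuum inequality \eqref{oo2}. The paper organizes the seminorm comparison by unit cells ($i=j$, $|i-j|=1$, $|i-j|\ge2$) rather than by near/far distance, but this is the same estimate in different clothing, and the paper likewise relies on the Lipschitz structure of the linear interpolant to handle the near-diagonal region for all $s<2$.
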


\begin{proof} 
Given $f:\ZZ\to \R$,  let $\phi:\R\to\R$ be its linear interpolation, i.e. 
$\phi (i):=f_i$ for $i\in \ZZ$ and 
\be\label{explic}
  \phi (x) = f_i + (f_{i+1}-f_i) (x-i) = f_{i+1} -  (f_{i+1}-f_i)(i+1-x), \qquad x\in [i, i+1].
\ee
It is easy to see that
\be\label{normf}
C_p^{-1}  \| \phi \|_{L^p(\R)} \le \| f \|_{L^p(\ZZ)} \le  C_p \| \phi \|_{L^p(\R)} \; , \qquad 2\le p \le \infty, 
\ee
with some constant $C_p$. We claim that
\be\label{claim}
 \int_\R \int_\R \frac { | \phi(x) - \phi(y)|^2} { |x-y|^{1+s}} \rd x \rd y \le C_s 
 \sum_{i \not = j  \in \ZZ}   \frac { |f_i - f_j|^2} { |i-j|^{1+s}}
 \ee
with some  constant $C_s$, then \eqref{normf} and \eqref{claim} will yield \eqref{s}
from \eqref{p}.

To prove \eqref{claim}, we can write
\be\label{df}
 \int_\R \int_\R \frac { | \phi(x) - \phi(y)|^2} { |x-y|^{1+s}} \rd x \rd y 
 = \sum_{i,j} \int_i^{i+1}\int_j^{j+1} \frac { | \phi(x) - \phi(y)|^2} { |x-y|^{1+s}} \rd x \rd y.
\ee
Using the explicit formula \eqref{explic},
we first compute the $i=j$ terms in \eqref{df}:
\begin{align}\label{1ex}
   \sum_i \int_i^{i+1}\int_i^{i+1} & \frac { | \phi(x) - \phi(y)|^2} { |x-y|^{1+s}} \rd x \rd y
 \\ & =  \sum_i |f_i- f_{i+1}|^2  \int_i^{i+1}\int_i^{i+1} \frac{\rd x \rd y}{|x-y|^s} = C_s 
 \sum_i \frac{|f_i- f_{i+1}|^2}{|i- (i+1)|^{1+s}}
\nonumber
\end{align}
with some explicit $C_s$.
Next we compute the terms $|i-j|=1$ in \eqref{df}. We assume
$j=i-1$, the terms $j=i+1$ are analogous;
\begin{align}\label{2ex}
   \sum_i \int_i^{i+1}& \int_{i-1}^{i} \frac { | \phi(x) - \phi(y)|^2} { |x-y|^{1+s}} \rd x \rd y  \\
 &  \le  \sum_i (f_{i+1}-f_i)^2 \int_i^{i+1}\int_{i-1}^{i} \frac{(x-i)^2}{(x-y)^{1+s}}  \rd x \rd y
   + \sum_i (f_i-f_{i-1})^2  \int_i^{i+1}\int_{i-1}^{i} \frac{(i-y)^2}{(x-y)^{1+s}}  \rd x \rd y,
\nonumber
\end{align}
where we used $\phi(x) = f_i + (f_{i+1}-f_i) (x-i)$ and $\phi(y) = f_i- (f_i-f_{i-1})(i-y)$.
The above integrals are finite constants $C_s$, so we get
$$
   \sum_i \int_i^{i+1}\int_{i-1}^{i} \frac { | \phi(x) - \phi(y)|^2} { |x-y|^2} \rd x \rd y
  \le C_s \sum_i \frac{ (f_{i+1}-f_i)^2}{(i+1-i)^{1+s}} +  \frac{(f_i-f_{i-1})^2 }{(i-(i-1))^{1+s}}.
$$
Finally, for the terms $|i-j|\ge 2$, we can just replace $(x-y)^{1+s}$ by $(i-j)^{1+s}$ 
in the right hand side of \eqref{df} and use simple Schwarz inequalities to get
$$
 \int_i^{i+1}\int_j^{j+1} \frac { | \phi(x) - \phi(y)|^2} { |x-y|^{1+s}} \rd x \rd y 
\le C_s\frac{ |f_i- f_j|^2 + |f_{i+1} - f_i|^2 + |f_{j+1}-f_j|^2}{|i-j|^{1+s}}.
$$
After summing up we get
\be\label{3ex}
  \sum_{|i-j|\ge 2}  \int_i^{i+1}\int_j^{j+1} \frac { | \phi(x) - \phi(y)|^2} { |x-y|^{1+s}} \rd x \rd y 
  \le C_s  \sum_{|i-j|\ge 2} \frac{ |f_i- f_j|^2}{|i-j|^{1+s}} + C_s
\sum_i \frac{|f_{i+1} - f_i|^2 }{((i+1)-i)^{1+s}}.
\ee
The estimates \eqref{1ex}, \eqref{2ex} and \eqref{3ex} together yield
\eqref{claim}. 
\end{proof}

\bigskip

With two fixed parameters $a, b>0$,
define the function
\be\label{mdef}
    m(\xi): =|\xi| \cdot {\bf 1}(|\xi|\le a) + b|\xi|, \qquad \xi \in \bR.
\ee
We will consider the operator $T=m(\sqrt{-\Delta})$ defined by $m$ being its
Fourier multiplier, i.e.
$$
   \wh{T \phi}(\xi) = m(\xi) \wh \phi(\xi).
$$
\begin{proposition}\label{prop:newGN}
We have
\be\label{newGN}
   \| \phi\|_4^4 \le C\| \phi\|_2^2 \langle \phi, m(\sqrt{-\Delta})\phi\rangle +
 \frac{C}{ab^3}  \|\phi\|_\infty^4.
\ee
\end{proposition}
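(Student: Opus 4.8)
\textbf{Proof plan for Proposition~\ref{prop:newGN}.}
The plan is to decompose the multiplier $m$ into a low-frequency and a high-frequency part and treat the two pieces separately, exactly as one usually proves critical Gagliardo--Nirenberg inequalities by freezing the high frequencies. Write $m(\xi) = m_{\mathrm{lo}}(\xi) + m_{\mathrm{hi}}(\xi)$ where $m_{\mathrm{lo}}(\xi) = |\xi|\,{\bf 1}(|\xi|\le a)$ and $m_{\mathrm{hi}}(\xi) = b|\xi|$, so that $\langle \phi, m(\sqrt{-\Delta})\phi\rangle = \langle\phi, m_{\mathrm{lo}}(\sqrt{-\Delta})\phi\rangle + b\,\langle \phi, \sqrt{-\Delta}\,\phi\rangle$, and both terms are nonnegative. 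The genuinely useful part is the full $\sqrt{-\Delta}$ contribution carrying the small coefficient $b$; the low-frequency part $m_{\mathrm{lo}}$ is only a lower-order correction and will be absorbed using the $L^\infty$ bound.

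First I would handle the high-frequency piece. By the standard critical Gagliardo--Nirenberg inequality \eqref{p} on $\R$ with $s=1$ (which is exactly the $s=1$ case of \eqref{oo2}, or directly the cited (1.4) of \cite{OO}), we have $\|\phi\|_4^4 \le C\|\phi\|_2^2\,\langle\phi,\sqrt{-\Delta}\,\phi\rangle$. If it happened that $\langle\phi,m(\sqrt{-\Delta})\phi\rangle \ge \langle\phi,\sqrt{-\Delta}\,\phi\rangle$ we would be done with room to spare, but in general $b<1$, so I must compensate. The key identity is
\[
  \langle\phi, \sqrt{-\Delta}\,\phi\rangle = \frac{1}{b}\langle\phi, m_{\mathrm{hi}}(\sqrt{-\Delta})\phi\rangle
  \le \frac1b \langle\phi, m(\sqrt{-\Delta})\phi\rangle,
\]
which already gives $\|\phi\|_4^4 \le \frac{C}{b}\|\phi\|_2^2\langle\phi,m(\sqrt{-\Delta})\phi\rangle$. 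This is not quite \eqref{newGN} because the factor is $1/b$ rather than $1$, and the claimed inequality has no $b$ in the first term. So the real content is to show that the difference, i.e. the part of $\|\phi\|_4^4$ not controlled by $\langle\phi,m(\sqrt{-\Delta})\phi\rangle$ alone, is bounded by $\frac{C}{ab^3}\|\phi\|_\infty^4$.

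To get the sharp form I would instead split $\phi = P_{<\Lambda}\phi + P_{\ge\Lambda}\phi$ using a Littlewood--Paley cutoff at a threshold $\Lambda\in[a, \infty)$ to be optimized, writing $\|\phi\|_4^4 \le C\|P_{<\Lambda}\phi\|_4^4 + C\|P_{\ge\Lambda}\phi\|_4^4$. For the high part, Bernstein together with \eqref{p} gives $\|P_{\ge\Lambda}\phi\|_4^4 \le C\|P_{\ge\Lambda}\phi\|_2^2 \langle P_{\ge\Lambda}\phi, \sqrt{-\Delta}\,P_{\ge\Lambda}\phi\rangle$, and on frequencies $|\xi|\ge\Lambda\ge a$ one has $m(\xi) = (1+b)|\xi| \ge b|\xi|$ hence $\sqrt{-\Delta}$ restricted to high frequencies is bounded by $\frac1b\,m(\sqrt{-\Delta})$, yielding $\|P_{\ge\Lambda}\phi\|_4^4 \le \frac{C}{b}\|\phi\|_2^2\langle\phi,m(\sqrt{-\Delta})\phi\rangle$ — still a $1/b$. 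The way to kill the $1/b$ is to use the energy more efficiently: on $|\xi|\ge\Lambda$ we have $m(\xi)\ge (1+b)\Lambda$ pointwise, so $\langle P_{\ge\Lambda}\phi, m(\sqrt{-\Delta})P_{\ge\Lambda}\phi\rangle \ge \Lambda\|P_{\ge\Lambda}\phi\|_2^2$, and combining $\|\phi\|_2^2 \le \Lambda^{-1}\langle\phi,m(\sqrt{-\Delta})\phi\rangle + \|P_{<\Lambda}\phi\|_2^2$ with $\|P_{<\Lambda}\phi\|_2^2 \le C\Lambda\|\phi\|_1^{?}$... here care is needed since only $\|\phi\|_\infty$ is available. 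Concretely, for the low-frequency part use Bernstein in the form $\|P_{<\Lambda}\phi\|_4^4 \le \|P_{<\Lambda}\phi\|_\infty^2 \|P_{<\Lambda}\phi\|_2^2 \le C\|\phi\|_\infty^2\|P_{<\Lambda}\phi\|_2^2$, and bound $\|P_{<\Lambda}\phi\|_2^2 \le C\Lambda\|\phi\|_\infty\|\phi\|_1$... no, $\|\phi\|_1$ is not available either. Instead write $\|P_{<\Lambda}\phi\|_2^2 = \int_{|\xi|<\Lambda}|\wh\phi|^2 \le \int_{|\xi|<\Lambda} \frac{m(\xi)}{|\xi|}... $ — this is where I must be careful, since $m(\xi)/|\xi|\ge 1$ so that bound goes the wrong way. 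The clean route: $\|P_{<\Lambda}\phi\|_2^2 \le \int_{|\xi|<a}|\wh\phi|^2 + \int_{a\le|\xi|<\Lambda}|\wh\phi|^2$; on the second range $m(\xi)\ge (1+b)a \ge a$, so that integral is $\le a^{-1}\langle\phi,m(\sqrt{-\Delta})\phi\rangle$, while the first is $\int_{|\xi|<a}|\wh\phi|^2 \le C a \|\phi\|_\infty^2$ by... again this needs $\widehat\phi\in L^\infty$, which follows only from $\|\phi\|_1$.

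\textbf{The main obstacle}, therefore, is precisely this: there is no $\|\phi\|_1$ or $\|\widehat\phi\|_\infty$ control in the hypotheses, only $\|\phi\|_\infty$, $\|\phi\|_2$ and the energy, so one cannot naively estimate low frequencies. The resolution I would pursue — and the step I expect to require the most care — is to avoid the Fourier side for the low-frequency piece entirely and instead work in physical space: use the representation $\langle\phi, m(\sqrt{-\Delta})\phi\rangle = \langle\phi, (\sqrt{-\Delta} - (1-b)P_{<a}\sqrt{-\Delta})\phi\rangle$ is too crude; rather, observe $m(\xi) \ge |\xi| - (1-b)|\xi|{\bf 1}(|\xi|\le a) \ge |\xi| - (1-b)a$, which is useless near $0$. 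The honest fix is a truncation/duality argument: for any level $t>0$ write $\|\phi\|_4^4 = \int_0^\infty 4t^3 |\{|\phi|>t\}|\,dt$, split at $t_0 \sim \|\phi\|_\infty \cdot (\text{small})$, control $\int_{t_0}^\infty$ by the full $L^4$ via \eqref{p} applied to a truncation $\phi_{t_0} := (|\phi|-t_0)_+\,\mathrm{sgn}\,\phi$ whose energy $\langle\phi_{t_0},\sqrt{-\Delta}\,\phi_{t_0}\rangle$ is bounded using $\sqrt{-\Delta} = \frac1b(m(\sqrt{-\Delta}) - m_{\mathrm{lo}}(\sqrt{-\Delta}))$ together with the elementary bound $\langle f, m_{\mathrm{lo}}(\sqrt{-\Delta}) f\rangle \le a\|f\|_2^2$ (since $m_{\mathrm{lo}}(\xi)\le a$ always) — this last bound is the crucial one, as it shows $m_{\mathrm{lo}}(\sqrt{-\Delta})$ is a bounded operator with norm $\le a$. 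Putting this together, $\langle\phi_{t_0},\sqrt{-\Delta}\,\phi_{t_0}\rangle \le \frac1b\langle\phi_{t_0}, m(\sqrt{-\Delta})\phi_{t_0}\rangle + \frac ab\|\phi_{t_0}\|_2^2$, and since $\phi_{t_0}$ is a $1$-Lipschitz contraction of $\phi$ (a standard fact: contractions decrease Dirichlet forms of $m(\sqrt{-\Delta})$ because $m$ is a Bernstein/CNIS-type symbol, $m(\xi)/|\xi|^2$ having the right monotonicity — or simply because $m(\sqrt{-\Delta})$ generates a Markovian semigroup), $\langle\phi_{t_0}, m(\sqrt{-\Delta})\phi_{t_0}\rangle \le \langle\phi, m(\sqrt{-\Delta})\phi\rangle$; also $\|\phi_{t_0}\|_2^2 \le \|\phi\|_2^2$. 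Then \eqref{p} gives $\|\phi_{t_0}\|_4^4 \le \frac{C}{b}\|\phi\|_2^2\langle\phi,m(\sqrt{-\Delta})\phi\rangle + \frac{Ca}{b}\|\phi\|_2^4$. The remaining low part $\int_0^{t_0} 4t^3|\{|\phi|>t\}|\,dt \le t_0^2\|\phi\|_2^2$, and $\|\phi\|_4^4 \le C\|\phi_{t_0}\|_4^4 + Ct_0^4\,|\{|\phi|>0\}|$ — no, $|\{|\phi|>0\}|$ is not controlled. So one uses instead $\|\phi\|_4^4 \le C t_0^2\|\phi\|_2^2 + C\|\phi_{t_0}\|_4^4$. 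Optimizing $t_0$: choosing $t_0^2 = a\|\phi\|_2^2/(b\|\phi\|_\infty^2)$... and iterating these estimates, one arrives after bookkeeping at $\|\phi\|_4^4 \le C\|\phi\|_2^2\langle\phi,m(\sqrt{-\Delta})\phi\rangle + \frac{C}{ab^3}\|\phi\|_\infty^4$, which is \eqref{newGN}. The delicate point throughout — and the one I would write out in full — is the contraction property of the Dirichlet form $\langle\cdot, m(\sqrt{-\Delta})\cdot\rangle$ under truncations, and the correct optimization of the truncation level $t_0$ that produces exactly the power $a^{-1}b^{-3}$ claimed.
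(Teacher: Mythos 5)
Your final route does not close, and the key assumption you lean on is false. You truncate $\phi$ at a height $t_0$ and then assert that $\langle\phi_{t_0},m(\sqrt{-\Delta})\phi_{t_0}\rangle\le\langle\phi,m(\sqrt{-\Delta})\phi\rangle$ ``because $m(\sqrt{-\Delta})$ generates a Markovian semigroup.'' It does not: $m(\xi)=|\xi|\,{\bf 1}(|\xi|\le a)+b|\xi|$ has a downward jump at $|\xi|=a$, from $(1+b)a$ to $ba$, so $m$ is not a continuous negative definite function, $e^{-tm}$ is not positive definite, and $\langle\cdot,m(\sqrt{-\Delta})\cdot\rangle$ is not a Dirichlet form; no Beurling--Deny contraction is available. (The slip in the middle of your proposal, ``on $|\xi|\ge\Lambda\ge a$ one has $m(\xi)=(1+b)|\xi|$,'' is part of the same confusion: there $m(\xi)=b|\xi|$.) One can of course contract the form $\langle\cdot,\sqrt{-\Delta}\cdot\rangle$ and then pass back via $m(\xi)\ge b|\xi|$, but that costs a factor $1/b$ on the energy, giving only $\|\phi_{t_0}\|_4^4\lesssim b^{-1}\|\phi\|_2^2\langle\phi,m(\sqrt{-\Delta})\phi\rangle$, and no choice of $t_0$ removes the $b^{-1}$: the target inequality has a $b$-free constant in front of $\|\phi\|_2^2\langle\phi,m(\sqrt{-\Delta})\phi\rangle$. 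Your proposed optimization of $t_0$ also leaves a remainder $t_0^2\|\phi\|_2^2\sim ab^{-1}\|\phi\|_2^4/\|\phi\|_\infty^2$, which involves $\|\phi\|_2^4$, not $\|\phi\|_\infty^4$, and there is no bound of $\|\phi\|_2$ by $\|\phi\|_\infty$ on $\R$.

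The idea you need, and briefly considered before dropping it, is to split in frequency rather than in height and to apply Gagliardo--Nirenberg to the \emph{low}-frequency piece, where $m(\xi)=(1+b)|\xi|\ge|\xi|$ so the bound carries no $b$-loss whatsoever. With a smooth cutoff $\chi_a$ at scale $a$ set $\phi=\phi_1+\phi_2$; then \eqref{oo2} with $s=1$, $p=4$ gives $\|\phi_1\|_4^4\le C\|\phi\|_2^2\langle\phi,m(\sqrt{-\Delta})\phi\rangle$ with an absolute constant, using $|\xi|\le m(\xi)$ on $\mbox{supp}\,\wh\phi_1$. For $\phi_2$ one interpolates $\|\phi_2\|_4\le\|\phi_2\|_\infty^{1/4}\|\phi_2\|_3^{3/4}\le\delta^{-4}\|\phi_2\|_\infty+\delta^{4/3}\|\phi_2\|_3$, passes $\|\phi_2\|_\infty\le C\|\phi\|_\infty$ by Littlewood--Paley, controls $\|\phi_2\|_3$ via \eqref{oo2} with $s=2/3$, $p=3$ together with $|\xi|^{2/3}\le 2b^{-1}a^{-1/3}m(\xi)$ on $\mbox{supp}\,\wh\phi_2$, and then chooses $\delta=b^{3/16}a^{1/16}$ to kill the negative powers of $a,b$ in the energy term and to leave $\delta^{-16}=a^{-1}b^{-3}$ in front of $\|\phi\|_\infty^4$. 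That asymmetry between low and high frequencies is the mechanism producing the factor $(ab^3)^{-1}$, and it is entirely absent from a height-truncation argument.
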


{\it Proof.}  Let $\chi\in C_0^\infty(\bR)$ be a symmetric cutoff function
such that $0\le \chi\le 1$, $\chi(\xi) =1$ for $|\xi|\le 1/2$ and $\chi(\xi)=0$ for $|\xi|\ge 1$.
Set $\chi_a(\xi)= \chi(\xi/a)$. Split $\phi=\phi_1+ \phi_2$ into low and high Fourier modes, the
decomposition is defined via their Fourier transforms,
$$
   \phi = \phi_1 + \phi_2, \qquad \wh \phi_1(\xi) : = \wh \phi (\xi)\chi_a(\xi), \quad 
  \wh \phi_2 (\xi): = \wh \phi(\xi) (1-\chi_a(\xi)).
$$
First we estimate the contribution from $\phi_1$. With the choice of $p=4$, $s=1$ in  \eqref{oo2}
we have
$$
   \| \phi_1\|_4 \le C \|\phi_1\|_2^{1/2} \Big[\int_\bR |\wh\phi_1(\xi)|^2 |\xi| \rd \xi\Big]^{1/4} 
   \le C \|\phi_1\|_2^{1/2}\langle \phi_1, m(\sqrt{-\Delta})\phi_1\rangle^{1/4} 
  \le  C \|\phi\|_2^{1/2}\langle \phi, m(\sqrt{-\Delta})\phi\rangle^{1/4} ,
$$ 
where we used that on the support of $\wh \phi_1$ we have $|\xi|\le m(\xi)$
and in the last step we used $|\wh\phi_1|\le |\wh \phi|$ pointwise.

For the contribution of $\phi_2$, with any $\delta>0$ we have
$$
    \| \phi_2\|_4 \le \|\phi_2\|_\infty^{1/4} \|\phi_2\|_3^{3/4} \le  \delta^{-4}  \|\phi_2\|_\infty
   + \delta^{4/3}  \|\phi_2\|_3.
$$
In the first term we use the Littlewood-Paley inequality
$$
    \|\phi_2\|_\infty \le C  \|\phi\|_\infty,
$$
where $C$ depends only on the choice of $\chi$ 
but is independent of $a$. In the second term we use \eqref{oo2} with $s=2/3$, $p=3$:
$$ 
\|\phi_2\|_3  \le C \|\phi_2\|^{1/2} \Big[ \int |\wh\phi_2(\xi)|^2 |\xi|^{2/3} \rd \xi\Big]^{1/4} 
 \le Cb^{-1/4}a^{-1/12}\|\phi_2\|^{1/2}\langle \phi_2, m(\sqrt{-\Delta})\phi_2\rangle^{1/4},
$$
where in the second step we used  $|\xi|^{2/3} \le 2b^{-1}a^{-1/3} m(\xi)$ for all $|\xi|\ge a/2$,
i.e. on the support of $\wh \phi_2$. Using $|\wh\phi_2|\le |\wh \phi|$, we thus
have
$$
    \| \phi_2\|_4 \le  C\delta^{-4}   \|\phi\|_\infty + C\delta^{4/3} b^{-1/4}a^{-1/12}
 \|\phi\|^{1/2}\langle \phi, m(\sqrt{-\Delta})\phi\rangle^{1/4}.
$$
Choosing $\delta = b^{3/16}a^{1/16}$, we obtain \eqref{newGN}. \qed

\medskip

Finally, we derive a discrete version and a localized discrete version of Proposition~\ref{prop:newGN}.
\begin{proposition}\label{prop:newGNdiscr} Let $\{ B_{ij}\; : \;  i\ne j \in \bZ\}$
 be a bi-infinite matrix of  nonnegative numbers with  $B_{ij}=B_{ji}$.
\begin{itemize}
\item[(i)] [Global version] Assume that for some positive constants $a, b, r$ with $b\le r\le 1$, we have
\be\label{displ1}
   B_{ij} \ge \frac{b}{|i-j|^2} , \quad \forall i\ne j\in\bZ
\ee
and
\be\label{displ2}
    B_{ij} \ge \frac{r}{|i-j|^2}, \quad \mbox{$\forall i,j\in\bZ$ with $|i-j|\ge a^{-1}$}. 
\ee
Then for any function $f:\bZ\to \bR$ we have
\be\label{GNdiscr}
  \| f\|_4^4 \le   \frac{C}{r} \| f\|_2^2 \sum_{i\ne j} B_{ij}|f_i-f_j|^2
+\frac{C}{ab^3}  \|f\|_\infty^4.
\ee
\item[(ii)]  [Local version] Let $\cI=\llbracket Z- L, Z+L\rrbracket \subset \bZ$ be a subinterval of length $|\cI|= 2L+1$
around $Z\in \bZ$ 
and let $\wh \cI: = \llbracket Z- (1+\tau)L, Z+(1+\tau)L\rrbracket \subset \bZ$ be a slightly larger interval,
where $\tau>0$.
 Assume that for some positive constants $a, b, r$ with $b\le r\le 1$, we have
\be\label{displ1loc}
   B_{ij} \ge \frac{b}{|i-j|^2} , \quad \forall i,j\in\wh \cI
\ee
and
\be\label{displ2loc}
   B_{ij} \ge \frac{r}{|i-j|^2}, \quad \mbox{$\forall i,j\in\wh \cI$ with $|i-j|\ge a^{-1}$}. 
\ee
Then for any function $f:\bZ\to \bR$ with $\mbox{supp} (f) \subset \cI$ we have
\be\label{GNdiscrloc}
  \| f\|_4^4 \le   C \| f\|_2^2 \Big[ \frac{1}{r} \sum_{i\ne j\in \cI} B_{ij}|f_i-f_j|^2 + \frac{1}{L\tau} \| f\|_2^2
  \Big]
+\frac{C}{ab^3}  \|f\|_\infty^4.
\ee
\end{itemize} 
\end{proposition}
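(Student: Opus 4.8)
\textbf{Proof proposal for Proposition~\ref{prop:newGNdiscr}.}

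The plan is to reduce both the global and the local discrete inequalities to the continuum statement of Proposition~\ref{prop:newGN}, using linear interpolation as in the proof of Proposition~\ref{prop:GNd}. First I would handle part (i). Given $f:\bZ\to\bR$, let $\phi:\bR\to\bR$ be its piecewise linear interpolation, as in \eqref{explic}, so that the norm equivalences \eqref{normf} hold. The key point is to dominate the quadratic form $\langle \phi, m(\sqrt{-\Delta})\phi\rangle$ for the multiplier $m$ in \eqref{mdef} by the discrete Dirichlet form $\sum_{i\ne j} B_{ij}|f_i-f_j|^2$, for an appropriate choice of $a,b$. Writing $m(\sqrt{-\Delta})$ through its kernel representation (a superposition of the $(-\Delta)^{s/2}$ kernels, i.e. $|x-y|^{-1-s}$ for $s=1$ plus the low-frequency part), the same splitting of the double integral $\iint |\phi(x)-\phi(y)|^2 K(x,y)\,\rd x\,\rd y$ into the diagonal cells $|i-j|\le 1$ and the off-diagonal cells $|i-j|\ge 2$ that was carried out in \eqref{df}--\eqref{3ex} shows that this integral is bounded by $C\sum_{i\ne j} |f_i-f_j|^2 (|i-j|+1)^{-2}$ for the homogeneous part. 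Then one uses \eqref{displ1} to control the nearby pairs (where $B_{ij}\ge b/|i-j|^2$ with the weaker constant $b$, so these terms cost a factor $1/b$), and \eqref{displ2} to control the pairs with $|i-j|\ge a^{-1}$ (with the stronger constant $r$, costing only $1/r$). Since the near pairs $|i-j|\le a^{-1}$ carry a total Fourier weight corresponding to frequencies $\gtrsim a$, which is exactly the regime where $m(\xi)=|\xi|$ (no $b$-suppression), tracking constants carefully yields $\langle \phi, m(\sqrt{-\Delta})\phi\rangle \le C b^{-1}\sum_{i\ne j}B_{ij}|f_i-f_j|^2$ up to the contribution already controlled by $r$; feeding this and \eqref{normf} into \eqref{newGN} gives \eqref{GNdiscr} after bookkeeping the factors $1/r$, $1/b$, and $1/(ab^3)$. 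The main obstacle here is precisely this constant-tracking: one must align the continuum cutoff scale $a$ in \eqref{mdef} with the discrete threshold $a^{-1}$ in \eqref{displ2} and verify that the low-frequency piece of $m$ only ever multiplies the $1/r$-controlled terms, not the $1/b$-controlled ones; this requires a mild Littlewood--Paley-type argument rather than a purely algebraic manipulation.

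For part (ii), the local version, I would modify the interpolation so that $\phi$ is supported in a bounded neighbourhood of $\cI$ — say extend $f$ by zero and interpolate linearly, so $\mathrm{supp}(\phi)\subset [Z-L-1,Z+L+1]$. Apply the global inequality \eqref{newGN} to this $\phi$. The continuum quadratic form $\langle\phi,m(\sqrt{-\Delta})\phi\rangle$ is again bounded by $C\sum_{i\ne j\in\bZ}|f_i-f_j|^2(|i-j|+1)^{-2}$, but now the sum over pairs with $i\in\cI$ and $j\notin\wh\cI$ (which are \emph{not} controlled by \eqref{displ1loc}--\eqref{displ2loc}) must be estimated separately. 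Since $f$ vanishes outside $\cI$ and $\mathrm{dist}(\cI,\bZ\setminus\wh\cI)\ge \tau L$, for such a pair $|f_i-f_j|^2=f_i^2$ and $|i-j|\ge \tau L$, so $\sum_{i\in\cI}\sum_{j:|i-j|\ge\tau L} f_i^2|i-j|^{-2}\le C(\tau L)^{-1}\|f\|_2^2$; this is exactly the extra term $(L\tau)^{-1}\|f\|_2^2$ appearing in \eqref{GNdiscrloc}. The remaining pairs, with both indices in $\wh\cI$ (or both outside, which contribute nothing), are controlled by \eqref{displ1loc} and \eqref{displ2loc} exactly as in part (i). Combining with \eqref{normf} and \eqref{newGN} yields \eqref{GNdiscrloc}. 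This part introduces no new difficulty beyond carefully isolating the boundary-crossing pairs; the anticipated hard step remains the constant-tracking in the frequency-space comparison of part (i), which part (ii) simply inherits.
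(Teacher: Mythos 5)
Your broad strategy — linear interpolation $\phi$ of $f$ as in \eqref{explic}, apply the continuum Proposition~\ref{prop:newGN}, then discretize the quadratic form $\langle\phi, m(\sqrt{-\Delta})\phi\rangle$ — is exactly the paper's, and your treatment of part (ii) (isolating the pairs $i\in\cI$, $j\notin\wh\cI$, using $\mathrm{dist}(\cI,\bZ\setminus\wh\cI)\ge\tau L$ to produce the $(L\tau)^{-1}\|f\|_2^2$ term) coincides with the paper's argument essentially verbatim. However, in part (i) you have not actually carried out the step you correctly identify as the crux, and your ansatz for it would not give the stated constant.

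The paper does not work with a ``kernel representation of $m(\sqrt{-\Delta})$ as a superposition of $(-\Delta)^{s/2}$'' nor a generic Littlewood--Paley argument. It splits $m(\xi)\le b|\xi| + |\xi|\chi_{2a}^2(\xi)$. For the first piece it applies \eqref{claim}, and the prefactor $b$ then cancels the $1/b$ cost of \eqref{displ1} exactly, so this piece contributes $C\sum_{i\ne j}B_{ij}|f_i-f_j|^2$ with an absolute constant, \emph{not} $Cb^{-1}\sum_{i\ne j}B_{ij}|f_i-f_j|^2$. The decisive device is for the second piece: one bounds $|\xi|\chi_{2a}^2(\xi)\le Q(\xi):=100\,a\bigl(1-e^{-|\xi|/a}\bigr)$, whose physical-space kernel is, up to constants, the bounded nonsingular kernel $\bigl((x-y)^2+a^{-2}\bigr)^{-1}$. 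Discretizing as in \eqref{claim} then yields a sum of the form $\sum_{i\ne j}|f_i-f_j|^2\bigl(|i-j|^2+a^{-2}\bigr)^{-1}$, whose denominator saturates at $a^{-2}$ for $|i-j|<a^{-1}$ and matches the threshold built into \eqref{displ2} for $|i-j|\ge a^{-1}$. Your claimed intermediate bound $C\sum|f_i-f_j|^2(|i-j|+1)^{-2}$ carries no $a$-dependence, so it gives no way to separate the pairs governed by \eqref{displ1} from those governed by \eqref{displ2}; and the resulting constant $Cb^{-1}$ you announce is strictly worse than the claimed $Cr^{-1}$ (since $b\le r$; in the application $b\sim K^{-\xi}$ while $r$ is an absolute constant, so the two differ by $K^\xi$), and the phrase ``up to the contribution already controlled by $r$'' does not repair this. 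In short: the specific choice $Q(\xi)=100\,a(1-e^{-|\xi|/a})$ and its physical-space incarnation $((x-y)^2+a^{-2})^{-1}$ \emph{is} the proof; without it the argument does not close.
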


{\it Proof.} Following the proof of Proposition~\ref{prop:GNd}, for any $f:\bZ\to \bR$
we define its continuous extension $\phi$ by \eqref{explic}. Then the combination of
\eqref{normf} and \eqref{newGN} yields
$$
     \| f\|_4^4 \le  C\| f\|_2^2 \langle \phi, m(\sqrt{-\Delta})\phi\rangle+
 \frac{C}{ab^3}  \|f\|_\infty^4,
$$
where $m$ is given in \eqref{mdef} and $a,b$ will be determined later.
We compute
\be\label{ssp}
    \langle \phi, m(\sqrt{-\Delta})\phi\rangle \le b \langle \phi, \sqrt{-\Delta}\phi\rangle +  
  \langle\phi, \sqrt{-\Delta} \; \chi_{2a}^2(\sqrt{-\Delta})\phi\rangle,
\ee
where we used that ${\bf 1}(|\xi|\le a) \le \chi_{2a}^2(\xi)$ by the definition of $\chi$ 
at the beginning of the proof of Proposition~\ref{prop:newGN}.
The first term is bounded by
\be\label{ff}
   b \langle \phi, \sqrt{-\Delta}\phi\rangle = b\int_\bR\int_\bR  \frac{|\phi(x)-\phi(y)|^2}{|x-y|^2} \rd x\rd y 
\le Cb\sum_{i<j} \frac{|f_i-f_j|^2}{|i-j|^2}
  \le \sum_{i<j} B_{ij} |f_i-f_j|^2
\ee
using \eqref{claim} in the first estimate and  \eqref{displ1} in the second one.
In the second term in \eqref{ssp} we use the trivial arithmetic inequality
$$
  |\xi| \chi_{2a}^2(\xi) \le Q(\xi) \quad \mbox{with} \quad
 Q(\xi) : = 100 a \big( 1- e^{-|\xi|/a}\big).
$$
Thus
$$ 
  \langle\phi, \sqrt{-\Delta} \; \chi_{2a}^2(\sqrt{-\Delta})\phi\rangle \le \int_\bR |\wh\phi(\xi)|^2 Q(\xi) \rd \xi
  = 50 \int_\bR\int_\bR \frac{|\phi(x)-\phi(y)|^2 }{(x-y)^2 + a^{-2}}\rd x 
\rd y.
$$
Mimicking the argument leading to \eqref{claim}, we can continue this estimate
\be\label{gg}
  \langle\phi, \sqrt{-\Delta} \; \chi_{2a}^2(\sqrt{-\Delta})\phi\rangle  \le C\sum_{i\ne j\in \bZ} 
  \frac{ |f_i-f_j|^2}{|i-j|^2 + a^{-2}} \le  \frac{C}{r}\sum_{i\ne j\in \bZ} 
  B_{ij} |f_i-f_j|^2, 
\ee
where we used \eqref{displ2} in the last step. This completes the proof of \eqref{GNdiscr}.

The proof of  \eqref{GNdiscrloc} is very similar, just in the
 very last estimates of \eqref{ff} and \eqref{gg}
we use that $f$ is supported in $\cI$. E.g. in \eqref{ff} we have
$$
  b\sum_{i<j} \frac{|f_i-f_j|^2}{|i-j|^2} = b\sum_{i<j\in \wh \cI}  \frac{|f_i-f_j|^2}{|i-j|^2} + 2b\sum_{i\in \cI} |f_i|^2
 \sum_{j\not \in \wh \cI}  \frac{1}{|i-j|^2} \le \sum_{i<j} B_{ij}|f_i-f_j|^2 + \frac{2}{L\tau} \| f\|_2^2,
$$
and the estimate in \eqref{gg} is analogous.  
\qed

\end{document}